\documentclass{amsart}
\usepackage{latexsym}
\usepackage{amsmath}
\usepackage{amssymb}
\usepackage[all]{xy}
\usepackage{comment}

\usepackage{autobreak}
\usepackage{enumitem}
\usepackage[hypertexnames=false]{hyperref} % the option "hypertexnames=false" is added for autonum
\usepackage[capitalize]{cleveref}
\crefname{equation}{}{}
\crefname{enumi}{}{}

\usepackage[\ifdefined\enabletodonotes\else disable\fi]{todonotes}

\usepackage{autonum} % must be after hyperref and showkeys

\usepackage{tikz}
\usetikzlibrary{positioning}
\usetikzlibrary{arrows.meta}

\numberwithin{equation}{section}
\newtheorem{thm}{Theorem}[section]
\newtheorem{prop}[thm]{Proposition}
\newtheorem{cor}[thm]{Corollary}
\newtheorem{lem}[thm]{Lemma}

\theoremstyle{definition}
\newtheorem{defn}[thm]{Definition}

\newtheorem{assertion}[thm]{Assertion}
\theoremstyle{remark}
\newtheorem{rem}[thm]{Remark}
\newtheorem{ex}[thm]{Example}

\newcommand{\Q}{{\mathbb Q}}
\newcommand{\Z}{{\mathbb Z}}
\newcommand{\C}{{\mathcal C}}
\newcommand{\e}{\varepsilon}
\newcommand{\R}{{\mathbb R}}
\newcommand{\mapright}[1]{%
 \smash{\mathop{%
  \hbox to 1cm{\rightarrowfill}}\limits_{#1}}}
\newcommand{\maprightd}[2]{%
 \smash{\mathop{%
  \hbox to 0.5cm{\rightarrowfill}}\limits^{#1}\limits_{#2}}}
\newcommand{\mapleft}[1]{%
 \smash{\mathop{%
  \hbox to 1cm{\leftarrowfill}}\limits_{#1}}}
\newcommand{\mapleftu}[1]{%
 \smash{\mathop{%
  \hbox to 0.8cm{\leftarrowfill}}\limits^{#1}}}
\newcommand{\maprightu}[1]{%
 \smash{\mathop{%
  \hbox to 1cm{\rightarrowfill}}\limits^{#1}}}
\newcommand{\maprightud}[2]{%
 \smash{\mathop{%
  \hbox to 1cm{\rightarrowfill}}\limits^{#1}_{#2}}}
\newcommand{\mapleftud}[2]{%
 \smash{\mathop{%
  \hbox to 1cm{\leftarrowfill}}\limits^{#1}_{#2}}}

\newcommand{\minmodel}{\mathfrak{M}}
\newcommand{\modelcat}{\mathcal{M}}
\newcommand{\et}[1]{\mathsf{et}({#1}^{\R_+})}

\author[K. Kuribayashi]{Katsuhiko Kuribayashi}
\address{%
  Department of Mathematics,
  Faculty of Science,
  Shinshu University,
  Matsumoto, Nagano 390-8621, Japan
}
\email{kuri@math.shinshu-u.ac.jp}
\author[T. Naito]{Takahito Naito}
\address{%
  Nippon Institute of Technology,
  Gakuendai, Miyashiro-machi, Minamisaitama-gun, Saitama 345-8501, Japan
}
\email{naito.takahito@nit.ac.jp}

\author[K. Sekizuka]{Kengo Sekizuka}
\address{%
Department of Science and Technology,
Graduate School of Medicine, Science and Technology,
Shinshu University,
Matsumoto, Nagano 390-8621, Japan
}
\email{23hs601d@shinshu-u.ac.jp}

\author[S. Wakatsuki]{Shun Wakatsuki}
\address{%
  Graduate School of Mathematics, Nagoya University,
  Furo-cho, Chikusa-ku, Nagoya, Aichi 464-8601, Japan
}
\email{shun.wakatsuki@math.nagoya-u.ac.jp}

\author[T. Yamaguchi]{Toshihiro Yamaguchi}
\address{%
  Faculty of Education,
  Kochi University, Akebono-cho, Kochi 780-8520, Japan
}
\email{tyamag@kochi-u.ac.jp}

\begin{document}
\title[A distance between maps via interleavings of Sullivan algebras]{A distance between maps via interleavings of relative Sullivan algebras
%--Draft--
}

\footnote[0]{{\it 2020 Mathematics Subject Classification}: Primary 55N31, 55P62; Secondary 55U35.

{\it Key words and phrases.} Interleaving distance, extended tame object, relative Sullivan algebra, Moore--Postnikov tower.

%This research was partially supported by a Grant-in-Aid for Scientific
%Research (B) 25287008
%from Japan Society for the Promotion of Science.

%Department of Mathematical Sciences,
%Faculty of Science,
%Shinshu University,
%Matsumoto, Nagano 390-8621, Japan
%e-mail:{\tt kuri@math.shinshu-u.ac.jp}
}

\begin{abstract}
In this article, we consider extended tame persistence commutative differential graded algebras (CDGAs) associated with relative Sullivan algebras.
In particular, if the relative Sullivan algebra is a model for a map between spaces, then the persistence CDGA is isomorphic to the persistence object obtained by a Postnikov tower for the map with the polynomial de Rham functor in the homotopy category of extended tame persistence CDGAs.

Moreover,
the interleaving distance in the homotopy category (IHC) in the sense of Lanari and Scoccola enables us
to introduce a pseudodistance on the homotopy set of maps via the persistence CDGA models for maps. In contrast to persistence cochain complexes, the IHC of persistence CDGAs does not coincide with  the cohomology interleaving distance in general. Due to the reason, we also discuss formalities of a persistence CDGA with interleavings.
Computational examples of the pseudodistances between maps are showcased.
\end{abstract}

\maketitle

\tableofcontents

\section{Introduction}
Recently, fascinating persistence invariants appear in algebra, geometry and topology which are obtained by swing-backs from topological data analysis; see, for example, \cite{Bl-L, BOZ, CGL, HLM, M-Z, Zhou}. In particular, Zhou \cite{Zhou} studies persistent Sullivan minimal models for persistent spaces; see Section \ref{sect:multi-p}. 

In \cite{Bl-L}, the {\it interleavings} for persistence objects with values in a model category are considered.
Being based on the consideration due to Blumberg and Lesnick \cite{Bl-L}, Lanari and Scoccola \cite{LS} introduce
the {\it interleaving distance in the homotopy category} $d_{\text{IHC}}$ for persistence objects. In the considerations of $d_{\text{IHC}}$ and other distances, which are essentially given in \cite{Bl-L}, Lanari and Scoccola use the projective model structure of the category of persistence objects with values in a cofibrantly generated model category.
In \cite{CGL}, Chach\'olski, Giunti and Landi have given the category of {\it tame} persistence objects a model structure.

In this article, by applying the distance $d_{\text{IHC}}$ to tame persistence objects, we construct a two parameter homotopy invariant for continuous maps; see Sections \ref{sect:ID} and \ref{sect:pCDGAs}. More precisely, the interleaving distance between maps is introduced and investigated by making use of the model structure on the category $\et{\mathsf{CDGA}}$ of {\it extended} tame persistence commutative differential graded algebras (CDGAs) over the rational field $\Q$.
The key to the discussion is the functor $\Theta$ constructed in Theorem \ref{thm:main}, which assigns an object in the homotopy category of
$\et{\mathsf{CDGA}}$ to a map between path-connected spaces via a relative Sullivan model for the map. 

The functor $\Theta$ is indeed defined algebraically with the dimension of the extended part of a relative Sullivan algebra for a map. However,
we remark that the extended tame persistence CDGA $\Theta(f)$ associated with a map $f$ between simply-connected spaces is isomorphic to the image of the Moore--Postnikov tower of $f$ by the polynomial de Rham functor in the homotopy category of $\et{\mathsf{CDGA}}$; see Theorem \ref{thm:main_II}.
Thus, by formulating the result with a partial Quillen equivalence in the sense of Moreno-Fern\'andez and B. Stonek \cite{M-S},
we see that the distance $d_{\text{IHC}}$ between $\Theta(f)$ and $\Theta(g)$ is equal to
the distance $d_{\text{IHC}}$ between the pointwise rationalizations of the Moore--Postnikov towers of $f$ and $g$ in the category of extended tame {\it copersistence} simplicial sets; see Corollary $\ref{cor:Alg_vs_Space}$.
Furthermore, we have a pseudodistance
%$
\[
d_{\text{H}} : [X, Y] \times [X, Y] \to \R_{\geq 0}\cup \{\infty\}
\]
%$
on the homotopy set by making use of the functor $\Theta$; see Theorem \ref{thm:Ho_sets}.
%Section \ref{sect:GeometricRealizations}.

The result \cite[Theorem 3.3]{KNWY} yields 
that the distance $d_{\text{IHC}}$ coincides with
the {\it cohomology interleaving distance} $d_{\text{CohI}}$ in the category of persistence cochain complexes over a field even though the homology functor induces the inequality $d_{\text{CohI}} \leq d_{\text{IHC}}$ in general.
On the other hand, we have examples which give the strict inequality $d_{\text{CohI}} < d_{\text{IHC}}$
if the two distances are dealt with in the category $\et{\mathsf{CDGA}}$.
Indeed, the Hopf map $S^3 \to S^2$ and the trivial map with the same domain and codomain is such an example.
Another example is given by considering the distance $d_{\text{H}}$ between appropriate two elements in the homotopy group
$\pi_3 \left( {\mathbb C} P^2 \# \overline{{\mathbb C} P^2} \right)\otimes \Q$; see Section \ref{section:sensitiveOne}. We stress that the strict inequality depends on the choice of the underlying field. In fact, in the second example above, the two distances between the elements coincide if we choose the complex field \(\mathbb{C}\) in the consideration instead of $\Q$; see Remark \ref{rem:connectedsumCP2_overC}.

A key to proving the equality $d_{\text{CohI}} = d_{\text{IHC}}$ in \cite[Theorem 3.3]{KNWY} is that every persistence cochain complex is $d_{\text{HI}}$-{\it formal}; see \cite[Proposition 3.5]{KNWY}. In general, the formality is defined by using the {\it homotopy interleaving distance} on a model category; see \cite{Bl-L, LS}. Thus, we are interested in considering formalities for persistence CDGAs. The topic is discussed in Section \ref{section:formalities} relating to the notion of formalizability of a map in the sense of Thomas \cite{Thomas};
see Propositions \ref{prop:implications_formalities} and \ref{prop:d_{IHC}_formalizable}.

It is worthwhile mentioning that a transferred model structure on the category of persistence CDGAs, which are not necessarily tame, is introduced in \cite{HLM} by using the {\it interval sphere} model structure on the category of persistence cochain complexes over $\Q$.  In this article, our interest is restricted to extended tame persistence CDGAs and their invariants and then we invoke the model structure on $\et{\mathsf{CDGA}}$ introduced in \cite{CGL}.  The novelty is that we assign a persistence CDGA to a single continuous map.

%\subsection{The organization of this manuscript}
The rest of this manuscript is arranged as follows. Section \ref{sect:ID} recalls the interleavings up to homotopy.
Moreover, interleaving distances introduced in \cite{Bl-L, LS} are reconsidered in the category of extended tame persistence objects.
To this end, we use the model structure of the category of tame persistence objects with values in a model category, which is due to
Chach\'olski, Giunti and Landi \cite{CGL}.
In Section \ref{sect:pCDGAs}, after recalling the notion of a relative Sullivan algebra, we introduce
the functor $\Theta$ mentioned above and investigate its properties.
In Section \ref{sect:MPs-P_CDGAs}, we relate the persistence CDGA associated with a map $f$ via the functor $\Theta$
to the Moore--Postnikov tower for $f$.
%Geometric realization of $\Theta(f)$ for each map $f$ is discussed in \ref{sect:GeometricRealizations}.
Section \ref{sect:Interleavings_maps} considers the interleaving distance in the homotopy category for maps.
Section \ref{section:sensitiveOne} is devoted to producing examples of persistence CDGAs associated with maps for each of which
the distance $d_{\text{IHC}}$ is greater than $d_{\text{CohI}}$. In Section \ref{section:formalities}, we show that formalities of a persistence CDGA defined by interleaving distances are equivalent to one another.  Moreover, the formalities are related to the formalizability of a map.
Section \ref{sect:Examples} gives more computational examples of the distances $d_{\rm IHC}$ of maps.
Section \ref{sect:Perspective} describes perspective of our work.

\section{The interleaving distances between extended tame objects}\label{sect:ID}

Let $\C$ be a category and $\mathcal{C}^{(\R, \leq)}$ the functor category, where $(\R, \leq)$ is the poset defined with the usual order which is regarded as a category.  Originally, the interleavings {\it up to homotopy} are defined in the category $\modelcat^{(\R, \leq)}$ endowed with the projective model structure
for a cofibrantly generated model category $\modelcat$. In order to define interleavings up to homotopy in a full subcategory of $\modelcat^{(\R, \leq)}$, we need to reconsider results in \cite{LS}.

We begin by recalling strict interleavings in the functor category $\mathcal{C}^{(\R, \leq)}$ for a general category $\mathcal{C}$.
For a real number $\e \geq 0$, define a functor $T_\e : (\R, \leq) \to (\R, \leq)$ by $T_\e(a) = a+ \e$.
%Moreover, we define a natural transformation $\eta_\e : id_{(\R, \leq)} \Rightarrow T_\e$ by $\eta_\e(a) : a \leq a + \e$.
The {\it $\e$-shift functor} $( \ )^\e :  \C^{(\R, \leq)} \to \C^{(\R, \leq)}$ is defined by $( \ )^\e (F) = F^\e := FT_\e$.
%Then, we see that$(F, G, \varphi, \psi)$ is an $\e$-interleaving if and only if

\begin{defn}\label{defn:interleaving}(\cite[Definition 4.2]{CCGGO}, \cite[Definition 3.1]{B-S})  Objects $F$ and $G$ in $\C^{(\R, \leq)}$ are {\it $\e$-interleaved} if there exists a commutative diagram
\begin{eqnarray}\label{eq:interleaving_1}
\xymatrix@C35pt@R25pt{
F \ar[r]  \ar[dr]^(0.7){\varphi} &  F^{\e}  \ar[r] \ar[rd]^(0.7){\varphi^\e}|\hole & F ^{2\e}   \\
G  \ar[r]  \ar[ur]^(0.3){\psi}|\hole& G^\e \ar[ur]^(0.3){\psi^\e} \ar[r]  & G^{2\e}
}
\end{eqnarray}
in which horizontal arrows are the natural transformations defined by the structure maps of $F$ and $G$. The pair  $(\varphi, \psi)$ of the natural transformations is called an $\e$-{\it interleaving} between $F$ and $G$.
\end{defn}

\begin{rem}
The commutative diagram in Definition \ref{defn:interleaving} yields the commutativity of the diagrams
\begin{eqnarray}\label{eq:interleaving}
\xymatrix@C10pt@R15pt{
&&F(i) \ar[rr]^-{F(i \leq i+2\e)} \ar[dr]_-{\varphi(i)} &  & F(i+2\e)   &\text{and}& & F(i +\e) \ar[rd]^{\varphi(i+\e)}& \\
&&& G(i+\e) \ar[ur]_{\psi(i+\e)} &                                               &&G(i) \ar[rr]_-{G(i \leq i+2\e)} \ar[ru]^-{\psi(i) }&& G(i+2\e)
}
\end{eqnarray}
for all $i \in \R$. We note that $F$ is isomorphic to $G$ in $\C^{(\R, \leq)}$ if and only if $F$ and $G$ are $0$-interleaved.
\end{rem}

\begin{defn}
For objects $F$ and $G$ in $\C^{(\R, \leq)}$, the interleaving distance $d_{\text{I}}(F, G)$ between $F$ and $G$ is defined by
\[
d_{\text{I}}(F, G) := \text{inf}(\{ \e \geq 0 \mid \text{$F$ and $G$ are $\e$-interleaved} \}\cup \{\infty\}).
\]
%We set that $d_{\text{I}}(F, G) = \infty$ if $F$ and $G$ are not $\e$-interleaved for any $\e\geq 0$.
\end{defn}

\begin{defn}\label{defn:et}  
Let $\C$ be a category and $\R_+$ the full subcategory of $(\R, \leq)$ whose objects are non-negative real numbers.
A sequence $\tau_0 < \tau_1 < \cdots  <\tau_n<\cdots$ in $[0,\infty)$, which is divergent or finite,
{\it discretises} a functor $X:\R_+\to \C$ if $X({s\leq t}):X(s)\to X(t)$ may fail to be an isomorphism only if there is $a\in \mathbb{N}$ such that $s<\tau_a\leq t$. A functor $X : \R_+\to \C$ is called an {\it extended tame functor} if there is a sequence that discretises it. Let  $\et{\C}$ denote the full subcategory of the functor category ${\C}^{\R_+}$ whose objects are extended tame functors. % and whose morphisms are all of the natural transformations.
An object $X$ in $\et{\C}$ is called {\it tame} if the sequence which discretises $X$ is finite; see \cite{CGL}.
\end{defn}

Following \cite[Section 2.2]{CGL}, we introduce a factorization of a morphism $g : X\to Y$ in $\et{\C}$. Let  $\tau_0 < \tau_1 < \cdots  <\tau_n<\cdots $ be a sequence discretising both $X$ and $Y$. By induction on
$\{0, 1, 2, ..., n, ... \}$, we define morphisms
$\bar{g}({\tau_a}) : X({\tau_a}) \to Q({\tau_a})$ and $\hat{g}({\tau_a}) : Q({\tau_a}) \to Y({\tau_a})$ in $\C$ as follows:
%
%\begin{center}
$(\bar{g}(0) : X(0) \to Q(0)):= (1 : X(0) \to X(0))$ \ and \  $(\hat{g}(0) : Q(0) \to Y(0)):=(g(0) : X(0)\to Y(0))$.
%\end{center}
For $a > 0$, the object $Q({\tau_a})$ is defined by $\text{colim}(\xymatrix@C35pt@R20pt{Y({\tau_{a-1}}) & X({\tau_{a-1}}) \ar[l]_-{g({\tau_{a-1}})} \ar[r]^-{X({\tau_{a-1} < \tau_a})} & X({\tau_a}) })$ with $\bar{g}({\tau_a}) : X({\tau_a}) \to Q({\tau_a})$ and $\hat{g}({\tau_a}) : Q({\tau_a}) \to Y({\tau_a})$ which fit in the commutative diagram
\begin{eqnarray}\label{diagram:the_pushout}
\xymatrix@C40pt@R15pt{
 X({\tau_{a-1}}) \ar[r]^{X({\tau_{a-1} < \tau_a})} \ar[d]_{g({\tau_{a-1}})}& X({\tau_a}) \ar[d]^{\bar{g}({\tau_a})}  \ar@/^0.8pc/[rdd]^{g({\tau_a})} &\\
 Y({\tau_{a-1}}) \ar@/_0.8pc/[rrd]_{Y({\tau_{a-1} < \tau_a})}\ar[r] & Q({\tau_a}) \ar[rd]^{\hat{g}({\tau_a})} & \\ % \ar@{.>}@/_2.0pc/
  & & Y({\tau_a})
}
\end{eqnarray}
consisting of the pushout square.
For $a > 0$, define $Q({\tau_{a-1}< \tau_a}) : Q({\tau_{a-1}}) \to Q({\tau_a})$ to be the composite of the morphism $Y({\tau_{a-1}}) \to Q({\tau_a})$ represented by the bottom horizontal arrow in (\ref{diagram:the_pushout}) and $\hat{g}({\tau_{a-1}}) : Q({\tau_{a-1}})  \to Y({\tau_{a-1}})$.
The construction above gives an extended persistence object $Q$ via the left Kan extension along the inclusion $\{\tau_0 < \cdots < \tau_n <\cdots \} \to \R$ of poset\footnote{We observe that the functor $(\lfloor \ \rfloor)^*$ mentioned in Section \ref{sect:relativeSullivan_pCDGA} below is nothing but the left Kan extension along the inclusion
$j : \mathbb{Z}\to \mathbb{R}$.}; see \cite[Section 2.1]{CGL}.
Thus, we have a factorization $g = \hat{g}\bar{g}$ of $g$.

Throughout this manuscript, we use the same terminology as that in \cite{DS}  for model categories.
The following two results are due to Chach\'olski, Giunti and Landi \cite{CGL}.

\begin{thm}\label{thm:ModelCat}\cite[Theorem 2.2]{CGL}
    Let $\modelcat$ be a model category. The following choices of weak equivalences, fibrations and cofibrations form a model structure on $\et{\modelcat}$. A morphism $g:X\to Y$ in $\et{\modelcat}$ is a
    \begin{itemize}
        \item weak equivalence if $g(t):X(t)\to Y(t)$ is a weak equivalence for all t,
        \item fibration if $g(t):X(t)\to Y(t)$ is a fibration for all t,
        \item cofibration if $\hat{g}(t): Q(t)\to Y(t)$ is a cofibration for all t.
    \end{itemize}
\end{thm}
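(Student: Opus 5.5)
The plan is to reduce everything to the discrete indexing poset $\mathbb{N}$, where a Reedy-type (projective) model structure is standard, and then transport it back along the discretisation. A preliminary remark makes this possible: any finite family of extended tame functors has a common discretising sequence, obtained by merging their sequences (the merged sequence is still divergent or finite). So for any finite diagram in $\et{\modelcat}$ we may fix one sequence $\tau_0<\tau_1<\cdots$. Restriction to $\{\tau_a\}$ and the left Kan extension along $\{\tau_a\}\hookrightarrow\R_+$ then give an equivalence between functors discretised by this sequence and functors on the poset $\{\tau_a\}\cong\mathbb{N}$ (or $\{0,\dots,n\}$). Under this equivalence the three classes in the statement become, respectively, the objectwise weak equivalences, the objectwise fibrations, and the Reedy cofibrations on the direct category $\mathbb{N}$. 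For the last of these, note that $Q(\tau_a)$ is precisely the relative latching object $Y(\tau_{a-1})\sqcup_{X(\tau_{a-1})}X(\tau_a)$. The values at non-$\tau$ points are isomorphic to those at the preceding $\tau_a$, so there the conditions reduce to the same ones, provided we interpret $\hat g(t)$ via the Kan-extended $Q$.

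A first thing to verify is that the class of cofibrations does not depend on the chosen discretising sequence. Refining a sequence by inserting a point $s$ with $\tau_{a-1}<s<\tau_a$ gives $X(\tau_{a-1})\cong X(s)$ and $Y(\tau_{a-1})\cong Y(s)$. The new latching map at $s$ is then an isomorphism, and the one at $\tau_a$ is unchanged. Since any two sequences have a common refinement, independence follows.

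With this in hand, the axioms are checked as follows.

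\textbf{Limits and colimits.} These are computed pointwise. Finite ones preserve extended tameness, using the merged sequence.

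\textbf{Two-out-of-three and retracts.} Two-out-of-three is immediate pointwise. For retracts, the classes are pointwise conditions or pointwise conditions on a functorial construction $g\mapsto\hat g$, and these are stable under retracts.

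\textbf{Lifting.} Given a cofibration $i:A\to B$, a fibration $p:X\to Y$ (one of them acyclic) and a square, we build the lift $h(\tau_a)$ by induction on $a$. At stage $a$, the data $h(\tau_{a-1})$ and the square induce a map $Q(\tau_a)\to X(\tau_a)$. Lifting in $\modelcat$ against $\hat i(\tau_a)$ then extends it to $B(\tau_a)$. In the case where $i$ is an acyclic cofibration, we need each $\hat i(\tau_a)$ to be acyclic. This comes from the standard Reedy argument: by induction, pushouts of acyclic cofibrations are acyclic, so $Y(\tau_{a-1})\to Q(\tau_a)$ is acyclic when $i(\tau_{a-1})$ is, and two-out-of-three then applies. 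Finally, we extend $h$ constantly on the intervals $[\tau_{a},\tau_{a+1})$, which is legitimate because all structure maps are isomorphisms there.

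\textbf{Factorization.} Given $g:X\to Y$, we inductively factor the map $X(\tau_a)\sqcup_{X(\tau_{a-1})}Z(\tau_{a-1})\to Y(\tau_a)$ in $\modelcat$ as a cofibration followed by an (acyclic) fibration. The resulting $Z$ is discretised by the same sequence, so it is extended tame.

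The main obstacle I expect is the careful identification of $\hat g$ at non-$\tau$ times, together with the independence of the sequence. The remaining steps are the standard Reedy-on-$\mathbb{N}$ arguments, and I would cite \cite{CGL} (or Hovey's Reedy theory) for the routine inductive lifting and factorization steps.
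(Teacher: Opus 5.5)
Your proposal is correct and follows essentially the paper's route. The paper only remarks that the inductive arguments of \cite[Theorem 2.2]{CGL} carry over because a discretising sequence is finite or divergent. Your reduction to the direct category $\mathbb{N}$ along a merged discretising sequence is an explicit version of that same induction; take the sequence with $\tau_0=0$, so that the left Kan extension recovers the values on $[0,\tau_0)$. Your check that the cofibration class does not depend on the chosen sequence is something the paper leaves implicit.

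There is one slip in the acyclic case of the lifting axiom. The map $B(\tau_{a-1})\to Q(\tau_a)$ (which you wrote as $Y(\tau_{a-1})\to Q(\tau_a)$) is the cobase change of the structure map $A(\tau_{a-1})\to A(\tau_a)$, so it need not be a weak equivalence. Use instead $\bar i(\tau_a)\colon A(\tau_a)\to Q(\tau_a)$, which is the cobase change of $i(\tau_{a-1})$. This map is an acyclic cofibration, because $i(\tau_{a-1})$ is a weak equivalence by hypothesis and a cofibration by the induction $i(\tau_{a-1})=\hat i(\tau_{a-1})\bar i(\tau_{a-1})$ (as in Proposition~\ref{prop:cofibrantOb}(i)). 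Then apply two-out-of-three to $i(\tau_a)=\hat i(\tau_a)\bar i(\tau_a)$ to conclude that $\hat i(\tau_a)$ is acyclic.
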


In the definition above, it is not necessarily assumed that the model category admits functorial factorizations. 

%The following proposition characterizes cofibrant objects in $\et{\modelcat}$.
\begin{prop}\label{prop:cofibrantOb}\cite[Proposition 2.3]{CGL}
Let $\modelcat$ be a model category. \\
{\rm (i)} If $g:X\to Y$ is a cofibration in $\et{\modelcat}$, then $g(t):X(t) \to Y(t)$ is a cofibration in $\modelcat$ for any $t$ in $\mathbb{R}_+$. \\
{\rm (ii)} An object $X$ in $\et{\modelcat}$ is cofibrant if and only if $X(0)$ is cofibrant and, for any $s < t$ in $\R_+$, the transition morphism $X({s<t}) : X(s) \to X(t)$ is a cofibration in $\modelcat$.
\end{prop}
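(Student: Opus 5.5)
For (i), I reduce to the discretisation points and use that $g$ factors as $g=\hat g\bar g$. Fix a sequence $\tau_0<\tau_1<\cdots$ discretising both $X$ and $Y$. Every $t$ lies in some $[\tau_a,\tau_{a+1})$, or beyond the last $\tau_n$ if the sequence is finite. On such an interval all transition maps of $X$, $Y$ and $Q$ are isomorphisms compatible with $g$. Hence $g(t)$ is isomorphic, in the arrow category, to $g(\tau_a)$, and it suffices to show that each $g(\tau_a)$ is a cofibration.

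I argue by induction on $a$, proving simultaneously that $\bar g(\tau_a)$ is a cofibration.
\begin{itemize}
\item For $a=0$: $\bar g(0)=1$ is a cofibration, and $g(0)=\hat g(0)$ is a cofibration by the definition of cofibration in Theorem \ref{thm:ModelCat}.
\item For $a>0$: $\bar g(\tau_a)$ is the pushout of $g(\tau_{a-1})$ along $X(\tau_{a-1}<\tau_a)$ in the pushout square (\ref{diagram:the_pushout}). By induction $g(\tau_{a-1})$ is a cofibration, and cofibrations are stable under pushout, so $\bar g(\tau_a)$ is a cofibration.
\end{itemize}
Then $g(\tau_a)=\hat g(\tau_a)\bar g(\tau_a)$ is a composite of two cofibrations, hence a cofibration. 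This proves (i).

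For (ii), apply the construction to $g:\emptyset\to X$, where $\emptyset$ is the constant initial functor (which is tame). Then $Q(\tau_0)=\emptyset$, and for $a>0$ the pushout gives $Q(\tau_a)=X(\tau_{a-1})$, with $\hat g(\tau_a)=X(\tau_{a-1}<\tau_a)$. So $X$ is cofibrant iff $X(0)$ is cofibrant and each $X(\tau_{a-1}<\tau_a)$ is a cofibration.

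It remains to match this with the condition on arbitrary $s<t$. For such $s<t$, the map $X(s<t)$ is, up to isomorphisms, a composite of the maps $X(\tau_{a-1}<\tau_a)$ for the indices $a$ with $s<\tau_a\le t$ (or an isomorphism if there are none). Both isomorphisms and composites of cofibrations are cofibrations, so each $X(s<t)$ is a cofibration. Conversely, the condition on all $s<t$ contains the condition at consecutive $\tau$'s as a special case.

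The main obstacle is keeping track of the identification of the values at non-discretisation points via the left Kan extension. In particular, I must check that $\hat g$ at an arbitrary $t$ agrees, up to isomorphism, with $\hat g(\tau_a)$, so that the cofibration condition "for all $t$" reduces to the $\tau_a$. I expect this to follow directly from the definition of $Q$ as a left Kan extension along the inclusion of $\{\tau_0<\tau_1<\cdots\}$.
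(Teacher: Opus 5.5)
Your proposal is correct and is essentially the induction argument of \cite[Proposition 2.3]{CGL} that the paper invokes. Cofibrations are stable under pushout, so $\bar g(\tau_a)$ is a cofibration; $g(\tau_a)=\hat g(\tau_a)\bar g(\tau_a)$ is then a composite of cofibrations; and (ii) follows by applying the factorization to $\emptyset\to X$. The one point worth making explicit is your use of the fact that an infinite discretising sequence must diverge: this ensures every $t$ lies in some $[\tau_a,\tau_{a+1})$ and only finitely many $\tau_a$ lie in $(s,t]$, and it is exactly the condition the paper flags when extending from tame to extended tame objects.
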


Originally, the results \cite[Theorem 2.2 and Proposition 2.3]{CGL} are proved for the full subcategory of $\et\modelcat$ consisting of tame objects.
The induction arguments in the original proofs of \cite[Theorem 2.2 and Proposition 2.3]{CGL} are valid to obtain Theorem \ref{thm:ModelCat} and Proposition \ref{prop:cofibrantOb}. For an extended tame object $F$, a sequence $\{\tau_n\}_{n\geq 0}$ which discretises $F$ is unbounded if the sequence is not finite. Then, in particular, we may construct a cofibrant replacement of $F$ by taking inductively a cofibrant replacement of each structure map $F({\tau_n}) \to F({\tau_{n+1}})$. Here, we use  the condition that the sequence is divergent.

%In what follows, we write $F(i)$ and $F(i < j)$ for the component $F^i$ of a persistence object $F$ at $i$ and for the translation given by the morphism $i < j$, respectively.

\begin{rem} 
The model structure in Proposition \ref{prop:cofibrantOb} is closely related to that of the category of towers in a category described in
\cite[Section VI, Definition 1.1]{GJ}; see Remark \ref{rem:CDGA_sSet} (iii).
\end{rem}

Let $\modelcat$ be a model category.
%The interleaving distance in the homotopy category is defined on the homotopy category
%$\text{Ho}(\modelcat^{(\R, \leq)})$ of the model category $\modelcat^{(\R, \leq)}$ with the projective model structure.
Since the $\delta$-shift functor $(\ )^\delta : \et{\modelcat} \to \et{\modelcat}$ preserves weak equivalences, it follows that the functor
$(\ )^\delta$ induces the self functor  $(\ )^\delta$ on the homotopy category $\text{Ho}(\et{\modelcat})$. Then, we can consider
the commutative diagram (\ref{eq:interleaving_1}) in $\text{Ho}( \et{\modelcat})$. Moreover, we say that objects $F$ and $G$ in $\text{Ho}( \et{\modelcat})$ are $\e$-{\it interleaved in the homotopy category} if they are $\e$-interleaved in the sense in Definition \ref{defn:interleaving}; see \cite[Section 2.2.2]{LS}.
The {\it interleaving distance in the homotopy category} between objects $F$ and $G$ in $\text{Ho}( \et{\modelcat})$ is defined by
\[
d_{\text{IHC}}(F, G) \! :=\! \text{inf}(\{ \e \geq 0 \mid \text{$F$, \!$G$ are $\e$-interleaved in the homotopy category} \}\cup \{\infty\}).
% \ \text{and}
\]

For objects $X$ and $Y$ in $\et{\modelcat}$, we say that $X$ and $Y$ are $\e$-{\it homotopy interleaved} if there exist $X\simeq X'$ and $Y\simeq Y'$ such that $X'$ and $Y'$ are $\e$-interleaved in $\modelcat^{(\R, \leq)}$; see \cite[Section 3.3]{Bl-L}. Here
$W\simeq W'$ means that there is a zigzag of weak equivalences connecting $W$ and $W'$.

Let $j : \et{\modelcat} \to \modelcat^{(\R, \leq)}$ be  the inclusion functor  and $q_* : \modelcat^{(\R, \leq)} \to \text{Ho}(\modelcat)^{(\R, \leq)}$ the  the functor induced by the localization functor $q : \modelcat \to \text{Ho}(\modelcat)$.
We say that $X$ and $Y$ in  $\et{\modelcat}$  are $\e$-{\it homotopy commutative interleaved} if $q_*jX$ and $q_*jY$ are
$\e$-interleaved in $\text{Ho}(\modelcat)^{(\R, \leq)}$.%\todo{[RR2 4)] period}
%Let $h :  \modelcat^{(\R, \leq)}  \to \text{Ho}(\modelcat)^{(\R, \leq)}$ be the functor defined by the composite of $\theta$ and the localization functor $\modelcat^{(\R, \leq)} \to \text{Ho}(\modelcat^{(\R, \leq)})$. Observe that $\theta = q_*$.

%\medskip
Let $X$ and $Y$ be objects in $\et\modelcat$. Following Blumberg and Lesnick \cite{Bl-L}, and Lanari and Scoccola \cite{LS},
we introduce the {\it homotopy interleaving distance} and the
{\it  homotopy commutative interleaving distance} defined by
\[
d_{\text{HI}}(X, Y) \! := \text{inf}(\{ \e \geq 0 \mid \text{$X$, $Y$ are $\e$-homotopy interleaved} \}\cup \{\infty\} )\ \text{and}
\]
\[
d_{\text{HC}}(X, Y) \! := \text{inf}(\{ \e \geq 0 \mid \text{$X$, \!$Y$ are $\e$-homotopy commutative interleaved} \}\cup \{\infty\}),
\]
respectively. We refer the reader to Proposition \ref{prop:inequalities} for inequalities which hold for the interleaving distances mentioned above.
%\todo{Kuri (3/27) : Proposition \ref{prop:inequalities} is added.}
%Moreover, Lanari and Scoccola \cite{LS} introduce
%the {\it interleaving distance in the homotopy category}\label{index:homotopyID} define by
%\[
%d_{\text{IHC}}(X, Y) \! :=\! \text{inf}(\{ \e \geq 0 \mid \text{$X$, \!$Y$ are $\e$-interleaved in the homotopy category} \}\cup \{\infty\}).
% \ \text{and}
%\]

\begin{rem}\label{rem:d_{IHC}_and_d_{HC}}
Interleavings in the homotopy category and homotopy commutative interleavings can be composed, respectively.
Thus, we see that the distances $d_{\text{IHC}}$ and $d_{\text{HC}}$ satisfy the triangle inequality and then
$d_{\text{IHC}}$ and $d_{\text{HC}}$ are pseudodistances on the class of objects in $\et{\modelcat}$.
\end{rem}

Let $\modelcat^{(\R, \leq)}$ be the category endowed with the projective model structure
for a cofibrantly generated model category $\modelcat$.
In \cite[2.2.3]{LS},
it is proved that the homotopy interleavings on the category $\modelcat^{(\R, \leq)}$ can be composed by using the functorial fibrant replacement.
However, it is not immediate that the homotopy interleavings on $\et{\modelcat}$ for a general model category $\modelcat$ are composable.

\begin{prop}\label{prop:composable} %\todo{Kuri(2/1) : By virtue of this proposition, $d_{\text{\em HI}}$ is regarded as a pseudodistance on $\et{\modelcat}$. (3/24) : See the previous paragraph.}
Suppose that each object in $\modelcat$ is fibrant. Then, homotopy interleavings can be composed in $\et{\modelcat}$.
As a consequence, the distance $d_{\text{\em HI}}$ is a pseudodistance on the class of objects in $\et{\modelcat}$.
\end{prop}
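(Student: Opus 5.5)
Given an $\e$-homotopy interleaving between $X$ and $Y$ and a $\delta$-homotopy interleaving between $Y$ and $Z$, we want an $(\e+\delta)$-homotopy interleaving between $X$ and $Z$. The data are zigzags of weak equivalences $X\simeq X'$, $Y\simeq Y'$, $Y\simeq Y''$, $Z\simeq Z''$, with $(X',Y')$ strictly $\e$-interleaved and $(Y'',Z'')$ strictly $\delta$-interleaved in $\modelcat^{(\R,\leq)}$. The plan, following \cite[2.2.3]{LS}, is to replace $Y'$ and $Y''$ by one common object. We then transport both strict interleavings to it, so that strict interleavings can be composed.

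\textbf{Reduction to a single weak equivalence.} The key step is a transfer lemma. Suppose $(A,B)$ is strictly $\e$-interleaved and $w: B\to \tilde B$ is a weak equivalence in $\et{\modelcat}$ between suitable objects. We want an object $\tilde A\simeq A$ with $(\tilde A,\tilde B)$ strictly $\e$-interleaved, and similarly when $w$ points the other way. Granting this, we apply it repeatedly along the zigzag $Y'\simeq Y\simeq Y''$. We move the interleaving partner of $Y'$ until $Y'$ is replaced by $Y''$, keeping the other end weakly equivalent to $X$. We then have strict interleavings $(\tilde X, Y'')$ and $(Y'',Z'')$, and composing them gives a strict $(\e+\delta)$-interleaving, as in Remark \ref{rem:d_{IHC}_and_d_{HC}}. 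A cleaner route is to first replace every object in sight cofibrantly, using the model structure of Theorem \ref{thm:ModelCat}. Since all objects are fibrant, weakly equivalent cofibrant objects are then connected by a single weak equivalence, indeed a homotopy equivalence. This shortens the zigzag to one arrow.

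\textbf{Transferring along a weak equivalence.} If $w:\tilde B\to B$ points into $B$, we take a cofibrant replacement $q:\tilde A\to A$ of $A$ in $\et{\modelcat}$. We lift $\psi\circ$(something) using that $w$ can be taken to be a trivial fibration: with all objects fibrant, factor $w$ so that it is a trivial fibration up to a weak equivalence. We then lift the map $\tilde A\to B^\e$ through $w^\e:\tilde B^\e\to B^\e$. The shift functor preserves fibrations and weak equivalences, since these are pointwise. For the other direction we use the pullback $A\times_{B^\e}\tilde B^\e$-type constructions. Alternatively, we use the mapping path object: since every object is fibrant, the path-object construction yields a functorial fibrant setting where the construction of \cite[2.2.3]{LS} via pullbacks of the diagram (\ref{eq:interleaving_1}) goes through verbatim. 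Pullbacks preserve weak equivalences along fibrations in a right proper situation. Right properness holds automatically because all objects are fibrant, \cite{DS}, and this is where the hypothesis is used.

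\textbf{Main obstacle.} The hard point is ensuring that the newly constructed objects remain extended tame, i.e., lie in $\et{\modelcat}$ rather than just $\modelcat^{\R_+}$. Shifts of extended tame objects are extended tame, using the discretising sequence shifted by $-\e$ and truncated at $0$. Pullbacks of extended tame diagrams are extended tame with the union of discretising sequences, which is still divergent. So the constructions should be performed using only finite limits of extended tame objects and the factorizations of Theorem \ref{thm:ModelCat}. Once composability is established, $d_{\text{HI}}$ satisfies the triangle inequality. Symmetry and $d_{\text{HI}}(X,X)=0$ are immediate, so $d_{\text{HI}}$ is a pseudodistance.
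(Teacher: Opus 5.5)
\textbf{There is a genuine gap in the transfer step.} Your outline matches what the paper does via \cite[Proposition 2.3, Lemma 2.2]{LS}: bring $Y'$ and $Y''$ to a common object, move both strict interleavings onto it, then compose. The problem is the step that carries all the content, transferring a \emph{strict} interleaving along a weak equivalence. You do not actually supply it, and the mechanism you propose fails.

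Write $\sigma^c_F\colon F\to F^c$ for the structure maps. Suppose $w\colon\tilde B\to B$ and $q\colon\tilde A\to A$ are trivial fibrations, and you get $\tilde\varphi,\tilde\psi$ by lifting $\varphi q$ through $w^\e$ and $\psi w$ through $q^\e$. Then all you can conclude is $q^{2\e}\tilde\psi^\e\tilde\varphi=\psi^\e w^\e\tilde\varphi=\psi^\e\varphi q=q^{2\e}\sigma^{2\e}_{\tilde A}$. That gives $\tilde\psi^\e\tilde\varphi\simeq\sigma^{2\e}_{\tilde A}$, not equality. So you have built an interleaving in the homotopy category, not a strict interleaving between weakly equivalent objects. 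This is exactly the difference between $d_{\rm IHC}$ and $d_{\rm HI}$, and it cannot be rectified in general.

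Your ``other direction'' $w\colon B\to\tilde B$ is also not handled. The pullback $A\times_{B^\e}\tilde B^\e$ needs a map $\tilde B\to B$, so it is the first direction again. On $\R_+$ you also cannot dualize with pushouts, because there is no $T_{-\e}$. Your cofibrant-replacement shortcut still leaves such an outward transfer to do.

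\textbf{How to close it.} You never need the outward direction; you only need pullbacks along trivial fibrations into the middle object. Fibrations in $\et{\modelcat}$ are pointwise, so every object there is fibrant. Path objects $B^I\to B$ are then trivial fibrations, and this, not right properness, is where the hypothesis enters (it is the role of \cite[Lemma 6.1.4]{S} in the paper). Brown's factorization therefore turns each weak equivalence in the zigzag $Y'\simeq Y\simeq Y''$ into a span of trivial fibrations. Iterated pullbacks compose these into one span $Y'\xleftarrow{p}W\xrightarrow{p'}Y''$ in $\et{\modelcat}$.

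Now take the strict $\e$-interleaving $(\varphi,\psi)$ of $X'$ and $Y'$. Set $X'':=X'\times_{Y'^\e}W^\e$, the pullback of $\varphi$ along $p^\e$, and let $\varphi''\colon X''\to W^\e$ be the projection. Define
\[
\psi'':=(\psi p,\ \sigma^{2\e}_W)\colon W\to X'^\e\times_{Y'^{2\e}}W^{2\e}=X''^\e .
\]
This is well defined because $\varphi^\e\psi p=\sigma^{2\e}_{Y'}p=p^{2\e}\sigma^{2\e}_W$. Both identities then hold on the nose:
\begin{itemize}
\item $\varphi''^\e\psi''=\sigma^{2\e}_W$ directly from the second component of $\psi''$;
\item $\psi''^\e\varphi''=\sigma^{2\e}_{X''}$, using $\psi^\e\varphi=\sigma^{2\e}_{X'}$.
\end{itemize}
Moreover $X''\to X'$ is a trivial fibration, being a pullback of $p^\e$; you only ever pull back trivial fibrations, which are pullback-stable in any model category.

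Do the same for the $\delta$-interleaving of $Y''$ and $Z''$ along $p'$, and compose the two strict interleavings at $W$. This gives the $(\e+\delta)$-homotopy interleaving. Your remarks that shifts and finite limits of extended tame objects remain extended tame are what keep everything inside $\et{\modelcat}$.
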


\begin{proof} It follows from the assumption that
each object in $\et{\modelcat}$ is also fibrant; see the model category structure of $\et{\modelcat}$ described in Definition \ref{thm:ModelCat}.
Then, the result follows from the proof of \cite[Proposition 2.3]{LS} with \cite[Lemma 2.2]{LS}. We may apply \cite[Lemma 6.1.4]{S}
when making trivial fibrations with a common domain in the argument \cite[Proposition 2.3]{LS}. Observe that a trivial fibration is stable under pullback.
\end{proof}

\begin{rem}\label{rem:d_{HI}} %\todo{Kuri (3/24) : $d_{\rm HI}$ is a pseudodistance. See the comment after Definition \ref{defn:IHD_formality}.}
Let $\mathsf{CDGA}$ denote the category of commutative differential graded algebras (CDGAs) over $\Q$.
In the rest of the manuscript except for Section \ref{section:formalities}, we mainly focus on considering the interleaving distance in the homotopy category of $\et{\mathsf{CDGA}}$ the category of extended tame persistence CDGAs.
We observe that the category $\mathsf{CDGA}$ is endowed with the model structure introduced in \cite[4.2 Definition]{B-G}. In particular, each object is fibrant. Then, by Proposition \ref{prop:composable}, the distance $d_{\rm HI}$ is a pseudodistance on the class of objects in
$\et{\mathsf{CDGA}}$.
\end{rem}

\section{Extended tame commutative differential graded algebras}\label{sect:pCDGAs}

%Let $\modelcat$ be a model category and $\mathsf{et}\modelcat:=\mathsf{et}([0, \infty), \modelcat)$ the category of {\it extended} tame persistence objects in $\modelcat$.  By definition, an extended tame object $X$ is discretised by a sequence  $\tau_0 < \cdots < \tau_l < \cdots$ which is divergent or finite.

We begin by recalling relative Sullivan algebras and explain how to relate the algebra to a persistence object; see
\cite{Halperin} for relative Sullivan algebras (KS-extensions) and their homotopy theory.

\subsection{Relative Sullivan models for maps and tame persistence CDGAs.}\label{sect:relativeSullivan_pCDGA}
Let $\iota :\wedge V_A \to \wedge V_A \otimes \wedge W$ be a minimal relative Sullivan algebra. Then, by definition, there exists a filtration $\{W^p(r)\}_{r\geq 0}$ for each $W^p$ such that
\begin{eqnarray}\label{eq:minimalAL}
d : 1\otimes W^p(r)\to \wedge V_A\otimes \wedge (W^{<p}\oplus W^p(r-1)).
\end{eqnarray}
Observe that the minimal relative Sullivan model for a map is unique up to isomorphism; see \cite[Theorem 14.12]{FHT}.
In what follows, we assume further that
$\wedge V_A$ is minimal. Then, the model gives rise to a sequence
$\theta(\iota)$ of CDGAs defined by
$\theta(\iota)(n):=  \wedge V_A \otimes \wedge (W^{\leq n})$ together with the inclusions
\[
\theta(\iota)(n < n+1): \wedge V_A \otimes \wedge (W^{\leq n}) \to  \wedge V_A \otimes \wedge (W^{\leq n+1})=\wedge V_A\otimes  \wedge (W^{\leq n}) \otimes
\wedge (W^{n+1})
\]
as maps connecting the CDGAs. By the definition of the minimality of a relative Sullivan algebra, we have

\begin{prop}\label{prop:relativeSullivanAlgs}
Each inclusion $\theta(\iota)(n < n+1)$ is a minimal relative Sullivan algebra with the filtration $\{W^{n+1}(r)\}_{r\geq 0}$.
\end{prop}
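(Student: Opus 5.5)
We verify the definition of a minimal relative Sullivan algebra directly for the extension $\theta(\iota)(n<n+1)$, taking as base the CDGA $B:=\wedge V_A\otimes\wedge(W^{\leq n})$ and as new generators $W^{n+1}$. The first step is to check that $B$ is a sub-CDGA of $\wedge V_A\otimes\wedge W$. For $p\leq n$, the condition (\ref{eq:minimalAL}) gives $d(W^p)\subset \wedge V_A\otimes\wedge(W^{\leq p})$, and $d(\wedge V_A)\subset\wedge V_A$ because $\iota$ is a morphism of CDGAs. Since $d$ is a derivation, $B$ is closed under $d$. The same reasoning for $p\le n+1$ shows that $B\otimes\wedge W^{n+1}=\wedge V_A\otimes\wedge(W^{\leq n+1})$ is a sub-CDGA. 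Hence $\theta(\iota)(n<n+1)$ is a well-defined inclusion of CDGAs, and its target is $B\otimes\wedge W^{n+1}$ as a graded algebra.

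The second step is the Sullivan condition for the new generators. The filtration $W^{n+1}(0)\subset W^{n+1}(1)\subset\cdots$ exhausts $W^{n+1}$. By (\ref{eq:minimalAL}) with $p=n+1$,
\[
d\bigl(1\otimes W^{n+1}(r)\bigr)\subset \wedge V_A\otimes\wedge\bigl(W^{\leq n}\oplus W^{n+1}(r-1)\bigr)=B\otimes\wedge W^{n+1}(r-1).
\]
This is exactly the filtration condition for a relative Sullivan algebra $B\to B\otimes\wedge W^{n+1}$. We also note that $W^{n+1}$ is concentrated in degree $n+1\ge 1$, as required of the generating space of a relative Sullivan algebra.

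The third step, which is the only one with real content, is minimality. We use the standard characterization (\cite[Section 14]{FHT}): $B\to B\otimes\wedge U$ is minimal when $d(U)\subset B^+\otimes\wedge U+B\otimes\wedge^{\geq2}U$. Let $\bar d$ denote the induced differential on $\wedge W$ obtained by killing $\wedge^+V_A$. Minimality of $\iota$ means $\bar d$ is decomposable on $\wedge W$. Take $w\in W^{n+1}$ and write $dw$ in the target algebra. A term lying in $\wedge V_A\otimes W^{\le n+1}$ with coefficient in $\mathbb{Q}\otimes W$ is a single linear generator of $W$, and such terms are excluded by minimality of $\iota$. Every other summand either has a factor in $\wedge^+V_A$ or a factor in $\wedge^{\geq2}W^{\leq n+1}$. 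In the latter case, either some factor lies in $W^{\le n}\subset B^+$ (since $W^{\le n}$ is positively graded), or all factors lie in $W^{n+1}$, giving an element of $\wedge^{\geq2}W^{n+1}$. By degree, $\wedge^{\ge 2}W^{n+1}$ lives in degrees $\geq 2n+2>n+2$, so it cannot actually occur, though it would be harmless anyway. Thus $dw\in B^+\otimes\wedge W^{n+1}$, which gives minimality.

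The main obstacle is matching the paper's notion of minimality with the decomposability condition above. If minimality is instead phrased as "$\operatorname{Im} d\subset \wedge^+V_A\otimes\wedge W+\wedge V_A\otimes\wedge^{\geq 2}W$", the translation is the sorting of monomials just described. The key point is that every element of $W^{\leq n}$ has positive degree and hence lies in the augmentation ideal $B^+$, so we never need to use minimality of $\wedge V_A$ itself beyond this.
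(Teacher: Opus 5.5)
Your proposal is correct and is essentially the paper's argument: the paper treats the statement as immediate from the definition of minimality together with the filtration condition (\ref{eq:minimalAL}) for $p=n+1$, and you simply unpack that into three checks (sub-CDGA, filtration condition, decomposability). One harmless slip is that the inequality $2n+2>n+2$ fails for $n=0$, where terms in $\wedge^{2}W^{1}$ really do occur (e.g.\ $dc=ab$ in the model of the Heisenberg nilmanifold), but, as you already note, such terms are permitted by the minimality condition, so the conclusion stands.
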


Let $f : X \to A$ be a continuous map between path-connected spaces and $\iota_f :\wedge V_A \to \wedge V_A \otimes \wedge W$ a minimal relative Sullivan model for the map $f$. Thus, we have a commutative diagram
\begin{eqnarray}\label{eq:A_{PL}}
\xymatrix@C20pt@R15pt{
\wedge V_A \ar[r]^-{\iota_f} \ar[d]_{\sim} & \wedge V_A \otimes \wedge W \ar[d]^{\sim}\\
A_{\rm PL}(A)  \ar[r]_{A_{\rm PL}(f)} & A_{\rm PL}(X)
}
\end{eqnarray}
whose vertical arrows are quasi-isomorphisms, where $A_{\rm PL}$ denotes the polynomial de Rham functor; see \cite[Theorem 3.1]{FHT_II} for the existence of a minimal relative Sullivan model for a map.

We say that a map $f : X\to A$ is {\it relatively finite}
if the vector space $W$ is of finite dimension for the minimal relative Sullivan model $\wedge V_A \otimes \wedge W$ for the map $f$.
We observe that $\Theta(f)$ is tame in the sense in Definition \ref{defn:et} if the map $f$ is relatively finite.

In order to relate the sequence $\theta(\iota_f)$ to a persistence object, we recall the floor function
$\lfloor \ \rfloor : \mathbb{R} \to \mathbb{Z}$. Then,
we have an extended tame persistence CDGA $\Theta'(f)$ defined by
$\Theta'(f):= (\lfloor \ \rfloor)^*\theta(\iota_f)$. It is immediate that $\Theta'(f)$ is tame if $f$ is relatively finite.
%\todo{Following the model structure, it is required to prove that $F \to F\otimes\wedge(t, dt)$ is a path object, namely a cofibration.}

\begin{rem}\label{rem:iso}
It follows from \cite[4.6 Theorem]{Halperin} that a minimal relative Sullivan algebra $\iota_f$ associated with a map $f : X \to A$ is uniquely determined up to isomorphism and then so is $\Theta'(f)$ in $\et{\mathsf{CDGA}}$.
\end{rem}

Let  $\text{Func}(I, \mathsf{Top}_0)$ be the functor category from $I$ the category consisting of two objects and nontrivial one arrow to
$\mathsf{Top}_0$ the category of path-connected topological spaces.
The main result in this section is described as follows.%\todo{Kuri (11/24) We consider connected topological spaces.}

\begin{thm}\label{thm:main}
The function $\Theta'$ gives rise to a contravariant functor
\[
\Theta : \text{\em Func}(I, \mathsf{Top}_0) \to
\text{\em Ho}(\mathsf{et}([0, \infty), \mathsf{CDGA}))=
\text{\em Ho}(\et{\mathsf{CDGA}})
\]
where the right-hand side denotes the homotopy category of $\et{\mathsf{CDGA}}$.
%Moreover, the functor $\Theta$ induces the functor $\widetilde{\Theta} : \text{\em Ho}(\text{\em Func}(I, \mathsf{Top}_0)) \to \text{\em Ho}(\et{\mathsf{CDGA}})$ between homotopy categories.
%
\end{thm}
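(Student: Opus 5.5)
On objects we set $\Theta(f):=\Theta'(f)=(\lfloor\ \rfloor)^*\theta(\iota_f)$. By Remark \ref{rem:iso} this is well defined up to isomorphism in $\et{\mathsf{CDGA}}$, hence as an object of $\text{Ho}(\et{\mathsf{CDGA}})$. The real work is on morphisms. A morphism in $\text{Func}(I,\mathsf{Top}_0)$ from $f:X\to A$ to $g:Y\to B$ is a commutative square $(u,v)$ with $u:X\to Y$, $v:A\to B$ and $gu=vf$. We must produce a morphism $\Theta(g)\to\Theta(f)$ in the homotopy category, show it is independent of all choices, and check functoriality.

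\emph{Step 1: cofibrancy.} By Proposition \ref{prop:relativeSullivanAlgs}, each transition map $\theta(\iota_g)(n<n+1)$ is a relative Sullivan algebra, hence a cofibration in $\mathsf{CDGA}$. Also $\theta(\iota_g)(0)=\wedge V_B\otimes\wedge W^{0}$ is a Sullivan algebra; for path-connected spaces $W^0$ is trivial or harmless, and I would arrange $W^{0}=0$. Proposition \ref{prop:cofibrantOb}(ii) then shows that $\Theta'(g)$ is cofibrant. Every object is fibrant by Remark \ref{rem:d_{HI}}. Consequently morphisms $\Theta'(g)\to\Theta'(f)$ in $\text{Ho}$ are homotopy classes of genuine natural transformations.

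\emph{Step 2: construct a morphism of towers.} First choose a Sullivan representative $\varphi:\wedge V_B\to\wedge V_A$ of $A_{\rm PL}(v)$ using (\ref{eq:A_{PL}}). Next use the lifting property of the relative Sullivan algebra $\iota_g$ to obtain $\psi:\wedge V_B\otimes\wedge W_g\to\wedge V_A\otimes\wedge W_f$. Here $\psi$ extends $\varphi$ and represents $A_{\rm PL}(u)$ up to homotopy rel $\wedge V_B$; this is the standard lifting lemma (\cite[Prop.~14.6]{FHT}). The key observation is that $\psi$ can be chosen degree-preserving on generators. In that case $\psi(W_g^{\le n})$ lies in $\wedge V_A\otimes\wedge W_f^{\le n}$, because an element of degree $\le n$ only involves generators of degree $\le n$. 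Thus $\psi$ restricts to maps $\theta(\iota_g)(n)\to\theta(\iota_f)(n)$ compatible with the inclusions, and pulling back along $\lfloor\ \rfloor$ gives a natural transformation $\Theta'(g)\to\Theta'(f)$. We define $\Theta(u,v)$ to be its class.

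\emph{Step 3: well-definedness (the main obstacle).} If $\psi_0,\psi_1$ arise from different choices, then $\varphi_0\simeq\varphi_1$ and the lifts are homotopic through a homotopy extending one between the $\varphi_i$. We must upgrade this to a homotopy of persistence maps, i.e.\ a map into a path object $\Theta'(f)\otimes\wedge(t,dt)$. Levelwise the path object is $\theta(\iota_f)(n)\otimes\wedge(t,dt)$, and we need it to be a path object in $\et{\mathsf{CDGA}}$. The map $\theta(\iota_f)(n)\otimes\wedge(t,dt)\to \theta(\iota_f)(n)\times\theta(\iota_f)(n)$ is a levelwise surjection, hence a fibration, and the inclusion of $\Theta'(f)$ into it is a weak equivalence. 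So it is a path object, and right homotopy suffices. The homotopy $H:\wedge V_B\otimes\wedge W_g\to\wedge V_A\otimes\wedge W_f\otimes\wedge(t,dt)$ is built inductively on generators by the homotopy lifting lemma. I would again construct it degreewise: at each stage one solves a lifting problem in degree $k$ using only generators of degree $\le k$, so $H$ respects the filtrations $W^{\le n}$. This gives a natural homotopy, and the same argument shows that changing the minimal models by isomorphisms (Remark \ref{rem:iso}) alters the result only by isomorphism.

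\emph{Step 4: functoriality.} For composable squares, the composite of chosen lifts is itself a valid lift for the composite square. Identities are represented by identity lifts. By Step 3 the resulting classes agree, which gives contravariant functoriality.
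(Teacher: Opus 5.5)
Your argument is the paper's argument dualized, and it can be completed. However, the step that carries all the weight (Step 3) is asserted rather than proved, and the mechanism you sketch for it is not what makes it true.

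The paper uses left homotopies out of the cylinder $\theta(\iota_g)^I$ of Lemma~\ref{lem:CofFib}. The reason is that Halperin's relative homotopy $\sim_{\rm rel}$ is defined through cylinders $(\wedge V)^I\to(\wedge V\otimes\wedge W)^I$. This lets the paper quote Halperin's uniqueness theorem (9.19) and compatibility with composition (9.17, 9.18) verbatim, and Lemma~\ref{lem:relativeH} then restricts the homotopy levelwise. You instead use right homotopies into $\Theta'(f)\otimes\wedge(t,dt)$. That is cheaper on the persistence side: a levelwise surjection is already a fibration in $\et{\mathsf{CDGA}}$, so no pushout check as in Lemma~\ref{lem:CofFib} is needed. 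The price is that you must supply the uniqueness statement in path-object form yourself.

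The difficulty you single out, compatibility with the filtrations $W^{\le n}$, is automatic. CDGA morphisms preserve degree and all generators have positive degree. So any $H$ with $H(\wedge V_B)\subset\wedge V_A\otimes\wedge(t,dt)$ sends $\wedge V_B\otimes\wedge W_g^{\le n}$ into $\wedge V_A\otimes\wedge W_f^{\le n}\otimes\wedge(t,dt)$. For the same reason, ``choosing $\psi$ degree-preserving'' in Step 2 is vacuous.

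The real content is the existence of a homotopy of pairs $(K,H)$, with $H\iota_g=(\iota_f\otimes 1)K$, from $(\varphi_0,\psi_0)$ to $(\varphi_1,\psi_1)$. Inductive lifting does not give this for free. The lifting lemma yields uniqueness of $\psi$ only rel $\wedge V_B$ for a fixed $\varphi$. With a moving base you must lift $\iota_g$ against $\iota_f\otimes\wedge(t,dt)\to\iota_f\times\iota_f$, which is not acyclic, so the obstructions do not vanish for formal reasons.

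The statement also only becomes correct once the choices in Step 2 are fixed up to a homotopy of pairs. Write $\gamma_f\colon\wedge V_A\to A_{\rm PL}(A)$ and $\eta_f\colon\wedge V_A\otimes\wedge W_f\to A_{\rm PL}(X)$ for the structure quasi-isomorphisms, and similarly for $g$. The required condition is $(\gamma_f\varphi,\eta_f\psi)\simeq(A_{\rm PL}(v)\gamma_g,A_{\rm PL}(u)\eta_g)$ as maps of arrows $\iota_g\to A_{\rm PL}(f)$. This is the relative-homotopy condition accompanying diagram (\ref{eq:two_squares}).

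Your phrase ``up to homotopy rel $\wedge V_B$'' is not well posed, because $\eta_f\psi$ and $A_{\rm PL}(u)\eta_g$ restrict on $\wedge V_B$ to $A_{\rm PL}(f)\gamma_f\varphi$ and $A_{\rm PL}(f)A_{\rm PL}(v)\gamma_g$, which are only homotopic. Moreover, separate homotopy classes of $\varphi$ and $\psi$ do not in general determine the class of the pair.

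With the condition stated this way, the gap closes cleanly:
\begin{itemize}
\item In the projective model structure on $\mathsf{CDGA}^{I}$, the arrow $\iota_g$ is cofibrant.
\item $(\gamma_f,\eta_f)\colon\iota_f\to A_{\rm PL}(f)$ is a weak equivalence between fibrant objects, so composing with it is a bijection on homotopy classes out of $\iota_g$.
\item Hence $(\varphi_0,\psi_0)$ and $(\varphi_1,\psi_1)$ are homotopic.
\item Since $\iota_f\otimes\wedge(t,dt)\to\iota_f\times\iota_f$ is a good path object, the homotopy can be realized through it, and your degree observation finishes the argument.
\end{itemize}
Equivalently, you can take Halperin's left relative homotopy and convert it to a right one by the same model-category fact. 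The same bijection also gives existence in Step 2 and functoriality in Step 4.
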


The proof of the theorem is postponed to Section \ref{section:PathCylinderOb}.

We consider the colimit functor $\text{colim} : \et{\mathsf{CDGA}} \to \mathsf{CDGA}$ whose right adjoint is the constant diagram functor $c$. Since $c$ preserves fibrations as well as trivial fibrations, it follows that the pair $(\text{colim}, c)$ is a Quillen adjunction. %\todo{Check them for $\Delta$.}
Thus, we have a left derived functor ${\bf L}(\text{colim}): \text{Ho}(\et{\mathsf{CDGA}}) \to  \text{Ho}(\mathsf{CDGA})$; see, for example,
%\cite[Theorem 2.3.9]{C} and
\cite[Proposition 9.3]{DS}.

\begin{prop}\label{prop:0_to_1} The diagram of functors
\begin{eqnarray}\label{eq:0_to_1}
\xymatrix@C30pt@R15pt{
\text{\em Func}(I, \mathsf{Top}_0)^{\rm op} \ar[r]^-{\Theta}  &  \text{\em Ho}(\et{\mathsf{CDGA}}) \ar[d]^{{\bf L}({\rm colim})}  \\
 \mathsf{Top}_0^{\text{\em op}} \ar[r]_-{q\circ A_{\rm PL}(\ )} \ar[u]^-{i} & \text{\em Ho}(\mathsf{CDGA})
}
\end{eqnarray}
is commutative, where $q : \mathsf{CDGA}\to {\rm Ho}(\mathsf{CDGA})$ denotes the localization functor and $i$ is the functor defined by $i(X) = X \to \ast$ the trivial map.
\end{prop}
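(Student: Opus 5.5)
The plan is to check commutativity of \eqref{eq:0_to_1} first on objects and then on morphisms, computing ${\bf L}({\rm colim})$ on a cofibrant representative.

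\textbf{Objects.} Let $X$ be a path-connected space, so $i(X)$ is the trivial map $X \to \ast$.
\begin{enumerate}[label=(\arabic*)]
\item A minimal Sullivan model of the point is $\wedge V_A = \Q$. Hence a minimal relative Sullivan model $\iota : \Q \to \Q\otimes \wedge W$ of $X\to\ast$ is just a minimal Sullivan model $m_X : (\wedge W, d) \xrightarrow{\sim} A_{\rm PL}(X)$ of $X$. Minimality gives $W^0=0$ (and I will use $W = W^{\geq 1}$ in what follows).
\item Therefore $\Theta'(i(X)) = (\lfloor\ \rfloor)^*\theta(\iota)$ has $\Theta'(i(X))(t) = \wedge W^{\leq \lfloor t\rfloor}$, and in particular $\Theta'(i(X))(0)=\Q$.
\item This object is cofibrant in $\et{\mathsf{CDGA}}$ by Proposition~\ref{prop:cofibrantOb}(ii). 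Indeed, $\Q$ is the initial object and hence cofibrant. Every transition map $X(s<t)$ is either an identity or a composite of the inclusions $\theta(\iota)(n<n+1)$. By Proposition~\ref{prop:relativeSullivanAlgs} these inclusions are relative Sullivan algebras, hence cofibrations in the model structure of \cite{B-G}.
\item Consequently ${\bf L}({\rm colim})\Theta(i(X))$ is represented by the honest colimit ${\rm colim}_t\, \wedge W^{\leq\lfloor t\rfloor} = \bigcup_n \wedge W^{\leq n} = \wedge W$, since colimits in $\mathsf{CDGA}$ along inclusions are unions.
\item The quasi-isomorphism $m_X$ then gives an isomorphism $\wedge W \cong A_{\rm PL}(X)$ in ${\rm Ho}(\mathsf{CDGA})$.
\end{enumerate}
These isomorphisms, $\phi_X := q(m_X)$, are the candidate components of a natural isomorphism filling the square.

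\textbf{Morphisms.} Let $h : X\to Y$ be a map. Then $i(h)$ is the commutative square with $h$ on top and the identity of $\ast$ at the bottom. I will use the definition of $\Theta$ on morphisms from Section~\ref{section:PathCylinderOb}. There one takes a lift $\psi : \wedge W_Y \to \wedge W_X$ of relative Sullivan models over the identity of $\Q$, making the square with $A_{\rm PL}(h)$ commute up to homotopy via $m_X\psi \simeq A_{\rm PL}(h) m_Y$. Such a lift exists and is unique up to homotopy relative to the base, by the lifting lemma for relative Sullivan algebras \cite{Halperin}.

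By minimality, $\psi$ preserves degree, so $\psi(W_Y^{\leq n}) \subset \wedge W_X^{\leq n}$. Thus $\psi$ restricts levelwise and defines a morphism $\Theta'(i(X)) \leftarrow \Theta'(i(Y))$ representing $\Theta(i(h))$. Applying ${\rm colim}$ to this morphism gives back $\psi$ itself. Since $m_X\psi$ and $A_{\rm PL}(h)m_Y$ are homotopic, they agree after applying $q$, so $\phi_X \circ {\bf L}({\rm colim})\Theta(i(h)) = q A_{\rm PL}(h)\circ \phi_Y$. This gives naturality.

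\textbf{Main obstacle.} The hard part is matching the definition of $\Theta$ on morphisms, which is given in the postponed section, with the levelwise restriction used above. Concretely, I must make sure the morphism in ${\rm Ho}(\et{\mathsf{CDGA}})$ assigned there is represented by an honest natural transformation between these cofibrant objects. Only then is ${\bf L}({\rm colim})$ of it computed by the plain colimit. If $\Theta$ is instead defined through a zigzag or a cofibrant replacement, I will use that $\Theta'(i(X))$ is already cofibrant and that every object is fibrant. Hence every morphism in the homotopy category is a homotopy class of genuine maps, and ${\rm colim}$, being left Quillen, preserves the relevant cylinder homotopies.
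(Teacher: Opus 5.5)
Your proposal is correct and is essentially the paper's argument: the paper's proof is just the observation that $\Theta(f)$ is cofibrant (Proposition~\ref{prop:cofibrantOb}), so ${\bf L}({\rm colim})$ is computed by the honest colimit, which for $i(X)$ is a minimal model $\wedge W$ of $X$; you have written out the object-level and naturality checks that this one line leaves implicit. The ``main obstacle'' you flag does not arise: in the proof of Theorem~\ref{thm:main}, $\Theta(i(h))$ is represented by the genuine lift $\psi$ restricted levelwise, and that restriction is possible simply because $\psi$ preserves degree (as in Lemma~\ref{lem:relativeH}), not because of minimality.
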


\begin{proof}
The commutativity follows from the fact that the image of the functor $\Theta$ consists of cofibrant objects; see Proposition \ref{prop:cofibrantOb}.
\end{proof}

%\begin{rem}\label{rem:n_to_n+1}
%As a consequence of Assertion \ref{assertion:0_to_1}, one has
%a commutative diagram
%\begin{eqnarray}\label{eq:1_to_2}
%\xymatrix@C30pt@R20pt{
%(\text{Func}(I, \mathsf{Top}_0))^{\rm op})^{({\mathbb R}^n, \leq)} \ar[r]^-{\Theta_*}  &  (\text{Ho}(\et{\mathsf{CDGA}}))^{({\mathbb R}^n, \leq)}
%\ar[d]^{(\mathbb{L}(\text{colim}))_*} \\
%( \mathsf{Top}_0^{\text{op}})^{({\mathbb R}^n, \leq)} \ar[r]_-{(q\circ A_{PL}(\ ))_*} \ar[u]^{i_*} & (\text{Ho}(\mathsf{CDGA}))^{({\mathbb R}^n, \leq)}.
%}
%\end{eqnarray}
%Thus, we obtain $(n+1)$-parameter persistence objects from $n$-parameter persistence objects via $\Theta_*$.
%In \cite{Zhou}, Zhou considers the interleaving distance and related distances between persistence spaces with the functor $(q\circ A_{PL}(\ ))_*$ in the case where $n=1$.
%\end{rem}

\subsection{Cylinder and path objects  in $\et{\mathsf{CDGA}}$}\label{section:PathCylinderOb}
In order to develop homotopy theory for extended tame persistence CDGAs,
we clarify a cylinder object and a path object for $\theta(\iota)$ associated with a relative Sullivan algebra $\iota$ in $\et{\mathsf{CDGA}}$.

We recall the cylinder object which is used when defining the left homotopy in $\mathsf{CDGA}$; see \cite[Chapter 5]{Halperin}.
Let $\wedge Z$ be a Sullivan algebra and $\nabla : \wedge Z\otimes \wedge Z \to \wedge Z$ the multiplication.
Define graded vector spaces $\overline{Z}$ and $\widehat{Z}$ by $\overline{Z}^n = Z^{n+1}$  and $\widehat{Z}^n = Z^{n}$, respectively.
We write $\overline{v}$ and $\widehat{v}$ for the elements in $\overline{Z}$ and $\widehat{Z}$ corresponding to $v$.
Then, the map $\nabla$ is decomposed as
\[
\wedge Z \otimes \wedge Z \stackrel{i}{\to} (\wedge Z)^I : = (\wedge Z \otimes \wedge Z \otimes \wedge \overline{Z}, \widetilde{d}) \maprightud{\zeta}{\cong}
(\wedge Z \otimes \wedge \overline{Z} \otimes \wedge \widehat{Z}, D) \maprightud{\rho}{\sim}  \wedge Z,
\]
where the map $i$ is the natural inclusion, $\rho$ is defined by $\rho(\widehat{v}) = 0 = \rho(\overline{v})$, $\rho(v) = v$ and $D(\overline{v}) = \widehat{v}$.
Moreover, the map $\zeta$ is an isomorphism of algebras defined by
$\zeta(\overline{v}) = \overline{v}$, $\zeta(v\otimes 1\otimes 1) = v$ and $\zeta(1\otimes v\otimes 1) = \sum_{n=0}^\infty \frac{\theta^n}{n!}(v)$, where $\theta = sD + Ds$ with the derivation $s$ of degree $-1$ defined by $s(v) = \overline{v}$, $s(\overline{v}) = 0 = s(\widehat{v})$. The differential $\widetilde{d}$ is defined by $\widetilde{d}= \zeta^{-1}D\zeta$.  We observe  that $\zeta(1\otimes v\otimes 1) =  v + \widehat{v} + \sum_{n= 1}^\infty \frac{(sD)^n}{n!}(v)$.

 We see that for objects $F$ and $F'$ in
$\et{\mathsf{CDGA}}$,  the pointwise tensor product  $F\otimes F'$ is the coproduct of  $F$ and $F'$.  The pointwise multiplication gives rise to
a morphism $\nabla : \theta(\iota)\otimes \theta(\iota) \to \theta(\iota)$, where
$\theta(\iota)$ is the persistence CDGA associated with a relative Sullivan algebra $\iota :\wedge V \to \wedge V \otimes \wedge W$.
We define a persistence CDGA $\theta(\iota)^I$ by the sequence
\[
(\wedge V)^I \to (\wedge V\otimes \wedge W^{\leq 1})^I \to  (\wedge V\otimes \wedge W^{\leq 2})^I \to \cdots .
\]
Then, we have a commutative diagram
 \begin{eqnarray}\label{diagram:factrisation_I}
\xymatrix@C40pt@R15pt{
\theta(\iota) \otimes \theta(\iota) \ar@/^1.0pc/[rr]^{\nabla} \ar[r]_-{g} & \theta(\iota)^I \ar[r]_-\pi & \theta(\iota)
}
\end{eqnarray}
in $\et{\mathsf{CDGA}}$, where $g$ and $\pi$ are induced by the natural inclusion and the projection $\pi$ mentioned above, respectively.

\begin{lem}\label{lem:CofFib}
In the commutative diagram (\ref{diagram:factrisation_I}), the map $g$ is a cofibration and $\pi$ is a trivial fibration in $\et{\mathsf{CDGA}}$. It turns out that the persistence CDGA
$\theta(\iota)^I$ is a cylinder object for $\theta(\iota)$.
\end{lem}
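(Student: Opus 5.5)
The plan is to check the two claims directly against the model structure of Theorem \ref{thm:ModelCat}, working on the discretising sequence $0<1<2<\cdots$ (all objects here are obtained from sequences indexed by $\mathbb{N}$, so every real interval is handled by the floor function). Write $\theta(\iota)(n)=\wedge V\otimes\wedge W^{\le n}=\wedge Z_n$ with $Z_n = V\oplus W^{\le n}$; by Proposition \ref{prop:relativeSullivanAlgs} each $\wedge Z_n$ is a Sullivan algebra and $\wedge Z_n\to \wedge Z_{n+1}$ is a relative Sullivan algebra.

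\emph{The map $\pi$ is a trivial fibration.} Fibrations and weak equivalences are levelwise, so it is enough to look at each $n$. At level $n$, $\pi(n)$ is the composite $(\wedge Z_n)^I \xrightarrow{\zeta,\ \cong} \wedge Z_n\otimes\wedge\overline{Z}_n\otimes\wedge\widehat{Z}_n \xrightarrow{\rho}\wedge Z_n$.
The map $\rho$ is surjective, hence a fibration in the model structure of \cite{B-G}. It is a quasi-isomorphism because $\wedge(\overline{Z}_n\oplus\widehat{Z}_n)$ with $D\overline v=\widehat v$ is contractible; this is the standard fact recalled from \cite[Chapter 5]{Halperin}. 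Since $\zeta$ is an isomorphism, $\pi(n)$ is a trivial fibration. I also need naturality in $n$. The constructions $(\ )^I$, $\zeta$ and $\rho$ are functorial for the inclusions $Z_n\subset Z_{n+1}$, because $s$ and $D$ on $\wedge Z_{n+1}$ restrict to those on $\wedge Z_n$. Hence $g$ and $\pi$ are morphisms of persistence CDGAs, and $\pi g=\nabla$ holds levelwise.

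\emph{The map $g$ is a cofibration.} By Theorem \ref{thm:ModelCat}, I must show that each $\hat g(n):Q(n)\to\theta(\iota)^I(n)$ is a cofibration. For $n=0$ this is $g(0):\wedge V\otimes\wedge V\to(\wedge V)^I$, which is the relative Sullivan algebra adjoining $\overline{V}$, hence a cofibration. For $n\ge1$,
\[
Q(n)=(\wedge Z_{n-1})^I\otimes_{\wedge Z_{n-1}\otimes\wedge Z_{n-1}}(\wedge Z_n\otimes\wedge Z_n)
=\wedge Z_n\otimes\wedge Z_n\otimes\wedge\overline{Z}_{n-1}
\]
with the restricted differential $\widetilde d$. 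Then $\hat g(n)$ is the inclusion into $\wedge Z_n\otimes\wedge Z_n\otimes\wedge\overline{Z}_n$, which adjoins the generators $\overline{W^n}$. So it is enough to show that this is a relative Sullivan algebra, i.e. to find a well-ordered filtration of $\overline{W^n}$ for which $\widetilde d$ of each generator lies in the subalgebra generated by $Q(n)$ and the earlier generators.

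\emph{Main obstacle.} The difficult step is computing $\widetilde d\,\overline w=\zeta^{-1}D\zeta(\overline w)$ for $w\in W^n$. Since $\zeta(\overline w)=\overline w$ and $D\overline w=\widehat w$, this equals $\zeta^{-1}(\widehat w)$. From the formula for $\zeta(1\otimes w\otimes1)$ I get
\[
\widehat w=\zeta(1\otimes w\otimes 1) - w - \sum_{k\ge1}\tfrac{(sD)^k}{k!}(w).
\]
The terms $(sD)^k(w)$ lie in the algebra generated by $Z_n$, $\overline{Z}_{n-1}$, $\widehat{Z}_{n-1}$ and $\overline{W^n(r-1)}$ when $w\in W^n(r)$. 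This uses \eqref{eq:minimalAL}: $dw\in\wedge V\otimes\wedge(W^{<n}\oplus W^n(r-1))$, $s$ is a derivation, and $sD=sd$ modulo $\widehat{\ }$-terms. Applying $\zeta^{-1}$, and noting that $\zeta$ preserves these subalgebras with $\zeta^{-1}(\widehat v)$ in the subalgebra generated by $v\otimes1$, $1\otimes v$ and earlier terms, gives
\[
\widetilde d\,\overline w\in \wedge Z_n\otimes\wedge Z_n\otimes\wedge(\overline{Z}_{n-1}\oplus\overline{W^n(r-1)}).
\]
This is the required filtration, obtained by induction on $r$ and then on the degree $n$. Therefore $\hat g(n)$ is a relative Sullivan algebra, hence a cofibration, and $g$ is a cofibration.

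Together with $\pi g=\nabla$ and the fact that $\pi$ is a weak equivalence, this shows that $\theta(\iota)^I$ is a cylinder object for $\theta(\iota)$.
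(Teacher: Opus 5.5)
Your proposal is correct and follows the same route as the paper. You check $\pi$ levelwise against Theorem~\ref{thm:ModelCat}, and you show $g$ is a cofibration by identifying the pushout $Q(n)$ with $\wedge Z_n\otimes\wedge Z_n\otimes\wedge\overline{Z}_{n-1}$ and observing that $\hat g(n)$ is the relative Sullivan extension adjoining $\overline{W^n}$. You also supply the verification of the relative Sullivan condition, which the paper only asserts. One small correction: in general the terms $(sD)^k(w)$ can also involve $\widehat{W^n(r-1)}$, so your list of generators should include it. Your inductive handling of $\zeta^{-1}(\widehat v)$ already absorbs these terms.
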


\begin{proof}
The definitions of the fibration and the weak equivalence described in Theorem \ref{thm:ModelCat} enable us to deduce that $\pi$ is a trivial fibration.
In order to show that $g$ is a cofibration, we recall the construction with the diagram (\ref{diagram:the_pushout}). Then, we have a commutative diagram
\begin{eqnarray*}%\label{diagram:the_pushout}
\xymatrix@C5pt@R12pt{
(\wedge V\otimes \wedge W^{\leq k})^{\otimes 2} \ar[r]^{\theta (\iota)^{\otimes 2}({k < k+1})} \ar[d]_{g({k})}& (\wedge V\otimes \wedge W^{\leq k+1})^{\otimes 2} \ar[d]^{\bar{g}({k+1})}  \ar@/^8.0pc/[dd]^{g({k+1})} &\\
(\wedge V\otimes \wedge W^{\leq k})^{\otimes 2}\otimes \wedge(\overline{V\oplus W^{\leq k}})  \ar@/_0.8pc/[rd]_-{(\theta (\iota)^I)({k<k+1})\ \ \ } \ar[r] &
(\wedge V\otimes \wedge W^{\leq k+1})^{\otimes 2}\otimes \wedge(\overline{V\oplus W^{\leq k}}) \ar[d]^{\hat{g}(k+1)} & \\ % \ar@{.>}@/_2.0pc/
  &  (\wedge V\otimes \wedge W^{\leq k+1})^{\otimes 2}\otimes \wedge(\overline{V\oplus W^{\leq k+1}}) &
}
\end{eqnarray*}
in which the inside square is a pushout.  It follows that $\hat{g}$ is a relative Sullivan algebra and hence $g$ is a cofibration.
\end{proof}

For a minimal relative Sullivan algebra $\iota$, we consider a commutative diagram
 \begin{eqnarray}\label{diagram:factrisation_II}
\xymatrix@C40pt@R15pt{
 \theta(\iota) \ar@/^1.0pc/[rr]^{\Delta} \ar[r]_-{h} & \theta(\iota)\otimes \wedge(t, dt) \ar[r]_-p &\theta(\iota) \times \theta(\iota),
}
\end{eqnarray}
where $\Delta$ denotes the diagonal map, $h$ is the canonical inclusion and $p$ is the epimorphism defined by $p = ev_0\times ev_1$.
We regard $\wedge(t, dt)$ as a constant persistence CDGA.

\begin{lem}\label{lem:FibCof}
In the commutative diagram (\ref{diagram:factrisation_II}), the map $h$ is a trivial cofibration and $p$ is a fibration in $\et{\mathsf{CDGA}}$. It turns out that the persistence CDGA
$\theta(\iota)\otimes \wedge(t, dt)$ is a path object for $\theta(\iota)$.
\end{lem}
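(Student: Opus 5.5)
We must show that $h$ is a weak equivalence and a cofibration in $\et{\mathsf{CDGA}}$, and that $p$ is a fibration. We treat the three properties separately, working with the discretising sequence $0<1<2<\cdots$ shared by $\theta(\iota)$ and $\theta(\iota)\otimes\wedge(t,dt)$.

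\emph{$p$ is a fibration.} By Theorem \ref{thm:ModelCat}, fibrations are pointwise. So it suffices to see that each component
\[
p(n)=ev_0\times ev_1:\theta(\iota)(n)\otimes\wedge(t,dt)\to\theta(\iota)(n)\times\theta(\iota)(n)
\]
is a fibration in $\mathsf{CDGA}$, i.e.\ a surjection, by the model structure of \cite{B-G}.

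Surjectivity is elementary. Given $(a,b)$, the element $a\otimes(1-t)+b\otimes t$ maps to $(a,b)$.

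\emph{$h$ is a weak equivalence.} Weak equivalences are also pointwise. Each $h(n)$ is the inclusion $A\to A\otimes\wedge(t,dt)$, and this is a quasi-isomorphism because $\wedge(t,dt)$ has the cohomology of $\Q$ (Künneth).

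\emph{$h$ is a cofibration.} This is the main point. We run the factorization (\ref{diagram:the_pushout}) as in the proof of Lemma \ref{lem:CofFib}.

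At level $0$, $\hat h(0)=h(0):\wedge V\to\wedge V\otimes\wedge(t,dt)$. This is a relative Sullivan algebra, since $\wedge(t,dt)$ is generated by $t$ with $d t=dt$, an elementary extension with contractible generators. Hence $\hat h(0)$ is a cofibration.

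For $k\geq 0$, the pushout $Q(k+1)$ of
\[
\theta(\iota)(k)\otimes\wedge(t,dt)\leftarrow\theta(\iota)(k)\to\theta(\iota)(k+1)
\]
is
\[
\theta(\iota)(k+1)\otimes_{\theta(\iota)(k)}\bigl(\theta(\iota)(k)\otimes\wedge(t,dt)\bigr)\cong\theta(\iota)(k+1)\otimes\wedge(t,dt).
\]
Here we use that the pushout in $\mathsf{CDGA}$ is the tensor product over the common algebra. The map $\hat h(k+1):Q(k+1)\to\theta(\iota)(k+1)\otimes\wedge(t,dt)$ is therefore an isomorphism, hence a cofibration.

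The only care needed is checking the identification of the pushout with the tensor product. This holds since $\theta(\iota)(k)\otimes\wedge(t,dt)$ is free over $\theta(\iota)(k)$. Alternatively, one can invoke Proposition \ref{prop:cofibrantOb}(ii)-style reasoning with relative Sullivan algebras as cofibrations.

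Thus $h$ is a trivial cofibration and $p$ is a fibration, with $ph=\Delta$. So $\theta(\iota)\otimes\wedge(t,dt)$ is a path object. In fact it is a very good one, though only $h$ being a weak equivalence is needed for the path object property.
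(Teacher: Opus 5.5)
Your proof is correct and follows the paper's argument. The key step in both is identifying the pushout $Q(k+1)$ with $\theta(\iota)(k+1)\otimes\wedge(t,dt)$, so that $\hat h(k+1)$ is the identity. You also fill in points the paper leaves implicit: surjectivity of each $p(n)$, and that $\hat h(0)$ is a cofibration. For the latter, since $t$ has degree $0$, it is cleaner to argue that the free algebra $\wedge(t,dt)$ has the lifting property against surjections than to call the extension a relative Sullivan algebra.
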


\begin{proof} It is readily seen that $h$ is a weak equivalence. Then,
it suffices to show that $h$ is a cofibration. The diagram (\ref{diagram:the_pushout}) for $h$ is given by
\begin{eqnarray*}%\label{diagram:the_pushout}
\xymatrix@C15pt@R15pt{
\wedge V\otimes \wedge W^{\leq k} \ar[r]^{\theta (\iota)({k < k+1})} \ar[d]_{h({k})}& \wedge V\otimes \wedge W^{\leq k+1} \ar[d]^{\bar{h}({k+1})}  \ar@/^8.0pc/[dd]^{h({k+1})} &\\
(\wedge V\otimes \wedge W^{\leq k}) \otimes \wedge(t, dt)  \ar@/_0.8pc/[rd]_-{(\theta (\iota)\otimes \wedge(t, dt))({k<k+1})\ \ \ \ \ \ } \ar[r] &
(\wedge V\otimes \wedge W^{\leq k+1})\otimes \wedge(t, dt) \ar[d]^{\hat{h}(k+1)} & \\ % \ar@{.>}@/_2.0pc/
  &  (\wedge V\otimes \wedge W^{\leq k+1})\otimes \wedge(t, dt). &
}
\end{eqnarray*} %\todo{$^{k+1}$}
We see that $\hat{h}(k+1)$ is the identity map and then $h$ is a cofibration in $\et{\mathsf{CDGA}}$.
\end{proof}

In order to prove Theorem \ref{thm:main}, we use a relative homotopy in the sense in \cite[9.13 Definition]{Halperin}.

\begin{lem}\label{lem:relativeH}
Let $H$ be a relative homotopy between relative Sullivan algebras $\iota$ and $\iota'$ in $\mathsf{CDGA}$; that is, $H$ fits in the commutative diagram
\begin{eqnarray*}
\xymatrix@C15pt@R15pt{
(\wedge V)^I \ar[r]^-\iota \ar[d]_H & (\wedge V\otimes \wedge W)^I \ar[d]^H \\
\wedge V' \ar[r]_-{\iota'}& \wedge V'\otimes \wedge W'.
}
\end{eqnarray*}
Then, one has  $H(\theta(\iota)^I(k))\subset \theta(\iota')(k)$.
\end{lem}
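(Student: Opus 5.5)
The inclusion in Lemma \ref{lem:relativeH} should follow from a pure degree argument, using only that $H$ is a morphism of graded algebras and that all Sullivan generators sit in positive degrees. No compatibility of $H$ with the filtrations $\{W^p(r)\}$ or with the differentials should be needed beyond $H$ being a CDGA map, since $\theta(\iota')(k)=\wedge V'\otimes\wedge W'^{\leq k}$ is already a sub-CDGA of $\wedge V'\otimes\wedge W'$ by Proposition \ref{prop:relativeSullivanAlgs}.

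First, I will make precise how $\theta(\iota)^I(k)=(\wedge V\otimes\wedge W^{\leq k})^I$ sits inside $(\wedge V\otimes \wedge W)^I$. Put $Z=V\oplus W$ and $Z_k=V\oplus W^{\leq k}$. The construction of $(\wedge Z)^I=(\wedge Z\otimes\wedge Z\otimes\wedge\overline{Z},\widetilde d)$ is natural with respect to the inclusion of Sullivan algebras $\wedge Z_k\subset \wedge Z$, because $s$, $D$ and $\zeta$ are defined on generators. Hence $(\wedge Z_k)^I$ is the sub-CDGA of $(\wedge Z)^I$ generated, as an algebra, by $z\otimes 1\otimes 1$, $1\otimes z\otimes 1$ and $\overline{z}$ for $z\in Z_k$. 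These are exactly the maps of the sequence defining $\theta(\iota)^I$, so the lemma amounts to checking where $H$ sends these algebra generators.

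Next, I will bound the degrees of these generators. If $z\in W^{\leq k}$, then $z\otimes1\otimes1$ and $1\otimes z\otimes 1$ have degree $\leq k$ and $\overline{z}$ has degree $\leq k-1$. If $z\in V$, then $H$ sends the corresponding generators into $\wedge V'$, since $H$ restricts to $(\wedge V)^I\to\wedge V'$ by the commutative square. Thus every algebra generator $x$ of $(\wedge Z_k)^I$ either maps into $\wedge V'$ or has $\deg x\leq k$.

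The key step is the following observation. Since $V'$ and $W'$ are concentrated in positive degrees, every monomial in $\wedge V'\otimes\wedge W'$ has degree at least the degree of each $W'$-factor it contains. Consequently, a homogeneous element of degree $p$ lies in $\wedge V'\otimes\wedge W'^{\leq p}$. In particular, $H(x)\in\wedge V'\otimes\wedge W'^{\leq k}=\theta(\iota')(k)$ for every generator $x$ as above. Since $H$ is multiplicative and $\theta(\iota')(k)$ is a subalgebra, the whole image $H(\theta(\iota)^I(k))$ lies in $\theta(\iota')(k)$.

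The only delicate point is the first step: identifying $\theta(\iota)^I(k)$ correctly as a sub-CDGA of $(\wedge V\otimes\wedge W)^I$. The concern is that the twisted differential $\widetilde d=\zeta^{-1}D\zeta$ and the generators $1\otimes z\otimes 1$ might involve elements outside $Z_k$. I expect to settle this by noting that $s$, $D$ and $\theta=sD+Ds$ preserve the subalgebra generated by $Z_k$, $\overline{Z_k}$ and $\widehat{Z_k}$, because $d(Z_k)\subset\wedge Z_k$. Hence $\zeta$ restricts to an isomorphism on the $k$-th stage.
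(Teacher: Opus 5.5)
Your proposal is correct and takes the same approach as the paper, whose entire proof is the remark that $H$ preserves degrees. You make explicit the two points the paper leaves implicit: the generators coming from $V$ land in $\wedge V'$ because of the commutative square, and a homogeneous element of degree $\le k$ in $\wedge V'\otimes\wedge W'$ involves only $W'^{\le k}$. Your ``delicate point'' that $(\wedge V\otimes\wedge W^{\le k})^I$ is a sub-CDGA is not actually needed for the inclusion itself, since that only uses the graded-algebra structure.
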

\begin{proof}
This result follows from the fact that $H$ preserves the degree of CDGAs.
\end{proof}

\begin{proof}[Proof of Theorem \ref{thm:main}] The proof heavily relies on the properties of left homotopies on relative Sullivan algebras investigated in  \cite{Halperin}.
Let $f : X\to A$ and $g : Y\to B$ be continuous maps and $(h, k) : f \to g$ a morphism in $\text{Func}(I, \mathsf{Top}_0)$. Then, the construction in \cite[(10,10)]{Halperin} allows us to obtain a diagram
\begin{eqnarray}\label{eq:two_squares}
\xymatrix@C15pt@R12pt{
\wedge V' \ar[rrr]^{\iota_g} \ar[ddd]_\varphi \ar[rd]_{\gamma'}^\sim & & & \wedge V' \otimes \wedge W'  \ar[ddd]_\psi \ar[ld]^{\eta'}_\sim \\
 & A_{\rm PL}(B) \ar[r]^{g^*} \ar[d]_{k^* }& A_{\rm PL}(Y)  \ar[d]^{h^* } & \\
 &A_{\rm PL}(A) \ar[r]^{f^*}  & A_{\rm PL}(X) & \\
 \wedge V \ar[rrr]_{\iota_f} \ar[ur]^\gamma_\sim & & & \wedge V \otimes \wedge W  \ar[lu]_{\eta}^\sim
}
\end{eqnarray}
in which $\iota_f$ and $\iota_g$ are relative Sullivan models for $f$ and $g$, respectively, for the right and left squares, $(k^*\gamma', h^*\eta') \sim_{\text{rel}} (\gamma\varphi, \eta \psi)$ and all the remaining squares commute; see \cite[9.13 Definition]{Halperin} for the definition of the relative homotopy relation $\sim_{\text{rel}}$. Moreover, if  a pair $(\varphi_1, \psi_1)$ of morphisms from $\iota_g$
to $\iota_f$
satisfies the condition above, then we have
$(\varphi_1, \psi_1) \sim_{\text{rel}} (\varphi, \psi)$. This follows from  \cite[9.19 Theorem]{Halperin}.
In order to consider left homotopy in $\et{\mathsf{CDGA}}$,
 we may use the cylinder object, which is introduced in Lemma \ref{lem:CofFib}, for an extended tame CDGA $\theta(\iota_f)$.
 Thus, it follows from Lemma \ref{lem:relativeH} that a relative homotopy in $\mathsf{CDGA}$ gives rise to a homotopy in $\et{\mathsf{CDGA}}$.
Therefore, the assignment $\Theta$ on the objects is well defined.

The functoriality of $\Theta$ follows from the uniqueness of the pair $(\varphi, \psi)$ up to homotopy and the fact that the composite of morphisms of CDGAs preserves the equivalence relation on relative homotopies; see \cite[9.17 and 9.18 Propositions]{Halperin}.
%  \todo{We have to verify that the well-definedness of $\theta$ and $\Theta$. To this end, it is needed to clarify a path/cylinder object in $\et{\mathsf{CDGA}}$ which defines the notion of the right/left homotopy.}
%We may use the equivalence between the left and right homotopy in the category of CDGAs; see \cite[Section 2.2]{FOT}.
\end{proof}

\begin{rem}\label{rem:The_form}
Let $\iota_f : \wedge V_A \to \wedge V_A \otimes \wedge W$ be a minimal relative Sullivan model for a map $f : X\to A$. Then, by definition, the persistence CDGA $\Theta(f)$ has the form $\wedge V_A \otimes \wedge W^{\leq n}$ on the interval $[n, n+1)$.
\end{rem}

\begin{rem}\label{rem:cofibrantOb}
The property (\ref{eq:minimalAL}) enables us to deduce that the structure map $\theta(\iota)^{s<t}$ in $\theta(\iota)$ for a relative Sullivan algebra $\iota$ is
a minimal relative Sullivan algebra.  By Proposition \ref{prop:cofibrantOb} (ii), we see that for each map $f : X \to Y$, the persistence CDGA $\Theta'(f)$ is cofibrant in
$\et{\mathsf{CDGA}}$. Moreover, every object in $\et{\mathsf{CDGA}}$ is fibrant. Thus, we may consider $\Theta'(f)$ itself without a fibrant-cofibrant replacement in $\text{Ho}(\et{\mathsf{CDGA}})$.
 %\todo{Check the assertions.}
\end{rem}

\begin{rem}\label{rem:SullivanRep} Given a Sullivan representative $\varphi_f : \minmodel(A)=\wedge V_A \to \wedge V_X=\minmodel(X)$ for a map $f : X \to A$, we have a minimal relative Sullivan algebra
$i_f : \wedge V_A  \to \wedge V_A \otimes \wedge W$
for $\varphi_f$; that is, $i_f$ satisfies the condition that $\eta_f\circ i_f = \varphi_f$ for some quasi-isomorphism $\eta_f : \wedge V_A \otimes \wedge W \to \wedge V_X$.
The result \cite[Proposition 2.22]{FOT} enables us to deduce that $i_f$ also fits in the commutative diagram (\ref{eq:A_{PL}}) instead of $\iota_f$.
This implies that the persistence CDGA $\theta(i_f)$ obtained by $i_f$ coincides with $\theta(\iota_f)$ up to isomorphism.
\end{rem}

%\todo{Kuri (3/24): We observe a homotopy in  $\et{\mathsf{CDGA}}$.}
As seen above, a homotopy $h$ between morphisms $\varphi, \psi\colon F \to G$ in $\et{\mathsf{CDGA}}$ is a morphism of the form
$F^I \to G$ or $F \to G\otimes \wedge(t, dt)$,
where $F^I$ is a cylinder object for $F$ and $G\otimes \wedge(t, dt)$ is a path object for $G$.
We observe that the homotopy $h$ gives rise to a pointwise homotopy $h(i)$ between $\varphi(i), \psi(i)\colon F(i) \to G(i)$ for each $i$ but the inverse does not necessarily hold.
The following remark explains how to construct a right homotopy between objects in $\et{\mathsf{CDGA}}$ associated with minimal relative Sullivan algebras.

\begin{rem}\label{rem:Homotopies}
Let
$h: \wedge V \otimes \wedge W \to \wedge V'\otimes \wedge W' \otimes \wedge (t, dt)$ be a morphism of CDGAs between
minimal relative Sullivan algebras $\iota : \wedge V \to \wedge V\otimes \wedge W$ and  $\iota': \wedge V' \to \wedge V'\otimes \wedge W'$.
We do not necessarily assume that $h(V) \subset \wedge V'$.

Let $\varepsilon$ be a positive real number.
Suppose that $h(v)$ is in $\wedge V' \otimes \wedge (W'^{\leq \lfloor \varepsilon \rfloor})\otimes \wedge(t, dt)$ for $v\in V$.
Then, since $h(w)$ is in $\wedge V' \otimes \wedge (W'^{\leq i})\otimes \wedge (t, dt)$ for $w \in W^{\leq i}$, it follows that
the restriction of $h$ gives rise to a map $\widetilde{h} : \theta(\iota) \to \theta(\iota')^\varepsilon \otimes \wedge (t, dt)$ in $\et{\mathsf{CDGA}}$.
\end{rem}

%We may write $\Theta_A$ for the functor in Assertion \ref{assertion:main} when stressing the overcategory for the given base space $A$.
%Such a functor is also defined on other overcategory $\mathsf{Top}/B$.

%%%%%%%%%%%%%%%%%%%%%%%%%%%%%%%%%%%%%%%%%%%%%%%%%%%%%%%%%%%
%%%%%%%%%%%%%%%%% Postnikov towers and persistence CDGAs %%%%%%%%%%%%%%%%%%%%%

 \section{A Postnikov tower meets an extended tame persistence CDGA}\label{sect:MPs-P_CDGAs}
 %\todo{Kuri, Sekizuka (3/12) This section is added.}
%We use the same notations and terminology as those in \cite{KNSWY}.
It is well known that the minimal model for a simply-connected space $X$ is related to a Postnikov tower for $X$ via Hirsch extensions.
As we did not find any literature explaining in detail the same relationships between a relative Sullivan model for a map $f$ and
a Postnikov tower for $f$, we clarify such a relationship from the viewpoint of persistence theory in this section.

\subsection{A partial Quillen equivalence and the functor $\Theta$}\label{sect:pQe}
We begin with the notion of a {\it partial Quillen equivalence} introduced in \cite{M-S}.
%In this section, it is assumed that a model category, which is a codomain of a functor category, admits functorial factorizatons (\cite{Ho, H}).
Let $F:\mathcal{C} \rightleftarrows \mathcal{D}:U$ denote a Quillen adjunction between model categories with the unit $\eta$ and the counit $\epsilon$; see \cite{DS} for derived functors.
Let $q_X : QX \to X$ be a cofibrant replacement of $X\in \mathcal{C}$ % in $\mathcal{C}$
and $r_Y : Y \to RY$ a fibrant replacement of $Y \in \mathcal{D}$.
 %in $\mathcal{D}$.
 Then, we define the {\it derived unit} and the {\it derived counit} by the composites
\[
\xymatrix{
\widetilde{\eta_X} : X\ar[r]^-{\eta_X}&UFX\ar[r]^-{U_{r_{FX}}}&URFX \ \ \ \  \text{and} \ \ \ \
%}
%\]
%and %the derived counit is the composition
%\[
%\xymatrix{
\widetilde{\epsilon_Y} : FQUY\ar[r]^-{F_{q_{UY}}}&FUY\ar[r]^-{\epsilon_Y}&Y ,
}
\]
respectively. %Here, $\eta$ and $\epsilon$ are the unit and the counit of the adjunction $L\dashv R$.

\begin{lem}\label{lem:WE} The following conditions are equivalent.
\begin{itemize}
\item[{\rm (i)}]   For every cofibrant object $X$ of $\mathcal{C}$ and every fibrant object $Y$ of $\mathcal{D}$,
    %each objectwise derived unit of $X$ and each objectwise derived counit of $Y$
    the derived unit $\widetilde{\eta_X}$ and the derived counit $\widetilde{\epsilon_Y}$
    are weak equivalences.
\item[{\rm (ii)}]   For every cofibrant object $X$ of $\mathcal{C}$ and every fibrant object $Y$ of $\mathcal{D}$,
a map $f:X\to U(Y)$ is a weak equivalence in $\mathcal{C}$ if and only if its adjoint $f^{\flat}: F(X)\to Y$ is a weak equivalence in $\mathcal{D}$.
\end{itemize}
    As a consequence, if each $\widetilde{\eta_X}$ and each $\widetilde{\epsilon_Y}$ are weak equivalences
    for every cofibrant object $X$ and every fibrant object $Y$, then the derived functors
    %\[
    ${\bf L}F  :  {\rm Ho}(\mathcal{C}) \rightleftarrows  {\rm Ho}(\mathcal{D}) : {\bf R}U$
    %\]
    are inverse equivalences of categories.
\end{lem}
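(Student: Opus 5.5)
We prove (i)$\Rightarrow$(ii) and (ii)$\Rightarrow$(i), and then derive the consequence. Throughout we use the factorizations of an adjoint pair and its adjunct: for $f : X \to UY$ the adjunct is $f^\flat = \epsilon_Y \circ F(f)$, and conversely $f = U(f^\flat)\circ \eta_X$. We also use two-out-of-three, and Ken Brown's lemma: the left Quillen functor $F$ preserves weak equivalences between cofibrant objects, and $U$ preserves weak equivalences between fibrant objects.

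\emph{(i)$\Rightarrow$(ii).} Let $X$ be cofibrant, $Y$ fibrant and $f : X\to UY$. Since $UY$ need not be cofibrant, we lift $f$ through $q_{UY}$. We may take $q_{UY}$ to be a trivial fibration, and $X$ is cofibrant, so there is $\tilde f : X \to QUY$ with $q_{UY}\tilde f = f$. By naturality of the counit,
\[
f^\flat = \epsilon_Y F(f) = \epsilon_Y F(q_{UY}) F(\tilde f) = \widetilde{\epsilon_Y}\circ F(\tilde f).
\]
Since $\widetilde{\epsilon_Y}$ is a weak equivalence, $f^\flat$ is a weak equivalence iff $F(\tilde f)$ is. By two-out-of-three, $f$ is a weak equivalence iff $\tilde f$ is. It therefore remains to show that $\tilde f$ is a weak equivalence iff $F(\tilde f)$ is; this is the main obstacle. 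One direction is Ken Brown, as $\tilde f$ is a map between cofibrant objects. For the reflection direction, we use the derived unit. Naturality of $\eta$ and of $r$ gives a commutative square
\[
U R F(\tilde f)\circ \widetilde{\eta_X} = \widetilde{\eta_{QUY}}\circ \tilde f .
\]
If $F(\tilde f)$ is a weak equivalence, then so is $RF(\tilde f)$. Ken Brown for $U$ on the fibrant objects $RFX$ and $RFQUY$ then shows $URF(\tilde f)$ is a weak equivalence. Both derived units are weak equivalences by (i), since $X$ and $QUY$ are cofibrant. Two-out-of-three then gives that $\tilde f$ is a weak equivalence.

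\emph{(ii)$\Rightarrow$(i).} For cofibrant $X$, the derived unit $\widetilde{\eta_X} : X\to U(RFX)$ has adjunct $r_{FX}$, by the triangle identities. Since $r_{FX}$ is a weak equivalence, (ii) makes $\widetilde{\eta_X}$ a weak equivalence. Dually, for fibrant $Y$, the map $q_{UY} : QUY\to UY$ is a weak equivalence from a cofibrant object. Its adjunct is $\epsilon_Y F(q_{UY}) = \widetilde{\epsilon_Y}$, so $\widetilde{\epsilon_Y}$ is a weak equivalence.

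\emph{Consequence.} On homotopy categories, ${\bf L}F$ and ${\bf R}U$ are computed as $FQ$ and $UR$. The derived adjunction ${\bf L}F\dashv{\bf R}U$ has unit and counit represented, in ${\rm Ho}$, by $\widetilde{\eta}$ and $\widetilde{\epsilon}$, after composing with the isomorphisms $q$ and $r$; this is as in \cite[Section 9]{DS}. If these are weak equivalences on all cofibrant and fibrant objects, then the unit and counit are natural isomorphisms, since every object of ${\rm Ho}$ is isomorphic to a cofibrant (respectively fibrant) one. Hence ${\bf L}F$ and ${\bf R}U$ are inverse equivalences.
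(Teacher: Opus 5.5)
Your argument is correct. The paper gives no argument of its own: it cites Lemma C.3.6 of Riehl--Verity for (i)$\Leftrightarrow$(ii). For the consequence, it observes that (ii) is exactly Dwyer--Spalinski's definition of a Quillen equivalence, so their Theorem 9.7(ii) applies.

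You instead prove (i)$\Leftrightarrow$(ii) directly:
\begin{itemize}
\item you reduce $f$ to a lift $\tilde f\colon X\to QUY$ between cofibrant objects, using the derived counit;
\item Ken Brown for $F$ gives one direction;
\item the derived units at $X$ and $QUY$ give the reflection.
\end{itemize}
You then get the consequence straight from (i) by identifying the derived unit and counit in ${\rm Ho}$. Having proved (i)$\Rightarrow$(ii), you could equally just quote Theorem 9.7(ii), as the paper does. What your route buys is self-containedness, and it makes visible exactly which hypotheses are used. That matters here, since Remark~\ref{rem:equivalence} stresses that no functorial factorizations are assumed.

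On that point, one step needs a small repair. You write $URF(\tilde f)$ and invoke ``naturality of $r$'', which presupposes a functorial fibrant replacement; the paper's setting does not provide one. Instead, choose a lift $g\colon RFX\to RFQUY$ of $r_{FQUY}\circ F(\tilde f)$ along $r_{FX}$. Such a lift exists because $r_{FX}$ is a trivial cofibration (the Dwyer--Spalinski convention, also used in the proof of Proposition~\ref{prop:pQe}) and $RFQUY$ is fibrant. Then:
\begin{itemize}
\item $U(g)\circ\widetilde{\eta_X}=\widetilde{\eta_{QUY}}\circ\tilde f$ follows from naturality of $\eta$ alone;
\item $g$ is a weak equivalence by two-out-of-three whenever $F(\tilde f)$ is;
\item $U(g)$ is then a weak equivalence by Ken Brown, since $g$ is a map between fibrant objects.
\end{itemize}
The rest of your argument goes through unchanged. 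Your choice of $q_{UY}$ as a trivial fibration, needed for the lift $\tilde f$, is the same convention and is legitimate.

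A minor point: the identity $f^\flat=\widetilde{\epsilon_Y}\circ F(\tilde f)$ uses only functoriality of $F$ and the formula $f^\flat=\epsilon_Y\circ F(f)$, not naturality of $\epsilon$.
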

\begin{proof} See, for example, \cite[Lemma C.3.6]{R-V}.
 The latter half follows from \cite[Theorem 9.7 (ii)]{DS}. In fact, the condition (ii) is the sufficient one in the theorem.
\end{proof}

\begin{defn}\label{defn:pQe}(cf. \cite[Definition 2.4]{M-S})
Let $F:\mathcal{C} \rightleftarrows \mathcal{D}:U$ be a Quillen adjunction. Suppose $\mathcal{C}_0 \subset \mathcal{C}$ and $\mathcal{D}_0 \subset \mathcal{D}$ are full subcategories such that the following hold:

{\rm{(1)}} The two subcategories are closed under weak equivalences; that is, %, in the sense that
if $C$ is weakly equivalent to $C_0$ with $C_0 \in \mathcal{C}_0$, then $C \in \mathcal{C}_0$ too, and analogously for $\mathcal{D}_0$.

{\rm{(2)}} $FX  \in \mathcal{D}_0$ for a cofibrant object $X \in \mathcal{C}_0$ and $UY \in \mathcal{C}_0$  for a fibrant object $Y \in \mathcal{D}_0$%\footnote{The original condition (2) in \cite{M-S} is as follows: $L\mathcal{C}_0 \subset \mathcal{D}_0$, $R\mathcal{D}_0 \subset \mathcal{C}_0$.}.

{\rm{(3)}} Every  objectwise derived unit $X\to URFX$ is a weak equivalence for a cofibrant $X \in \mathcal{C}_0$. Dually, every objectwise derived counit $FQUY\to Y$ is a weak equivalence for
a fibrant $Y \in \mathcal{D}_0$.

Then, we call the adjunction $F:\mathcal{C} \rightleftarrows \mathcal{D}:U$ a {\it partial Quillen equivalence for the subcategories $\mathcal{C}_0$ and $\mathcal{D}_0$}.
\end{defn}

\begin{rem}\label{rem:equivalence}   %\todo{In this remark, we  explain a difference between a partial Quillen equivalence in Definition \ref{defn:pQe} and the original one.}
Originally, the notion of a partial Quillen equivalence in \cite{M-S} is defined for two model categories which admit functorial factorizations.
Moreover, the second condition requires that
$F\mathcal{C}_0 \subset \mathcal{D}_0$ and $U\mathcal{D}_0 \subset \mathcal{C}_0$.
As mentioned in \cite[The first paragraph in Page 5]{M-S},
(**) :
the restriction of the derived adjunction ${\rm Ho}(\mathcal{C}) \rightleftarrows {\rm Ho}(\mathcal{D})$ to full subcategories consisting of objects
$\mathcal{C}_0$ and $\mathcal{D}_0$, respectively, gives an equivalence of categories.

While the condition (2)  in Definition \ref{defn:pQe} is different from the original one and the model category that we deal with does not require
the functorial factorizations,
the assertion (**) remains valid. % even if the last condition is replaced with that in Definition \ref{defn:pQe}.
This follows from the fact that the unit and counit of the derived functors in Lemma \ref{lem:WE} give rise to isomorphisms on the objects in the full subcategories of the homotopy categories; see the proof of \cite[Theorem 9.7 (ii)]{DS}.
Observe that the conditions (1) and  (2) yield the well-definedness of the restrictions of the derived functors.
\end{rem}

To describe the Sullivan--de Rham equivalence theorem
\cite[10.1 Theorem]{B-G} in terms of the partial Quillen equivalence,
we recall finiteness properties of a CDGA and a simplicial set.

\begin{defn}\label{defn:finite_Q}
(i) A CDGA $A$ is of finite $\Q$-type if each $M^i$ is finite dimensional for the minimal model $M$ for $A$. \\
(ii) A simplicial set $X$ is of finite $\Q$-type if the $i$th rational cohomology group of $X$ for $i\geq 0$ is finite dimensional.
\end{defn}

\begin{thm}\label{thm:RHT} {\rm (cf. \cite[10.1 Theorem]{B-G} and \cite[Theorem 3.4]{M-S}.)}
%\todo{This theorem explains the Sullivan--de Rham equivalence theorem in terms of the partial Quillne equivalence.}
Let $\mathcal{C}_0$ be the full subcategory of $\mathsf{CDGA}$ consisting of connected CDGAs of finite $\Q$-type.
Let  $\mathcal{D}_0$ denote the full subcategory of $\mathsf{sSet}$ consisting of nilpotent path-connected rational simplicial sets of finite $\Q$-type.
%We recall the result \cite[10.1 Theorem]{B-G}. In particular, the theorem asserts that
Then, the adjoint pair
%\[
$\langle \ \rangle : \mathsf{CDGA} \rightleftarrows \mathsf{sSet}^{\rm op} : A_{\rm PL}(\ )$
%\]
of the realization functor $\langle \ \rangle$ and the polynomial de Rham functor $A_{\rm PL}(\ )$ gives rise to a partial Quillen equivalence for the subcategories
$\mathcal{C}_0$ and $\mathcal{D}_0^{\rm op}$.
\end{thm}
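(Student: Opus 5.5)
We verify the three conditions of Definition \ref{defn:pQe} for the Quillen adjunction $\langle\ \rangle \dashv A_{\rm PL}$ with $\mathcal{C}_0$ and $\mathcal{D}_0^{\rm op}$, using the Bousfield--Gugenheim model structure on $\mathsf{CDGA}$ \cite[4.2 Definition]{B-G}. That $(\langle\ \rangle, A_{\rm PL})$ is a Quillen pair $\mathsf{CDGA}\rightleftarrows \mathsf{sSet}^{\rm op}$ is part of \cite{B-G}: $A_{\rm PL}$ sends cofibrations (resp. trivial cofibrations) of simplicial sets to fibrations (resp. trivial fibrations) of CDGAs, namely to surjections, since $A_{\rm PL}$ is extendable. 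In $\mathsf{sSet}^{\rm op}$ every object is fibrant, because every simplicial set is cofibrant; and every CDGA is fibrant. Hence the cofibrant replacement $Q$ on the CDGA side can be taken to be a minimal Sullivan model, and the fibrant replacement $R$ on the $\mathsf{sSet}^{\rm op}$ side is the identity.

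\textbf{Condition (1).} A CDGA quasi-isomorphic to a connected CDGA of finite $\Q$-type is itself connected (cohomologically, which is the notion used here) and has the same minimal model, so it lies in $\mathcal{C}_0$. In $\mathsf{sSet}^{\rm op}$ the weak equivalences are weak homotopy equivalences, and nilpotency, path-connectedness, rationality and finite $\Q$-type are all homotopy invariant. So both subcategories are closed under weak equivalences.

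\textbf{Condition (2).} For a cofibrant $X\in\mathcal{C}_0$ (a Sullivan algebra, up to retract, quasi-isomorphic to a minimal one of finite type), the realization $\langle X\rangle$ is a nilpotent, rational, path-connected Kan complex of finite $\Q$-type. This holds because its homotopy groups are $\pi_n\langle \wedge V\rangle \cong \mathrm{Hom}(V^n,\Q)$, and the realization is built as a tower of principal $K(\Q^{k},n)$-fibrations \cite[Section 8]{B-G}. Conversely, for $Y\in\mathcal{D}_0$, $A_{\rm PL}(Y)$ is connected since $Y$ is path-connected, and it has finite-type minimal model because $Y$ is nilpotent of finite $\Q$-type; here we invoke the correspondence between minimal models and Postnikov towers for nilpotent spaces \cite[Sections 9--10]{B-G}.

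\textbf{Condition (3).} With the replacements chosen above, the derived unit is $X\to A_{\rm PL}\langle X\rangle$ and the derived counit, read in $\sSet$, is $Y\to\langle \minmodel(A_{\rm PL}Y)\rangle$. Both are weak equivalences by \cite[10.1 Theorem]{B-G}, respectively by its unit and counit statements for minimal algebras of finite type and for nilpotent rational spaces of finite type. For a cofibrant $X$ that is not minimal, we pass through the quasi-isomorphism $\minmodel(X)\to X$ and use that both functors preserve weak equivalences between cofibrant objects (Ken Brown's lemma), together with two-out-of-three.

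\textbf{Main obstacle.} The delicate point is the bookkeeping in condition (2): the realization of a non-minimal cofibrant object of $\mathcal{C}_0$ must land in $\mathcal{D}_0$, and the unit and counit must be checked against the variance of $\mathsf{sSet}^{\rm op}$. We handle the first issue by reducing to minimal models via condition (1) and Ken Brown's lemma, since $\langle\ \rangle$ preserves weak equivalences between cofibrant objects, and the second by dualizing carefully. This is exactly the content of \cite[Theorem 3.4]{M-S}, which we cite for comparison. Finally, Remark \ref{rem:equivalence} explains why the weakened form of condition (2) and the absence of functorial factorizations cause no harm.
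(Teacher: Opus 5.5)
Your proposal is correct and follows the same route as the paper. Condition (1) is checked directly. Condition (2) comes from the well-definedness of the Bousfield--Gugenheim functors on the finite-type nilpotent subcategories. Condition (3) comes from the unit and counit statements in the proof of \cite[10.1 Theorem]{B-G}, with $R=\mathrm{id}$ because every object of $\mathsf{sSet}^{\rm op}$ is fibrant.

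Your reduction of the unit at a non-minimal cofibrant $X$ to the minimal case (via $\minmodel(X)\to X$, Ken Brown's lemma and two-out-of-three) spells out what the paper compresses into ``the proof of \cite[10.1 Theorem (ii)]{B-G} yields''. One small correction: for the right adjoint $A_{\rm PL}$, Ken Brown's lemma gives preservation of weak equivalences between fibrant objects, not cofibrant ones. This is harmless here, since all objects of $\mathsf{sSet}^{\rm op}$ are fibrant.
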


\begin{proof}
It is readily seen that the condition (1) holds. The well-definedness of the functors in \cite[10.1 Theorem]{B-G} allows us to deduce that the condition (2) holds. We consider the condition (3).
Since the adjunction map $\varphi_Y$ dealt  with in the proof of \cite[10.1 Theorem (i)]{B-G}
is nothing but the objectwise derived counit $\widetilde{\epsilon_Y}$, it follows from the proof that  $\widetilde{\epsilon_Y}$ is
a weak equivalence.
Moreover, the proof of \cite[10.1 Theorem (ii)]{B-G} yields that the unit $\eta_X$ is a weak equivalence for a cofibrant $X \in \mathcal{C}_0$ and then so is the derived unit $\widetilde{\eta_X}$.
We observe that the homotopy category of fibrant and cofibrant objects of a model category $\mathcal{M}$, which is equivalent to ${\rm Ho}(\mathcal{M})$,   is considered in \cite[Sections 8, 9 and 10]{B-G}.
%We observe that the left derived functor is defined with cofibrant replacements; see
%\cite[Section 9]{DS} for more details.
\end{proof}

%Suppose that two model categories admit a partial Quillen equivalence for appropriate subcategories. Then, the result \cite[Lemma 5.2]{M-S} asserts that the induced Quillne adjunction between functor categories is also partial Quillen equivalence provided the projective model structures exist on the functor categories.
The following lemma is a variant of \cite[Lemma 5.2]{M-S}.

\begin{prop}\label{prop:pQe}
Let $\mathcal{C}$ and $\mathcal{D}$ be model categories. Let $F:\mathcal{C} \rightleftarrows \mathcal{D}:U$ be a partial Quillen equivalence for the subcategories $\mathcal{C}_0$ and $\mathcal{D}_0$. Then, the induced Quillen adjunction $F_*:\mathsf{et}(\mathcal{C^{\mathbb{R_+}}}) \rightleftarrows \mathsf{et} (\mathcal{D^{\mathbb{R_+}}}):U_*$ %between them
is a partial Quillen equivalence for the subcategories $\mathsf{et}(\mathcal{C}_0^{\mathbb{R_+}})$ and $\mathsf{et}(\mathcal{D}_0^{\mathbb{R_+}})$.
\end{prop}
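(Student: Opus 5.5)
We first check that $F_*$ and $U_*$ are well defined on extended tame objects and form a Quillen adjunction. If a sequence $\{\tau_a\}$ discretises $X$, it also discretises $F_*X = F\circ X$, since functors send isomorphisms to isomorphisms; the same holds for $U_*$. The adjunction $F\dashv U$ then induces a pointwise adjunction $F_*\dashv U_*$ on the full subcategories $\et{\mathcal{C}}$ and $\et{\mathcal{D}}$. By Theorem \ref{thm:ModelCat}, fibrations and trivial fibrations in $\et{\mathcal{D}}$ are defined pointwise, so $U_*$ preserves them because $U$ does. Hence $(F_*,U_*)$ is a Quillen adjunction.

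Next we verify the three conditions of Definition \ref{defn:pQe} for $\mathsf{et}(\mathcal{C}_0^{\R_+})$ and $\mathsf{et}(\mathcal{D}_0^{\R_+})$, understood as the extended tame functors that take values objectwise in $\mathcal{C}_0$ (resp. $\mathcal{D}_0$).

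For condition (1), a zigzag of weak equivalences in $\et{\mathcal{C}}$ is a pointwise zigzag of weak equivalences in $\mathcal{C}$. Closure then follows from closure of $\mathcal{C}_0$ at each $t$.

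For condition (2), let $X$ be cofibrant in $\mathsf{et}(\mathcal{C}_0^{\R_+})$. By Proposition \ref{prop:cofibrantOb}(ii), $X(0)$ is cofibrant and every $X(s<t)$ is a cofibration, so each $X(t)$ is cofibrant. Therefore $F X(t)\in\mathcal{D}_0$ by condition (2) for $F$. Dually, a fibrant $Y\in\mathsf{et}(\mathcal{D}_0^{\R_+})$ has each $Y(t)$ fibrant, since fibrations are pointwise, and so $UY(t)\in\mathcal{C}_0$.

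Condition (3) is the main point. Take cofibrant $X\in \mathsf{et}(\mathcal{C}_0^{\R_+})$ and a fibrant replacement $r\colon F_*X\to R(F_*X)$ in $\et{\mathcal{D}}$. We would like to evaluate the derived unit $X\to U_*R F_*X$ at $t$ and recognise it as a derived unit of $F\dashv U$ at $X(t)$. The unit of $F_*\dashv U_*$ at $t$ is $\eta_{X(t)}$. Since weak equivalences and fibrations in $\et{\mathcal{D}}$ are pointwise, $r(t)\colon FX(t)\to (RF_*X)(t)$ is a weak equivalence to a fibrant object, i.e. a fibrant replacement of $FX(t)$ in $\mathcal{D}$. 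As $X(t)$ is cofibrant and lies in $\mathcal{C}_0$, condition (3) for $F\dashv U$ gives that $X(t)\to URFX(t)$ is a weak equivalence. Here one must note that condition (3) does not depend on the chosen fibrant replacement: two fibrant replacements are related by a weak equivalence between fibrant objects, and $U$ preserves such weak equivalences by Ken Brown's lemma. Hence the derived unit is a pointwise weak equivalence, i.e. a weak equivalence in $\et{\mathcal{C}}$.

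The counit is the step I expect to be the main obstacle. For a fibrant $Y$, a cofibrant replacement $q\colon QU_*Y\to U_*Y$ in $\et{\mathcal{C}}$ is, by Proposition \ref{prop:cofibrantOb}, cofibrant at each $t$, and $q(t)$ is a weak equivalence. So $q(t)$ is a cofibrant replacement of $UY(t)$ in $\mathcal{C}$. Evaluating $F_*QU_*Y\to F_*U_*Y\to Y$ at $t$ then gives a derived counit for $Y(t)$, which is fibrant and lies in $\mathcal{D}_0$. Independence of the choice of cofibrant replacement follows dually from Ken Brown's lemma applied to $F$. Thus the derived counit is a weak equivalence pointwise, hence in $\et{\mathcal{D}}$, and this completes the proof.
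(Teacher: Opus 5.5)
Your proposal is correct and follows essentially the same route as the paper. Both reduce everything to components using Proposition~\ref{prop:cofibrantOb} and the pointwise nature of fibrations and weak equivalences. Both then compare the component $r(t)\colon FX(t)\to (RF_*X)(t)$ with a fibrant replacement of $FX(t)$ in $\mathcal{D}$ via Ken Brown's lemma and 2-out-of-3, and dualise for the counit.

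The paper makes your ``independence of the replacement'' step concrete. It notes that $r(t)$ is an acyclic cofibration when $r$ is (Proposition~\ref{prop:cofibrantOb}(i)), so there is a lift $h\colon (RF_*X)(t)\to R(FX(t))$, which is a weak equivalence between fibrant objects. Without such cofibrancy, two fibrant replacements are in general linked only by a zigzag of weak equivalences between fibrant objects, and that still suffices for your argument.
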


\begin{proof}
    The proof is verbatim the same as that of the original result. Instead of the derived counit and the derived unit obtained by the  cofibrant and fibrant replacement functors in the proof of  \cite[Lemma 5.2]{M-S}, we consider the derived counit and the derived unit defined at the beginning of this section. % with the same argument as in  \cite[Lemma 5.2]{M-S}.
Moreover, in our proof, it is necessary to pay attention to the difference in the definitions of the partial Quillen equivalence
as mentioned in Remark \ref{rem:equivalence}. %\todo{Kuri (3/28): Why do we repeat the proof? This is an excuse. }

    It is readily seen that the condition (1) holds. By Proposition \ref{prop:cofibrantOb}, we see that a cofibrant object of $\mathsf{et}(\mathcal{C^{\mathbb{R_+}}})$ is pointwise cofibrant. By definition,
a fibrant object of $\mathsf{et}(\mathcal{C^{\mathbb{R_+}}})$ is pointwise fibrant.
Then, the condition (2) holds.
   % \todo{Sekizuka will prove that the condition on the objectwise derived unit in (3) holds.}
    We verify that the condition (3) holds. Let $X\in \mathsf{et}(\mathcal{C}_0^{\mathbb{R_+}})$ be a cofibrant object. Proposition \ref{prop:cofibrantOb} (ii) yields that $X_0$ is a cofibrant object and the transition morphism $X({s<t}):X(s) \to X(t)$ for any $s<t$ in
    $\mathbb{R}_+$  is a cofibration in $\mathcal{C}_0$. Thus, we see that $X(t)$ is cofibrant in $\mathcal{C}_0$ for $t\in \mathbb{R}_+$.
     Let $\underline{r}_Y:Y\to \underline{R}Y$ be a fibrant replacement of $Y\in \mathsf{et}(\mathcal{D}_0^{\mathbb{R_+}})$.
     Observe that $(\underline{R}Y)(t)$ is in $\mathcal{D}_0$ and then $ \underline{R}Y$ is an object in  $\mathsf{et}(\mathcal{D}_0^{\mathbb{R_+}})$. 
    We prove that the composite
    \[
    \xymatrix{
    \widetilde{\eta_X} : X\ar[r]^-{{{\eta}_*}_X} &U_*F_*X\ar[r]^-{{U_*}_{{\underline{r}}_{F_*X}}} & U_*\underline{R}F_*X
    }
    \]
    is a weak equivalence in $\mathsf{et}(\mathcal{C}_0^{\mathbb{R_+}})$, where $\eta_*$ denotes the unit of $F_*\dashv U_*$, which is indeed componentwise the unit $\eta$ of $F\dashv U$. Therefore, we need to check that the composite
    \[
    \xymatrix@C30pt@R15pt{
    (\widetilde{\eta_X})(t) : X(t)\ar[r]^-{{{\eta}}({X(t)})} &UFX(t)\ar[r]^-{{U}_{{\underline{r}}_{F_*X}(t)}} & U(\underline{R}F_*X)(t)
    }
    \]
    %\todo{Naito(3.30):$UFX^t$ instead of $FUX^t$?}
    is a weak equivalence in $\mathcal{C}_0$ for all $t \in \mathbb{R}_+$.
    The model category structure on $\mathsf{et} (\mathcal{D^{\mathbb{R_+}}})$ described in Theorem \ref{thm:ModelCat} implies that
     $(\underline{R}F_*X)(t)$ is fibrant. Moreover,  by Proposition \ref{prop:cofibrantOb} (i), we see that
     $(\underline{r}_{F_*X})(t)$ is an acyclic cofibration.   Let $r_Y:Y\to RY$ be a fibrant replacement of $Y \in \mathcal{D}_0$.
     Then, there exists a lift $h$ which makes the diagram
    \[
    \xymatrix@C25pt@R15pt@M=7pt{
    FX(t) \ar@{>->}[d]^{\sim}_{(\underline{r}_{F_*X})(t)} \ar@{>->}[r]^{r_{FX(t)}}_{\sim}&R(FX(t)) \\
    (\underline{R}F_*X)(t)\ar@{.>}[ur]_h
    }
    \]
    %By the 2-out-of-3 property of weak equivalences,
    %Then,
    %the lift $h$ is a weak equivalence.
    %We will now use the fact that a fibration in $\mathsf{et}(\mathcal{D}^{\mathbb{R_+}})$ is a levelwise fibration [Theorem \ref{thm:ModelCat}]. This implies that a fibrant object in $\mathsf{et}(\mathcal{D}^{\mathbb{R_+}})$ is levelwise fibrant. Thus $\underline{P}(L_*X)^t$ is fibrant in $\mathcal{D}$ for all $t$.
    commutative.
    Since $h$ is a weak equivalence between fibrant objects, it follows from the dual to \cite[9.9 Lemma (K. Brown)]{DS} that
    the right Quillen functor $U$ takes $h$ to a weak equivalence. Then, we have the commutative diagram
    
    %\smallskip
    \[
    \xymatrix@C40pt@R20pt{
    X(t) \ar[r]^-{\eta_{X(t)}} \ar@/^20pt/@{.>}[rr] & UFX(t) \ar[r]^-{U_{(r_{F_*X})(t)}} \ar@/_20pt/[rr]_{U_{r_{FX(t)}}} & U\underline{R}(F_*X)(t) \ar[r]^{U_h}_{\sim} & UR(FX(t)).
    %&&
    }
    \]
   As mentioned above, the object $X(t)$ is cofibrant in $\mathcal{C}_0$.
    Moreover, by assumption, the adjoint  $F\dashv U$ is a partial Quillen equivalence.
    Therefore,
    the total composite, which is an objectwise derived unit of $F\dashv U$ at $X(t)$, is a weak equivalence.
    Since $U_h$ is a weak equivalence, it follows from the 2-out-of-3 property that
     the dotted composite, which is nothing but the morphism $(\widetilde{\eta_X})(t)$,  is also a weak equivalence. %, which is what we wanted to prove.
    %Moreover, the original proof uses the fact that a cofibrant object in the functor category is a pointwise cofibrant; see \cite[11.6.3]{H}.
    %In the part, we may apply Propositio \ref{prop:cofibrantOb}.

    The dual argument works to show that the objectwise derived counit is a weak equivalence.
\end{proof}

\begin{rem}
Let $\mathcal{C}$ be a model category and $\mathsf{tame}(\mathcal{C^{\mathbb{R_+}}})$ denote the full subcategory of
$\mathsf{et}(\mathcal{C^{\mathbb{R_+}}})$ comprised of tame objects; see Definition \ref{defn:et}. The category
$\mathsf{tame}(\mathcal{C^{\mathbb{R_+}}})$ is endowed with the model category structure described in Theorem \ref{thm:ModelCat}, which is defined originally in \cite{CGL}. Proposition \ref{prop:pQe} remains valid even if `$\mathsf{et}$' is changed with `$\mathsf{tame}$'. This follows from the proof of the proposition.
\end{rem}

%We recall a Postnikov tower for a morphism in $\mathsf{sSet}$ the category of simplicial sets introduced by Goerss and Jardine.

In order to describe the main result in this section, we recall the definition of a Postnikov tower for a map.

 \begin{defn}{\cite[Chapter VI Definition 3.9]{GJ}}\label{defn_Postnikov}
Let $f:X\to Y$ be a morphism of simplicial sets. %map between connected topological spaces.
Then a Postnikov tower for $f$ is a tower of spaces
 $\{ X_n, q_{n+1} : X_{n+1} \to X_n \}_{n\geq 0}$ equipped with maps of towers
\[
\{ i_n \} : \{ X \} \to \{ X_n \},
\quad
\{ p_n \} : \{ X_n \} \to \{ Y \}
\]
such that
\begin{itemize}
\item[(1)] the composite $p_n \circ i_n$ is $f$ for any $n$;
\item[(2)] for any choice of vertex of $X$, the map $(i_n)_* : \pi_k (X)\to \pi_k (X_n)$ is an isomorphism for $k \leq n$;
\item[(3)]  for any choice of vertex of $X$, the map $(p_n)_* : \pi_k (X_n) \to \pi_k (Y)$ is an isomorphism for $k>n+1$;
%and a monomorphism for $k=n+1$.
\item[(4)]  for any choice of vertex $v$ of $X$, there is an exact sequence
\[
\xymatrix@C15pt@R15pt{
0 \ar[r] & \pi_{n+1}(X_n) \ar[r] & \pi_{n+1}(Y) \ar[r]^-{\partial} & \pi_n(F_v) ,
}
\]
where $F_v$ is the homotopy fiber of $f$ at $v$ and $\partial$ is the connecting homomorphism in the long exact sequence of the homotopy fibration $F_v \to X \to Y$.
\end{itemize}
Here, $\{ X \}$ and $\{ Y \}$ denote the constant towers of $X$ and $Y$, respectively.
\end{defn}
 %\todo{Kuri (3/1): We here explain the Moore--Postnikov tower for a Kan fibration $f : X\to Y$ in the category of simplicial sets.}

We have a special Postnikov tower $\{ X(n) \}$ for a Kan fibration $f : X\to B$ with fibrant $B$ which is called the {\it Moore--Postnikov tower} for $f$; see \cite[VI, 3]{GJ}.

For $q$ simplices $\sigma, \tau : \Delta^q \to X$, we define a relation $\sigma \sim_n \tau$ if
\begin{itemize}
\item[i)]
$f\circ \sigma = f\circ \tau$ and
\item[ii)]
$\sigma_| = \tau_| : {\rm sk}_n\Delta^q \to  X$.
\end{itemize}
The relation gives an equivalence relation on $X$ and then the projections $i_n : X \to X(n):=X/\!\sim_n$ and well-defined maps $p_n : X(n) \to B$ induced by $f$ are defined.
The consideration in \cite[Chapter VI]{GJ} yields that  $\{ \{ X(n) \}, \{i_n\}, \{p_n\} \}$ is a Postnikov tower for $f : X\to B$.
Observe that there exist canonical projections $q_n :  X(n) \to X(n-1)$ and
the map $p_n : X(n) \to B$ is a fibration with $p_n = p_{n-1}\circ q_n$. Moreover, it follows that the construction is functorial. Thus,
we have a functor ${\rm MP}$ from the subcategory consisting of Kan fibrations with fibrant bases of $\rm{Func}(I, \mathsf{sSet})$ to
$\mathsf{et(sSet^{(\mathbb{Z}_+ , \leq )^{\rm op}})}$ defined by sending a Kan fibration $f$ to the Moore--Postnikov tower for $f$.
%\todo{Naito(3.30) : Change $\mathsf{et((sSet))^{\mathbb{Z}_+^{\rm op}})}$ to $\mathsf{et(sSet^{(\mathbb{Z}_+ , \leq )^{\rm op}})}$}

%We call a map $f : X\to Y$ is of {\it finite type} if the homotopy fiber $F_f$ is connected and the rational homotopy $\pi_i(F_f)\otimes \Q$ of the homotopy fiber of $f$ is trivial provided $i\geq N$  for some integer $N >1$.
 % \todo{Recall the definition of a partial Quillen equivalence; see \cite[Definition 2.4]{M-S}.}

Let $S_*(\ )$ be the singular simplex functor and $\mathcal{H}$ the homotopy fibration functor.  Let $\lfloor \ \rfloor : (\R_+, \leq) \to (\Z_+, \leq)$ denote the floor function. Observe that $f$ is a Serre fibration if and only if $S_*(f)$ is a Kan fibration; see \cite[Page 11]{GJ}.
We denote by $\mathcal{F}_{\text{sc}}$ the full subcategory of $\rm{Func}(I, \mathsf{Top}_0)^{\rm op}$
consisting of maps between simply-connected spaces of finite $\Q$-type. Observe that for a simply-connected space $X$, the space $X$ is of finite $\Q$-type if and only if the vector spaces $H^*(X; \Q)$ for $i \geq 1$ are finite dimensional; see \cite[Proposition 12.2]{FHT}.

\begin{thm}\label{thm:main_II} With the same notation as in Theorem \ref{thm:RHT},
one has a diagram of categories and functors %\todo{We use the projective model category structure on $\mathsf{(sSet)^{\R_+^{\rm op}}}$. Then, $\mathsf{et((sSet)^{\R_+^{\rm op}})}$ is regarded as its full subcategory.  }
\begin{eqnarray}\label{eq:equivalence}
\xymatrix@C28pt@R15pt{
\rm{Func}(I, \mathsf{Top})^{\rm op} \ar@/^1.8pc/[rr]^{mp:=(\lfloor \ \rfloor)^*\circ  {\rm MP} \circ S_*( \ ) \circ \mathcal{H}} & \mathsf{et(CDGA^{\R_+})} \ar@<1ex>[r]^-{\langle \ \rangle_*} \ar[d] &\mathsf{et((sSet^{\rm op})^{\R_+})} \ar@<1.2ex>[l]^-{A_{\rm PL}( \ )_*}_-{\bot}  \ar[d] \\
\rm{Func}(I, \mathsf{Top}_0)^{\rm op} \ar[r]^-{\Theta}  \ar[u]&  {\rm Ho}(\mathsf{et(CDGA^{\R_+})}) \ar@<1ex>[r]^-{{\bf L}\langle \ \rangle_*} & {\rm Ho}(\mathsf{et((sSet^{\rm op})^{\R_+})})  \ar@<1.2ex>[l]^-{{\bf R}A_{\rm PL}( \ )_*}_-{\bot}  \\
\mathcal{F}_{\rm sc} \ar@{.>}[r]_{\Theta_{\rm res}} \ar[u] \ar[ru]_{\Theta_{\rm res}} \ar@/_1.95pc/[rr]_-{\text{(Rationalization)} \circ (mp)=:(mp)_\Q} & {\rm Full}(\mathsf{et(\mathcal{C}_0^{\R_+}})) \ar@<1ex>[r]^-{{\bf L}\langle \ \rangle_*} \ar[u] &
{\rm Full}(\mathsf{et((\mathcal{D}_0^{\rm op})^{\R_+}})) \ar@<1.2ex>[l]^-{{\bf R}A_{\rm PL}( \ )_*}_-{\simeq} \ar[u]
}
\end{eqnarray}
for which the following assertions hold.

{\rm (i)} The adjunction $(\langle \ \rangle_*,  A_{\rm PL}( \ )_*)$ induced by $(\langle \ \rangle,  A_{\rm PL}( \ ))$ pointwise gives rise to a partial Quillen equivalence for the subcategories $\mathsf{et}(\mathcal{C}_0^{\R_+})$ and $\mathsf{et}((\mathcal{D}_0^{\rm op})^{\R_+})$.
% in the sense in \cite{M-S}.
As a consequence, the adjunction in the bottom of the lower-right square gives an equivalence of the categories.

{\rm (ii)} The diagram starting at $\mathcal{F}_{\rm sc}$ and ending at ${\rm Ho}(\mathsf{et(CDGA^{\R_+})})$ via
$\mathsf{et((sSet^{\rm op})^{\R_+})}$ is commutative.

{\rm (iii)} The lower diagram comprised of $\Theta_{\rm res}$, the adjoint and the rationalization $(mp)_\Q$ is commutative.
%where we replace the categories with {\it pointed ones}.

Here, $\Theta$ is the functor described in Theorem \ref{thm:main}, $\Theta_{\rm res}$ is its restriction and ${\rm Full}(\mathcal E)$ denotes the full subcategory of the homotopy category whose objects are in $\mathcal{E}$.
Moreover,
%in the right-lower square, the upper pair denotes the adjunction consisting of the {\it derived functor} and
%the lower is the pair of the restrictions.
the (Rationalization) is the composite ${\bf L}\langle \ \rangle_*\circ {\bf R}A_{\rm PL}( \ )_*$.
\end{thm}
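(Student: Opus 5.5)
Part (i) is immediate from what precedes it. Theorem \ref{thm:RHT} says that $(\langle\ \rangle, A_{\rm PL}(\ ))$ is a partial Quillen equivalence for $\mathcal{C}_0$ and $\mathcal{D}_0^{\rm op}$. Proposition \ref{prop:pQe}, applied with $\mathcal{C}=\mathsf{CDGA}$ and $\mathcal{D}=\mathsf{sSet}^{\rm op}$, then shows that the pointwise adjunction is a partial Quillen equivalence for $\mathsf{et}(\mathcal{C}_0^{\R_+})$ and $\mathsf{et}((\mathcal{D}_0^{\rm op})^{\R_+})$. The equivalence of the full subcategories of the homotopy categories follows from Remark \ref{rem:equivalence}, that is, from assertion (**) together with Lemma \ref{lem:WE}. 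The only point to check is that the extended tame model structure of Theorem \ref{thm:ModelCat} on $\mathsf{et}((\mathsf{sSet}^{\rm op})^{\R_+})$ matches the one used in Proposition \ref{prop:pQe}. This holds because $\mathsf{sSet}^{\rm op}$ is itself a model category.

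For (ii), I would produce an explicit zigzag of weak equivalences in $\et{\mathsf{CDGA}}$ between $\Theta(f)$ and $A_{\rm PL}(\ )_*(mp(f))$.

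\emph{Step 1.} Take $f:X\to A$ in $\mathcal{F}_{\rm sc}$ and replace it by the Serre fibration $\mathcal{H}(f)$. Let $\{X(n)\}$ be the Moore--Postnikov tower of the Kan fibration $S_*\mathcal{H}(f)$ over $B=S_*A$.

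\emph{Step 2.} Let $\iota_f:\wedge V_A\to\wedge V_A\otimes\wedge W$ be the minimal relative Sullivan model. Both spaces are simply connected, so the homotopy fibre $F$ is simply connected or, more generally, nilpotent. The standard identification of the fibre model \cite{FHT} gives $W^{k}\cong \mathrm{Hom}(\pi_k(F),\Q)$. By Definition \ref{defn_Postnikov}, $X(n)\to B$ is a fibration whose fibre is the $n$-th Postnikov section of $F$.

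\emph{Step 3.} Show by induction on $n$ that $\wedge V_A\otimes\wedge W^{\le n}$ is a relative Sullivan model for $p_n:X(n)\to B$, compatibly with the maps $q_n$. Proposition \ref{prop:relativeSullivanAlgs} says each stage is an elementary extension. Each $q_{n+1}:X(n+1)\to X(n)$ is a principal fibration with fibre $K(\pi_{n+1}F,n+1)$; principality comes from simple connectivity. Its relative model is therefore the Hirsch extension by $W^{n+1}$, with differential given by the $k$-invariant. Comparing with the relative Sullivan lifting lemma then yields quasi-isomorphisms
\[
\phi_n:\wedge V_A\otimes\wedge W^{\le n}\to A_{\rm PL}(X(n))
\]
that commute with the structure maps strictly, not only up to homotopy.

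\emph{Step 4.} Precompose with $\wedge V_A\to A_{\rm PL}(A)\to A_{\rm PL}(S_*A)$ and pass to $(\lfloor\ \rfloor)^*$. This gives a morphism $\theta(\iota_f)\to A_{\rm PL}(\ )_*(mp(f))$ in $\et{\mathsf{CDGA}}$ that is a pointwise quasi-isomorphism, and hence an isomorphism in ${\rm Ho}$. Naturality in $f$ follows from the uniqueness up to relative homotopy used in the proof of Theorem \ref{thm:main}.

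The main obstacle is Step 3: obtaining \emph{strict} compatibility of the $\phi_n$ with the tower maps. I would construct the $\phi_n$ inductively using the lifting property of the relative Sullivan algebra $\theta(\iota_f)(n<n+1)$ against the surjective quasi-isomorphism obtained by factoring $A_{\rm PL}(q_{n+1})$. Alternatively, since $\theta(\iota_f)$ is cofibrant by Remark \ref{rem:cofibrantOb} and every object is fibrant, it suffices to build one morphism of towers inductively. Each extension step is a lifting problem in which the cofibration is $\theta(\iota_f)(n<n+1)$.

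For (iii), apply ${\bf L}\langle\ \rangle_*$ to the isomorphism from (ii). Then $\langle\ \rangle_*\Theta_{\rm res}(f)\cong {\bf L}\langle\ \rangle_*{\bf R}A_{\rm PL}(\ )_*(mp(f))$, which is $(mp)_\Q(f)$ by definition. It remains to check that both sides lie in the stated full subcategories. Objects in $\mathcal{F}_{\rm sc}$ have finite $\Q$-type, so each $\wedge V_A\otimes\wedge W^{\le n}$ lies in $\mathcal{C}_0$. The pointwise realizations are nilpotent rational spaces of finite type, so they lie in $\mathcal{D}_0$. Part (i) then makes the lower square commute up to natural isomorphism.
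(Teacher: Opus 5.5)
Your treatment of (i) and (iii) matches the paper. The gap is in Step 3 of (ii), which is where the real content lies.

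\textbf{Where the gap is.} You assert that the relative model of $q_{n+1}$ ``is therefore the Hirsch extension by $W^{n+1}$, with differential given by the $k$-invariant''. But $W$ is the generating space of the minimal relative model $\iota_f$. Nothing in your argument relates the differential $d|_{W^{n+1}}$ in $\iota_f$ to the $k$-invariant of the Moore--Postnikov tower.
\begin{itemize}
\item The isomorphism $W^{n+1}\cong \mathrm{Hom}(\pi_{n+1}(F),\Q)$ only matches dimensions. Two extensions of $\wedge V_A\otimes\wedge W^{\le n}$ by generators of degree $n+1$ spanning spaces of equal dimension need not be isomorphic.
\item The mechanism you propose for producing $\phi_{n+1}$ is circular. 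A lifting problem for the cofibration $\theta(\iota_f)(n<n+1)$ against a surjective quasi-isomorphism $C\to A_{\rm PL}(X(n+1))$ (obtained by factoring $A_{\rm PL}(q_{n+1})$) requires a bottom map $\wedge V_A\otimes\wedge W^{\le n+1}\to A_{\rm PL}(X(n+1))$ as input.
\item That bottom map is exactly what you are trying to construct, and showing it is a quasi-isomorphism is the actual point.
\item The same objection applies to your remark about the cofibrancy of $\theta(\iota_f)$.
\end{itemize}

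\textbf{What is missing: a global comparison with $\iota_f$.} The paper proceeds as follows.
\begin{itemize}
\item It first models the tower independently: stagewise relative models of the $A_{\rm PL}(q_n)$, made strictly compatible via \cite[Proposition 2.22]{FOT}, then retracted to minimal ones $\wedge V_B\otimes\wedge W_n$.
\item Since $\wedge W_n$ is a minimal model of the fibre $F(n)$, one has $W_n=W_n^{\le n}$. So the stages nest into a minimal relative algebra $\iota'$.
\item The compatible maps $A_{\rm PL}(i_n)$ give a map $u$ from the colimit to $A_{\rm PL}(X)$. This lifts to $\Phi$ into $\wedge V_B\otimes\wedge W$.
\item The fibre part of $\Phi$ on $\wedge W_n$ represents $F\to F(n)$, so its linear part $W_n\to W^{\le n}$ is an isomorphism.
\item Hence $\Phi$ is an isomorphism by Proposition \ref{prop:linear-part} and \cite[Proposition 3.10]{FHT_II}. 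Degree reasons make it carry each stage onto the truncation $\wedge V_B\otimes\wedge W^{\le n}$.
\end{itemize}

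\textbf{How your Hirsch tower can be rescued.} The same kind of comparison works for your construction.
\begin{itemize}
\item Build each stage as a relative model of $A_{\rm PL}(q_{n+1})\circ\phi_n$, so strict compatibility is automatic.
\item For degree reasons the tower is relatively minimal: $d(U_{n+1})$ lands in degree $n+2$ of the previous stage, which contains no generators of degree $n+2$.
\item Its colimit maps quasi-isomorphically to $A_{\rm PL}(X)$, because $X\to X(n)$ is $(n+1)$-connected.
\item Uniqueness of minimal relative models then identifies it with $\iota_f$, compatibly with the truncations.
\end{itemize}
Either way, the identification with the truncations of $\iota_f$ has to come from such a global comparison. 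It cannot come from a stagewise lifting argument starting at $\theta(\iota_f)$.
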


\begin{rem}\label{rem:CDGA_sSet}
(i) We have an isomorphism ${\rm Func}(J, \C^{\rm op}) \cong  {\rm Func}(J^{\rm op} , \C)^{\rm op}$ between functor categories.
While the codomain of the functor $mp$ is the full subcategory $(\mathsf{et((sSet)^{\R_+^{\rm op}})})^{\rm op}$ of the  functor category
${\rm Func}(\R_+^{\rm op} , \mathsf{sSet})^{\rm op}$ consisting of extended tame copersistence simplicial sets, we use the category
 $\mathsf{et((sSet^{\rm op})^{\R_+})}$ as the codomain up to the isomorphism of the categories.

 (ii) The (Rationalization) gives indeed rise to the {\it levelwise $\Q$-localization}. In fact, the counit $\e_X : {\bf L}\langle \ \rangle_*\circ {\bf R}A_{\rm PL}(X)_* \to X$ is indeed the levelwise $\Q$-localization in $\mathsf{sSet^{\rm op}}$; see \cite[11.2]{B-G} and \cite[Theorem 17.12]{FHT}.
Since each object in $\mathsf{et((sSet^{\text{op}})^{\R_+})}$ is fibrant, it follows that  ${\bf R}A_{\rm PL}( \ )_* = A_{\rm PL}( \ )_*$.

 (iii) The domain of the composite ${\rm MP}\circ S_*( \ ) \circ {\mathcal H} :
 \rm{Func}(I, \mathsf{Top}) \to \mathsf{sSet}^{(\mathbb{Z}_+, \leq)^{\rm op}}$ is a simplicial model category of towers of simplicial sets; see \cite[Chapter VI, 1]{GJ}. Moreover, we see that the adjoint
 \[
\lfloor \ \rfloor^* :  (\mathsf{sSet}^{(\mathbb{Z}_+, \leq)^{\rm op}})^{\rm op}=(\mathsf{sSet}^{\rm op})^{({\mathbb Z}_+, \leq)} \rightleftarrows  \mathsf{et((sSet^{\rm op})^{\R_+})} : {\rm res}
 \]
 %\todo{Naito(3.30) : $(\mathsf{sSet}^{\rm op})^{({\mathbb Z}_+, \leq)}$ instead of $(\mathsf{sSet}^{\rm op})^{({\mathbb Z}, \leq)}$? $\mathsf{et((sSet^{\rm op})^{\R_+})}$ instead of $\mathsf{et(sSet^{\rm op})}^{\R_+}$?}
 of the functor induced by the floor function and the restriction is a Quillen adjunction.
  %\todo{Kuri (3/25): We know the model structure of the category of towers.}
\end{rem}

\begin{cor}\label{cor:Alg_vs_Space} For objects $f$ and $g$ in $\mathcal{F}_{\rm sc}$, it holds that
\[
d_{\rm IHC}(\Theta (f), \Theta(g)) = d_{\rm IHC}(mp(f)_\Q, mp(g)_\Q),
\]
where the right-hand side denotes the interleaving distance between copersistence simplicial spaces in the homotopy category.
\end{cor}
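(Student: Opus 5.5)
The plan is to deduce the corollary from Theorem \ref{thm:main_II}, especially (i) and (iii), by transporting interleavings along an equivalence of categories that commutes with the shift functors. Write $\Phi := {\bf L}\langle \ \rangle_*$ restricted to ${\rm Full}(\mathsf{et}(\mathcal{C}_0^{\R_+}))$. By Theorem \ref{thm:main_II} (i), $\Phi$ is an equivalence onto ${\rm Full}(\mathsf{et}((\mathcal{D}_0^{\rm op})^{\R_+}))$, with inverse ${\bf R}A_{\rm PL}(\ )_*=A_{\rm PL}(\ )_*$ (Remark \ref{rem:CDGA_sSet} (ii)). By Theorem \ref{thm:main_II} (iii), $\Phi(\Theta(f))\cong mp(f)_\Q$ in the homotopy category, and likewise for $g$.

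First I will check that $\Theta(f)$ and $\Theta(g)$ actually lie in ${\rm Full}(\mathsf{et}(\mathcal{C}_0^{\R_+}))$. For $f\in\mathcal{F}_{\rm sc}$, each stage $\wedge V_A\otimes\wedge W^{\leq n}$ is connected and of finite $\Q$-type, since $V_A$ and $W$ are finite in each degree for simply connected spaces of finite $\Q$-type. These objects are also cofibrant (Remark \ref{rem:cofibrantOb}).

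Second, and this is the key step, I will show that the shift functor $(\ )^\e$ commutes with $\Phi$ and its inverse up to natural isomorphism. The shift is defined pointwise, and $\langle\ \rangle_*$ and $A_{\rm PL}(\ )_*$ are pointwise functors, so $\langle F^\e\rangle_* = (\langle F\rangle_*)^\e$ strictly. I also have to check that the shift preserves cofibrant objects, since ${\bf L}$ is computed on cofibrant replacements. By Proposition \ref{prop:cofibrantOb} (ii), a shifted cofibrant object is again cofibrant: $F^\e(0)=F(\e)$ is cofibrant because every $F(t)$ is, and the transition maps of $F^\e$ are still cofibrations. The shift also preserves weak equivalences. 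Hence $\Phi((\ )^\e)\cong(\Phi(\ ))^\e$ naturally, and the natural transformations $F\to F^\e$ given by the structure maps are carried to the corresponding ones. I also need the subcategories to be stable under shift, which is clear pointwise.

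Third, I will transport interleavings. An $\e$-interleaving $(\varphi,\psi)$ between $\Theta(f)$ and $\Theta(g)$ in ${\rm Ho}(\et{\mathsf{CDGA}})$ consists of morphisms between objects of the full subcategory and their shifts. Applying $\Phi$ and composing with the isomorphisms from the second and third steps gives a commutative diagram (\ref{eq:interleaving_1}) between $mp(f)_\Q$ and $mp(g)_\Q$. Conversely, the inverse equivalence carries interleavings back, and full faithfulness ensures every interleaving arises this way. One subtlety is that $d_{\rm IHC}$ is defined in the whole homotopy category, but the objects involved, including the shifts, stay in the full subcategories, so every interleaving morphism is a morphism there. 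Therefore the two sets of $\e$ for which an interleaving exists coincide, and the infima agree.

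The main obstacle is the naturality in the second step: making sure the derived functors, defined through cofibrant replacement, are compatible with the shift and with the structure maps $F\to F^\e$ up to coherent isomorphism. I would handle this by working with cofibrant representatives, such as $\Theta'(f)$ itself, so that ${\bf L}\langle\ \rangle_*$ is computed directly by $\langle\ \rangle_*$, and the compatibility becomes the strict pointwise identity.
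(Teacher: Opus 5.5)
Your proposal is correct and follows the same route as the paper. The paper deduces the corollary in one line from the commutativity of the bottom diagram in Theorem \ref{thm:main_II}~(iii) together with the equivalence of categories in Theorem \ref{thm:main_II}~(i). Your additional checks only make explicit what the paper leaves implicit. These are that the shift preserves cofibrant objects and the subcategories $\mathsf{et}(\mathcal{C}_0^{\R_+})$ and $\mathsf{et}((\mathcal{D}_0^{\rm op})^{\R_+})$, and that ${\bf L}\langle\ \rangle_*$ agrees strictly with $\langle\ \rangle_*$ on cofibrant objects and therefore commutes with shifts and structure maps.
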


\begin{proof}
This follows from the commutativity of the bottom diagram in (\ref{eq:equivalence}) and the equivalence of categories.
\end{proof}

\begin{proof}[Proof of Theorem \ref{thm:main_II}]
%Regarding the proof of (i) in Assertion \ref{assertion:main},
(i) The assertion follows from Proposition \ref{prop:pQe}. Remark \ref{rem:equivalence} and Theorem \ref{thm:RHT} yield the latter half of the assertion.
%We observe that  $\mathsf{sSet}$ and $\mathsf{CDGA}$ are cofibrantly generated model strucrures and hence the categories admit functorial factorizations.
%[Outline of the proof.] We may reconsider the proof of \cite[Lemma 5.2]{M-S} with equivalent conditions for the Quillen equivalence between model cateogries in the book due to Hovey. Moreover, we use the result asserting that  a cofibration in the functor category endowed with the projective model category structure is a levelwise cofibration; see [The book due to Hirschhorn, 11.6.3]. This assertion holds in the category $\mathsf{et}(\mathcal{M})$ for a model category $\mathcal{M}$.

(ii) %The result is proved by using properties of Moore--Postnikov (MP) tower for a map which appear in the proof of \cite[Chapter VI, 2, Theorem 3.11]{GJ}. %and a cofibrant replacement of the object $(A_{\rm PL}( \ )_* \circ mp)(f)$ in $\mathsf{et(CDGA^{\R_+})}$ for a map $f : X \to B$.
We give an appropriate cofibrant replacement of the object  $(A_{\rm PL}( \ )_* \circ mp)(f)$ in $\mathsf{et(CDGA^{\R_+})}$ for a map $f : X \to B$.
Consider the commutative diagram
%\begin{align}\label{eq:s}
\begin{eqnarray}\label{eq:squares}
\xymatrix@C12pt@R15pt{
% & & & & \\
& \wedge V_B \ar[d]_{=} \ar[r]_-{j_1}& \wedge V_B\otimes \wedge W_1 \ar@{.>}[r]_-{j_2} \ar[d]_\sim  & \cdots \ar@{.>}[r] & \wedge V_B\otimes \wedge W_{n-1} \ar@{.>}[r]_-{j_n} \ar[d]_\sim &
 \wedge V_B\otimes \wedge W_{n} \ar@{.>}[r] \ar[d]_\sim  & \cdots \\
& \wedge V_B \ar[d]_\sim \ar[r]& \wedge V_B\otimes \wedge Z_1 \ar[r] \ar[d]_\sim & \cdots \ar[r] & \wedge V_B\otimes \wedge Z_{n-1} \ar[r] \ar[d]_\sim &
 \wedge V_B\otimes \wedge Z_{n} \ar[r] \ar[d]_\sim & \cdots \\
& A_{\rm PL}(B)\ar[r]^-{A_{\rm PL}(q_1)} &  A_{\rm PL}(X(1)) \ar[r]^-{A_{\rm PL}(q_2)} & \cdots \ar[r] &  A_{\rm PL}(X(n-1)) \ar[r]^-{A_{\rm PL}(q_n)}&  A_{\rm PL}(X(n))  \ar[r] &\cdots
}
\end{eqnarray}
%\end{align}
with solid arrows in which the second sequence consists of cofibrations and the third one is induced by the MP tower of $f$. Moreover, the retraction to minimal relative Sullivan algebras gives rise to the first line; that is, each vertical map is the inclusion and a quasi-isomorphism.
This follows from \cite[Theorem 14.9]{FHT}.
We show that there exist dots arrows such that each square is commutative and the first sequence is isomorphic to $\Theta(f)$ in $\text{Ho}(\mathsf{et(CDGA^{\R_+})})$.

The construction in \cite[pages 204 and 205]{FHT} enables us to obtain a morphism $j_n$ which fits the commutative diagram
\[
\xymatrix@C20pt@R5pt{
 & \wedge V_B\otimes \wedge W_{n-1} \ar[dd]^{j_n} \ar[r] & \wedge W_{n-1} \ar[dd]^{\widetilde{j_n}}\\
\wedge V_B \ar[ru] \ar[rd]&  \\
&  \wedge V_B\otimes \wedge W_{n} \ar[r] & \wedge W_{n}
}
\]
and the square in (\ref{eq:squares}) containing $j_n$ and $A_{\rm PL}(q_n)$ as the top and bottom arrows, respectively, is homotopy commutative. Moreover, the map $\widetilde{j_n}$ induced by $j_n$ is a Sullivan representative for the natural map $(q_n)_| : F(n-1) \to F(n)$, where $F(n)$ is the fiber of the fibration $p_n : X(n) \to B$. We observe that
the space $F(n)$ is the $n$th stage of the Moore--Postnikov tower of the fiber $F$ of $f$. Moreover, since $B$ is simply connected by assumption, it follows that $\wedge W_n$ is a minimal model for $F(n)$; see the proof of \cite[Theorem 3.11]{GJ}. Then, we see that $W_n = W_n^{\leq n}$ and
$j_n : W_{n-1} \to W_n^{\leq n-1}$ is an isomorphism. Therefore, we may assume that $j_n$ is an inclusion.
Thus, the colimit ${\rm colim}_n(\wedge V_B\otimes \wedge W_n)$ is regarded as a relative Sullivan algebra of the form $\iota' : \wedge V_B \to \wedge V_B\otimes \wedge \widetilde{W}$.

By virtue of \cite[Proposition 2.22]{FOT}, we may replace inductively vertical arrows connecting the first and third sequences
in (\ref{eq:squares}) with quasi-isomorphisms so that the each square is strictly commutative.  It turns out that  $\theta(\iota')$ is a cofibrant replacement for $A_{\rm PL}(mp(f))$ in
$ \mathsf{et(CDGA^{\R_+})}$.

In what follows, we show that $\theta(\iota')$ is isomorphic to  $\theta(\iota_f)$.
The universality of the colimit gives rise to a morphism $u$ which makes the square consisting of solid arrows and $u$ in the diagram
\begin{align}\label{eq:squaresII}
\xymatrix@C20pt@R15pt{
\wedge V_B \ar[d]_{=} \ar[r]& \wedge V_B\otimes \wedge \widetilde{W} \ar@{.>}@/^3pc/[dd]^(0.7){u} \ar@{.>}[d]^\Phi \ar[r]& \wedge \widetilde{W}
\ar@{.>}@/^3pc/[dd]^(0.7){\overline{u}} \ar@{.>}[d]^{\overline\Phi}\\
\wedge V_B \ar[d]_\sim \ar[r]& \wedge V_B\otimes \wedge W \ar[d]_\sim^v \ar[r] & \wedge W \ar[d]^{\overline{v}}_\sim\\
A_{\rm PL}(B)\ar[r]_-{A_{\rm PL}(f)} &  A_{\rm PL}(X) \ar[r]& A_{\rm PL}(F)
}
\end{align}
commutative. The lifting property (\cite[Proposition 14.6]{FHT}) implies that there exists a morphism $\Phi$ such that the upper-left square is commutative and
$v \circ \Phi \simeq u$ rel $\wedge V_B$. Moreover, the relative homotopy gives a homotopy between $\overline{v} \circ \overline{\Phi}$ and  $\overline{u}$, where $\overline{v}$, $\overline{\Phi}$ and  $\overline{u}$ are morphisms induced by $v$, $\Phi$ and $u$, respectively.

By \cite[Proposition 15.5]{FHT}, we see that $\overline{v}$ is a quasi-isomorphism.
Therefore, the restriction of
$\overline{\Phi}$ to $\wedge W_n$  gives rise to a Sullivan representative for $i_n$ in a Moore-Postnikov tower
$\{i_n\} : \{ F\} \to \{F(n)\}$ of the fiber $F$. This implies that the linear part $Q(\overline{\Phi}) : W_n \to W^{\leq n}$ is an isomorphism.
Observe that $\wedge \widetilde{W}$ and $\wedge W$ are minimal.
Then, the morphism $\overline{\Phi}$ is an isomorphism; see Proposition \ref{prop:linear-part} below. It follows from \cite[Proposition 3.10]{FHT_II} that $\Phi$ is an isomorphism.

(iii) The result (i) and the commutativity in (ii)  yield the assertion (iii).
\end{proof}

\begin{rem} With the same notation as in the proof of Theorem \ref{thm:main_II}, in the category of simplicial sets,
we have a commutative diagram %\todo{Kuri (1/16): The evident map is an isomorphism.}
\begin{eqnarray*}
\xymatrix@C20pt@R1pt{
 & & \text{lim} \langle\wedge V_B\otimes \wedge (W^{\leq n}) \rangle \ar[dd]^{\text{pr}_n}\\
S_*(X)_{\Q}=\langle \wedge V_B\otimes \wedge W \rangle \ar@/_0.8pc/[rrd]_{\langle (\text{inc.})_n \rangle} \ar[r]^-{\langle\eta\rangle}_-{\cong} & \langle\text{colim} \wedge V_B\otimes \wedge (W^{\leq n})\rangle
\ar[rd]_{\langle i_n \rangle} \ar[ru]_{u}^{\cong}& \\
& &  \langle\wedge V_B\otimes \wedge (W^{\leq n}) \rangle,
}
\end{eqnarray*}
where $\eta :\text{colimit} \wedge V_B\otimes \wedge W^{\leq n} \to \wedge V_B\otimes \wedge W$ is a canonical isomorphism. The realization functor $\langle \ \rangle$ is the left adjoint to the functor $A_{\rm PL}$ and then $\langle \ \rangle$ preserves the colimit. Thus, the evident map $u$ is an isomorphism. As a consequence, the universality of the limit enables us to conclude that the composite $u\circ \langle\eta\rangle$ is the evident map $S_*(X)_{\Q}=\langle\wedge V_B\otimes \wedge W \rangle \to \text{lim} \langle\wedge V_B\otimes \wedge (W^{\leq n}) \rangle$ which is an isomorphism.
\end{rem}

\subsection{The geometric realization of an extended tame CDGA}\label{sect:GeometricRealizations}
%\todo{Naito added a subsection on geometric desctription (1/20)}
In this subsection, we work in the category of topological spaces and
we discuss the geometric realization of the extended tame CDGA $\theta (\iota)$ obtained from a minimal relative Sullivan algebra $\iota : \wedge V \to \wedge V \otimes \wedge W$. Here, we assume that $W= W^{\geq 1}$. However, the condition that $V^1 = 0$ is not necessarily assumed.
%We recall a Postnikov tower for a map in $\mathsf{Top}$  introduced by Goerss and Jardine.

While the original definition (Definition \ref{defn_Postnikov}) of a Postnikov tower of a map is given for simplicial sets, we here consider its analogue for path-connected topological spaces.
In the end of the section, we give  geometric realizations of specific extended tame CDGAs described in Sections \ref{sect:Interleavings_maps} and \ref{sect:Examples}.

Let $| \, \cdot \, |$ be the spatial realization functor; see \cite[\S 17]{FHT}. %\todo{Kuri (2/20) : Some sentences are revised to make the format consistent.}
By applying the %spatial realization
functor $| \, \cdot \, |$ to the sequence $\theta (\iota)$, we obtain a tower of topological spaces
\[
\xymatrix@C15pt{
\left\{ |\theta (\iota) (n)| \right\} : |\theta (\iota) (0)| & |\theta (\iota) (1)| \ar[l] & |\theta (\iota)(2)| \ar[l] & \cdots \ar[l] & |\theta (\iota)(n)| \ar[l] & \cdots \ar[l]
}
\]
Each structure map
$\theta(\iota)(n) \to \theta(\iota)(n+1)$ of $\theta(\iota)$ is a relative Sullivan algebra.
Then, by \cite[Proposition 17.9]{FHT}, we see that the sequence above is a tower of fibrations.

Let $i_n$ and $p_n$ be the spatial realizations of the inclusions $\theta(\iota)(n) \to \wedge V\otimes \wedge W$ and $\wedge V \to \theta(\iota)(n)$,  respectively.

 %\todo{Kuri (3/1): Do we assume that each $\theta(\iota)(n)$ is relatively finite? We are glad when the answer is No because of Remark \ref{rem:finite_type} below. }
\begin{prop}\label{prop:P_tower}
The tower $\left\{ |\theta (\iota) (n)| \right\}_{n\geq 0}$ equipped  with $\{i_n\}$ and $\{p_n\}$ is a Postnikov tower for the map $|\iota| : |\wedge V \otimes \wedge W| \to |\wedge V|$  in the sense in Definition \ref{defn_Postnikov}. %Goerss and Jardine.
\end{prop}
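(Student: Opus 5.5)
We verify the four conditions of Definition \ref{defn_Postnikov} for the tower $\{|\theta(\iota)(n)|\}$ with the maps $i_n$ and $p_n$, using the spatial realization and the homotopy groups of realizations of relative Sullivan algebras (\cite[\S 17]{FHT}).

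\textbf{Condition (1).} Since $\wedge V \to \theta(\iota)(n) \to \wedge V\otimes \wedge W$ composes to $\iota$, functoriality of $|\cdot|$ gives $p_n\circ i_n = |\iota|$. The maps of towers are compatible with the $q_n$, because the inclusions $\theta(\iota)(n)\to\theta(\iota)(n+1)$ commute with all of these inclusions.

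\textbf{Fibrations and fibres.} By \cite[Proposition 17.9]{FHT}, each relative Sullivan algebra $\wedge V \to \wedge V\otimes \wedge W^{\leq n}$ (Proposition \ref{prop:relativeSullivanAlgs} and Remark \ref{rem:cofibrantOb}) realizes to a fibration $p_n$. Its fibre is $|\wedge W^{\leq n}|$, the realization of the fibre algebra $\wedge W^{\leq n}$ with the quotient differential $\bar d$. Likewise $|\iota|$ is a fibration with fibre $F=|(\wedge W,\bar d)|$. Minimality of $\iota$ implies that $(\wedge W,\bar d)$ is a minimal Sullivan algebra (assume $W=W^{\geq 1}$). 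Hence, at least where the fibre is nilpotent or where we work with the homotopy Lie algebra, $\pi_k(|\wedge W|)\cong \mathrm{Hom}(W^k,\Q)$ and $\pi_k(|\wedge W^{\leq n}|)\cong \mathrm{Hom}(W^k,\Q)$ for $k\leq n$, with $\pi_k(|\wedge W^{\leq n}|)=0$ for $k>n$. The inclusion $\wedge W^{\leq n}\to \wedge W$ induces the identity on $W^{\leq n}$. Thus $F\to F(n):=|\wedge W^{\leq n}|$ is the $n$th Postnikov section of the fibre.

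\textbf{Conditions (2)--(4).} We compare the long exact homotopy sequences of the fibrations $F\to |\wedge V\otimes\wedge W|\to |\wedge V|$ and $F(n)\to |\theta(\iota)(n)|\to |\wedge V|$ over the identity of $|\wedge V|$, via the maps induced by $i_n$.
\begin{itemize}
\item[(2)] For $k\leq n$ the fibre map is an isomorphism on $\pi_k$ and surjective on $\pi_{k+1}$. The five lemma then gives $(i_n)_*\colon\pi_k\cong\pi_k$ for $k\leq n$. For the ends, where $\pi_0,\pi_1$ appear as sets or groups, we use the exactness at $\pi_1$ and $\pi_0$ together with connectivity.
\item[(3)] For $k>n+1$ we have $\pi_k(F(n))=\pi_{k-1}(F(n))=0$, so $(p_n)_*$ is an isomorphism on $\pi_k$.
\item[(4)] In degree $n+1$ the sequence $0=\pi_{n+1}(F(n))\to\pi_{n+1}(|\theta(\iota)(n)|)\to\pi_{n+1}(|\wedge V|)\to \pi_n(F(n))$ is exact. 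Here $\pi_n(F)\to\pi_n(F(n))$ is an isomorphism, and the connecting maps commute with the fibre map $F\to F(n)$. This yields the required sequence with $\partial$ the connecting map of $F\to |\wedge V\otimes\wedge W|\to|\wedge V|$. The homotopy fibre $F_v$ agrees with the actual fibre because $|\iota|$ is a fibration.
\end{itemize}

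\textbf{Main obstacle.} The hard step is the identification of $\pi_*$ of the realized fibres with the duals $W^\vee$ in low degrees when $V^1\neq 0$ or $W^1\neq 0$. There the spaces need not be simply connected, and \cite[Theorem 17.10 / \S 17]{FHT}-type statements give $\pi_1$ only as the Malcev-completed group attached to $W^1$. I would handle this with the nilpotent version of the realization theorem, \cite[Theorem 1.5 / Chapter 7]{FHT_II}, applied to the minimal algebra $(\wedge W,\bar d)$. Since $W^1$ is preserved identically by the inclusion $\wedge W^{\leq n}\to\wedge W$ for $n\geq 1$, the $\pi_1$ isomorphism holds on the nose, which is enough for (2) and (4). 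The case $n=0$ is handled separately: there $F(0)$ is a point, so the statements reduce to $\pi_0$-connectivity and injectivity of $\pi_1(|\wedge V\otimes\wedge W|)\to\pi_1(|\wedge V|)$ modulo the image of $\pi_0$, which follows directly from the long exact sequence.
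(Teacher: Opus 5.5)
Your proof is correct, but for conditions (2) and (4) it takes a different route from the paper.

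\textbf{Your route.} You compare the fibration sequences of $|\iota|$ and $p_n$ over $|\wedge V|$. You identify $\pi_*$ of both fibres $|\wedge W,\bar d|$ and $|\wedge W^{\le n},\bar d|$ with the duals of the generators, naturally in the linear part of $\wedge W^{\le n}\to\wedge W$. You then conclude with the five lemma and the naturality of connecting maps.

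\textbf{The paper's route.} It applies \cite[Proposition 17.9]{FHT} to the relative Sullivan algebra $\theta(\iota)(n)\to\wedge V\otimes\wedge W$ itself. Thus $i_n$ is a fibration with fibre $|\wedge W^{>n},\bar d|$. Its $\pi_k\cong[(\wedge W^{>n},\bar d),H^*(S^k;\Q)]$ vanishes for $k\le n$, simply because there are no generators in degrees $\le n$. Then:
\begin{itemize}
\item (2) is read off the long exact sequence of $i_n$;
\item (3) is your argument;
\item (4) follows from $(p_n)_*$ being injective and $(i_n)_*$ surjective on $\pi_{n+1}$, together with $\mathrm{Ker}\,\partial=\mathrm{Im}\,|\iota|_*=\mathrm{Im}\,(p_n)_*$.
\end{itemize}

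\textbf{Comparison.} The paper needs only vanishing statements, not the positive and natural identification $\pi_k\cong\mathrm{Hom}(W^k,\Q)$. Since the fibre of $i_n$ is simply connected for $n\ge 1$, the issue you flag as the main obstacle never arises: the Malcev-type $\pi_1$ when $W^1\neq 0$ or $V^1\neq 0$. Your hedge ``at least where the fibre is nilpotent'' is therefore unnecessary there. What your route buys is extra information: it exhibits $|\wedge W|\to|\wedge W^{\le n}|$ as the $n$th Postnikov section of the fibre, so the tower is visibly the fibrewise Postnikov tower (compare the remark on $F(n)$ in the proof of Theorem~\ref{thm:main_II}). The cost is importing the computation of $\pi_*$ of realizations of non-simply-connected minimal algebras from \cite{FHT_II}.

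\textbf{Two small slips.}
\begin{itemize}
\item In the five-lemma step, the hypothesis actually used is injectivity of $\pi_{k-1}(F)\to\pi_{k-1}(F(n))$, not surjectivity on $\pi_{k+1}$. This injectivity also holds.
\item For $n=0$, no injectivity of $\pi_1(|\wedge V\otimes\wedge W|)\to\pi_1(|\wedge V|)$ is needed, and it fails in general. Since $W=W^{\ge 1}$, one has $|\theta(\iota)(0)|=|\wedge V|$ and $p_0=\mathrm{id}$, so (4) for $n=0$ only uses that the fibre is connected.
\end{itemize}
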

\begin{proof}
%Let $i_n$ and $p_n$ be the spatial realization of the inclusions $\theta(\iota)(n) \to \wedge V\otimes \wedge W$ and $\wedge V \to \theta(\iota)(n)$,  respectively.
To prove that $\{ |\theta(n)| \}$ is a Postnikov tower, we verify the conditions (1)--(4) in Definition\ref{defn_Postnikov}.
%\todo{Kuri (2/18) : We have to show that the condition (4) holds.}

(1) Clearly, $p_n \circ i_n = |\iota|$ for each $n$ by the naturality of the spatial realization.

(2) In the case $n=0$, $|\wedge V\otimes \wedge W|$ and $|\theta (\iota)(0)|=|\wedge V|$ are connected.
Thus, it is immediate that $(i_0)_* : \pi_0 (|\wedge V\otimes \wedge W|) \to \pi_0 (|\theta (\iota)(0)|)$ is an isomorphism.
Suppose that $n\geq 1$. By \cite[Proposition 17.9]{FHT}, we see that $i_n$ is a fibration with fiber $\left| \wedge (W^{>n}) , \bar{d} \right|$. Here $(\wedge (W^{>n}) , \bar{d})$ is the quotient of $\wedge V\otimes \wedge W$ by the ideal generated by $\left( \wedge V \otimes \wedge (W^{\leq n}) \right)^+$.
Since $(\wedge W^{>n}, \bar{d})$ is a Sullivan algebra with generators only in degrees $k > n$, it follows from \cite[\S 1.7 Theorem 1.1 (iii)]{FHT_II} and the formality of $S^n$ that
\[
\pi_k \left( \left| \wedge (W^{> n}) , \bar{d} \right| \right)
\cong
\left[
\left( \wedge (W^{> n}), \bar{d} \right),
A_{\rm PL}(S^k)
\right]
\cong\left[
\left( \wedge (W^{> n}), \bar{d} \right),
H^*(S^k ; \Q)
\right].
\]
Since the right-hand side homotopy set consists only of the trivial morphism for $k \leq n$, we have $\pi_k \left( \left| \wedge (W^{> n}) , \bar{d} \right| \right) =0$  for $k \leq n$.
Thus, the homotopy long exact sequence of homotopy groups associated with $i_n$ implies the condition (2).

(3) The proof of the condition (3) is essentially the same as that of (2).
In particular, the result  \cite[Proposition 17.9]{FHT} allows us to deduce that $p_n : |\theta (\iota)(n)| \to |\wedge V|$ is a fibration with fiber $\left|\wedge (W^{\leq n}), \bar{d}\right|$.
We also see that $\pi_k \left( \left| \wedge (W^{\leq n}) , \bar{d} \right| \right) =0$  for $k \geq n+1$ from \cite[\S 1.7 Theorem 1.1 (iii)]{FHT_II}.
% It follows from \cite[Theorem 1.1 (iii)]{FHT_II} and the formality of $S^n$ that
% \[
% \pi_k \left( \left| \wedge (W^{\leq n}) , \bar{d} \right| \right)
% \cong
% \left[
% \left( \wedge (W^{\leq n}), \bar{d} \right),
% A_{PL}(S^k)
% \right]
% \cong\left[
% \left( \wedge (W^{\leq n}), \bar{d} \right),
% H^*(S^k ; \Q)
% \right].
% \]
% Since the right-hand side homotopy set consists only of the trivial morphism for $k \geq n+1$, we have $\pi_k \left( \left| \wedge (W^{\leq n}) , \bar{d} \right| \right) =0$  for $k \geq n+1$.
By the long exact sequence of homotopy groups associated with $p_n$, we conclude that condition (3) is satisfied.

(4) Consider the commutative diagram
\[
\xymatrix@C20pt@R15pt{
0 \ar[r] & \pi_{n+1}(|\theta (\iota)(n)|) \ar[r]^-{(p_{n})_*} & \pi_{n+1}(|\wedge V|) \ar[r]^-{\partial} & \pi_{n+1}(F)
\\
& \pi_{n+1}(|\wedge V \otimes \wedge W|), \ar[u]^-{(i_{n})_*} \ar[ur]_-{|\iota|_*}  &&
}
\]
where $F$ is the fiber of $|\iota|$.
The commutativity of this diagram follows from (1).
We shall show that the top sequence in the diagram is exact.
By the same argument as in (3), we see that $(p_{n})_*$ is a monomorphism.
Similarly, we can verify that $(i_n)_*$ is an epimorphism by the same argument as in (2).
It follows from the commutativity of the diagram that
%\[
${\rm Ker} \, \partial = {\rm Im} \, |\iota|_* =   {\rm Im} ( (p_{n})_* \circ  (i_{n})_*) = {\rm Im} \, (p_{n})_*$ .
%\]
This completes the proof.
\end{proof}

% \begin{rem}
% Let $B$ and $C$ be simply connected CDGAs, that is, $B^0 = C^0 = \K$ and $B^1 = C^1 = 0$.
% Note that for any CDGA map $\varphi : B \to C$, there exists a minimal relative Sullivan model $m : B \otimes \wedge W \stackrel{\simeq}{\longrightarrow} C$ for $\varphi$ that satisfies the assumption on the differential in Proposition; see \cite[Proposition 14.3 and 14.9]{FHT} for a construction of minimal relative Sullivan models.
% \end{rem}

\begin{rem}\label{rem:finite_type}
In Proposition \ref{prop:P_tower}, we do not assume that the each component $\theta(i)$ of the persistence CDGA is relatively finite; that is, $\dim \theta(i)^n <\infty$ for $n\geq 0$. Then, it does not seem that Theorem \ref{thm:main_II} (iii) yields Proposition \ref{prop:P_tower}.
\end{rem}

%We provide examples of Postnikov towers and their Sullivan models.
We provide examples each of which is a decomposition of a map $f$ whose pointwise rationalization is a Postnikov tower for the map $f_\Q$.
These serve as specific geometric realizations of extended tame CDGAs described in Examples \ref{ex:CP_S_distance} and \ref{ex:p_q}.

\begin{ex}\label{ex:Postnikov:S_CP}
Let $i : S^2  \hookrightarrow {\mathbb C} P^2$ be the inclusion.
Recall that $\mathbb{C}P^2$ is obtained by attaching a 4-dimensional disk $D^4$ to $S^2$ via the Hopf map $h: S^3 \to S^2$.
We show that a Postnikov tower for the rationalization $i_{\Q}$ is given by
\begin{eqnarray}\label{Postnikov:S_CP}
\xymatrix@C20pt@R15pt{
{\mathbb C} P^2_{\Q} & {\mathbb C} P^2_{\Q} \ar[l]_-{=} & {\mathbb C} P^2_{\Q}  \ar[l]_-{=} & (X_3)_{\Q} \ar[l]_-{(\text{pr}_1)_{\Q}} & S^2_{\Q} \ar[l]_-{\bar{f}_{\Q}} & S^2_{\Q} \ar[l]_-{=} & \cdots , \ar[l]_-{=}
}
\end{eqnarray}
where $X_3$ is the pullback $\mathbb{C} P^2 \times_{K(\Z , 4)} PK(\Z , 4)$ of the path fibration $p : PK(\Z , 4) \to K(\Z , 4)$, which is defined by $p(\gamma)=\gamma(1)$, along a map $f:\mathbb{C} P^2 \to K(\Z , 4)$.
	The map $f: \mathbb{C}P^2 \to K(\mathbb{Z}, 4)$ is defined as the composite $g \circ q$, where $q: \mathbb{C}P^2 \to S^4$ is the quotient map that collapses $S^2$ to the base point.
	The map $g: S^4 \to K(\mathbb{Z}, 4)$ represents the fundamental class of $S^4$ under the identification $[S^4, K(\mathbb{Z}, 4)] \cong H^4(S^4; \mathbb{Z})$.
Since $f \circ i$ is the trivial map by the definition of $f$, the universal property of the pullback induces a unique map $\bar{f} : S^2 \to \mathbb{C}P^2 \times_{K(\mathbb{Z}, 4)} PK(\mathbb{Z}, 4)$ that makes the following diagram commutative:
\begin{eqnarray}\label{diag:S^2_CP}
\xymatrix@C20pt@R15pt{
S^2 \ar[rd]^-{\bar{f}} \ar@/^13pt/[rrd]^-{\text{trivial map}} \ar@/_15pt/[ddr]_-{i}& &
\\
& \mathbb{C} P^2 \times_{K(\Z , 4)} PK(\Z , 4) \ar[d]_-{\text{pr}_1} \ar[r]^-{\text{pr}_2} & PK(\Z , 4) \ar[d]^-{p}
\\
& \mathbb{C} P^2 \ar[r]^-{f=g\circ q} & K(\Z , 4).
}
\end{eqnarray}

The associated maps $\{i_n\}$ and $\{p_n\}$ in the Postnikov tower are defined by
\[
 i_n=
  \begin{cases}
    i_{\Q} & (n=0,1,2) \\
    \bar{f}_{\Q} & (n=3) \\
    id  & (n\geq 4)
  \end{cases}
\quad
\text{and}
\quad
 p_n=
  \begin{cases}
    id & (n=0,1,2) \\
    ({\rm pr}_1)_{\Q} & (n=3) \\
    i_{\Q}  & (n\geq 4).
  \end{cases}
\]
We shall verify that this tower satisfies conditions (1)--(4) of the definition of a Postnikov tower.
Condition (1) is clear from the definitions of $i_n$ and $p_n$.
Condition (2) is straightforward for $n \neq 3$.
To verify condition (2) for $n=3$ and $k \leq 3$ in the non-rational case,  %\todo{Kuri (3/12) : We pay attention to the computation of $\pi_k(X_3)$ in the non-rational case. We show that $\bar{f}_* : \pi_k(S^2) \to \pi_k(X_3)$ is an isomorphism for $k\leq 3$.}
we consider the fiber sequence
\begin{eqnarray}\label{fibration:X_3}
\xymatrix{
K({\mathbb Z} , 3) \cong \Omega K({\mathbb Z} , 4) \ar[r]
&
X_3 \ar[r]^-{{\rm pr}_1}
&
{\mathbb C} P^2.
}
\end{eqnarray}
Since $({\rm pr}_1)_* : \pi_k(X_3) \to \pi_k(\mathbb{C}P^2)$ and $i_* : \pi_k(S^2) \to \pi_k(\mathbb{C}P^2)$ are isomorphisms for $k<3$, it follows that $\bar{f}_* : \pi_k(S^2) \to \pi_k(X_3)$ is an isomorphism.

We consider the case $k=3$.
Observe that $i \circ h : S^3 \to \mathbb{C}P^2$ is null-homotopic.
We can choose a null-homotopy $H: S^3 \times I \to \mathbb{C}P^2$ defined by the composite $H = \Phi \circ \alpha$, where $\Phi: D^4 \to \mathbb{C}P^2$ is the characteristic map of the 4-cell and $\alpha: S^3 \times I \to D^4$ is given by $\alpha(u, t) = (1-t)u + tb$ for a basepoint $b \in S^3 = \partial D^4$.
By the construction of $H$, the composition $q \circ H$ induces a relative homeomorphism
\[
q\circ H : (S^3 , \{ b \})\times (I, \partial I) \stackrel{\cong }{\longrightarrow} (S^4, *).
\]
By the homotopy lifting property of the fibration ${\rm pr}_1$, there exists a homotopy $G:S^3 \times I \to X_3$ such that the following diagram commutes:
\[
\xymatrix@C25pt@R20pt{
S^3 \times \{ 0 \} \ar@{^{(}->}[d] \ar[r]^-{\bar{f}\circ h} & X_3 \ar[d]^-{{\rm pr}_1}
\\
S^3 \times I \ar[r]^-{H} \ar[ru]^-{G} & \mathbb{C}P^2.
}
\]
Explicitly, the map $G$ is given by $G(u,t)=(H(u,t), \gamma_{u,t})$, where $\gamma_{u,t} \in PK(\mathbb{Z},4)$ is the path defined by $\gamma_{u,t}(s)=g\circ q \circ H (u, ts)$ for $s\in [0,1]$.
Since $H(u,1)$ is the base point, the path $\gamma_{u,1}=g\circ q\circ H(u, \, \cdot \, )$ is contained in $\Omega K(\mathbb{Z},4)$.
This defines a map $\phi : S^3 \to \Omega K(\mathbb{Z}, 4)$ by $\phi(u) = \gamma_{u, 1}$ which makes the diagram
\[
\xymatrix@C40pt@R15pt{
S^3 \ar@/^18pt/[rr]^-{\phi} \ar[r]_-{{\rm ad}(q\circ H)}  \ar[d]_-{h} & \Omega S^4 \ar[r]_-{\Omega g}& \Omega K(\mathbb{Z},4) \ar[d]^-{\iota}
\\
S^2 \ar[rr]^-{\bar{f}} && X_3.
}
\]
commutative up to homotopy with the homotopy induced by $G$.
Here, ${\rm ad}(q\circ H)$ is the adjoint of $q\circ H$, and $\iota$ denotes the inclusion of the fiber.
Since $\pi_3(\mathbb{C}P^2) = 0$, which follows from the long exact sequence associated with the $S^1$-bundle $S^1 \to S^5 \to \mathbb{C}P^2$, it is readily seen that $\pi_3(\iota)$ is an isomorphism.
The composite $q \circ H$ gives rise to a homeomorphism $\Sigma S^3 \to S^4$. Then, we see that ${\rm ad}(q \circ H)$ is a generator of $\pi_3 (\Omega S^4) \cong \Z$.
Furthermore, since $g$ represents the fundamental class, $\Omega g$ induces an isomorphism on $\pi_3 (\Omega S^4)$. Thus, $\phi$ represents a generator of $\pi_3(\Omega K(\mathbb{Z}, 4))$, and the commutativity of the diagram implies that $\bar{f}_* : \pi_3(S^2) \to \pi_3(X_3)$ is an isomorphism.
Therefore, condition (2) is satisfied for $n=3$.

In the rational setting, it is well known that $\pi_{k}(S^2_{\mathbb Q})=0$ for $k>3$ and $\pi_{k}({\mathbb C}P^2_{\mathbb Q})=0$ for $k>5$.
The long exact sequence of the fibration \eqref{fibration:X_3} implies that $\pi_k ((X_3)_{\mathbb Q}) \cong \pi_k ({\mathbb C}P^2_{\mathbb Q})$ for $k>4$.
Furthermore, the rational homotopy group of the homotopy fiber $F$ of $i : S^2 \to \mathbb{C}P^2$ is given by
\[
\pi_k(F)\otimes \mathbb{Q} \cong
\pi_k(F_{\mathbb{Q}}) \cong
  \begin{cases}
    \mathbb{Q} & (k=3,4) \\
    0 & (\text{otherwise}) .
  \end{cases}
\]
Combining these observations, conditions (3) and (4) are readily seen to be satisfied.

We next describe the Sullivan model that corresponds to the Postnikov tower of $i_{\mathbb{Q}}$.  %\todo{Kuri (2/20): Naito will complete the assertion.}
Let ${\minmodel}(S^2)=(\wedge(x,y),d)$ and ${\minmodel}({\mathbb C}P^2) = (\wedge(x,z) , d)$ be  minimal Sullivan models of $S^2$ and ${\mathbb C}P^2$, respectively.
Their differentials satisfy $d(x)=0$, $d(y)=x^2$, and $d(z)=x^3$, where $|x|=2$, $|y|=3$, and $|z|=5$.
A Sullivan representative for $i$ is given by $i^* : \wedge (x,z) \to \wedge (x,y)$, $i^*(x)=x$ and $i^*(z)=xy$.
Then, we have a minimal relative Sullivan model $ \iota : \wedge(x, z)  \to \wedge (x,z, v,w)$  of $i^*$ via the quasi-isomorphism defined by
\[
\eta : \wedge (x, z, v,w) \stackrel{\sim}{\longrightarrow} \wedge (x,y),
\]
where  $ \eta (v)=y$,  $\eta (w) = 0$, $\eta(z) = xy$, $dv=x^2$ and $dw = xy - z$; see Remark \ref{rem:SullivanRep}.

The relative Sullivan model for $p : PK(\Z , 4 ) \to K(\Z , 4)$ is of the form
\[
(\wedge (w) , 0) \hookrightarrow (\wedge (w , v) , d),
\]
where $d(v)=w$ and $d(w)=0$; see \cite[\S 15(b)Example 2]{FHT} for the model of $K(\Z , 4)$.
By the choice of $f$, the morphism $f^* : \wedge (w) \to \wedge (x,z)$ given by $f^*(w)=x^2$ provides a Sullivan representative for $f$.
It follows from \cite[\S 15 Proposition 15.8]{FHT} that
\[
(\wedge (x,z) ,d) \otimes_{(\wedge (w) , 0)} (\wedge (w,v),d) \cong (\wedge (x,z,v),d)
\]
is a Sullivan model for $X_3={\mathbb C} P^2 \times_{K(\Z , 4)} PK(\Z , 4)$ and the inclusion $\iota_2 : (\wedge (x,z) ,d) \to  (\wedge (x,z,v),d)$ is a Sullivan representative for $\text{pr}_1$.
Moreover, we see that a morphism $\bar{f}^* : (\wedge (x,z,v),d) \to (\wedge (x,y),d)$ defined by $\bar{f}^* (x)=x$, $\bar{f}^*(z)=xy$, $\bar{f}^* (v)=y$ is a Sullivan representative for $\bar{f}$.
Indeed, the commutativity of  the lower-left triangle in the diagram \eqref{diag:S^2_CP} implies that any Sullivan representative $\varphi : (\wedge (x,z,v),d) \to (\wedge (x,y),d)$ for $\bar{f}$ must satisfy $\varphi(x)=x$.
Since $\varphi$ commutes with the differentials, it follows that $\varphi$ is uniquely determined as $\bar{f}^*$.
We also see that the inclusion $\iota_3 : (\wedge (x,z,v),d) \to (\wedge (x,z,v,w),d)$ provides a relative Sullivan model of $\bar{f}^*$ via the quasi-isomorphism $\eta$.
Therefore, the sequence
\[
\xymatrix@R0pt{
&
2
&
3
&
4
&
\\
\cdots \ar[r]^-{=}
&
(\wedge (x, z) ,d)
\ar[r]^-{\iota_2}
&
(\wedge (x,z,v),d)
\ar[r]^-{\iota_3}
&
(\wedge (x,z,v,w),d)
\ar[r]^-{=}
&
\cdots
}
\]
is a cofibrant replacement of the image of
the sequence \eqref{Postnikov:S_CP}
by $(\lfloor \ \rfloor)^* \circ A_{\rm PL}( \ )_*$,
%is a Sullivan model for the Postnikov tower \eqref{Postnikov:S_CP},
and this coincides with $\theta (\iota)$.
\end{ex}

\begin{ex}\label{ex:paths}
Let $p:PS^2 \to S^2 $ be the path fibration of $S^2$.
A Postnikov tower for the rationalization $p_{\Q}$ is given by
\begin{eqnarray}\label{Postnikov:PS^2}
\xymatrix@C40pt{
S^2_{\Q} & S^3_{\Q} \ar[l]_-{h_{\Q}} & PS^2_{\Q} \ar[l]_-{\tilde{{\rm ev}}( \, \cdot \, , 1)_{\Q}} & PS^2_{\Q} \ar[l]_-{=} & \cdots \ar[l]_-{=}
}
\end{eqnarray}
Here, $h$ is the Hopf map. In fact, since $h$ is a fiber bundle, it is a Hurewicz fibration from \cite[Chapter 7, \S 4]{May}.
We define the map $\tilde{{\rm ev}} : PS^2 \times I \to S^3$ as a lift in the following diagram
\[
\xymatrix@C50pt@R20pt{
PS^2 \times \{ 0 \} \ar[r]^-{*} \ar[d]_-{\text{incl.}} & S^3 \ar[d]^-{h}
\\
PS^2 \times I \ar[r]^-{{\rm ev}} \ar[ru]^-{\tilde{{\rm ev}}} & S^2,
}
\]
where ${\rm ev}$ in the bottom row is the evaluation map.
The associated maps $\{i_n\}$ and $\{p_n\}$ in the Postnikov tower are defined by
\[
 i_n=
  \begin{cases}
    p_{\Q} & (n=0) \\
    \tilde{{\rm ev}}( \, \cdot \, , 1)_{\Q} & (n=1) \\
    id  & (n\geq 2)
  \end{cases}
\quad
\text{and}
\quad
 p_n=
  \begin{cases}
    id & (n=0) \\
    h_{\Q} & (n=1) \\
    p_{\Q}  & (n\geq 2).
  \end{cases}
\]
It is straightforward to verify that this construction satisfies conditions (1)--(4) in Definition \ref{defn_Postnikov}.
Conditions (1) and (2) follow directly from the definition of the maps. In particular, condition (2) for $n=1$ is satisfied since both $S^3_{\mathbb{Q}}$ and $PS^2_{\mathbb{Q}}$ are simply connected.
We also see that, in the rational setting, conditions (3) and (4) are immediately satisfied since $PS^2_{\mathbb{Q}}$ is contractible and the homotopy groups $\pi_k(S^2_{\mathbb{Q}})$ and $\pi_k(S^3_{\mathbb{Q}})$ vanish for $k > 3$.

%The algebraic description of this example will be provided in \S \ref{ex:p_q}.

It follows that a Sullivan representative for the Hopf map $h:S^3 \to S^2$ is given by $h^* : (\wedge (x,y),d) \to (\wedge (e), 0)$, $h^*(x)=0$, $h^*(y)=e$,  where $|e|=3$.
For the relative Sullivan algebra $\iota_1 : (\wedge (x,y), d) \to (\wedge (x,y,z) , d) $, a quasi-isomorphism
\[
\eta : (\wedge (x,y,z) , d) \stackrel{\simeq}{\longrightarrow} (\wedge (e), 0 )
\]
defined by $\eta (x)=0$, $\eta (y)=e$, $\eta (z)=0$ provides a relative Sullivan model for $h^*$.
Finally, it suffices to consider a model for $\tilde{{\rm ev}}( \, \cdot \, , 1)$.
It is readily seen that the inclusion $\iota_2 : (\wedge (x,y,z), d) \to (\wedge (x,y,z,w), d)$ is a relative Sullivan model for $\tilde{{\rm ev}}( \, \cdot \, , 1)$.
Indeed, since $(\wedge (x,y,z,w), d)$ is acyclic, any CDGA morphism to $(\wedge (x,y,z,w), d)$ is null-homotopic.
Therefore, the sequence
\[
\xymatrix@R0pt{
0
&
1
&
2
&
\\
(\wedge (x, y) ,d)
\ar[r]^-{\iota_1}
&
(\wedge (x,y,z),d)
\ar[r]^-{\iota_2}
&
(\wedge (x,y,z,w),d)
\ar[r]^-{=}
&
\cdots
}
\]
is a Sullivan model for the Postnikov tower \eqref{Postnikov:PS^2}, and this coincides with $\theta (\iota)$.
\end{ex}

\begin{rem}
Let $P$ be the sequence whose pointwise rationalization is the tower (\ref{Postnikov:S_CP}). Then, it follows from the latter half of
Example \ref{ex:Postnikov:S_CP} that $A_{\rm PL}( \ )_* (P) \cong \Theta (i : S^2  \hookrightarrow {\mathbb C} P^2)$ in
${\rm Full}(\mathsf{et(\mathcal{D}_0^{\R_+}}))$. Then, we see that the sequence ${\bf L}\langle \ \rangle_*(\Theta (i))$
is isomorphic to the tower $(\ref{Postnikov:S_CP})$ and then $(\ref{Postnikov:S_CP})$ is a Postnikov tower for $i_\Q$.
In Example \ref{ex:Postnikov:S_CP}, we prove the fact with explicit constructions of structure maps of the tower.
In particular, we pay attention to show that $\bar{f}_* : \pi_k(S^2) \to \pi_k(X_3)$ is an isomorphism for $k\leq 3$ in the non-rational case.

In the same way, it is proved that the sequence (\ref{Postnikov:PS^2}) is a Postnikov tower for $p_\Q$ in Example \ref{ex:paths}.
\end{rem}

%%%%%%%%%%%%%%%%%%%%%%%%%%%%%%%%%%%%%%%%%%%%%%%%%%%%%%%%%

\section{The interleaving distance between maps in the homotopy category}\label{sect:Interleavings_maps}

We introduce a bivariate homotopy invariant for maps with the functor
$\Theta$ in Theorem \ref{thm:main} and
the interleaving distance in the homotopy category $\text{Ho}(\et{\mathsf{CDGA}})$.
%\todo{We first recall the definitions of an interleaving and the interleaving distance.}

\subsection{A pseudodistance on the homotopy set of maps}
For maps $f: X\to A$ and $g: Y \to B$ between path-connected spaces, we consider the interleaving distance
$
d_{\text{IHC}}(\Theta(f), \Theta(g))
$
in the homotopy category $\text{Ho}(\et{\mathsf{CDGA}})$.
We remark that $\Theta(f)$ is a cofibrant object in the category of extended tame persistence CDGAs; see Proposition \ref{prop:cofibrantOb}.
Thanks to this fact, we can compute the interleaving distance between $\Theta(f)$ and $\Theta(g)$ with homotopy.
%\todo{For $f,g:X\to Y$ and $h:Y\to Z$, is it true that $d_{IHC}(f,g)\geq d_{IHC}(h\circ f,h\circ g)$ in general ? Then $h_*:[X,Y]\to [X,Z]$ is continuous by persistent topology. If not, find a counter example !(Yam)}

%Let $p\mathsf{Ch}^*_\Q$ be the category of persistence cochain complexes over $\Q$ endowed with the projective model structure.
%Then, we see that \todo{Kuri : We have to check the assertion. (2/9): Do we need this comment?}
%the forgetful functor $U : \mathsf{CDGA} \to \mathsf{Ch}^*_\Q$ induces the functor $U_* : \text{Ho}(\et{\mathsf{CDGA}}) \to \text{Ho}(p\mathsf{Ch}^*_\Q)$.
%Therefore, the interleaving distance on $\text{Ho}(p\mathsf{Ch}^*_\Q)$ gives a lower bound for the distance $d_{\text{IHC}}$ on
%$\text{Ho}(\et{\mathsf{CDGA}})$.

While the proofs of the following results are standard, the assertions are fascinating.

\begin{prop}\label{prop:ID_equivalences} Let $f : X\to A$ and $g:Y \to B$ be maps between simply-connected spaces.

{\rm (i)} %Let $f : X\to A$ and $Y \to B$ be in $\text{\em Func}(I, \mathsf{Top})$. Then,
$d_{\text{\em IHC}}(\Theta(f), \Theta(g)) = 0$ if and only if  the rationalizations of maps $f$ and $g$ are isomorphic in
$\text{\em Func}(I, \text{\em Ho}(\mathsf{Top}))$; that is, one has a homotopy commutative diagram
\[
\xymatrix@C30pt@R12pt{
X_\Q \ar[r]^{f_\Q} \ar[d]_h^\simeq & A_\Q \ar[d]^k_\simeq\\
Y_\Q \ar[r]_{g_\Q} & B_\Q
}
\]
in $\mathsf{Top}$ in which vertical arrows are homotopy equivalences.

{\rm (ii)} %\todo{(12/16) `finite type' is removed. }
%For maps $f : X\to A$ and $Y \to B$,
 Suppose that $d_{\text{\em IHC}}(\Theta(f), \Theta(g)) < \infty$. Then,
the rational homotopy type of $X$ coincides with that of $Y$.
\end{prop}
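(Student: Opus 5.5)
The plan is to derive (ii) from a colimit argument and (i) from the integer-discreteness of $\Theta(f)$. The tools are the Quillen adjunction $(\mathrm{colim}, c)$ with its derived functor ${\bf L}(\mathrm{colim})$ from Section \ref{sect:pCDGAs}, and the Sullivan--de Rham equivalence in the form of Theorem \ref{thm:RHT}.

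\emph{Part (ii).} Suppose $\Theta(f)$ and $\Theta(g)$ are $\e$-interleaved in $\mathrm{Ho}(\et{\mathsf{CDGA}})$ for some finite $\e$.
\begin{enumerate}
\item Apply ${\bf L}(\mathrm{colim})$ to the interleaving diagram (\ref{eq:interleaving_1}). Since $T_\e$ is cofinal in $\R_+$, there is a natural isomorphism ${\bf L}(\mathrm{colim})(F^\e)\cong {\bf L}(\mathrm{colim})(F)$.
\item Under this identification, the structure map $F\to F^{2\e}$ goes to the identity. Hence the images of $\varphi$ and $\psi$ are mutually inverse isomorphisms in $\mathrm{Ho}(\mathsf{CDGA})$.
\item By Remark \ref{rem:cofibrantOb}, $\Theta(f)$ is cofibrant. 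So ${\bf L}(\mathrm{colim})\Theta(f)=\mathrm{colim}_n\,\wedge V_A\otimes\wedge W^{\leq n}=\wedge V_A\otimes \wedge W$, which is a Sullivan model of $X$; this is the same computation as in Proposition \ref{prop:0_to_1}.
\item Thus $X$ and $Y$ have weakly equivalent Sullivan models. Since they are simply connected, they have the same rational homotopy type.
\end{enumerate}

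\emph{Part (i), ``only if''.} Take $0<\e<1/2$ with an $\e$-interleaving. Restriction along $\Z_+\subset\R_+$ preserves weak equivalences, so it descends to homotopy categories. Because $\Theta(f)=(\lfloor\ \rfloor)^*\theta(\iota_f)$ is constant on $[n,n+1)$, the restriction of $\Theta(f)^\e$ is literally $\theta(\iota_f)$. The same holds for $g$, and the structure map $F\to F^{2\e}$ restricts to the identity.
\begin{enumerate}
\item The restricted $\varphi,\psi$ are therefore inverse isomorphisms $\theta(\iota_f)\cong\theta(\iota_g)$ in the homotopy category of $\Z_+$-sequences.
\item Evaluate this isomorphism at level $0$ and pass to the colimit, using the natural map from level $0$ to the colimit. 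This yields a square
\[
\wedge V_A\to\wedge V_A\otimes\wedge W \quad\text{versus}\quad \wedge V_B\to \wedge V_B\otimes\wedge W',
\]
commuting in $\mathrm{Ho}(\mathsf{CDGA})$ with isomorphisms as vertical arrows.
\item Apply the realization functor $\langle\ \rangle$. By Theorem \ref{thm:RHT} and Remark \ref{rem:CDGA_sSet}(ii), this gives a homotopy commutative square of rationalizations $f_\Q$, $g_\Q$ whose vertical maps are homotopy equivalences.
\end{enumerate}
The point I expect to need the most care is the first step: the restricted $\varphi$ and $\psi$ must compose to the identity as morphisms in the homotopy category of sequences, not merely pointwise. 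This holds because the restriction of the whole diagram (\ref{eq:interleaving_1}) is a diagram in that homotopy category.

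\emph{Part (i), ``if''.} The approach is to promote the homotopy commutative square of rationalizations to an isomorphism of minimal relative models.
\begin{enumerate}
\item Note that $\Theta(f)\cong\Theta(f_\Q)$, since a minimal relative Sullivan model of $f$ is also one for $f_\Q$.
\item Apply the construction (\ref{eq:two_squares}) from the proof of Theorem \ref{thm:main} to the homotopy commutative square, together with its homotopy inverse. This produces morphisms $(\varphi,\psi)$ of minimal relative Sullivan algebras in which $\varphi$ and $\psi$ are quasi-isomorphisms.
\item Since $\wedge V_A$ and $\wedge V_B$ are minimal, $\varphi$ is an isomorphism. Then $\psi$ is an isomorphism by \cite[Proposition 3.10]{FHT_II}, which is the same argument as at the end of the proof of Theorem \ref{thm:main_II}.
\item Because $\psi$ preserves degrees, it restricts to isomorphisms $\wedge V_A\otimes\wedge W^{\leq n}\cong\wedge V_B\otimes\wedge W'^{\leq n}$ for every $n$. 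This gives $\Theta(f)\cong\Theta(g)$, i.e.\ a $0$-interleaving, so the distance is $0$.
\end{enumerate}
An alternative route for this direction is to use Theorem \ref{thm:main_II}: the homotopy equivalence of rationalized maps induces an isomorphism of rationalized Moore--Postnikov towers, hence $\Theta(f)\cong\Theta(g)$.

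The main obstacle is the second step of the ``if'' direction: turning a merely homotopy commutative square into a strict morphism of relative models that is an isomorphism. This is where Halperin's relative homotopy theory, \cite[9.19 Theorem]{Halperin}, carries the weight.
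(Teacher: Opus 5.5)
Your proposal is correct. Part (i) is essentially the paper's argument; part (ii) takes a genuinely different route.

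\emph{Part (i).} For ``only if'', the paper also uses a $\delta$-interleaving with $2\delta<1$ to get an isomorphism $\theta(i_f)\cong\theta(i_g)$, then compares level $0$ with the colimit and realizes. The only difference is that the paper picks a strict representative $\varphi\colon\theta(i_f)\to\theta(i_g)$, which exists because $\Theta(f)$ is cofibrant and every object is fibrant. You stay in the homotopy category of $\Z_+$-sequences instead, which is legitimate because sequential colimits of CDGAs preserve quasi-isomorphisms.

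For ``if'', the paper likewise uses minimality to make the base map an isomorphism. It then concludes that the quasi-isomorphism of minimal relative models over it is an isomorphism, and restricts it degreewise. The paper gets the strictly compatible total map by replacing $\psi$ with a homotopic extension of $\iota_f\circ\varphi$ via \cite[Proposition 2.22]{FOT}. You instead appeal to (\ref{eq:two_squares}), which is set up for strictly commuting squares. So you must first strictify your homotopy commutative square, e.g.\ turn $g_\Q$ into a fibration and move $h$ within its homotopy class. The homotopy inverse is unnecessary: $\psi$ is already a quasi-isomorphism because $h$ is a homotopy equivalence.

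\emph{Part (ii).} The paper takes strict representatives $\varphi(i)$ and forms $\Phi=\mathrm{colim}_i\,\varphi(i)$. It then checks surjectivity and injectivity of $H^n(\Phi)$ by hand, using the interleaving identities in cohomology and the stabilization $H^n(\wedge V\otimes\wedge Z^{\le n+1})\cong H^n(\wedge V\otimes\wedge Z)$.

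You instead push the whole interleaving through ${\bf L}(\mathrm{colim})$ and use finality of $T_\e$. This works once you spell out two points:
\begin{enumerate}
\item Use the natural isomorphism ${\bf L}(\mathrm{colim})\Rightarrow{\bf L}(\mathrm{colim})\circ(\ )^\e$ induced by the structure maps. This is what lets you identify ${\bf L}(\mathrm{colim})(\psi^\e)$ with ${\bf L}(\mathrm{colim})(\psi)$.
\item Shifts of cofibrant objects remain cofibrant, by Proposition \ref{prop:cofibrantOb}(ii).
\end{enumerate}

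Your version is shorter and more general: any two objects at finite $d_{\rm IHC}$ have isomorphic derived colimits, with no use of the degree filtration of $\Theta(f)$. The paper's version is elementary, avoids derived functors, and exhibits an explicit quasi-isomorphism between the total spaces of the relative models.

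Both versions, like the final ``realize the maps'' step in (i), tacitly need the finite $\Q$-type hypothesis of Theorem \ref{thm:RHT}, which the statement does not impose. This caveat applies equally to the paper.
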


\begin{proof} (i)  We use the same notation as in Remark \ref{rem:SullivanRep}. Let $\varphi_f$ and $\varphi_g$ be Sullivan representatives for $f$ and $g$, respectively. We choose minimal relative Sullivan algebras $i_f$ and $i_g$ in Remark \ref{rem:SullivanRep}.

The ``only if'' part is proved.
 For any $\delta >0$, the persistence CDGAs $(\lfloor \ \rfloor)^*\theta(i_f)$ and $(\lfloor \ \rfloor)^*\theta(i_g)$ are $\delta$-interleaved in the homotopy category and then $\theta(i_f)$ and $\theta(i_g)$ are $0$-interleaved in the homotopy category. %\todo{Consider the case where $\delta <\frac{1}{2}$. }
Therefore, we have a morphism  $\varphi : \theta(i_f) \to \theta(i_g)$ of persistence CDGAs, which is a weak equivalence.  The morphism gives rise to a commutative diagram
 % \todo{Kuri (3/26): We use $\theta(i_f)(i)$ but not $\theta(i_f)_i$ }
\begin{eqnarray}\label{eq:rational_type}
\xymatrix@C20pt@R20pt{
\theta(i_f)(0) \ar[r] \ar[d]_{\varphi(0)}  \ar@/^1.5pc/[rr]_{\varphi_f}& \text{colim}_n\theta(i_f)(n) \ar[d]^{\Phi:=\text{colim}\varphi(n)} \ar[r]^-{\eta_f}_-\simeq & \wedge V_X \ar@{.>}[d]^\psi & \hspace{-0.8cm}= \minmodel(X) \\
\theta(i_g)(0) \ar[r]  \ar@/_1.5pc/[rr]^{\varphi_g}& \text{colim}_n\theta(i_g)(n)  \ar[r]_-{\eta_g}^-\simeq & \wedge V_Y &  \hspace{-0.8cm}= \minmodel(Y)
}
\end{eqnarray}
of solid arrows in $\mathsf{CDGA}$ in which $\varphi(0)$ and $\Phi$ are isomorphisms on the rational cohomology.
%\todo{Naito(3.30) : $\varphi(0)$ instead of $\varphi_0$?}
By using homotopy inverses $\xi_f$ and $\xi_g$ of $\eta_f$ and $\eta_g$, respectively, we have a dots arrow $\psi$ with $\Phi\circ \xi_f \simeq \xi_g \circ \psi$. Then, we see that the right square is homotopy commutative.
%We may replace the right-hand side CDGAs with their minimal models.
By realizing the maps, % and the homotopies in $\mathsf{CDGA}$ which give the interleavings,
we have the result.

The ``if'' part follows from the fact that $\theta(i_f)$ is isomorphic to $\theta(i_g)$. In fact, in the diagram (\ref{eq:two_squares}), the small square with $\gamma'$ is homotopy commutative. Then, we see that the big square with $\eta$ is homotopy commutative. It follows from
\cite[Proposition 12.9]{FHT} that the big diagram itself is homotopy commutative. Observe that $\eta$ is a quasi-isomorphism.
The result \cite[Proposition 2.22]{FOT} makes the big diagram commutative replacing $\psi$ with an extension $\widehat{i_f\varphi}$ of
$i_f\circ \varphi$ which satisfies the condition that $\widehat{i_f\varphi}\simeq \psi$. The map $\psi$ is a quasi-isomorphism and hence so is
$\widehat{i_f\varphi}$.
Moreover, the map $\varphi$ is an isomorphism and two relative Sullivan algebras
$\wedge V\otimes \wedge W$ and  $\wedge V'\otimes \wedge W'$ are minimal. Therefore, it follows from \cite[Theorem 14.11]{FHT} that $\widehat{i_f\varphi}$ is an isomorphism.
We have $\Theta'(f) \cong \Theta'(g)$ as a persistence CDGA.

We observe that the equivalence between left and right homotopies in the proof above is used.

(ii)
%The finiteness of the relative Sullivan models for the maps enables us to deduce that the one of the interleavings which give the finite interleaving distance is a quasi-isomorphism between Sullivan models for $X$ and $Y$. This completes the proof.
%A proof without assuming the finiteness:
%\todo{Kuri (12/15): This is an answer to Yamaguchi's question on the finiteness condition.}
Let $\iota_f : \wedge V \to \wedge V\otimes \wedge W$ and $\iota_g : \wedge V \to \wedge V\otimes \wedge Z$ be relative Sullivan models for $f$ and $g$, respectively.
Let $(\varphi, \psi)$ be an $\e$-interleaving between $\theta(\iota_f)$ and
$\theta(\iota_g)$. Then, the morphisms $\varphi(i) : \wedge V\otimes \wedge W^{\leq i} \to  \wedge V\otimes \wedge Z^{\leq i+\e}$ of CDGAs induce
a morphism $\Phi :  \wedge V\otimes \wedge W \to  \wedge V\otimes \wedge Z$ of CDGAs. Observe that $\Phi\circ i_k = j_{k+\e} \circ \varphi(k) : \wedge V\otimes \wedge W^{\leq k} \to \wedge V\otimes \wedge Z$, where $i_k$ and $j_{k+\e}$ are the canonical inclusions. %We show that $\Phi$ is a quasi-isomorphism.

%\todo{Naito(3.30) : Mixing $H^*(\Phi)$ and $\Phi_:$ here.}
We show the surjectivity of $\Phi_*$. For an element $x$ in $H^n(\wedge V\otimes \wedge Z)$, there exists an element $u$ in
$H^n(\wedge V\otimes \wedge Z^{\leq n+1})$ such that $(j_{n+1})_*(u)= x$.
Let  $\alpha$ be the element $(i_{n+1+\e})_*\circ \psi(n+1)_*(u)$ in $H^n(\wedge V\otimes \wedge W)$.
Then, we see that
\begin{align}
\Phi_*(\alpha) &= (\Phi_* \circ  (i_{n+1+\e})_*\circ \psi(n+1)_*)(u) \\
&  =  ((j_{n+1+2\e})_* \circ \varphi(n+1+\e)_* \circ \psi(n+1)_*)(u) \\
& = ( ( j_{n+1+2\e})_* \circ \iota_g(n+1 < n+1+2\e)_* )(u) = ( j_{n+1})_*(u)=x.
\end{align}

The injectivity of $\Phi_*$ is proved as follows. Suppose that $\Phi_*(y) = 0$ for an element $y \in H^n(\wedge V\otimes \wedge W)$. Then, there exists an element $v$ in
$H^n(\wedge V\wedge W^{\leq n+1})$ such that $(i_{n+1})_*(v)= y$. The definition of $\Phi$ enables us to deduce that
$\Phi_*\circ (i_{n+1})_* = (j_{n+1+\e})_*\circ \varphi(n+1)_*$.  The map $(j_{n+1+\e})_*$ is an isomorphism on the cohomology $H^n(\wedge V\otimes \wedge Z^{\leq n+1+\e})$. Thus, we have $\varphi(n+1)_*(v)= 0$. This yields that
\begin{align}
0 &=((i_{n+1+2\e})_*\circ \psi(n+1+\e)_*\circ \varphi(n+1)_*)(v) \\
 &= ((i_{n+1+2\e})_*\circ \iota_f(n+1 < n+1+2\e)_* )(v) = (i_{n+1})_*(v)= y.
 \end{align}
As a consequence, we see that the morphism $\Phi$ is a quasi-isomorphism. The same argument as in (i) with the right-hand square in (\ref{eq:rational_type})
allows us to obtain the result.
\end{proof}

%\begin{rem}
%The circumstance in Proposition \ref{prop:ID_equivalences} (ii) is similar as in the category of $\R^n$-valued functions; see \cite[Section 5.1 and Remark 5.1]{L}.
%\end{rem}

When considering the distance in $\text{Ho}(\et{\mathsf{CDGA}})$ via the functor $\Theta$, our interest goes to maps whose domains coincide.

\begin{thm}\label{thm:Ho_sets}
The function $d : [X, Y] \times [X, Y] \to \R_{\geq 0}\cup \{\infty\}$ defined by $d([f], [g]) = d_{\text{\em IHC}}(\Theta(f), \Theta(g))$
is a well-defined pseudodistance on $[X, Y]$. Assume further that $X$ and $Y$ are simply connected. Then, $d([f], [g])=0$ if and only if there exist  self homotopy equivalences $\alpha$ and $\beta$ on $X_\Q$ and $Y_\Q$, respectively,  such that $\beta \circ f_\Q  \simeq  g_\Q \circ \alpha$.
\end{thm}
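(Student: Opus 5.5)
Well-definedness comes first. If $f \simeq f'$, then $A_{\rm PL}(f)$ and $A_{\rm PL}(f')$ are homotopic CDGA maps. A minimal relative Sullivan model $\iota_f$ for $f$ is then also one for $f'$, because the square (\ref{eq:A_{PL}}) commutes up to homotopy and can be strictified by \cite[Proposition 2.22]{FOT}, as in Remark \ref{rem:SullivanRep}. Alternatively, since $(1_X,1_Y)$ together with a homotopy gives an isomorphism in $\text{Ho}$ between the arrows, Theorem \ref{thm:main} and the uniqueness in Remark \ref{rem:iso} give $\Theta(f)\cong\Theta(f')$ in $\text{Ho}(\et{\mathsf{CDGA}})$. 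Since $d_{\rm IHC}$ depends only on isomorphism classes in the homotopy category (an isomorphism is a $0$-interleaving, and interleavings compose), $d$ is well defined on $[X,Y]$. The pseudodistance axioms then follow directly: $d([f],[f])=0$ via the identity, symmetry holds because the definition of an interleaving is symmetric, and the triangle inequality is Remark \ref{rem:d_{IHC}_and_d_{HC}}.

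For the characterization of $d([f],[g])=0$ in the simply connected case, I will invoke Proposition \ref{prop:ID_equivalences} (i) with $A=B=Y$. It says that $d=0$ if and only if there are homotopy equivalences $h: X_\Q\to X_\Q$ and $k: Y_\Q\to Y_\Q$ with $k\circ f_\Q\simeq g_\Q\circ h$. Setting $\alpha=h$ and $\beta=k$ gives exactly the asserted statement, since here the domains and codomains coincide, so $h$ and $k$ are self homotopy equivalences. For the converse, given such $\alpha$ and $\beta$, the homotopy commutative square is precisely the hypothesis of the ``if'' part of Proposition \ref{prop:ID_equivalences} (i).

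The main subtlety I expect is matching the data in Proposition \ref{prop:ID_equivalences} (i), which is phrased through isomorphism in $\text{Func}(I,\text{Ho}(\mathsf{Top}))$, with the self-equivalence formulation. In particular, I must make sure that the homotopies live on the rationalizations, and that a homotopy commutative square of rational maps yields a relative-homotopy-commutative diagram of minimal models. To handle this, I would pass to Sullivan representatives via the equivalence between the rational homotopy category and minimal Sullivan algebras of finite type, applying \cite[Proposition 12.9]{FHT} as in the proof of Proposition \ref{prop:ID_equivalences}.
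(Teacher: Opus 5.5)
Your proposal is correct and follows the paper's route. Well-definedness comes from homotopic maps having isomorphic $\theta(\iota_f)$. The paper's Lemma~\ref{lem:well-definedness} builds a comparison map by lifting and then strictifies it with \cite[Proposition 2.22]{FOT} and minimality, while you strictify the model square itself and quote uniqueness of minimal relative models, so the ingredients are the same. The zero-distance characterization is Proposition~\ref{prop:ID_equivalences}~(i) with $A=B=Y$, as in the paper.

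Drop your ``alternatively'' argument, though. $\Theta$ in Theorem~\ref{thm:main} is only functorial for strictly commuting squares, so $(1_X,1_Y)$ together with a homotopy is not a morphism in $\text{Func}(I,\mathsf{Top}_0)$. Homotopy invariance is exactly what your primary argument, or the paper's lemma, has to supply.
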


The following lemma is used in proving the theorem above.

%\begin{cor}
%For elements $f$ and $g$ in $\pi_n(X)$ with $d_{\text{\em IHC}}(\Theta(f), \Theta(g))=0$, one has $f_\Q = r \cdot g_\Q$ for some nonzero $r \in \Q$
%under the identification $\pi_n(X_\Q) \cong \text{\em Hom}(V_X, \Q)$.
%\end{cor}

%\begin{proof}
%A rational model $\psi$  for the self homotopy equivalence $\alpha$ in Theorem \ref{thm:Ho_sets} is an isomorphism. Then $\psi$  assigns $rv$ to a generator $v \in (V_{S^n})^n$ with some nonzero $r \in \Q$.  We have the result.
%\end{proof}

\begin{lem}\label{lem:well-definedness} Suppose that $f  : X \to Y$ is homotopic to $g : X \to Y$. Then, the persistence object $\theta(\iota_f)$ is isomorphic to $\theta(\iota_g)$, where $\iota_f$ and $\iota_g$ are minimal relative Sullivan models for $f$ and $g$, respectively.
\end{lem}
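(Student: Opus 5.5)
The plan is to show that homotopic maps have a common Sullivan representative. Then the remark on Sullivan representatives (Remark \ref{rem:SullivanRep}) and the uniqueness of minimal relative Sullivan algebras up to isomorphism (Remark \ref{rem:iso}, i.e.\ \cite[4.6 Theorem]{Halperin} or \cite[Theorem 14.12]{FHT}) give an isomorphism of relative Sullivan algebras. Finally, I would check that this isomorphism respects the degree filtration, so that it induces an isomorphism $\theta(\iota_f)\cong\theta(\iota_g)$ of persistence CDGAs.

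First, fix minimal Sullivan models $m_X:\wedge V_X\to A_{\rm PL}(X)$ and $m_Y:\wedge V_Y\to A_{\rm PL}(Y)$. Let $\varphi_f,\varphi_g:\wedge V_Y\to\wedge V_X$ be Sullivan representatives of $f$ and $g$, so that $m_X\varphi_f\simeq A_{\rm PL}(f)m_Y$ and similarly for $g$. Since $f\simeq g$, the maps $A_{\rm PL}(f)$ and $A_{\rm PL}(g)$ are homotopic as CDGA maps; this can be seen via $X\times I$ and the quasi-isomorphisms $A_{\rm PL}(X\times I)\to A_{\rm PL}(X)$ given by the two end inclusions. Hence $m_X\varphi_f\simeq m_X\varphi_g$. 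Because $m_X$ is a quasi-isomorphism and $\wedge V_Y$ is a Sullivan algebra, composition with $m_X$ induces a bijection on homotopy classes \cite[Proposition 12.9]{FHT}, and therefore $\varphi_f\simeq\varphi_g$.

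Next, take the minimal relative Sullivan algebra $i_f:\wedge V_Y\to\wedge V_Y\otimes\wedge W$ with a quasi-isomorphism $\eta_f$ satisfying $\eta_f i_f=\varphi_f$. Since $\varphi_g\simeq\varphi_f=\eta_f i_f$ and $\eta_f$ is a quasi-isomorphism, \cite[Proposition 2.22]{FOT} lets us replace $\eta_f$ by a homotopic quasi-isomorphism $\eta'$ with $\eta' i_f=\varphi_g$. Thus the same $i_f$ is also a minimal relative Sullivan model for $\varphi_g$, and by Remark \ref{rem:SullivanRep} it serves as a model fitting into diagram (\ref{eq:A_{PL}}) for $g$. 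By uniqueness of minimal relative Sullivan models, $\iota_g$ is isomorphic to $i_f$ relative to $\wedge V_Y$, and likewise $\iota_f$ is isomorphic to $i_f$. Composing gives an isomorphism $\Psi:\wedge V_Y\otimes\wedge W\to\wedge V_Y\otimes\wedge W'$ under $\wedge V_Y$.

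The step I expect to be the main obstacle is ensuring that $\Psi$ sends $\wedge V_Y\otimes\wedge W^{\leq n}$ onto $\wedge V_Y\otimes\wedge W'^{\leq n}$, with the base preserved. This holds because $\Psi$ preserves degree and is the identity on $\wedge V_Y$, so $\Psi(W^{\leq n})\subset\wedge V_Y\otimes\wedge W'^{\leq n}$, and the same argument applies to $\Psi^{-1}$. This is the same degree argument as in Lemma \ref{lem:relativeH}. Consequently $\Psi$ restricts levelwise to isomorphisms commuting with the inclusions, which gives $\theta(\iota_f)\cong\theta(\iota_g)$.
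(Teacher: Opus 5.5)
Your proof is correct and uses the same ingredients as the paper's proof. These are: cancelling a quasi-isomorphism on homotopy classes out of a Sullivan algebra (your $X\times I$ argument gives exactly the relation $A_{\rm PL}(f)\circ m_Y\simeq A_{\rm PL}(g)\circ m_Y$ that the paper takes from \cite[Proposition 12.6]{FHT}); rigidifying a homotopy over $\wedge V_Y$ via \cite[Proposition 2.22]{FOT}; and the fact that a quasi-isomorphism of minimal relative Sullivan algebras under the base is an isomorphism. The only difference is packaging: the paper rigidifies a lift of $w$ through $w'$ and then cites \cite[Theorem 14.11]{FHT} directly, whereas you rigidify $\eta_f$ so that $i_f$ models both $f$ and $g$, and then quote the uniqueness theorem, which contains that same argument.

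Your degree argument is also sound: an isomorphism over $\wedge V_Y$ carries $\wedge V_Y\otimes\wedge W^{\leq n}$ onto $\wedge V_Y\otimes\wedge W'^{\leq n}$ because all generators have non-negative degree. This supplies the step the paper leaves implicit when it concludes $\theta(\iota_f)\cong\theta(\iota_g)$.
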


\begin{proof} We consider a diagram of solid arrows
\begin{eqnarray*}
\xymatrix@C60pt@R15pt{
 %& & \wedge V_Y \otimes \wedge W' \ar@/^1.5pc/[ddl]^{w'}_{\sim} \\
\wedge V_Y \ar[r]^-{\iota_f} \ar[d]_u^{\sim} \ar@/^1.5pc/[rr]^{\iota_g}& \wedge V_Y \otimes \wedge W \ar[d]^w_{\sim} \ar@{.>}[r]^\varphi&  \wedge V_Y \otimes \wedge W' \ar@/^1.0pc/[dl]^{w'}_{\sim} \\
A_{\rm PL}(Y)  \ar[r]_{A_{\rm PL}(f), \ A_{\rm PL}(g)} & A_{\rm PL}(X) &
}
\end{eqnarray*}
in which $A_{\rm PL}(f) \circ u \simeq A_{\rm PL}(g)\circ u$; see \cite[Proposition 12.6]{FHT}. The Lifting lemma \cite[Lemma 12.4]{FHT} allows us to obtain the map
$\varphi$ with $w' \circ \varphi \simeq w$. Observe that $\varphi$ is a quasi-isomorphism.
We see that $w'\circ (\varphi \circ \iota_f) \simeq w'\circ \iota_g$ and then
$\varphi \circ \iota_f \simeq \iota_g$.  The result \cite[Proposition 2.22]{FOT} implies that $\widehat{\iota_g} \circ \iota_f = \iota_g$ for some $\widehat{\iota_g}$ with $\widehat{\iota_g} \simeq \varphi$. Since $\widehat{\iota_g}$ is a quasi-isomorphism, it follows from \cite[Theorem 14.11]{FHT} and the minimality of  $\iota_f$ and $\iota_g$ that $\widehat{\iota_g}$ is an isomorphism and hence $\theta(\iota_f)$ is isomorphic to $\theta(\iota_g)$.
\end{proof}

\begin{proof}[Proof of Theorem \ref{thm:Ho_sets}] The well-definedness follows from  Lemma \ref{lem:well-definedness}.
Proposition \ref{prop:ID_equivalences} (i) implies the latter half of the assertion.
\end{proof}

%\begin{prop}
%For maps $f:X\to Y$, $g:X\to Y$ and  a map $h:Y\to Z$
%in simply connected spaces,
 %  $d_{\text{IHC}}(\Theta(h\circ f),\Theta(h\circ g))\leq d_{\text{IHC}}(\Theta(f),\Theta(g))$.
%\todo{Yam (1/11): It is proved inductively
 %    by Prop 10.4 of Griffiths-Morgan[13].
  %   See Good Note for the proof.}
%\end{prop}

%\begin{proof}
%If $\Theta(f)$ and $\Theta(g)$ are  $\epsilon$-interleaved by natural transformations $\psi$ and $\varphi$, then  $\Theta(h\circ f)$
%and $\Theta(h\circ g)$ are so by the same $\psi$ and $\varphi$
%since $M(Z)$ is a sub-DGA of $M(Y)$.
%\end{proof}

%\begin{cor}\todo{Yam (1/11)}
%When $X$, $Y$ and $Z$ is simply connected, the induced map by $h:Y\to Z$,   $h_*:[X,Y]\to [X,Z]$ is continuous.
%\end{cor}

\subsection{Interleaving distance to \(\Theta(id)\)}
\label{sect:IDtoTheta(id)}
% Let \(f\colon X\to Y\) be a map between simply connected spaces.
In this subsection, we investigate the distance
\(d_{\rm IHC}(\Theta(f\colon X\to Y), \Theta(id\colon X\to X))\) provided $f: X\to Y$ is relatively finite in the sense in Section \ref{sect:relativeSullivan_pCDGA}.
As a consequence, the distance is determined under some assumption. %\todo{Kuri(2/21) : Here is  `finite type'. }

Before investigating this distance, we recall the linear part of a Sullivan algebra and its basic properties, which will be used below.
For a Sullivan algebra $(\wedge V,d)$, the space of indecomposables of $\wedge V$ is defined by 
\[ 
Q(\wedge V) := \wedge^+V/(\wedge^+V)^2. 
\]
We identify $Q(\wedge V)$ with $V$ as graded vector spaces.
The differential $d$ induces a differential $ d_1:Q(\wedge V) \to Q(\wedge V)$, and hence $(Q(\wedge V),d_1)$ is a cochain complex. We call $d_1$ the linear part of $d$.
For a CDGA morphism $\varphi:(\wedge V,d)\to (\wedge W,d)$, the induced morphism $Q\varphi:Q(\wedge V) \to Q(\wedge W) $ is called the linear part of $\varphi$. 
We also recall the following basic property of the linear part, which will be used later.

\begin{prop}
[{\rm cf. \cite[Proposition 6.13(i)]{B-G} \cite[Proposition 14.13]{FHT}}]\label{prop:linear-part}
%[{\rm cf. \cite[Proposition 12.8(2), Theorem 14.9, Proposition 14.13]{FHT}}]\label{prop:linear-part}
\leavevmode
%\par\vspace{-1em}
\begin{enumerate}[label=(\arabic*)]
\item If $\varphi_0 \simeq \varphi_1 : (\wedge V,d)\to (\wedge W,d)$ are homotopic morphisms between Sullivan algebras and $(\wedge V,d)$ is simply connected, then $H(Q\varphi_0) = H(Q\varphi_1)$.
\item If $\varphi : (\wedge V,d)\to (\wedge W,d)$ is a morphism of Sullivan algebras, then $\varphi$ is a quasi-isomorphism if and only if $H(Q\varphi)$ is an isomorphism.
\end{enumerate}
\end{prop}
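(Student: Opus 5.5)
We prove the two parts separately. Part (1) comes from a cylinder computation on indecomposables. Part (2) is reduced to the minimal case by splitting off contractible factors.

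\emph{Part (1).} Following Section \ref{section:PathCylinderOb}, a homotopy $\varphi_0\simeq\varphi_1$ is a morphism $H\colon(\wedge V\otimes\wedge\overline{V}\otimes\wedge\widehat{V},D)\to(\wedge W,d)$ with $H\lambda_0=\varphi_0$ and $H\lambda_1=\varphi_1$. Here $\lambda_0(v)=v$ and $\lambda_1(v)=\zeta(1\otimes v)=v+\widehat{v}+\sum_{n\ge1}\frac{(sD)^n}{n!}(v)$. Since $Q$ is a functor, $Q\varphi_k=QH\circ Q\lambda_k$, so it suffices to show that $Q\lambda_0$ and $Q\lambda_1$ are chain homotopic maps $(V,d_1)\to(V\oplus\overline{V}\oplus\widehat{V},D_1)$.

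We compute $Q\lambda_1$. Write $dv=d_1v+(\text{decomposables})$. The derivation $s$ sends a product $ab$ to $s(a)b\pm a\,s(b)$. Because $V=V^{\ge2}$, every $\overline{u}$ has degree $\ge1$. Hence $s$ maps $(\wedge^+)^2$ into $(\wedge^+)^2$, and likewise $D$ does. This is the step that uses simple connectivity; without it a product such as $x\cdot u$ with $|u|=1$ could produce a linear term $\overline{u}$.

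Consequently $Q(sD)^n(v)=(s_1D_1)^n(v)$, where $s_1(v)=\overline{v}$ and $D_1$ is the induced linear differential ($D_1\overline{v}=\widehat{v}$, $D_1 v=d_1v$, $D_1\widehat{v}=0$). Now $(s_1D_1)(v)=\overline{d_1v}$, and applying $s_1D_1$ again gives $s_1D_1\overline{d_1 v}=s_1(\widehat{d_1v})=0$. Therefore
\[
Q\lambda_1(v)=v+\widehat{v}+\overline{d_1v},\qquad Q\lambda_1-Q\lambda_0=\widehat{v}+\overline{d_1v}=D_1 s_1(v)+s_1 d_1(v).
\]
Thus $s_1$ is a chain homotopy between $Q\lambda_0$ and $Q\lambda_1$, so $H(Q\lambda_0)=H(Q\lambda_1)$. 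Composing with $QH$ gives $H(Q\varphi_0)=H(Q\varphi_1)$. I expect the main care here to be in the bookkeeping of signs and in checking that no decomposable term becomes linear.

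\emph{Part (2).} By \cite[Theorem 14.9]{FHT}, each Sullivan algebra splits as $\wedge V\cong\wedge V'\otimes\wedge(U\oplus dU)$ with $\wedge V'$ minimal. The inclusion $i_V$ and the projection $p_V$ are quasi-isomorphisms. Their linear parts are also quasi-isomorphisms, because $Q(\wedge(U\oplus dU))=U\oplus dU$ is acyclic and the minimal factor has $d_1=0$. Do the same for $\wedge W$, and set
\[
\psi:=p_W\circ\varphi\circ i_V\colon\wedge V'\to\wedge W'.
\]
Since $i_V,p_W$ and their linear parts induce isomorphisms in cohomology, $\varphi$ is a quasi-isomorphism iff $\psi$ is. By the same reasoning, $H(Q\varphi)$ is an isomorphism iff $H(Q\psi)=Q\psi$ is one, the latter equality holding because $d_1=0$ on minimal algebras.

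It remains to treat $\psi$. If $\psi$ is a quasi-isomorphism of minimal Sullivan algebras, it is an isomorphism by \cite[Theorem 14.11]{FHT}, so $Q\psi$ is an isomorphism. Conversely, if $Q\psi$ is an isomorphism, then $\psi$ is surjective: its image contains $V'$ modulo decomposables, and an induction on degree shows the image contains all generators. Then $\psi$ is an isomorphism by comparing the associated graded pieces for the word-length filtration, on which $\psi$ acts as $\wedge(Q\psi)$. In particular $\psi$ is a quasi-isomorphism. This proves (2).
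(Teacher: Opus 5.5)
Your proposal is correct, and it is self-contained where the paper is not. The paper proves (1) only by noting that simple connectivity makes every homotopy augmentation-preserving and then citing \cite[Proposition 6.13(i)]{B-G}; it takes (2) verbatim from \cite[Proposition 14.13]{FHT}.

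\textbf{Part (2).} Your argument is essentially the standard proof of that FHT result. You split off contractible factors by \cite[Theorem 14.9]{FHT}, apply \cite[Theorem 14.11]{FHT}, and finish with a degree induction and the word-length filtration. It uses only $V=V^{\ge 1}$, which is consistent with (2) carrying no connectivity hypothesis.

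\textbf{Part (1).} Your argument makes explicit what the Bousfield--Gugenheim citation hides: on indecomposables the two ends of the cylinder differ by $D_1s_1+s_1d_1$.

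One correction of emphasis. The place where $V=V^{\ge 2}$ is genuinely needed is the one the paper names: $H$ and $\lambda_1$ must send augmentation ideals into augmentation ideals, so that $QH$ exists and $Q\varphi_k=QH\circ Q\lambda_k$. This holds because every generator of the cylinder, including those in $\overline V$, has positive degree, while $(\wedge W)^0=\Q$. You use this silently and should state it.

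The step you flag, that $s$ preserves decomposables, does not really depend on simple connectivity. If decomposables are taken to be the square of the ideal generated by the generators, a derivation sending generators to generators or $0$ always preserves that square. What actually breaks when $V^1\neq 0$ is that $H(\overline u)$ for $|u|=1$ can be a nonzero scalar.

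You can also skip computing $Q\lambda_1$ altogether. Since $\rho\lambda_0=\rho\lambda_1=\mathrm{id}$, and $Q\rho$ is a quasi-isomorphism (its kernel $(\overline V\oplus\widehat V,D_1)$ is acyclic), one gets $H(Q\lambda_0)=H(Q\rho)^{-1}=H(Q\lambda_1)$ directly.

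Finally, reading ``homotopic'' as a left homotopy through the Section~\ref{section:PathCylinderOb} cylinder is legitimate. Sullivan algebras are cofibrant and every CDGA is fibrant, so any homotopy can be realized through that good cylinder object.
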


\begin{proof}
Since $(\wedge V,d)$ is simply connected, any homotopy between $\varphi_0$ and $\varphi_1$ is augmentation-preserving.
The assertion (1) follows from
\cite[Proposition~6.13(i)]{B-G}.
% By \cite[Theorem~14.9]{FHT}, we may write
% \[
%  (\wedge V,d)\cong (\wedge V',d)\otimes(E,d)
%  \quad\text{and}\quad
%  (\wedge W,d)\cong (\wedge W',d)\otimes(F,d),
% \]
% where $(\wedge V',d)$ and $(\wedge W',d)$ are minimal Sullivan
% algebras, and $(E,d)$ and $(F,d)$ are contractible Sullivan
% algebras. Thus, we have identifications
% \[
%  H(Q(\wedge V),d_1)\cong V'
%  \quad\text{and}\quad
%  H(Q(\wedge W),d_1)\cong W'
% \]
% since the linear part of a minimal Sullivan algebra has zero
% differential and the linear part of a contractible Sullivan algebra
% is acyclic.
% The morphisms $\varphi_0$ and $\varphi_1$ induce homotopic
% morphisms from $(\wedge V',d)$ to $(\wedge W',d)$.
% Since $H^1(\wedge V',d)=0$, \cite[Proposition~12.8(2)]{FHT} implies that
% their linear parts coincide. Under the identifications above,
% this gives
% $
%  H(Q\varphi_0)=H(Q\varphi_1).
% $
The assertion (2) is \cite[Proposition~14.13]{FHT}.
\end{proof}

Let \(f\colon X\to Y\) be a map between simply connected spaces.
Let \(\wedge V_X\) and \(\wedge V_Y\) be minimal Sullivan models for
\(X\) and \(Y\), respectively.
Denote by \(f^*\colon \wedge V_Y \to \wedge V_X\)
the model for the map \(f\colon X\to Y\).
Take the minimal relative Sullivan model
\(\eta\colon \wedge V_Y \otimes \wedge W \xrightarrow{\simeq} \wedge V_X\) for \(f^*\).
%Assume that \(W\) is finite dimensional,
%and
We observe that
\(\Theta(f)(i) = \wedge V_Y\otimes \wedge W^{\le i}\) and
\(\Theta(id)(i) = \wedge V_X\)
for each \(i\in\mathbb{Z}_+\).

We assume that the map $f$ is relatively finite. 
Let \(N\) be the maximum integer
such that \(W^N \neq 0\).

\begin{prop}
  \label{prop:f_id_N_interleaved} The extended tame persistence CDGAs
  \(\Theta(f)\) and \(\Theta(id)\) are \(N\)-interleaved in the homotopy category.
  In other words,
  \(d_{\rm IHC}(\Theta(f), \Theta(id)) \le N\).
\end{prop}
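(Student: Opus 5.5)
The plan is to write down an explicit $N$-interleaving between $\Theta(f)=(\lfloor\ \rfloor)^*\theta(\iota)$, with $\theta(\iota)(i)=\wedge V_Y\otimes\wedge W^{\le i}$, and $\Theta(id)$, the constant persistence CDGA $\wedge V_X$. A strict $N$-interleaving in $\et{\mathsf{CDGA}}$ gives one in $\text{Ho}(\et{\mathsf{CDGA}})$. If the strict one fails, it suffices to have morphisms in the homotopy category for which the triangles in \eqref{eq:interleaving} commute up to homotopy in $\et{\mathsf{CDGA}}$.

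For $\varphi\colon \Theta(f)\to \Theta(id)^N$, take at each level $i$ the restriction of the quasi-isomorphism $\eta\colon \wedge V_Y\otimes\wedge W\to\wedge V_X$ to $\wedge V_Y\otimes\wedge W^{\le i}$. These restrictions are compatible with the inclusions, so $\varphi$ is a strict natural transformation, since $\Theta(id)$ is constant.

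For $\psi\colon \Theta(id)\to\Theta(f)^N$, note that $\Theta(f)^N(i)=\wedge V_Y\otimes\wedge W^{\le i+N}=\wedge V_Y\otimes\wedge W$ for all $i\ge 0$, because $W=W^{\le N}$. Hence $\Theta(f)^N$ is the constant object $\wedge V_Y\otimes\wedge W$. Since $\eta$ is a quasi-isomorphism and $\wedge V_X$ is a Sullivan algebra, the lifting lemma \cite[Lemma 12.4]{FHT} gives a homotopy inverse $\xi\colon\wedge V_X\to\wedge V_Y\otimes\wedge W$ with $\eta\xi\simeq 1$ and $\xi\eta\simeq 1$. We let $\psi$ be the constant morphism $\xi$.

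The interleaving conditions then reduce to two checks. First, $\psi^N\circ\varphi$ should equal the structure map $\Theta(f)(i)\to\Theta(f)(i+2N)=\wedge V_Y\otimes\wedge W$. At level $i$ this composite is $\xi\circ\eta|_{\wedge V_Y\otimes\wedge W^{\le i}}$, which is homotopic to the inclusion. Second, $\varphi^N\circ\psi$ should equal the identity of the constant object $\wedge V_X$, and this composite is $\eta\xi\simeq 1$.

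The main obstacle is upgrading these pointwise homotopies to homotopies in $\et{\mathsf{CDGA}}$, since a pointwise homotopy need not give a homotopy of persistence objects (see the remark before Remark \ref{rem:Homotopies}). For the second condition this is automatic, because both objects are constant. For the first, I would fix a single right homotopy $H\colon\wedge V_Y\otimes\wedge W\to\wedge V_Y\otimes\wedge W\otimes\wedge(t,dt)$ from $\xi\eta$ to the identity and restrict it to each $\wedge V_Y\otimes\wedge W^{\le i}$. The target $\Theta(f)^{2N}\otimes\wedge(t,dt)$ is the constant object $\wedge V_Y\otimes\wedge W\otimes\wedge(t,dt)$, so these restrictions form a natural transformation. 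It is a homotopy in $\et{\mathsf{CDGA}}$ by Lemma \ref{lem:FibCof}, using that $\Theta(f)$ is cofibrant (Remark \ref{rem:cofibrantOb}) and every object is fibrant, so that right homotopy computes morphisms in $\text{Ho}(\et{\mathsf{CDGA}})$. This is essentially Remark \ref{rem:Homotopies} with $\varepsilon=N$. Together these give $d_{\rm IHC}(\Theta(f),\Theta(id))\le N$.
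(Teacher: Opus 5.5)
Your proposal is correct and follows essentially the same route as the paper. In both, $\varphi$ is the levelwise restriction of $\eta$, $\psi$ is a constant inverse of $\eta$ landing in $\Theta(f)^N=\wedge V_Y\otimes\wedge W$, and a single homotopy on $\wedge V_Y\otimes\wedge W$ is restricted levelwise as in Remark~\ref{rem:Homotopies}. The only difference is how the inverse is obtained. The paper first observes that $\eta$ is surjective, since its linear part is onto by minimality of $\wedge V_X$; this surjectivity is what \cite[Lemma 12.4]{FHT} actually requires, and it yields a strict section $\rho$ with $\eta\rho=\mathrm{id}$, so that one triangle commutes on the nose. You instead use only a homotopy inverse and handle that triangle with a constant homotopy between constant objects, which works equally well.
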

\begin{proof}
  Note that the quasi-isomorphism \(\eta\) is surjective
  since the linear part of \(\eta\) is surjective
  by \cite[Proposition 14.13]{FHT} and the minimality of \(\wedge V_X\).
  Hence \cite[Lemma 12.4]{FHT} implies that
  there is a morphism \(\rho\colon \wedge V_X \to \wedge V_Y\otimes\wedge W\) of CDGAs
  which makes the diagram
  \begin{equation}
    \xymatrix@C30pt@R20pt{
      & \wedge V_Y\otimes \wedge W \ar@{->>}[d]^{\simeq}_\eta \\
      \wedge V_X \ar[r]_{id} \ar@{.>}[ru]^\rho & \wedge V_X
    }
  \end{equation}
  commute strictly, i.e.\ \(\eta\rho = id\).
  Here \(\eta\) is a homotopy equivalence of CDGAs
  and \(\rho\) is the homotopy inverse of \(\eta\).
  Thus \(\rho\eta\simeq id\).
  %\todo{Kuri (3/26) : It seems that the homotopy equivalence follows from Whitehead theorem.}

  Now we define
  \(\varphi\colon \Theta(f) \Rightarrow \Theta(id)\circ T_N\) and
  \(\psi\colon \Theta(id) \Rightarrow \Theta(f)\circ T_N\)
  by the restrictions of \(\eta\) and \(\rho\), respectively.
  Note that \(\psi\) is well defined
  since \(\wedge V_Y\otimes\wedge W^{\le N} = \wedge V_Y\otimes\wedge W\).
  It is easy to show that
  the pair \((\varphi, \psi)\) is an \(N\)-interleaving in the homotopy category; see Remark \ref{rem:Homotopies}.
\end{proof}

Moreover, we can determine \(d_{\rm IHC}(\Theta(f), \Theta(id))\) explicitly under an assumption on the model for $f^*$.

\begin{prop}
  \label{prop:f_id_distance}
  Assume that any morphism
  \(\alpha\colon \wedge V_X \to \wedge V_Y\otimes \wedge W^{<N}\)
  has no homotopy retract.
  Then, it holds that
  \(d_{\rm IHC}(\Theta(f), \Theta(id)) = N\).
\end{prop}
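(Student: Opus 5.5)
By Proposition \ref{prop:f_id_N_interleaved} we already have \(d_{\rm IHC}(\Theta(f),\Theta(id))\le N\). It therefore suffices to show that \(\Theta(f)\) and \(\Theta(id)\) are not \(\e\)-interleaved in the homotopy category for any \(\e<N\). We argue by contradiction: fix \(\e<N\) and an \(\e\)-interleaving \((\varphi,\psi)\) in \(\text{Ho}(\et{\mathsf{CDGA}})\). Remark \ref{rem:cofibrantOb} says \(\Theta(f)\) and \(\Theta(id)\) are cofibrant and every object is fibrant, so the morphisms \(\varphi\) and \(\psi\) in the homotopy category are represented by genuine morphisms of persistence CDGAs. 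The two interleaving triangles then commute only up to homotopy in \(\et{\mathsf{CDGA}}\), and hence, evaluating at each \(i\), they commute up to homotopy of CDGAs.

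We evaluate the interleaving at the level \(i=0\). By Remark \ref{rem:The_form}, \(\Theta(id)\) is constant with value \(\wedge V_X\). Also \(\Theta(f)(\e)=\wedge V_Y\otimes\wedge W^{\le\lfloor\e\rfloor}\), and since \(\lfloor \e\rfloor\le N-1\) this is contained in \(\wedge V_Y\otimes\wedge W^{<N}\). Take the components
\[
\psi(0)\colon \wedge V_X\to \wedge V_Y\otimes\wedge W^{\le\lfloor\e\rfloor},
\qquad
\varphi(\e)\colon \wedge V_Y\otimes\wedge W^{\le\lfloor\e\rfloor}\to \Theta(id)(2\e)=\wedge V_X .
\]
The triangle starting at \(\Theta(id)(0)\) gives \(\varphi(\e)\circ\psi(0)\simeq \Theta(id)(0\le 2\e)=id_{\wedge V_X}\). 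Composing \(\psi(0)\) with the inclusion \(\wedge V_Y\otimes\wedge W^{\le\lfloor\e\rfloor}\hookrightarrow \wedge V_Y\otimes\wedge W^{<N}\) yields a morphism \(\alpha\colon\wedge V_X\to\wedge V_Y\otimes\wedge W^{<N}\). Because \(\varphi(\e)\) extends along this inclusion, \(\alpha\) admits a homotopy retraction, namely \(\varphi(\e)\) extended compatibly. This contradicts the hypothesis, and so \(d_{\rm IHC}\ge N\).

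The main obstacle is producing that extension honestly. I would apply the naturality of \(\varphi\): set \(k=\lfloor\e\rfloor\) and \(r=\varphi(N-1)\), the component at level \(N-1\), viewed on \(\Theta(f)(N-1)=\wedge V_Y\otimes\wedge W^{<N}\). Naturality gives \(r\circ\Theta(f)(\e\le N-1)=\Theta(id)(2\e\le N-1+\e)\circ\varphi(\e)=\varphi(\e)\). Then
\[
r\circ\alpha=r\circ\Theta(f)(\e\le N-1)\circ\psi(0)=\varphi(\e)\circ\psi(0)\simeq id .
\]
Since \(\varphi\) and \(\psi\) are honest natural transformations, this naturality holds strictly, so \(\alpha\) has the homotopy retraction \(r\). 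The remaining care is only to check that "homotopy retract" in the hypothesis means exactly the existence of such an \(r\) with \(r\alpha\simeq id\). I would also confirm that passing from homotopy-category morphisms to representatives is legitimate because the objects are cofibrant–fibrant.
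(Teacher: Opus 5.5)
Your argument is correct and is essentially the paper's: both take \(\psi(0)\colon \wedge V_X\to\Theta(f)(\varepsilon)\subset \wedge V_Y\otimes\wedge W^{<N}\) and use the interleaving relation \(\varphi(\varepsilon)\circ\psi(0)\simeq id\) to produce a homotopy retract that the hypothesis forbids. The only difference is bookkeeping: the paper first enlarges \(\varepsilon\) to lie in \([N-1,N)\), so that \(\Theta(f)(\varepsilon)=\wedge V_Y\otimes\wedge W^{<N}\) and \(\varphi(\varepsilon)\) is itself the retraction, whereas you keep \(\varepsilon\) and extend \(\varphi(\varepsilon)\) by naturality to \(r=\varphi(N-1)\) (for \(\varepsilon\in(N-1,N)\) the map \(\Theta(f)(\varepsilon\le N-1)\) does not exist, but there naturality gives \(\varphi(N-1)=\varphi(\varepsilon)\), so your conclusion is unchanged).
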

\begin{proof}
  By \cref{prop:f_id_N_interleaved},
  it is enough to show that
  \(\Theta(f)\) and \(\Theta(id)\) are {\it not} \(\varepsilon\)-interleaved in the homotopy category
  for any \(\varepsilon<N\).
  Assume that there is such an \(\varepsilon\)-interleaving \((\varphi, \psi)\),
  where \(\varphi\colon \Theta(f)\Rightarrow\Theta(id)\circ T_{\varepsilon}\)
  and \(\psi\colon \Theta(id)\Rightarrow\Theta(f)\circ T_{\varepsilon}\).
  We may assume \(N-1\le\varepsilon<N\).
  Then, we have
  \begin{math}
    \psi(0)\colon
    \wedge V_X
    \to \wedge V_Y\otimes \wedge W^{\le \varepsilon}
    = \wedge V_Y \otimes \wedge W^{<N}
  \end{math}
  and
  \(\varphi(\varepsilon)\psi \simeq id\),
  which is a contradiction.
\end{proof}

%\todo{Naito added the computational example of $d_{\rm IHC}$}
\begin{ex}
%\todo{apply \cref{prop:f_id_distance}}
%Based on Proposition \cref{prop:f_id_distance},
By using Proposition \ref{prop:f_id_distance}, we determine the interleaving distance between $\Theta (*:S^{m}\to S^{m})$ and  $\Theta (id:S^{m} \to S^{m})$ in the homotopy category.

 (i) We prove that  the distance $d_{\rm IHC}(\Theta (*:S^{2n}\to S^{2n}), \Theta (id:S^{2n} \to S^{2n}))= 4n-1$.
Let $\minmodel (S^{2n})=(\wedge (x,y),d)$ be the minimal Sullivan model for $S^{2n}$, where $d(x)=0$, $d(y)=x^2$, $|x|=2n$.
The relative Sullivan model for $*$ is a relative Sullivan algebra of the form
\[
\iota_* : \wedge (x,y) \to \wedge (x,y) \otimes \wedge (z,w, x', y') \cong \wedge (x,y,z,w) \otimes \wedge (x', y').
\]
Here, $d(z)=x$, $d(w)=xz-y$ and $\wedge (x', y')$ is a copy of $\wedge (x,y)$;
 see also Subsection \ref{ex:p_q} for the Sullivan models.

Let $W= {\rm span}_{{\mathbb Q}}\{ z,w,x',y' \}$ and $N = 4n-1$.
Then, any morphism $\alpha : \wedge (x,y) \to \wedge (x,y)\otimes \wedge W^{<N} \cong \wedge (x,y)\otimes \wedge (z,w,x')$ has no homotopy retract.
Indeed, if there exists a homotopy retract $\beta$ of $\alpha$, then the induced composite on homology
\[
\xymatrix{
H(\wedge (x,y)) \ar[r]^-{H(\alpha)}
&
H(\wedge (x,y)\otimes \wedge (z,w,x'))
\ar[r]^-{H(\beta)}
&
H(\wedge (x,y))
}
\]
is equal to the identity map of $H(\wedge (x,y))$.
However, $H(\alpha)$ is trivial since $H(\wedge (x,y)) \cong {\mathbb Q}[x]/(x^2)$ and $H(\wedge (x,y)\otimes \wedge (z,w,x')) \cong {\mathbb Q}[x']$ as algebras, which contradicts $H(\beta)\circ H(\alpha) = id$.
Therefore, $d_{\rm IHC}(\Theta(*), \Theta(id) )= 4n-1$ from Proposition \ref{prop:f_id_distance}.

(ii) We show that $d_{\rm IHC}(\Theta (*:S^{2n+1}\to S^{2n+1}), \Theta (id:S^{2n+1} \to S^{2n+1}))=2n+1$.
The persistence CDGAs $\Theta(*)$ and $\Theta(id)$ have the forms
\begin{align}
\xymatrix@C20pt@R0pt{
&\hspace{-2.0cm} 0 & \hspace{-4cm} 2n-1 & \hspace{-5.5cm} 2n &\hspace{-5.5cm}  2n+1 \\
\ \Theta(*):&\wedge(x) =\cdots = \wedge(x) \  \ar@{>->}[r]& \wedge(x, y) \ \ar@{>->}[r]& \wedge(x, y, z) = \cdots,\\
\Theta(id):&\wedge(x)= \cdots =\wedge(x)\ar[r]&\wedge(x)\ar[r]&\wedge(x)= \cdots,
}
\end{align}
where $d(y)=x$, $d(z)=0$, $\deg(x)=\deg(z)=2n+1$ and $\deg(y)=2n$.
%Proposition  \ref{prop:f_id_N_interleaved} yields that  $d_{\rm IHC}(\Theta (*), \Theta (id)) \leq 2n+1$.
Since $H(\wedge (x, y), d)=\Q$, it follows that
every morphism $\alpha \colon \wedge(x) \to \wedge (x, y)$ has no homotopy retract.  By virtue of Proposition \ref{prop:f_id_distance}, we have the result.
\end{ex}

\begin{ex}\label{ex:CP_S_distance}
%\todo{Added the example (Naito)}
Let $i : S^2 \hookrightarrow {\mathbb C}P^2$ be the inclusion.
In this example, we determine the distance $d_{\rm IHC}(\Theta (i:S^{2}\to {\mathbb C}P^2), \Theta (id:S^{2} \to S^{2}))$.
Recall from Example \ref{ex:Postnikov:S_CP} the Sullivan representative $i^*$ for $i$ and its minimal relative Sullivan model $\wedge(x, z) \hookrightarrow \wedge (x,z, v,w)$ defined via the quasi-isomorphism $\eta : \wedge (x,z, v,w) \stackrel{\sim}{\longrightarrow} \wedge (x,y)$.

% A Sullivan representative for the inclusion is given by $i^* : \wedge (x,z) \to \wedge (x,y)$, $i^*(x)=x$ and $i^*(z)=xy$.
% Here, the CDGAs $\wedge(x,z)$ and $\wedge(x,y)$ are minimal Sullivan models of ${\mathbb C}P^2$ and $S^2$, respectively. Their differentials satisfy $d(x)=0$, $d(y)=x^2$, and $d(z)=x^3$, where $|x|=2$, $|y|=3$, and $|z|=5$.
% We have a minimal relative Sullivan model $j : \wedge(x, z) \to \wedge (x,z)\otimes \wedge (v,w)$  of $i^*$ via the quasi-isomorphism defined by
% \[
% \eta : \wedge (x,z)\otimes \wedge (v,w) \stackrel{\sim}{\longrightarrow} \wedge (x,y),
% \]
% where  $ \eta (v)=y$,  $\eta (w) = 0$, $\eta(z) = xy$, $dv=x^2$ and $dw = xy - z$; see Remark \ref{rem:SullivanRep}.
% \todo{Kuri (1/7) : Change $v$ to $y$ in the equation $dw= ..$.  }

We see that $\Theta (i)$ and $\Theta (id)$ are $3$-interleaved in the homotopy category. In fact, the interleaving $(\varphi, \psi)$ between the two persistence CDGAs is given as follows.
\begin{itemize}
\item $\varphi : \Theta (i) \Rightarrow \Theta (id) \circ T_{3}$ is a natural transformation defined by a restriction of $\eta$.
\item $\psi :  \Theta (id) \Rightarrow \Theta (i) \circ T_{3}$ is a natural transformation induced by
\[
\rho : \wedge (x,y)  \to \wedge (x,z, v, w), \ \rho (x)=x, \ \rho(y)=v.
\]
\end{itemize}
%\todo{Kuri: $\rho(y)=z$ is changed to $\rho(y)=v$. Is it correct? }
It is readily seen that $\varphi^3 \psi = \Theta(id)(* < *+6)$ by definition.
%\todo{Kuri (1/9): $\eta \rho = id$ should be $\varphi^3 \psi = \Theta(id)(* < *+6)$.}
We define a homotopy $\Phi : \wedge (x,z,v,w) \to \wedge (x,z,v,w)\otimes \wedge (t,dt)$ by
\[
\Phi (x) = x, \hspace{1em} \Phi (z) = z\otimes t + xv\otimes (1-t) - w \otimes dt, \hspace{1em} \Phi (v)=v, \hspace{1em} \Phi (w) = w\otimes t.
\]
%\todo{Kuri (1/7) :  $\Phi$ is not a homotopy in $\et{\mathsf{CDGA}}$. $\Phi(w)$ is {\it not} in $\wedge(x, z)\otimes \wedge (v, w)^{\leq 3}\otimes \wedge (t, dt)$; see Remark \ref{rem:Homotopies}. }
Then, the restriction of $\Phi$ gives rise to a homotopy $\widetilde{\Phi}: \Theta(i) \to \Theta(i)^6\otimes \wedge(t, dt)$
which yields $\Theta(i)(* < *+6) \simeq \psi^3\varphi$
%\todo{Kuri(1/9) : $\rho \eta$ should be $\psi^3\varphi$.}
in $\et{\mathsf{CDGA}}$; see Remark \ref{rem:Homotopies}. It follows that $(\varphi, \psi)$ is a $3$-interleaving between $\Theta (i) $ and $\Theta (id)$.

Next, we show that $\Theta (i)$ and $\Theta (id)$ are not $\varepsilon$-interleaved in the homotopy category for any $2 \leq \varepsilon < 3$.
Assume that $\Theta (i)$ and $\Theta (id)$ are $\varepsilon$-interleaved in the homotopy category.
Let $\varphi' : \Theta (i) \Rightarrow \Theta (id) \circ T_{\varepsilon}$ and $\psi' : \Theta (id) \Rightarrow \Theta (i) \circ T_{\varepsilon}$ be the natural transformation which give the $\e$-interleaving.
Then, we have the following commutative diagram in $\text{Ho}(\mathsf{CDGA})$:
\[
\xymatrix@C25pt@R12pt{
\Theta (id) (0) \ar[rr]^-{\Theta_1 ( 0  \leq  2\varepsilon)} \ar[rd]_-{\psi' (0)}&& \Theta (id) (2\varepsilon)
\\
& \Theta (i) (\varepsilon). \ar[ru]_-{\varphi' ( \varepsilon)} &
}
\]
By applying the homology functor to the diagram, we have the commutative diagram
\[
\xymatrix@C25pt@R12pt{
\Q[x]/(x^2) \ar[rr]^-{id}  \ar[rd]_-{H(\psi' (0))}&& \Q[x]/(x^2)
\\
& \Q[x]/(x^3) \ar[ru]_-{H(\varphi' (\varepsilon))} &
}
\]
in the category of commutative graded algebras.
However, any algebra map from $\Q[x]/(x^2)$ to $\Q[x]/(x^3)$ is trivial, which contradicts the commutativity of the diagram.
Therefore, we conclude that  $d_{\rm IHC}(\Theta (i), \Theta (id) )= 3$.

%At the end of Example \ref{ex:CP_S_distance}, we would like to point out that the interleaving distance $d_{\rm IHC}(\Theta (i), \Theta (id))$ cannot be determined by Proposition \ref{prop:f_id_distance}, since the assumptions of Proposition \ref{prop:f_id_distance} are not satisfied in this example.
It is worthwhile mentioning that  Proposition \ref{prop:f_id_distance} is not applicable when determining the interleaving distance $d_{\rm IHC}(\Theta (i), \Theta (id))$.
Indeed, the CDGA map $\alpha : \wedge (x,y)\to \wedge (x,z)\otimes \wedge (v)$, $\alpha (x)=x$, $\alpha (y)=v$ has a homotopy retract $\beta : \wedge (x,z)\otimes \wedge (v) \to \wedge (x,y)$ defined by $\beta (x)=x$, $\beta (z)=0$ and $\beta (v)=y$.
\end{ex}

When \(X=*\), the assumption in \cref{prop:f_id_distance} cannot be satisfied.
In this case, we have the following proposition,
which implies that the distance is strictly smaller than the integer \(N\) described before Proposition \ref{prop:f_id_N_interleaved}. 

\begin{prop}
  Let \(Y\) be a space with the base point \(*\in Y\),
  \(f\colon *\to Y\) the inclusion of the base point,
  and \(\wedge V_Y\) the minimal Sullivan model for \(Y\).
  Assume that \(f\colon *\to Y\) is relatively finite
  in the sense in Section \ref{sect:relativeSullivan_pCDGA},
  i.e.\ \(V_Y\) is finite dimensional.
  Then, it holds that
  \(d_{\rm IHC}(\Theta(f\colon *\to Y), \Theta(id\colon *\to *)) = N/2\).
\end{prop}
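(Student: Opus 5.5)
The plan is to make everything explicit, starting from the shape of the minimal relative Sullivan model of $f^*\colon \wedge V_Y\to\Q$ (the augmentation), and then to prove the two inequalities separately. Throughout, $Y$ is taken simply connected and of finite type, as in the rest of this subsection.

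\textbf{Step 0: the model.} Since $\wedge V_Y\otimes\wedge W\simeq \Q$, the linear part of its differential is acyclic. By minimality of $\wedge V_Y$, this forces $W\cong sV_Y$ with $W^{n}\cong V_Y^{n+1}$. Moreover, we can choose bases with $d w_v = v + (\text{decomposables})$, where $d_1\colon W^{n}\to V_Y^{n+1}$ is an isomorphism. This is essentially the path-fibration model (cf.\ Example~\ref{ex:paths}). In particular $N=\max\{\deg v\}-1$. Also $\Theta(id)$ is the constant object $\Q$, and:
\begin{itemize}
\item any morphism $\Theta(f)(i)\to\Q$ is the augmentation $\varepsilon_i$, since $V_Y=V_Y^{\ge 2}$;
\item any morphism $\Q\to\Theta(f)(j)$ is the unit.
\end{itemize}
Hence an $\varepsilon$-interleaving consists of $\varphi=\varepsilon$ and $\psi=$ unit. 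The composite $\varphi^\varepsilon\psi$ is automatically the identity of $\Q$. So the only condition is that the structure map $\Theta(f)(i<i+2\varepsilon)$ be homotopic, in $\et{\mathsf{CDGA}}$, to the composite $\Theta(f)(i)\to\Q\to\Theta(f)(i+2\varepsilon)$.

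\textbf{Step 1: upper bound $N/2$.} I will build one persistence homotopy $\widetilde H\colon\Theta(f)\to\Theta(f)^{N}\otimes\wedge(t,dt)$ from the inclusion to the augmentation, as in Remark~\ref{rem:Homotopies}. The idea is to contract the contractible algebra $\wedge V_Y\otimes\wedge W$ by induction on degree, defining $H$ on generators.
\begin{itemize}
\item For $w\in W$, use $H(w)=w\otimes t+(\text{terms in lower } W)$.
\item For $v\in V_Y$, use $H(v)=v\otimes t \pm w_v\otimes dt+\cdots$.
\end{itemize}
I will choose the correction terms inductively so that $H$ commutes with $d$, mirroring the $sD+Ds$ construction of the cylinder in Section~\ref{section:PathCylinderOb}, or the standard contraction of a contractible Sullivan algebra ($\wedge(U\oplus dU)$ after a change of generators). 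Two checks are needed:
\begin{itemize}
\item $H(w)$ for $w\in W^{\le i}$ only involves $W^{\le i}$;
\item $H(v)$ only involves $V_Y$ and $W^{\le N}$.
\end{itemize}
Given these, $H$ restricts to $\Theta(f)(i)\to\Theta(f)(i+N)\otimes\wedge(t,dt)$ for every $i\ge 0$. This yields $\Theta(f)(*<*+N)\simeq\psi^{N/2}\varphi$, so the two objects are $N/2$-interleaved. The main obstacle is making the inductive correction terms respect the filtration by $W^{\le i}$. I expect degree reasons to guarantee this, since all corrections have degree at most that of the generator.

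\textbf{Step 2: lower bound.} Suppose an $\varepsilon$-interleaving exists with $\varepsilon<N/2$. Evaluating at $0$ (evaluation is a functor on homotopy categories, since homotopies in $\et{\mathsf{CDGA}}$ are pointwise homotopies), the inclusion $\iota\colon\wedge V_Y\to\wedge V_Y\otimes\wedge W^{\le 2\varepsilon}$ is homotopic to the map through $\Q$. Here $\wedge V_Y\otimes\wedge W^{\le 2\varepsilon}\subset \wedge V_Y\otimes\wedge W^{<N}$.

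By Proposition~\ref{prop:linear-part}(1), valid because $\wedge V_Y$ is simply connected, $H(Q\iota)=0$. However, the linear part of $\wedge V_Y\otimes\wedge W^{<N}$ is the complex $V_Y\oplus W^{<N}$ with $d_1\colon W^n\cong V_Y^{n+1}$. Its cohomology therefore contains $V_Y^{N+1}\neq 0$, whose killers lie in $W^{N}$ and have been removed. The map $H(Q\iota)$ sends $V_Y^{N+1}$ isomorphically onto this cohomology, a contradiction.

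Hence $d_{\rm IHC}=N/2$.
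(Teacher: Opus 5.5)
Your proposal is correct and follows essentially the same route as the paper: the interleaving is given by the augmentation and the unit (the paper justifies the upper bound only by noting that $\Theta(f)(i)\simeq\Q$ for $i\ge N$), and the lower bound comes from Proposition~\ref{prop:linear-part}(1) applied to the null-homotopic inclusion $\wedge V_Y\to\wedge V_Y\otimes\wedge W^{<N}$; where you identify the linear homology explicitly with $V_Y^{N+1}$, the paper uses the dimension count $\dim V_Y>\dim W^{<N}$. The filtration worry in your Step 1 is vacuous: since $i+N\ge N$, the shifted object $\Theta(f)^N$ is constantly $\wedge V_Y\otimes\wedge W$, so any contraction of this contractible Sullivan algebra restricts to a homotopy in $\et{\mathsf{CDGA}}$, and such a contraction exists by the lifting lemma without any inductive construction.
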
 %\todo{Kuri(2/21) : Here is  `finite type'. }
\begin{proof}
  In this case,
  the minimal relative Sullivan model
  \(\eta\colon \wedge V_Y\otimes \wedge W \xrightarrow{\simeq} \Q\) for \(f\colon *\to Y\)
  satisfies
  \(W^n = (V_Y)^{n+1}\) for \(n\in \mathbb N\).
  Recall that \(N = \max\{n \mid W^n\neq 0\}\).

  First, we show that
  \(\Theta(f)\) and \(\Theta(id)\) are \(N/2\)-interleaved in the homotopy category.
  Note that
  \(\Theta(f)(i) = \wedge V_Y\otimes \wedge W \simeq \Q\)
  and
  \(\Theta(id)(i) = \Q\)
  for any \(i\ge N\).
  Hence, the trivial maps
  \(\varphi\colon \Theta(f) \Rightarrow \Theta(id)\circ T_{N/2}\) and
  \(\psi\colon \Theta(id) \Rightarrow \Theta(f)\circ T_{N/2}\)
  give an \(\frac{N}{2}\)-interleaving \((\varphi, \psi)\) in the homotopy category.

  Next, we show that
  \(\Theta(f)\) and \(\Theta(id)\) are not \(\varepsilon\)-interleaved in the homotopy category
  for any \(\varepsilon < \frac{N}{2}\).
  Assume that there is such an \(\varepsilon\)-interleaving \((\varphi, \psi)\),
  where \(\varphi\colon \Theta(f)\Rightarrow\Theta(id)\circ T_{\varepsilon}\)
  and \(\psi\colon \Theta(id)\Rightarrow\Theta(f)\circ T_{\varepsilon}\).
  Since \(\Theta(id)(\varepsilon) = \Q\),
  the inclusion map \(\Theta(f)(0 < 2\varepsilon)\colon \wedge V_Y \to \wedge V_Y\otimes\wedge W^{<2\varepsilon}\)
  is null-homotopic
  and so is \(\wedge V_Y\to \wedge V_Y\otimes\wedge W^{<N}\).
  Hence, its linear part
  \((V_Y, 0) \to (V_Y\oplus W^{<N}, d)\)
  induces the trivial map in the homology. 
  This follows from Proposition \ref{prop:linear-part} (1). 
  %is also null-homotopic. 
  \todo{We use Proposition \ref{prop:linear-part} (1).}
  But this is a contradiction
  since \(\dim V_Y > \dim W^{<N}\).
  This completes the proof.
\end{proof}

\section{Examples where $d_{\rm IHC}$ is greater than $d_{\rm Coh}$
}
\label{section:sensitiveOne}
%\todo{Kuri(8/31): How about this title.}

The homology functor $H$ gives rise to a functor $H : \text{Ho}(\et{\mathsf{CDGA}}) \to \et{\mathsf{GA}}$ to the category of extended tame persistence graded algebras. We here recall the {\it cohomology interleaving distance} $d_{\text{CohI}}(F, G)$ of extended tame persistence CDGAs $F$ and $G$ defined by $d_{\rm CohI}(F, G):= d_{\rm I}(H(F), H(G))$, where $d_{\rm I}$ denotes the interleaving distance defined in $\et{\mathsf{GA}}$.

%Remark that it may hold that
%$d_{\text{CohI}}(\Theta (f),\Theta(g))\neq 0$
%for $f:X\to *$ and $g:Y\to *$ even if $H^*(X;\Q )\cong H^*(Y;\Q)$ as $\Q$-graded algebras.

%\todo{Kuri (3/26): We first consider the cohomology ID in this section.}
The cohomology interleaving distance also detects persistence of graded algebras in the sense, for example, that
$d_{\text{CohI}}(\Theta (X\to \ast),\Theta(Y \to \ast)) >  0$ even if $H^*(X;\Q )\cong H^*(Y;\Q)$ as $\Q$-graded algebras.

\begin{ex}%\todo{Kuri: I think that some references for the minimal models $M(X)$ and $M(Y)$ are needed.}
%\todo{Naito(3.30) : Notation for the connected sum}
Let $X$ be the formal space $(S^3\times S^8)\# (S^3\times S^8)$
and $Y$  a non-formal 11-dimensional manifold such that $\minmodel(Y)=(\wedge(x,y,z),d)$ with
$|x|=|y|=3, |z|=5$, $dx=dy=0$ and $dz=xy$.
Then $H^*(X;\Q )=\wedge (x,y)\otimes \Q[u,w]/I
\cong H^*(Y;\Q)$
with $|u|=|w|=8$ and $I=(xy,xu,yw,u^2,uw,w^2,xw-yu)$.
%\todo{Kuri: $xw-yu$?}
Note that the minimal Sullivan model for $X$ is given by
\begin{eqnarray}\label{eq:connected_sum}
\minmodel(X)=(\wedge (x,y,z,v_{7},v_{7}',u,w, v_9,v_9', v_9'', v_{10}, v_{10}', v_{10}'', \ldots), d)
\end{eqnarray}
with $dx=dy=du=dw=0$, $ dz=xy$, $dv_7=xz$, $dv_7'=yz$, $dv_9=xv_7$, $dv_9'=yv_7'$, $dv_9''=xv_7'+yv_7$, $dv_{10}=xu$,
$dv_{10}'=yw$ and $dv_{10}''=xw-yu, \ldots$; see \cite[Example 3.6]{FOT}.
Here, $|x|=|y|=3$, $|z|=5$ and $|v_i|=i$.
Then, we see that $d_{\text{CohI}} (\Theta (f),\Theta(g))>0$
for $f:X\to *$ and $g:Y\to *$.
Indeed, if the distance is zero, then the map  $\varphi(8) : H(\Theta (g))(8) \to H(\Theta (f))(8)$ induces a map from  $\Q[u]/(u^2)$ to
 $\Q[u]$ or $\Q[w]$ by degree reasons in the model (\ref{eq:connected_sum}), which is a contradiction.
\end{ex}

For the rest of this section, we give examples of maps $f, g : X\to A$ with
\[
d_{\text{IHC}}(\Theta (f), \Theta (g)) \gneqq d_{\rm CohI}(\Theta (f), \Theta (g)).
\]

\begin{prop}\label{ex:The_Hopf_map}%\todo{Kuri (11/21): The example is revised.}
For the trivial map $f=*:S^3\to S^2$ and the Hopf map $h:S^3\to S^2$,
one has
\begin{enumerate}
    \item[{\rm (1)}] $d_{\rm IHC}(\Theta (f), \Theta (h))=3$
    \item[{\rm (2)}]  $d_{\rm CohI}(\Theta (f), \Theta (h))=2$.
\end{enumerate}
\end{prop}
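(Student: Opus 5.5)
The plan is to write down the minimal relative Sullivan models of both maps explicitly. Then compute $\Theta(f)$, $\Theta(h)$ and their cohomologies, and prove each equality by an explicit interleaving together with an obstruction.

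\textbf{Models.} Let $\minmodel(S^2)=(\wedge(x,y),d)$ with $dx=0$, $dy=x^2$, $|x|=2$, $|y|=3$.

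For the Hopf map, the fibre is $S^1$. The model is $\wedge(x,y)\to\wedge(x,y,z)$ with $|z|=1$ and $dz=x$, mapping quasi-isomorphically onto $(\wedge e,0)$ via $x\mapsto 0$, $z\mapsto 0$, $y\mapsto e$ (cf. Example~\ref{ex:paths}). Hence $\Theta(h)=\wedge(x,y)$ on $[0,1)$ and $\wedge(x,y,z)\simeq\wedge(e)$ on $[1,\infty)$.

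For the trivial map, the homotopy fibre is $\Omega S^2\times S^3$. The model is $\wedge(x,y)\to\wedge(x,y)\otimes\wedge(z,w,e')$ with $dz=x$, $dw=y-xz$ ($|w|=2$) and $de'=0$ ($|e'|=3$). Hence $\Theta(f)$ is $\wedge(x,y)$ on $[0,1)$, $\wedge(x,y,z)\simeq\wedge(e)$ on $[1,2)$, $\wedge(x,y,z,w)\simeq\Q$ on $[2,3)$, and $\wedge(x,y,z,w,e')\simeq\wedge(e')$ on $[3,\infty)$.

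The cohomology barcodes are therefore:
\begin{itemize}
\item $H\Theta(h)$: $\Q[x]/(x^2)$ on $[0,1)$ and $\wedge(u_3)$ on $[1,\infty)$;
\item $H\Theta(f)$: $\Q[x]/(x^2)$, then $\wedge(u_3)$, then $\Q$, then $\wedge(u_3)$ on the successive intervals, with the structure map from $[1,2)$ to $[3,\infty)$ equal to zero in degree $3$.
\end{itemize}

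\textbf{Proof of (2).} For the upper bound I will write down a $2$-interleaving of graded algebras.
\begin{itemize}
\item $H\Theta(h)(t)\to H\Theta(f)(t+2)$: zero in positive degrees for $t<1$, and the identity of $\wedge(u_3)$ for $t\ge 1$ (since then $t+2\ge 3$).
\item $H\Theta(f)(t)\to H\Theta(h)(t+2)$: zero in positive degrees for $t<3$, and the identity for $t\ge 3$.
\end{itemize}
Naturality holds because every structure map of $H\Theta(f)$ out of $[0,3)$ kills positive degrees. Both triangle identities are checked interval by interval: both sides are zero, except when $t\ge 1$ (first direction) or $t\ge 3$ (second direction), where both are identities.

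For the lower bound, suppose $\e<2$. Choose $t\ge 1$ with $t+\e\in[2,3)$, which is possible exactly because $\e<2$. The identity of $H^3\Theta(h)(t)=\Q$ would then factor through $H^3\Theta(f)(t+\e)=0$, a contradiction.

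\textbf{Proof of (1).} Since $H$ is a functor on $\text{Ho}(\et{\mathsf{CDGA}})$, we get $d_{\rm IHC}\ge d_{\rm CohI}=2$. So it remains to rule out $2\le\e<3$ and to build a $3$-interleaving.

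\emph{Excluding $2\le\e<3$.} Take $t=0$ in the second triangle of (\ref{eq:interleaving}). The structure map $\Theta(h)(0<2\e)$, which is the inclusion $\wedge(x,y)\to\wedge(x,y,z)$, is homotopic to a composite through $\Theta(f)(\e)=\wedge(x,y,z,w)$. That algebra is contractible, so the composite is null-homotopic. But the inclusion is not null-homotopic: its linear part sends $y$ to a nonzero class of $H(Q(\wedge(x,y,z)),d_1)$, because $d_1z=x$ and $d_1 y=0$. This contradicts Proposition~\ref{prop:linear-part}(1).

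\emph{$3$-interleaving.} All objects are cofibrant and fibrant, so morphisms in the homotopy category are homotopy classes of genuine maps. I define the maps levelwise.
\begin{itemize}
\item $\Theta(h)\to\Theta(f)^3$: on $[0,1)$, the inclusion $\wedge(x,y)\subset\wedge(x,y,z,w,e')$; on $[1,\infty)$, the extension $z\mapsto z$.
\item $\Theta(f)\to\Theta(h)^3$: on $[0,2)$, the inclusion into $\wedge(x,y,z)$. On $[2,3)$ it must be defined on $\wedge(x,y,z,w)$; the choice $w\mapsto 0$ is not compatible with $dw=y-xz$, so instead I factor through a map to the contractible stage, using that $y-xz$ is exact in $\wedge(x,y,z)$ up to the class $u_3$. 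On $[3,\infty)$ it is a map realizing $e'\mapsto$ the generator.
\end{itemize}
This step is the main obstacle: choosing these maps, and explicit homotopies in the sense of Remark~\ref{rem:Homotopies}, so that the maps are strictly natural in $t$ and both triangle identities hold up to homotopy in $\et{\mathsf{CDGA}}$, not merely pointwise. The difficulty concentrates at the contractible stage $[2,3)$.

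What I would try first is to take the $[3,\infty)$ components to be quasi-isomorphisms between $\wedge(x,y,z,w,e')\simeq\wedge(e')$ and $\wedge(x,y,z)\simeq\wedge(e)$, matching the degree-$3$ generators. The components on $[0,3)$ would then be chosen to factor through these, mirroring the homotopy $\Phi$ constructed in Example~\ref{ex:CP_S_distance}. Combined with the exclusion of $2\le\e<3$, this gives $d_{\rm IHC}=3$.
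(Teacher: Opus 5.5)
Your proof of (2) is correct. Your exclusion of $2\le\e<3$ in (1) via $H(Q(\ ))$ is also correct, and it is the paper's own argument.

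The gap is the upper bound $d_{\rm IHC}(\Theta(f),\Theta(h))\le 3$. You never construct the $3$-interleaving, and neither component you tentatively propose can belong to one.
\begin{itemize}
\item For $\Theta(h)\Rightarrow\Theta(f)^3$: the inclusion $\wedge(x,y,z)\hookrightarrow\wedge(x,y,z,w,e')$ kills $u_3=[y-xz]$, because $y-xz=dw$. So the composite $\Theta(h)(t)\to\Theta(f)(t+3)\to\Theta(h)(t+6)$ is zero on $H^3$ at every level $t\ge 1$. But $\Theta(h)(t<t+6)$ is the identity of $\wedge(x,y,z)$, which is an isomorphism on $H^3\cong\Q$.
\item For $\Theta(f)\Rightarrow\Theta(h)^3$: $\Theta(h)(3<5)$ is the identity, so naturality (even in the homotopy category) forces $\varphi(0)=\varphi(2)\circ\Theta(f)(0<2)$. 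This factors through the contractible $\wedge(x,y,z,w)$, so $\varphi(0)$ must be null-homotopic. The inclusion $\wedge(x,y)\to\wedge(x,y,z)$ is not null-homotopic, by your own linear-part argument.
\item Equivalently, extending the inclusion over $w$ needs a primitive of $y-xz$. That element is a cocycle representing the nonzero class $u_3$, so ``exact up to $u_3$'' is exactly the obstruction, not a way around it.
\end{itemize}

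The missing idea is to take both transformations as restrictions of single CDGA maps that kill everything except the degree-$3$ classes:
\begin{itemize}
\item $\Phi\colon\wedge(x,y,z,w,e')\to\wedge(x,y,z)$ with $\Phi(e')=y-xz$ and $\Phi(x)=\Phi(y)=\Phi(z)=\Phi(w)=0$. This is a CDGA map since $d(y-xz)=0$ and $\Phi(dw)=0$.
\item $\Psi\colon\wedge(x,y,z)\to\wedge(x,y,z,w,e')$ with $\Psi(y)=e'$ and $\Psi(x)=\Psi(z)=0$.
\end{itemize}
Every target level $i+3\ge 3$ contains all generators, so the restrictions are well defined and strictly natural. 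The composites are joined to the identities by explicit homotopies:
\begin{itemize}
\item for $\Phi\Psi$ on $\wedge(x,y,z)$: $H(z)=zt$, $H(x)=xt-z\,dt$, $H(y)=y-xz(1-t^2)$;
\item for $\Psi\Phi$ on $\wedge(x,y,z,w,e')$: $H(z)=zt$, $H(x)=xt-z\,dt$, $H(w)=wt^2$, $H(y)=yt^2+2wt\,dt$, $H(e')=e'$.
\end{itemize}
By Remark~\ref{rem:Homotopies} these restrict to homotopies in $\et{\mathsf{CDGA}}$, since the target level $i+6$ again contains every generator. This is the paper's construction, with $\bar x,\bar y,\tilde y$ in place of your $z,w,e'$, and it is what completes (1).
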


%\begin{ex}\label{ex:The_Hopf_map}
%For maps $f=*:S^3\to S^2$ and the Hopf map $g:S^3\to S^2$, we show
%(1) $d_{\text{IHC}}(\Theta (f), \Theta (g))=3$ and (2) $d_{\text{I}}(H(\Theta (f)), H(\Theta (g)))=2$.

\begin{proof} In the proof, we will only present non-trivial assignments for generators when defining a morphism of CDGAs. 

(1)
Let $\minmodel(S^2)=(\wedge (x,y),d)\to (\wedge (x,y,\bar{x},\bar{y},\tilde{y}),d)$ be the minimal relative Sullivan algebra for $f$ and $\minmodel(S^2)=(\wedge (x,y),d)\to (\wedge (x,y,\bar{x}), d)$ the minimal relative Sullivan algebra for $g$, where the differentials are defined by
%Here
$d(\bar{x})=x$,  $d(\bar{y})=y-x\bar{x}$ and $d(\tilde{y})=0$.
Then, the persistence CDGA  $\Theta(f)$ has the form
 $\wedge(x,y)$ on $[0,1)$,
$\wedge(x,y,\bar{x})$ on $[1,2)$,
$\wedge(x,y,\bar{x},\bar{y})$ on $[2,3)$ and
$\wedge(x,y,\bar{x},\bar{y},\tilde{y})$ on $[3,\infty)$;
see Remark \ref{rem:The_form}.
% On the other hand,
The persistence CDGA $\Theta(h)$ is defined by
$\wedge(x,y)$ on $[0,1)$ and
 $\wedge(x,y,\bar{x})$ on $[1,\infty)$.

For $\e \geq 3$, a part of an interleaving
$\varphi(i):\Theta(f)(i)\to \Theta (h)(i+\e)$
is defined by $\varphi(i)(\tilde{y})=y-x\bar{x}$ and sending the other generators to zero.
In addition, we define $\psi (i) :\Theta(h)(i)\to \Theta (f)(i+\e)$ by $\psi(i)(y)=\tilde{y}$ and sending the others to zero.
It follows that $\Theta(f)$ and $\Theta(h)$
are $\e$-interleaved in the homotopy category. In fact, for $i\geq 0$,
we have $\varphi^{\e}\circ \psi \simeq \Theta(h)(*<*+{2\e})$
 in $\et{\mathsf{CDGA}}$
by restricting the homotopy
$H:\wedge (x,y,\bar{x})\to \wedge (x,y,\bar{x})\otimes \wedge (t,dt)$
given by $H(\bar{x})= \bar{x}t$,
$H(x)= xt-\bar{x}dt$ and
$H(y)= y-x\bar{x}(1-t^2)$.
Moreover,  we have
$\psi^{\e}\circ\varphi\simeq \Theta(f)(*<*+{2\e})$ in $\et{\mathsf{CDGA}}$
%\todo{Kuri (1/9) : We have $\psi^\varepsilon \circ \varphi \simeq \Theta(f)(* < * +{2\varepsilon})$. }
 by the restriction of the homotopy
$H:\wedge (x,y,\bar{x},\bar{y},\tilde{y})\to \wedge (x,y,\bar{x},\bar{y},\tilde{y})\otimes \wedge (t,dt)$
given by $H(\bar{x})= \bar{x}t$,
$H(x)= xt-\bar{x}dt$,
$H(\bar{y})= \bar{y}t^2$,
$H(y)= yt^2+2\bar{y}tdt$ and
$H(\tilde{y})=\tilde{y}$; see Remark \ref{rem:Homotopies}.
%\todo{Kuri: We need a homotopy between the inclusion and the composite.}

Suppose that $2\leq \e <3$ and $(\varphi, \psi)$ is an $\e$-interleaving in the homotopy category.
Then, we see that
$\varphi(\e)\circ \psi(0)$ is homotopic to the structure map $\Theta(h)(0 < 2\e)$, which is the inclusion
$\wedge (x, y) \to \wedge (x, y, \bar{x})$. By applying the functor $H(Q( \ ))$ to the right triangle in (\ref{eq:interleaving}), we have
$H(Q(\Theta(h)(0 < 2\e)))(y) =y$; see Subsection \ref{sect:IDtoTheta(id)}. 
On the other hand,
since
$H(Q(\Theta(h)(0 < 2\e)))$ factors through $H(Q(\Theta(f)(\e)))=\Q$, it follows that
$H(Q(\Theta(h)(0 < 2\e)))(y) =0$, which is a contradiction.
%the  constant  map since $\Theta(f)(\epsilon)\simeq(\Q,0)$.
%On the other hand, $\Theta (0<2\epsilon)$ is  the relative model of
%the Hopf map.
%Thus it is not $\epsilon$-interleaved for $2\leq\epsilon <3$.
Hence, we obtain the result (1).
%sNext we show (2) $d_{\text{I}}(H(\Theta (f)), H(\Theta (g)))=2$.

(2)
Under the same notation as in the proof of (1),
the persistence CDGA $H(\Theta(f))$ is given by
$\Q[x]/(x^2)$ on $[0,1)$,
$\wedge(y)$ on $[1,2)$,
 $\Q$ on $[2,3)$ and
 $\wedge(\tilde{y})$ on $[3,\infty)$.
%On the other hand,
Moreover, the persistence CDGA $H(\Theta(h))$ has the form
given by
$\Q[x]/(x^2)$ on $[0,1)$ and
 $\wedge(y)$ on $[1,\infty)$.
For $\e \geq 2$, we define
$\varphi(i):H(\Theta(f))(i)\to H(\Theta (h))(i+\e)$ by $\varphi(i)(\tilde{y})=y$.
Another part of the interleaving $\psi (i) :H(\Theta(h))(i)\to H(\Theta (f))(i+\e)$ is
defined by $\psi(i)(y)=\tilde{y}$.
We see that $H(\Theta(f))$ and $H(\Theta(h))$ are $\e$-interleaved.
If $1\leq \e <2$, then
$\varphi(1+\e)\circ \psi(1)$ is the constant map since $H(\Theta(f))(\e)=\Q$.
On the other hand, $\Theta (1<1+2\e)$ is  the identity map.
This yields that $H(\Theta(f))$ and $H(\Theta(h))$ are not  $\e$-interleaved. % for $\varepsilon <2$.
We have the result (2).
\end{proof}

We consider another example which gives the inequality $d_{\text{CohI}} < d_{\text{IHC}}$.
Let ${\mathbb C} P^2 \# \overline{{\mathbb C} P^2}$ denote the connected sum of the complex projective plane and one with the opposite orientation. 
Observe that the minimal Sullivan model for the connected sum is of the form
\[
\wedge V =(\wedge (x_1,x_2, y_1 , y_2), d) \  \ \text{with} \ \ dx_i =0, dy_1=x_1^2 + x_2^2 \ \ \text{and} \ \  dy_2=x_1 x_2,
\]
where $|x_i|=2$ and $|y_i|=3$; see \cite[Example 3.8]{FOT}.

We consider a map $f_k : S^3 \to {\mathbb C} P^2 \# \overline{{\mathbb C} P^2}$ which corresponds to the dual of $y_k$ via the identification $\pi_3 \left( {\mathbb C} P^2 \# \overline{{\mathbb C} P^2} \right) \otimes {\mathbb Q} \cong {\rm Hom}(V^3 , {\mathbb Q})$ for $k=1,2$; see \cite[Theorem 15.11]{FHT}.
A Sullivan representative for $f_k$ is given by $f_k^* : \wedge V \to (\wedge (e),0)$, $f_k^*(x_i)=0$, $f_k^*(y_i)=e$ $(k=i)$ and $f_k^*(y_i)= 0$ $(k\neq i)$ with $|e|=3$.
Here, $(\wedge(e), 0)$ is a Sullivan model of $S^3$.
Then, we obtain the inclusion $\iota : \wedge V \to (\wedge V \otimes \wedge (\bar{x}_1, \bar{x}_2, \bar{y}_k) , d)$ as a minimal relative Sullivan model for $f^*_k$ via $\eta$ defined by 
\[
\eta : (\wedge V \otimes \wedge (\bar{x}_1, \bar{x}_2, \bar{y}_k) , d) \longrightarrow (\wedge(e), 0) , \ \eta (\bar{x}_i)=0, \ \eta (\bar{y}_k)=0,
\]
where the differential of $\wedge V \otimes \wedge (\bar{x}_1, \bar{x}_2, \bar{y}_j)$ is given by $d(\bar{x}_i)=x_i$, $d(\bar{y}_1)=y_2 - x_1 \bar{x}_2$ and $d(\bar{y}_2)=y_1 - x_1\bar{x}_1 - x_2 \bar{x}_2$; see Remark \ref{rem:SullivanRep}. 

\begin{prop}\label{ex:connectedsumCP2_distance}
One has
\begin{enumerate}
    \item[{\rm (1)}] $d_{\rm CohI}(\Theta(f_1), \Theta(f_2))=0$,
    \item[{\rm (2)}] $d_{\rm IHC}(\Theta(f_1), \Theta(f_2))=1$.
    %\item[{\rm (2)}] $d_{\rm IHC}(\Theta(f_1), \Theta(f_2))>0$.
\end{enumerate}
\end{prop}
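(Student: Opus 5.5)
For (1), the plan is to compute the persistence graded algebras $H(\Theta(f_1))$ and $H(\Theta(f_2))$ stage by stage and write down an explicit isomorphism between them. By Remark \ref{rem:The_form}, $\Theta(f_k)$ equals $\wedge V$ on $[0,1)$, $\wedge V\otimes\wedge(\bar{x}_1,\bar{x}_2)$ on $[1,2)$, and $\wedge V\otimes\wedge(\bar{x}_1,\bar{x}_2,\bar{y}_k)$ on $[2,\infty)$, since $|\bar{x}_i|=1$ and $|\bar{y}_k|=2$.
\begin{itemize}
\item On $[0,1)$ the two objects agree, and we take the identity.
\item On $[1,2)$ the CDGAs are literally the same. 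A direct computation should give $H\cong\wedge(\tilde{y}_1,\tilde{y}_2)$, where $\tilde{y}_1=[y_1-x_1\bar{x}_1-x_2\bar{x}_2]$ and $\tilde{y}_2=[y_2-x_1\bar{x}_2]$ are exactly the cocycles that are $d\bar{y}_2$ and $d\bar{y}_1$.
\item On $[2,\infty)$ both stages are models of $S^3$, so $H=\wedge(e)$. For $f_1$ the structure map kills $\tilde{y}_2$ and sends $\tilde{y}_1\mapsto e$; for $f_2$ the roles are swapped.
\end{itemize}
The map $H(\wedge V)\to\wedge(\tilde{y}_1,\tilde{y}_2)$ vanishes in positive degrees: the $x_i$ become exact, and the degree $4$ class $x_1^2$ (equal to $-x_2^2$ in cohomology) also dies. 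Hence the graded-algebra automorphism swapping $\tilde{y}_1\leftrightarrow\tilde{y}_2$ on $[1,2)$, together with the identities elsewhere, commutes with all structure maps. This gives a $0$-interleaving of $H(\Theta(f_1))$ and $H(\Theta(f_2))$, so $d_{\rm CohI}=0$.

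For the upper bound in (2), I would construct a $1$-interleaving $(\varphi,\psi)$ by hand, in the style of Proposition \ref{prop:f_id_N_interleaved} and Proposition \ref{ex:The_Hopf_map}. The maps $\varphi(i)$ for $i<1$ are the inclusions $\wedge V\to\wedge V\otimes\wedge(\bar{x}_1,\bar{x}_2)$. For $i\ge 1$ the targets are the $S^3$-models of $f_2$. Since the quasi-isomorphism $\eta$ onto $(\wedge(e),0)$ is surjective, the lifting lemma \cite[Lemma 12.4]{FHT} lets us lift a map through $\wedge(e)$ chosen to send $\tilde{y}_1$ to the surviving class $[y_2-x_1\bar{x}_2]$. $\psi$ is defined symmetrically. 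The homotopies $\varphi^1\psi\simeq$ structure map and $\psi^1\varphi\simeq$ structure map would be written explicitly as maps into $(\ )\otimes\wedge(t,dt)$ and restricted via Remark \ref{rem:Homotopies}. The shift of $2$ pushes every composite into the $S^3$-stage, where the homotopy only has to be checked on $V\oplus\langle\bar{x}_1,\bar{x}_2\rangle$. I expect this step to need some trial and error: the naive inclusion on stage $1$ does not extend over $\bar{y}_1$, because $y_2-x_1\bar{x}_2$ is not exact in the target, so the lift must genuinely use the $\wedge(e)$ factorization.

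For the lower bound, suppose first that $\e<1/2$. Then the argument of Proposition \ref{prop:ID_equivalences}(i) (with $\delta$ small) forces $\Theta(f_1)\cong\Theta(f_2)$, hence self-equivalences $\alpha$ of $S^3_\Q$ and $\beta$ of $({\mathbb C}P^2\#\overline{{\mathbb C}P^2})_\Q$ with $\beta f_{1,\Q}\simeq f_{2,\Q}\alpha$ (Theorem \ref{thm:Ho_sets}). Passing to linear parts (Proposition \ref{prop:linear-part}), $\beta$ is given by $g\in GL_2(\Q)$ on $\langle x_1,x_2\rangle$. Its action on $V^3$ is determined by how $g$ transforms the relation space $\langle x_1^2+x_2^2,\,x_1x_2\rangle$, and the requirement is that it carry the line dual to $y_1$ to the line dual to $y_2$. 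This amounts to mapping the quadratic form $x_1x_2$ to a rational multiple of $x_1^2+x_2^2$, modulo $x_1x_2$. That is impossible over $\Q$, because it requires $-1$ to be a square (which matches Remark \ref{rem:connectedsumCP2_overC}). This arithmetic verification is the heart of the matter. For $1/2\le\e<1$ the composite $\Theta(f_2)(0<2\e)$ factors through stage $1$ of $\Theta(f_1)$, which is the same CDGA. Here I would apply $H(Q(\ ))$ to the interleaving triangle, as in Proposition \ref{ex:The_Hopf_map}. The expected obstacle is that a stage-$1$ factorization is not obviously contradictory; I would try to reduce it to the same $GL_2(\Q)$ obstruction, since the factorization still induces an automorphism of $\langle x_1,x_2\rangle$ that must be compatible with the $\bar{y}$-parts.
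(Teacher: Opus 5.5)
Your part (1) is fine. The stage-by-stage computation, with $H\cong\wedge(\tilde y_1,\tilde y_2)$ on $[1,2)$ and the swap, is exactly the isomorphism $H\Theta(f_1)\cong H\Theta(f_2)$ the paper invokes. Part (2), however, has two gaps.

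\textbf{The upper bound cannot be built as you set it up.} Suppose $\varphi(i)$ is the inclusion $\wedge V\to\wedge V\otimes\wedge(\bar x_1,\bar x_2)$ for $i<1$.
\begin{enumerate}
\item Strict naturality forces $\varphi(1)|_{\wedge V}$ to be the inclusion into $\Theta(f_2)(2)$.
\item The degree-one part of $\Theta(f_2)(2)$ is spanned by $\bar x_1,\bar x_2$, and $d\varphi(1)(\bar x_i)=x_i$, so $\varphi(1)(\bar x_i)=\bar x_i$. Hence $\varphi(1)$ is the inclusion.
\item The inclusion sends the cocycle $y_2-x_1\bar x_2$ to a generator of $H^3(\Theta(f_2)(2))\cong\Q$.
\item But naturality also gives $\varphi(1)=\varphi(2)\circ\Theta(f_1)(1<2)$, and $y_2-x_1\bar x_2=d\bar y_1$ is exact in $\Theta(f_1)(2)$.
\end{enumerate}
No lift through $\wedge(e)$ at levels $\geq 1$ avoids this; the same contradiction persists on homotopy classes. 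The defect sits at level $0$: $\varphi(0)$ must already interchange $y_1$ and $y_2$. This is possible precisely because $x_1,x_2$ are exact in stage $1$.

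The paper takes $\tilde\varphi(y_1)=y_2+x_1\bar x_1-x_1\bar x_2+x_2\bar x_2$, $\tilde\varphi(y_2)=y_1-x_1\bar x_1+x_1\bar x_2-x_2\bar x_2$, $\tilde\varphi(x_i)=x_i$, $\tilde\varphi(\bar x_i)=\bar x_i$, $\tilde\varphi(\bar y_1)=\bar y_2$. It takes $\tilde\psi$ by the same formulas with $\bar y_2\mapsto\bar y_1$. These are mutually inverse CDGA isomorphisms (not over $\wedge V$) carrying stage $n$ into stage $n+1$. Their restrictions therefore form a strict $1$-interleaving, with no homotopies needed.

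\textbf{The lower bound is incomplete for $1/2\le\varepsilon<1$.} Your case $\varepsilon<1/2$ works, via Proposition \ref{prop:ID_equivalences}(i) and the anisotropy of $x_1^2+x_2^2$ over $\Q$. The remaining range is left open, and the factorization you describe is misidentified. The map $\Theta(f_2)(0<2\varepsilon)=\varphi(\varepsilon)\circ\psi(0)$ passes through $\Theta(f_1)(\varepsilon)=\wedge V$, i.e.\ stage $0$, not stage $1$.

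Correcting that observation supplies the missing idea and handles every $\varepsilon<1$ at once:
\begin{enumerate}
\item For such $\varepsilon$, $\varphi(0)$ is an endomorphism of $\wedge V$.
\item Since $H^3(Q(\Theta(f_1)(t)),d_1)\cong V^3$ for $t<2$, apply Proposition \ref{prop:linear-part}(1) to $\psi(\varepsilon)\circ\varphi(0)\simeq\Theta(f_1)(0\le 2\varepsilon)$. This shows $Q\varphi(0)|_{V^3}$ is injective, hence bijective.
\item If $Q\varphi(0)|_{V^2}$ were singular, $\varphi(0)(dy_1)$ and $\varphi(0)(dy_2)$ would be linearly dependent. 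But $d\varphi(0)(y_1)$ and $d\varphi(0)(y_2)$ are independent, because $d\colon V^3\to\wedge^2V^2$ is injective. So $\varphi(0)$ is an automorphism.
\item Apply $H(Q(\ ))$ to the naturality square of $\varphi$ from level $0$ to level $2$. Along the top, $y_2\mapsto 0$ because $d_1\bar y_1=y_2$. Along the bottom, the image of $y_2$ is nonzero: by Lemma \ref{lem:connectedsumCP2_aut} the $y_2$-coefficient of $Q\varphi(0)(y_2)$ is $\sigma(\lambda^2+\bar\lambda^2)\neq 0$, and $d_1\bar y_2=y_1$ kills only $y_1$.
\end{enumerate}
This is your $\sqrt{-1}$ obstruction, but applied to the single component $\varphi(0)$ rather than to a global equivalence. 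No split at $1/2$ is needed.
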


To prove the proposition,
we consider automorphisms on \(\wedge V\) mentioned above.
The following lemma can be proved by a straightforward calculation.

\begin{lem}\label{lem:connectedsumCP2_aut}
  Any isomorphism \(\varphi\colon \wedge V \to \wedge V\) of Sullivan algebras
  can be written as
  \begin{align}
    \varphi(x_1) &= \lambda x_1 + \bar\lambda x_2,&
    \varphi(x_2) &= \sigma\bar\lambda x_1 + \sigma\lambda x_2,\\
    \varphi(y_1) &= (\lambda^2+\bar\lambda^2)y_1 + 4\lambda\bar\lambda y_2,&
    \varphi(y_2) &= \sigma\lambda\bar\lambda y_1 + \sigma(\lambda^2+\bar\lambda^2)y_2,
  \end{align}
  where
  \(\sigma\in\{\pm 1\}\) and
  \(\lambda, \bar\lambda \in \Q\) with \(\lambda^2\neq \bar\lambda^2\).
\end{lem}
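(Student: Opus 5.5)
The plan is to use the degree structure of $\wedge V$ to reduce $\varphi$ to two $2\times 2$ matrices, one on $V^2$ and one on $V^3$. The compatibility $\varphi d = d\varphi$ then gives a pair of quadratic equations on the $V^2$-matrix, which I solve by a sign-case analysis.

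\emph{Step 1: $\varphi$ is linear on generators.} Since $V = V^2\oplus V^3$ with $V^1=0$, we have $(\wedge V)^2 = V^2$ and $(\wedge V)^3 = V^3$; in particular a product of a degree-$2$ element with a degree-$1$ element cannot occur. Hence any morphism $\varphi$ is of the form
\[
\varphi(x_1)=a x_1+b x_2,\qquad \varphi(x_2)=c x_1+e x_2 ,
\]
and $\varphi(y_j)\in V^3$ for $j=1,2$. Moreover, $\varphi$ is an isomorphism exactly when both the matrix on $V^2$ and the matrix on $V^3$ are invertible.

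\emph{Step 2: the constraint from the differential.} The differential $d\colon V^3\to \wedge^2 V^2$ is injective, with image $U=\mathrm{span}\{q_1,q_2\}$ where $q_1=x_1^2+x_2^2$ and $q_2=x_1x_2$. Commutation with $d$ forces $\varphi(q_j)=d\varphi(y_j)\in U$. Conversely, once $\varphi(q_j)\in U$, the element $\varphi(y_j)$ is uniquely determined by injectivity of $d$, and the resulting map automatically commutes with $d$. Expanding in the basis $x_1^2, x_1x_2, x_2^2$:
\begin{itemize}
\item $\varphi(q_2)\in U$ is equivalent to $ac=be$;
\item $\varphi(q_1)\in U$ is equivalent to $a^2+c^2=b^2+e^2$.
\end{itemize}

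\emph{Step 3: solving the system.} Adding and subtracting twice the first relation to the second gives
\[
(a+c)^2=(b+e)^2,\qquad (a-c)^2=(b-e)^2 .
\]
So $b+e=\pm(a+c)$ and $b-e=\pm(a-c)$, giving four sign cases.
\begin{itemize}
\item Two cases give $(b,e)=(a,c)$ or $(b,e)=(-a,-c)$. In both, the determinant $ae-bc$ vanishes, so $\varphi$ is not invertible and these cases are excluded.
\item The remaining two cases give $(b,e)=(c,a)$ or $(b,e)=(-c,-a)$.
\end{itemize}
Setting $\lambda=a$, $\bar\lambda=b$ and $\sigma=\pm1$ in the last two cases recovers the stated formulas for $\varphi(x_1)$ and $\varphi(x_2)$. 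The determinant is then $\sigma(\lambda^2-\bar\lambda^2)$, so invertibility is equivalent to $\lambda^2\neq\bar\lambda^2$.

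\emph{Step 4: the values on $y_1,y_2$.} Direct expansion with $\sigma^2=1$ gives
\[
\varphi(q_1)=(\lambda^2+\bar\lambda^2)q_1+4\lambda\bar\lambda\, q_2,\qquad
\varphi(q_2)=\sigma\lambda\bar\lambda\, q_1+\sigma(\lambda^2+\bar\lambda^2)q_2 .
\]
Lifting these through the injective map $d|_{V^3}$ yields exactly the stated formulas for $\varphi(y_1)$ and $\varphi(y_2)$.

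It remains to check that the $V^3$-matrix is invertible. Its determinant is $\sigma\bigl((\lambda^2+\bar\lambda^2)^2-4\lambda^2\bar\lambda^2\bigr)=\sigma(\lambda^2-\bar\lambda^2)^2$, which is nonzero under the same condition $\lambda^2\neq\bar\lambda^2$.

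The only place needing care is the case analysis in Step 3, specifically making sure that the degenerate sign cases are excluded by invertibility rather than overlooked. Everything else is routine bookkeeping.
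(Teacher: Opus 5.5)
Your proof is correct and is essentially the ``straightforward calculation'' the paper invokes without giving details. Reducing $\varphi$ to the $2\times 2$ matrix on $V^2$, extracting $ac=be$ and $a^2+c^2=b^2+e^2$ from $\varphi d=d\varphi$, and discarding the two sign cases with vanishing determinant is exactly the right bookkeeping. Your final check that the $V^3$-matrix has determinant $\sigma(\lambda^2-\bar\lambda^2)^2$ is a converse the lemma does not require, but it is accurate.
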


By using the above lemma, we prove the following result,
which will be utilized in the proof of Proposition \ref{ex:connectedsumCP2_distance}.

\begin{lem}\label{lem:connectedsumCP2}
For any isomorphism $\varphi : \wedge V \to \wedge V$, the coefficient of $y_2$ in $Q\varphi(y_2)$ is nonzero. Here, $Q\varphi : V \to V$ is the linear part of $\varphi$.
\end{lem}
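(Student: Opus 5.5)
Since $\varphi$ preserves degrees and $\wedge V$ is generated by $x_1,x_2$ in degree $2$ and $y_1,y_2$ in degree $3$, the linear part $Q\varphi$ restricted to $V^3$ is just the projection of $\varphi(y_2)$ onto $V^3=\mathrm{span}\{y_1,y_2\}$. The elements of $(\wedge^{\geq 2}V)^3$ vanish, because a product of generators of degree $\geq 2$ has degree at least $4$. Hence $\varphi(y_2)$ lies in $V^3$ itself, and $Q\varphi(y_2)=\varphi(y_2)$. By Lemma \ref{lem:connectedsumCP2_aut},
\[
\varphi(y_2)=\sigma\lambda\bar\lambda\, y_1+\sigma(\lambda^2+\bar\lambda^2)\,y_2 ,
\]
so the coefficient of $y_2$ in $Q\varphi(y_2)$ is $\sigma(\lambda^2+\bar\lambda^2)$. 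It remains to see that this is nonzero.

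Since $\sigma=\pm1$, we only need $\lambda^2+\bar\lambda^2\neq 0$. Over $\Q$, a sum of two squares vanishes only when $\lambda=\bar\lambda=0$. That case would give $\lambda^2=\bar\lambda^2$, which the condition $\lambda^2\neq\bar\lambda^2$ in Lemma \ref{lem:connectedsumCP2_aut} excludes. Therefore $\lambda^2+\bar\lambda^2>0$, and the coefficient is nonzero.

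The one step carrying real content is Lemma \ref{lem:connectedsumCP2_aut}, which we are allowed to assume. Its verification would go as follows. Write $\varphi(x_i)$ as linear combinations of $x_1,x_2$. Impose that $\varphi(x_1^2+x_2^2)$ and $\varphi(x_1x_2)$ are exact, i.e.\ lie in the span of $x_1^2+x_2^2$ and $x_1x_2$ in $\wedge^2 V^2$. The vanishing of the coefficient of $x_1^2-x_2^2$ forces the stated shape with $\sigma=\pm1$. Invertibility of $\varphi$ then gives $\lambda^2\neq\bar\lambda^2$. For the present lemma, the only point worth stressing is that the argument uses rationality: over $\mathbb{C}$ one could have $\lambda^2+\bar\lambda^2=0$ with $\lambda^2\neq\bar\lambda^2$, for instance $\bar\lambda=i\lambda$. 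This is consistent with the later Remark \ref{rem:connectedsumCP2_overC}, which says the distances coincide over $\mathbb{C}$.
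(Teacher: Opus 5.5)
Your proof is correct and follows the paper's argument: you read off the coefficient $\sigma(\lambda^2+\bar\lambda^2)$ from Lemma \ref{lem:connectedsumCP2_aut}, and then note that a sum of two rational squares vanishes only when both are zero, which $\lambda^2\neq\bar\lambda^2$ excludes. Your check that $(\wedge^{\geq 2}V)^3=0$, so that $Q\varphi(y_2)=\varphi(y_2)$, makes explicit a step the paper leaves implicit. One small point about your optional sketch of the classification lemma: invertibility is needed not only to get $\lambda^2\neq\bar\lambda^2$ but also to discard degenerate solutions such as $\varphi(x_1)=a(x_1+x_2)$, $\varphi(x_2)=c(x_1+x_2)$, which satisfy the compatibility with $d$ but are not of the stated shape.
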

\begin{proof}
  By Lemma \ref{lem:connectedsumCP2_aut},
  the coefficient of \(y_2\) in \(Q\varphi(y_2)\) is \(\sigma(\lambda^2+\bar\lambda^2)\).
  Since \(\lambda,\bar\lambda\in\Q\) and \(\lambda^2\neq\bar\lambda^2\),
  we have \(\lambda^2+\bar\lambda^2>0\).
  Hence \(\sigma(\lambda^2+\bar\lambda^2)\neq 0\).
\end{proof}
% \begin{proof}
% The assertion is proved by contradiction.
% We may write
% \begin{align}
% &\varphi (x_1) = \lambda_1 x_1 + \lambda_2 x_2, \hspace{1em}
% \varphi (x_2) = \mu_1 x_1 + \mu_2 x_2
% \\
% &\varphi (y_1) = p_1 y_1 + p_2 y_2, \hspace{1em}
% \varphi (y_2) = q_1 y_1 + q_2 y_2
% \end{align}
% for some $\lambda_i , \mu_i , p_i , q_i \in {\mathbb Q}$.
% Assume that $q_2 = 0$.
% First, the compatibility of $\varphi$ with the differentials, $d\varphi = \varphi d$, implies that $\lambda_1 \mu_1 - \lambda_2 \mu_2 = 0$ and $\lambda_1 \mu_2 + \lambda_2 \mu_1 = 0$.
% We also see that $\lambda_1 \mu_2 - \lambda_2 \mu_1 \neq 0$ since $\varphi$ is an isomorphism.
% From $\lambda_1 \mu_2 + \lambda_2 \mu_1 = 0$ and $\lambda_1 \mu_2 - \lambda_2 \mu_1 \neq 0$, we have $\lambda_2 \mu_1 \neq 0$.
% Multiplying both sides of the equation $\lambda_1 \mu_1 - \lambda_2 \mu_2 = 0$ by $1 / \lambda_2 \mu_1$ yields $\lambda_1 = \lambda_2 \mu_2 / \mu_1$.
% Substituting this into the equation $\lambda_1 \mu_2 + \lambda_2 \mu_1 = 0$, we obtain $\mu_1^2 + \mu^2_2 = 0$. This leads to $\mu_1 =0$, which contradicts to $\lambda_2 \mu_1 \neq 0$.
% \end{proof}

We are ready to prove Proposition \ref{ex:connectedsumCP2_distance}.

\begin{proof}[Proof of Proposition \ref{ex:connectedsumCP2_distance}]%\todo{Kuri: The proof is revised with new terminology. Check the proof again.}
The assertion (1) is trivial since $H\Theta(f_1) \cong H\Theta(f_2)$ as persistence CGAs.

We prove the assertion (2). 
First, it is readily seen that $\Theta(f_1)$ and $\Theta(f_2)$ are $1$-interleaved in the homotopy category.
Indeed, consider CDGA morphisms
\begin{align}
\tilde\varphi  &: \wedge V \otimes \wedge (\bar{x}_1 , \bar{x}_2 , \bar{y}_1) \to \wedge V \otimes \wedge (\bar{x}_1 , \bar{x}_2 , \bar{y}_2) \quad \text{and}
\\
\tilde\psi &: \wedge V \otimes \wedge (\bar{x}_1 , \bar{x}_2 , \bar{y}_2) \to \wedge V \otimes \wedge (\bar{x}_1 , \bar{x}_2 , \bar{y}_1)
\end{align}
defined by
\begin{align}
&\tilde\varphi (x_i) = \tilde\psi (x_i)=x_i, \quad \tilde\varphi (\bar{x}_i) = \tilde\psi (\bar{x}_i)=\bar{x}_i, \quad \tilde\varphi(\bar{y}_1)=\bar{y}_2, \quad \tilde\psi (\bar{y}_2)=\bar{y}_1,
\\
&\tilde\varphi(y_1) = \tilde\psi (y_1) = y_2 + x_1 \bar{x}_1 - x_1 \bar{x}_2 + x_2 \bar{x}_2 \quad \text{and}
\\
&\tilde\varphi(y_2) = \tilde\psi (y_2) = y_1 - x_1 \bar{x}_1 + x_1 \bar{x}_2 - x_2 \bar{x}_2.
\end{align}
Since $\tilde\psi\circ\tilde\varphi=id$ and $\tilde\varphi\circ\tilde\psi=id$, and their restrictions on $\Theta(f_1)(t)$ and $\Theta(f_2)(t)$ are compatible with the structure maps, these morphisms give a $1$-interleaving between $\Theta(f_1)$ and $\Theta(f_2)$.

Assume that $\Theta(f_1)$ and $\Theta(f_2)$ are $\varepsilon$-interleaved in the
homotopy category for some $0\leq\varepsilon<1$.
Let $(\varphi,\psi)$ be an $\varepsilon$-interleaving between the persistence CDGA, where
\[
 \varphi : \Theta(f_1) \Longrightarrow \Theta(f_2) \circ T_\varepsilon,
 \qquad
 \psi : \Theta(f_2) \Longrightarrow \Theta(f_1) \circ T_\varepsilon.
\]
Since $\Theta (f_1)(0)=\Theta (f_2)(\varepsilon)=\wedge V$, the component $\varphi(0)$
is a morphism from $\wedge V$ to itself.
It is proved that $\varphi (0)$ is an isomorphism under the assumption. To this end, it suffices to show that $Q\varphi(0)$ is an isomorphism. 

A direct calculation of the cohomology of $Q(\Theta(f_1)(t))$ shows that the structure map $\Theta (f_1)(0\leq 2\varepsilon)$ induces an
isomorphism
\[
 H^3(Q(\wedge V),d_1)
 \stackrel{\cong}{\longrightarrow}
 H^3(Q(\Theta(f_1)(2\varepsilon)),d_1),
\]
where
$
 H^3(Q(\Theta(f_1)(t)),d_1)
 \cong 
 V^3
 =
 \Q \langle y_1,y_2\rangle
$
for $0 \leq t < 2$.
Since $\wedge V$ is simply connected, Proposition~\ref{prop:linear-part}(1)
and the interleaving condition
$\psi(\varepsilon)\circ\varphi(0)
 \simeq
 \Theta(f_1)(0\leq2\varepsilon)$
give
$H^3(Q\psi(\varepsilon))\circ H^3(Q\varphi(0))
 =
 H^3(Q(\Theta(f_1)(0\leq2\varepsilon))).
$
Since the right-hand side is an isomorphism,
$H^3(Q\varphi(0))$ is injective.
Moreover, since $H^3(Q\varphi(0))$ is an endomorphism of the finite-dimensional vector space $H^3(Q(\wedge V),d_1)$, its injectivity implies that it is an isomorphism.
Under the identification 
$
 H^3(Q(\wedge V),d_1) \cong V^3,
$
$H^3(Q\varphi(0))$ coincides with $
 Q\varphi(0)|_{V^3}:V^3\to V^3.
$
It follows that $Q\varphi(0)|_{V^3}$ is an isomorphism.

We next show that $Q\varphi(0)|_{V^2}:V^2\to V^2$ is an isomorphism. 
Suppose that it is not an isomorphism. 
Since
$Q\varphi(0)|_{V^2}$ is a non-isomorphic endomorphism of the two-dimensional
vector space $V^2=\mathbb{Q}\langle x_1,x_2\rangle$, 
\[
Q\varphi(0)(x_1)=\varphi(0)(x_1)
\quad
\text{and}
\quad
Q\varphi(0)(x_2)=\varphi(0)(x_2)
\]
 are linearly dependent in $V^2$.
Therefore, the two elements
\[
 \varphi(0)(dy_1)
 =
 \varphi(0)(x_1)^2+\varphi(0)(x_2)^2
\quad
\text{and}
\quad
 \varphi(0)(dy_2)
 =
 \varphi(0)(x_1)\varphi(0)(x_2)
\]
are linearly dependent in $\wedge^2 (V^2)$.

On the other hand, %the restriction of $d$ to $V^3$, $d:V^3 \to \wedge^2 (V^2)$, is injective.
since
$Q\varphi(0)|_{V^3}$ is an isomorphism, the elements 
$\varphi(0)(y_1)$ and $\varphi(0)(y_2)$ are linearly independent. 
The restriction of $d$ to $V^3$, $d:V^3 \to \wedge^2 (V^2)$, is injective. Then, 
it follows that $d\varphi(0)(y_1)$ and $d\varphi(0)(y_2)$ are also linearly independent.
However, since $\varphi(0)$ is a morphism of CDGAs, we have
$
 d\varphi(0)(y_i)=\varphi(0)(dy_i)
$
for $i=1,2$. This is a contradiction. Therefore,
$Q\varphi(0)|_{V^2}$ is an isomorphism.
Thus, $Q\varphi(0)$ is an isomorphism.
It follows that $\varphi(0)$ is an isomorphism; see also \cite[\S 15, Remark 2]{FHT}.

%%%%%%%% previous version
% In order to prove the assertion (2), we show that $\Theta(f_1)$ and $\Theta(f_2)$ are not $\varepsilon$-interleaved in the homotopy category for $0\leq \varepsilon < 1/2$.
% Assume that $\Theta(f_1)$ and $\Theta(f_2)$ are $\varepsilon$-interleaved in the homotopy category. Then, we have an $\e$-interleaving
% $(\varphi, \psi)$ with $\varphi:\Theta(f_1) \Rightarrow \Theta(f_2)\circ T_{\varepsilon}$ and $\psi:\Theta(f_2) \Rightarrow \Theta(f_1)\circ T_{\varepsilon}$.
% Since $\varphi$ is a natural transformation and the structure map $\Theta(f_1)(0\to \e)$ is the identity map, it follows that $\varphi (0)=\varphi(\varepsilon)$.
% Furthermore, the homotopy commutative diagrams
% \[
% \xymatrix@C15pt@R12pt{
% \Theta (f_1)(0) \ar[rr]^-{id} \ar[rd]_-{\varphi(0)} && \Theta (f_1)(2\varepsilon)
% \\
% & \Theta (f_2)(\varepsilon), \ar[ru]_-{\psi (\varepsilon)} &
% }
% \xymatrix@C15pt@R12pt{
% & \Theta (f_1)(\varepsilon) \ar[rd]^-{\varphi (\varepsilon)} &
% \\
% \Theta (f_2)(0) \ar[rr]^-{id} \ar[ru]^-{\psi(0)} && \Theta (f_2)(2\varepsilon)
% }
% \]
% show that $\varphi(0)$ is a quasi-isomorphism on $\wedge V = \Theta(f_1)(0) = \Theta(f_2)(\e)$.
% It follows from \cite[Proposition 12.10]{FHT} that $\varphi(0)$ is an isomorphism.

Now, the natural transformation $\varphi$ gives rise to the commutative diagram
\[
\xymatrix@C60pt@R20pt{
\Theta (f_1)(0) \ar[r]^-{\Theta (f_1)(0 \leq 2)} \ar[d]_-{\varphi(0)} & \Theta (f_1)(2) \ar[d]^-{\varphi(2)}
\\
\Theta (f_2)(\varepsilon) \ar[r]^-{\Theta (f_2)(\varepsilon \leq \varepsilon + 2)}  & \Theta (f_2)(\varepsilon + 2).
}
\]
Applying the homology functor to the linear part of the diagram above, we obtain the commutative diagram
\[
\xymatrix@C60pt@R20pt{
V \ar[r]^-{q_1} \ar[d]_-{Q\varphi(0)} & H\left( Q \left( \wedge V \otimes \wedge (\bar{x}_1 , \bar{x}_2 , \bar{y}_1 ) \right), d_1 \right) \ar[d]^-{H(Q\varphi(2))}
\\
V \ar[r]^-{q_2} & H\left( Q \left( \wedge V \otimes \wedge (\bar{x}_1 , \bar{x}_2 , \bar{y}_2 ) \right), d_1 \right),
}
\]
where $q_1$ and $q_2$ are quotient maps.
It is readily seen that $q_1 (y_2) = 0$ and then $H(Q\varphi(2))\circ q_1 (y_2) = 0$.
On the other hand, $q_2 \circ Q\varphi(0) (y_2) \neq 0$ from Lemma \ref{lem:connectedsumCP2}, which contradicts the commutativity of the diagram above.
\end{proof}

\begin{rem} %\todo{Kuri (3/26) : The case $\mathbb{C}$ is considered at the end of the section.}
  \label{rem:connectedsumCP2_overC}
  The proof of Lemma \ref{lem:connectedsumCP2} depends on the fact that
  the equation \(\lambda^2+\bar\lambda^2=0\) has no non-trivial solution in \(\Q\).

  If we take coefficients in \(\mathbb{C}\),
  the isomorphism \(\varphi\) defined by
  \begin{align}
    \varphi(x_1) &= x_1 + \sqrt{-1} x_2,&
    \varphi(x_2) &= \sqrt{-1} x_1 + x_2,\\
    \varphi(y_1) &= 4\sqrt{-1} y_2,&
    \varphi(y_2) &= \sqrt{-1} y_1
  \end{align}
  is a counterexample for Lemma \ref{lem:connectedsumCP2}.
  Moreover, the extension
  \begin{equation}
    \tilde\varphi\colon
    (\wedge V \otimes \wedge (\bar{x}_1, \bar{x}_2, \bar{y}_1) , d)
    \to
    (\wedge V \otimes \wedge (\bar{x}_1, \bar{x}_2, \bar{y}_2) , d)
  \end{equation}
  of \(\varphi\) given by
  \begin{align}
    \tilde\varphi(\bar{x}_1) &= \bar{x}_1 + \sqrt{-1}\bar{x}_2,&
    \tilde\varphi(\bar{x}_2) &= \sqrt{-1}\bar{x}_1 + \bar{x}_2,\\
    \tilde\varphi(\bar{y}_1) &= \sqrt{-1}\bar{y}_2 - \bar{x}_1\bar{x}_2
  \end{align}
  is an isomorphism of relative Sullivan algebras over \(\wedge V\).
  Hence
  \(d_{\rm IHC}(\Theta(f_1), \Theta(f_2))=0\) and
  Proposition \ref{ex:connectedsumCP2_distance} (2) does not hold
  with coefficients in \(\mathbb{C}\).
\end{rem}

%We consider the cohomology interleaving distance $d_{\text{CohI}} (\Theta (f),\Theta(g))$ for maps.

%%%%%%%%%%%%%%%%%%%%%%%%%%%%%%%%%%%%%%%%

\section{Formalities of a persistence CDGA}\label{section:formalities}

The purposes of this section are to introduce formalities of an object in $\et{\mathsf{CDGA}}$, and to compare them.
 %\todo{Kuri(3/25): The sentence is added. }
%We say that

\begin{defn}
A persistence CDGA $F$ in $\et{\mathsf{CDGA}}$ is $H$-{\it formal} if there exists a zig-zag sequence of weak equivalences between $F$ and $(H(F), 0)$ in
$\et{\mathsf{CDGA}}$.
\end{defn}

In order to define other formalities of extended tame persistence CDGAs, we consider the commutative diagram
\[
\xymatrix@C30pt@R15pt{
\et{\mathsf{CDGA}} \ar[d]_{\pi} \ar[r]^-{j} & \mathsf{CDGA}^{\mathbb{R}_+} \ar[d]^{q_*} \\
\text{Ho}(\et{\mathsf{CDGA}}) \ar@{.>}[r]_-{j_*}& \text{Ho}(\mathsf{CDGA})^{\mathbb{R}_+},
}
\]
where $\pi$ and $q$ are the localization maps, $j$ is the embedding and $j_*$ is unique map induced by $j$. Observe that
$q_*\circ j$ sends a weak equivalence to an isomorphism.

We recall interleaving distances defined in Section \ref{sect:ID}.
By virtue of \cite[Proposition 3.6]{B-S}, we have  %\todo{Kuri (3/26): The inequalities are added.}
\begin{prop}\label{prop:inequalities}
For objects $F$ and $G$ in $\et{\modelcat}$,
\begin{align}\label{eq:IHC}
d_{\rm HC}(j_*(F), j_*(G)) \leq d_{\rm IHC}(F,G) \leq d_{\rm HI}(F, G) \leq d_{\rm I}(F, G).
\end{align}
\end{prop}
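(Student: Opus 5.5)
We prove the three inequalities separately, from right to left, each time showing that an $\e$-interleaving of the stronger kind produces one of the weaker kind with the same $\e$. Taking infima then gives the claim. This follows the pattern of \cite[Proposition 3.6]{B-S}, adapted to $\et{\modelcat}$ with the model structure of Theorem \ref{thm:ModelCat}.

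\emph{The inequality $d_{\rm HI}\le d_{\rm I}$.} Suppose $F$ and $G$ are $\e$-interleaved in $\modelcat^{(\R,\leq)}$. Take $X'=F$ and $Y'=G$ with the trivial zigzags of identities. Then $F$ and $G$ are $\e$-homotopy interleaved by definition.

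\emph{The inequality $d_{\rm IHC}\le d_{\rm HI}$.} Suppose there are zigzags of weak equivalences $F\simeq F'$ and $G\simeq G'$ in $\et{\modelcat}$, and an $\e$-interleaving $(\varphi,\psi)$ between $F'$ and $G'$.
\begin{itemize}
\item We check that $F'$ and $G'$ lie in $\et{\modelcat}$. Each is weakly equivalent to an extended tame object, and the zigzags are taken in $\et{\modelcat}$, so this holds. The interleaving maps are natural transformations between objects of $\et{\modelcat}$, hence morphisms there.
\item Applying the localization $\pi\colon\et{\modelcat}\to{\rm Ho}(\et{\modelcat})$ sends the strictly commutative diagram (\ref{eq:interleaving_1}) to a commutative diagram.
\item The shift $(\ )^\e$ preserves weak equivalences, so it commutes with $\pi$ up to the induced functor. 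Hence $\pi F'$ and $\pi G'$ are $\e$-interleaved in the homotopy category.
\item The zigzags become isomorphisms $\pi F\cong\pi F'$ and $\pi G\cong\pi G'$. These isomorphisms are natural with respect to the induced shift, because the shift of a weak equivalence is again a weak equivalence.
\end{itemize}
Conjugating the interleaving by these isomorphisms gives an $\e$-interleaving between $F$ and $G$ in ${\rm Ho}(\et{\modelcat})$.

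\emph{The inequality $d_{\rm HC}\le d_{\rm IHC}$.} We use the functor $j_*\colon{\rm Ho}(\et{\modelcat})\to{\rm Ho}(\modelcat)^{\R_+}$ induced by $q_*\circ j$.
\begin{itemize}
\item The key point is that $j_*$ commutes with the shift functors, i.e. $j_*\circ(\ )^\e=(\ )^\e\circ j_*$. Both composites are induced from the same functor $q_*\circ j\circ(\ )^\e=(\ )^\e\circ q_*\circ j$ on $\et{\modelcat}$. By the universal property of localization they coincide.
\item The identification respects the natural transformations $F\Rightarrow F^\e$ given by the structure maps, again because they are images of strict maps.
\end{itemize}
Therefore $j_*$ carries the diagram (\ref{eq:interleaving_1}) in ${\rm Ho}(\et{\modelcat})$ to a commutative diagram in ${\rm Ho}(\modelcat)^{\R_+}$. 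This is precisely an $\e$-homotopy commutative interleaving of $j_*F$ and $j_*G$. Here we read $d_{\rm HC}(j_*F,j_*G)$ as the interleaving distance of $q_*jF$ and $q_*jG$.

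\emph{Expected obstacle.} The only delicate point is the compatibility of the localization functors with shifts. Precisely, the natural transformation from the identity to $(\ )^\e$ must descend to ${\rm Ho}(\et{\modelcat})$ and be preserved by $j_*$. I would settle this by noting that all the functors and transformations involved are induced from strict ones on $\et{\modelcat}$ that preserve weak equivalences.
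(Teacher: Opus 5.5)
Your proof is correct and follows the paper's route. The paper simply cites \cite[Proposition 3.6]{B-S}: a functor compatible with the shift functors and the structure transformations carries $\e$-interleavings to $\e$-interleavings. You spell this out for the identity, for the localization $\pi$ together with the zigzag isomorphisms, and for $j_*$, and you also supply the descent of the shift and of $\mathrm{id}\Rightarrow(\ )^\e$ to ${\rm Ho}(\et{\modelcat})$, which the citation leaves implicit.

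One justification should be tightened. $F'$ and $G'$ lie in $\et{\modelcat}$ only because the zigzags in the definition of $d_{\rm HI}$ are read inside $\et{\modelcat}$. Being pointwise weakly equivalent to an extended tame object does not make an object extended tame.
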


By using these distances, we define formalities of a persistence CDGA.

\begin{defn}\label{defn:IHD_formality} % \todo{Kuri (1/24): General formalities are described.}
Let $F$ be an extended tame persistence CDGA. \\
(1) $F$ is $d_{\text{HI}}$-{\it formal} if $d_{\text{HI}}(F, H(F))=0$. \\
(2) $F$ is $d_{\text{IHC}}$-{\it formal} if $d_{\text{IHC}}(F, H(F))=0$.\\
(3) $F$ is $d_{\text{HC}}$-{\it formal} if $d_{\text{HC}}(j_*(F), j_*(H(F)))=0$.

Here $H(F)$ is regarded as a CDGA with the trivial differential.
\end{defn}

%Observe that each object in the model category $\mathsf{CDGA}$ in \cite{B-G} is fibrant. Then, by Proposition \ref{prop:composable},
By Remark \ref{rem:d_{HI}}, we see that
$d_{\text{HI}}$ is a pseudodistance on the class of objects in $\et{\mathsf{CDGA}}$. %; see Remark \ref{rem:d_{IHC}_and_d_{HC}}.
Therefore, the same arguments as in the proofs
of \cite[Proposition 2.10]{KNWY} and Lemma \cite[Lemma 2.13]{KNWY} enable us to deduce the following result.

%We here recall the {\it cohomology interleaving distance} $d_{\text{CohI}}(F, G)$ of extended tame persistence CDGAs $F$ and $G$ defined by
%$d_{\text{CohI}}(F, G):= d_{\text{I}}(H(F), H(G))$, where $d_{\text{I}}$ denotes the interleaving distance defined in the category of commutative graded algebras.

\begin{lem}\label{lem:An_isometry} Suppose that extended tame persistence CDGAs $F$ and $G$ are
$d_{\rm HI}$-formal. Then
$d_{\rm HI}(F, G) = d_{\rm CohI}(F, G)$. %Thus,  $H : (\C, d_{\C, H}) \to (\A, d_{\A})$ is an isometry provided every object in $\C$ is  $d_{\C, H}$-formal.
\end{lem}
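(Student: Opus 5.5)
We prove the two inequalities $d_{\rm CohI}(F,G)\leq d_{\rm HI}(F,G)$ and $d_{\rm HI}(F,G)\leq d_{\rm CohI}(F,G)$ separately, following the pattern of \cite[Proposition 2.10 and Lemma 2.13]{KNWY}.

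For the first inequality, suppose $F$ and $G$ are $\e$-homotopy interleaved. Then there are zigzags of weak equivalences $F\simeq F'$ and $G\simeq G'$ with $F'$ and $G'$ strictly $\e$-interleaved in $\mathsf{CDGA}^{\R_+}$. Apply the cohomology functor $H$ pointwise. It sends weak equivalences in $\et{\mathsf{CDGA}}$ to isomorphisms of persistence graded algebras, because weak equivalences are pointwise quasi-isomorphisms by Theorem \ref{thm:ModelCat}. It also sends the commutative interleaving diagram (\ref{eq:interleaving_1}) to a commutative diagram. Hence $H(F)\cong H(F')$ and $H(G)\cong H(G')$ are $\e$-interleaved, so $d_{\rm CohI}(F,G)\leq d_{\rm HI}(F,G)$. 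This step uses no formality.

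For the reverse inequality, suppose $H(F)$ and $H(G)$ are $\e$-interleaved in $\et{\mathsf{GA}}$.
\begin{enumerate}
\item Regard $H(F)$ and $H(G)$ as persistence CDGAs with zero differential. The same pair of natural transformations then gives a strict $\e$-interleaving of CDGAs, since algebra maps between algebras with zero differential are automatically CDGA maps. These objects are still extended tame, being discretised by any common discretising sequence of $F$ and $G$. Therefore $d_{\rm HI}(H(F),H(G))\leq d_{\rm I}(H(F),H(G))=d_{\rm CohI}(F,G)$.
\item By $d_{\rm HI}$-formality, $d_{\rm HI}(F,H(F))=0$ and $d_{\rm HI}(G,H(G))=0$.
\item $d_{\rm HI}$ is a pseudodistance on $\et{\mathsf{CDGA}}$ by Remark \ref{rem:d_{HI}}, which rests on Proposition \ref{prop:composable} and the fact that every CDGA is fibrant. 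The triangle inequality then gives
\[
d_{\rm HI}(F,G)\leq d_{\rm HI}(F,H(F))+d_{\rm HI}(H(F),H(G))+d_{\rm HI}(H(G),G)\leq d_{\rm CohI}(F,G).
\]
\end{enumerate}

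The two inequalities together give $d_{\rm HI}(F,G)=d_{\rm CohI}(F,G)$. Since $d_{\rm CohI}$ is defined as an infimum, the argument is run with $\e$ ranging over realized interleaving parameters and the infimum is taken at the end. The one genuinely non-formal input is the triangle inequality for $d_{\rm HI}$, that is, composability of homotopy interleavings in $\et{\mathsf{CDGA}}$. This is already provided by Proposition \ref{prop:composable}, so the remaining care is only in checking that all intermediate objects stay inside $\et{\mathsf{CDGA}}$.
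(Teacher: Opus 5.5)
Your proof is correct and is essentially the paper's argument. The paper's proof consists of noting that $d_{\rm HI}$ is a pseudodistance on $\et{\mathsf{CDGA}}$ (Remark \ref{rem:d_{HI}}, via Proposition \ref{prop:composable}) and then invoking the arguments of \cite[Proposition 2.10 and Lemma 2.13]{KNWY}. Those arguments are your two inequalities: cohomology is non-expansive, and the triangle inequality routed through $H(F)$ and $H(G)$ uses $d_{\rm HI}\le d_{\rm I}$ together with formality; you have simply written out the details the paper leaves to the reference.
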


As mentioned in the Introduction, every persistence cochain complex is $d_{\text{HI}}$-formal; see \cite[Proposition 3.5]{KNWY}. The result yields the
equality $d_{\text{CohI}} = d_{\text{IHC}}$ on the category of persistence cochain complexes; see \cite[Theorem 3.3]{KNWY}.  On the other hand,
Lemma \ref{lem:An_isometry}, Propositions \ref{ex:The_Hopf_map} and \ref{ex:connectedsumCP2_distance} imply that there exist non $d_{\rm HI}$-formal objects in $\et{\mathsf{CDGA}}$; see Remark \ref{rem:non-formalizable}. Thus, we are interested in relationships among the formalities in Definition \ref{defn:IHD_formality} and the $H$-formality.

\begin{prop}\label{prop:implications_formalities} %\todo{Kuri (1/16): We prove `equivalences' between formalities of persistence objects in a general setting.}
For an extended tame persistence CDGA $F$, consider the following implications
\[
\xymatrix@C45pt@R25pt{
 \text{$F$ is $H$-formal} \ar@{=>}[r]^-{(1)} & \text{$F$ is $d_{\text{\em HI}}$-formal}  \ar@{=>}[ld]_(0.56){(2)}  \\
 \text{$F$ is $d_{\text{\em IHC}}$-formal} \ar@{=>}[u]^-{(5)} \ar@{=>}@<1.2ex>[r]^-{(3)} % \ar@/_2.2pc/@{=>}[rr]^-{(4)}
 &  \text{$F$ is $d_{\text{\em HC}}$-formal.}  \ar@{=>}@<1.7ex>[l]_-{(4)} %\hspace{-0.5cm}\text{$f : X\to Y$ is formalizable}.
% & \ar@{=>}[r] &  \text{$X$ and $Y$ are formal}.
%\end{align}
}
\]

\medskip
%\begin{align}
%\[
%\xymatrix@C35pt@R10pt{
% \text{$F$ is $H$-formal} \ar@{=>}[r]^-{(1)} & \text{$F$ is $d_{\text{\em HI}}$-formal}  \ar@{=>}[ld]_(0.56){(2)} & \\
% \text{$F$ is $d_{\text{\em IHC}}$-formal} \ar@{=>}[u]^-{(6)} \ar@{=>}@<1.2ex>[r]_-{(3)}
% \ar@/_2.2pc/@{=>}[rr]^-{(4)} &  \text{$F$ is $d_{\text{\em HC}}$-formal}  \ar@{=>}@<1.7ex>[l]^-{(5)}&\hspace{-0.5cm}\text{$f : X\to Y$ is formalizable}.
% & \ar@{=>}[r] &  \text{$X$ and $Y$ are formal}.
%\end{align}
%}
%\]
%Here, the implication (5) holds provided $f$ is relatively finite.
\noindent
{\em (i)} The implications (1), (2) and (3) hold in general.  \\
{\em (ii)} Let $\{\tau_n\}_{n\geq 0}$ be a  sequence which discretises $F$. Suppose that
there exists a positive number $\alpha$ such that $\tau_{n+1} -\tau_n \geq \alpha$ for each $n$.
Then, the implications (4) and (5) hold.
\end{prop}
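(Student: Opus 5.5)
The plan is to settle the general implications (i) by formal arguments, and to obtain (ii) by showing directly that $d_{\rm HC}$-formality forces $H$-formality when the discretising sequence has gaps bounded below by $\alpha$.

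\textbf{The implications in (i).} For (1), suppose $F$ is $H$-formal, so there is a zigzag of weak equivalences $F\simeq (H(F),0)$ in $\et{\mathsf{CDGA}}$. Taking $X'=Y'=H(F)$ in the definition of homotopy interleavings, $F$ and $H(F)$ are $0$-homotopy interleaved. Hence $d_{\rm HI}(F,H(F))=0$. The implications (2) and (3) are immediate from the chain of inequalities in Proposition \ref{prop:inequalities}, applied with $G=H(F)$.

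\textbf{Reduction for (ii).} Since (3) holds, it suffices to show that a $d_{\rm HC}$-formal $F$ is $H$-formal. That single statement gives (4) by composing with (1) and (2), and gives (5) by composing with (3). We may replace $F$ by a cofibrant replacement, constructed inductively along $\{\tau_n\}$ as in Section \ref{sect:ID}. Both $F$ and $H(F)$ are discretised by $\{\tau_n\}$. Fix $0<\varepsilon<\alpha/2$ and a homotopy commutative $\varepsilon$-interleaving $(\varphi,\psi)$ between $q_*jF$ and $q_*jH(F)$.

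\textbf{Key step: pointwise isomorphisms.} For each $n$ we have $\tau_n+2\varepsilon<\tau_{n+1}$. Therefore the composite
\[
F(\tau_n)\xrightarrow{\varphi}H(F)(\tau_n+\varepsilon)\xrightarrow{\psi}F(\tau_n+2\varepsilon)
\]
equals the structure map in $\text{Ho}(\mathsf{CDGA})$, and that structure map is an isomorphism. The same holds for the other composite $H(F)(\tau_n)\to F(\tau_n+\varepsilon)\to H(F)(\tau_n+2\varepsilon)$. Moreover $F(\tau_n)\to F(\tau_n+\varepsilon)$ and $H(F)(\tau_n)\to H(F)(\tau_n+\varepsilon)$ are identifications. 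Consequently $\Phi_n:=\varphi(\tau_n)$, viewed as a map $F(\tau_n)\to H(F)(\tau_n)$, is an isomorphism in $\text{Ho}(\mathsf{CDGA})$, i.e. it is represented by a quasi-isomorphism. Naturality of $\varphi$ for $\tau_n\le\tau_{n+1}$ shows that the square formed by $\Phi_n$, $\Phi_{n+1}$ and the two structure maps commutes in $\text{Ho}(\mathsf{CDGA})$, that is, up to homotopy.

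\textbf{Main obstacle: rectification.} The remaining task is to turn this homotopy commutative ladder into a strict morphism $F\to H(F)$ in $\et{\mathsf{CDGA}}$. I would proceed by induction on $n$. Choose a CDGA representative $\Phi_0$. Suppose strict representatives $\Phi_0,\dots,\Phi_n$ have been chosen so that the ladder commutes strictly up to stage $n$. Pick any representative $\Phi'_{n+1}$. Then $\Phi'_{n+1}\circ F(\tau_n<\tau_{n+1})$ is homotopic to $H(F)(\tau_n<\tau_{n+1})\circ\Phi_n$. The structure map $F(\tau_n<\tau_{n+1})$ is a cofibration by Proposition \ref{prop:cofibrantOb}, and every CDGA is fibrant. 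So the homotopy extension property (in the form of \cite[Proposition 2.22]{FOT}, as used in Remark \ref{rem:SullivanRep}) produces $\Phi_{n+1}\simeq\Phi'_{n+1}$ making the square commute strictly. Extending constantly on each $[\tau_n,\tau_{n+1})$ gives a morphism in $\et{\mathsf{CDGA}}$; because $\{\tau_n\}$ diverges, the induction determines the morphism on all of $\R_+$. This morphism is a pointwise quasi-isomorphism, hence a weak equivalence $F\to (H(F),0)$, and so $F$ is $H$-formal.

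I expect this rectification step to be the delicate point. One must check that the homotopy in the square can be taken relative to the already fixed $\Phi_n$. It also has to be confirmed that the compatibility of $\Phi_{n+1}$ with cohomology is preserved under homotopy, which holds because homotopic maps induce the same map in cohomology.
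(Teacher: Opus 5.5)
Your proposal is correct and follows the paper's argument: choosing $2\varepsilon<\alpha$ makes the interleaving produce pointwise quasi-isomorphisms $F(\tau_n)\to H(F)(\tau_n)$ forming a homotopy commutative ladder, which is then strictified inductively by extension of homotopies \cite[Proposition 2.22]{FOT} along the cofibrations $F(\tau_n<\tau_{n+1})$ (this already gives $\Phi_{n+1}\circ F(\tau_n<\tau_{n+1})=H(F)(\tau_n<\tau_{n+1})\circ\Phi_n$ on the nose, so the relative-homotopy check you worry about is not needed). The differences are organizational. You obtain (5) from (3) together with this single argument, whereas the paper gives a separate direct argument for (5); you also cofibrantly replace $F$, whereas the paper replaces $H(F)$. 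One small gap: if $\tau_0>0$ you must also cover $[0,\tau_0)$, for instance by prepending $0$ to the discretising sequence and taking $2\varepsilon<\tau_0$ as well.
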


\begin{rem}
As a consequence of the proposition above, we see that
formalities in Definition \ref{defn:IHD_formality} and the $H$-formality are equivalent to one another under an appropriate assumption, for example,
in the case where the given persistence CDGA is tame; see Definition \ref{defn:et}. %\todo{Kuri (3/26) : A sufficient condition for the formalities to be equivalent. }
\end{rem}

The proof of Proposition \ref{prop:implications_formalities} is given after proving Proposition \ref{prop:d_{IHC}_formalizable} below,  which relates  the $d_{\rm IHC}$-formality to
the {\it formalizability} of a map in the sense of Thomas; see \cite[V]{Thomas}.

\begin{defn}\label{defn:formalizability}
A morphism $\alpha : (B, d_B) \to (C, d_C)$ of CDGAs is {\it formalizable} if there exists a homotopy commutative diagram
\begin{eqnarray}\label{eq:formalizability}
\xymatrix@C30pt@R15pt{
(B, d_B) \ar[r]^{\alpha} & (C, d_C) \\
\wedge V_B \ar[r]^{m_\alpha} \ar[u]^{m_B}_\sim  \ar[d]_{\theta_B}^\sim & \wedge V_C  \ar[u]_{m_C}^\sim  \ar[d]^{\theta_C}_\sim \\
H^*(B) \ar[r]^{H(\alpha)} & H^*(C)
}
\end{eqnarray}
in which vertical arrows are quasi-isomorphisms and $\wedge V_B$ and $\wedge V_C$ are minimal models for $B$ and $C$, respectively.

A continuous map $f : X\to Y$ between path-connected spaces is {\it formalizable} if the morphism $A_{\rm PL}(f) : A_{\rm PL}(Y) \to A_{\rm PL}(X)$ of CDGAs is formalizable. A path-connected space $X$ is {formal} if the trivial map $X \to \ast$ is formalizable.
%\todo{Remark: $PX\to X$ is formalizable iff $X$ is formal. Kuri (3/23) : We define a formal space.}
\end{defn}

In the original definition \cite[V.3 (iii)]{Thomas}, it is required that $H(m_B) = H(\theta_B)$.
This requirement is satisfied by connecting the diagram (\ref{eq:formalizability}) with
the diagram obtained by applying the homology functor to (\ref{eq:formalizability}). Moreover, even if $m_\alpha$ in (\ref{eq:formalizability})
is replaced with a relative Sullivan model $\iota \colon \wedge V_B \to \wedge V_B\otimes W$ for $\alpha$, we have a homotopy commutative diagram
(\ref{eq:formalizability}) with a lift $m_\alpha$ which satisfies the condition that $\eta \circ m_\alpha \simeq \iota$ for a minimal model
$\eta : \wedge V_C \stackrel{\sim}{\to}  \wedge V_B\otimes W$.

%The following proposition explains relationships among the formalizablity of a map $f$ and formalities of the persistence CDGA $\Theta(f)$ described in Definition \ref{defn:IHD_formality}.

\begin{rem}\label{rem:PX}
Let $p : PX \to X$  be the path fibration on a formal space $X$. For $\alpha = A_{\rm PL}(p)$,
we choose a Sullivan representative as the map $m_\alpha$ in the diagram (\ref{eq:formalizability}) above. Since $H(A_{\rm PL}(PX)) \cong \Q$, it follows that
the lower square in (\ref{eq:formalizability}) is commutative. Thus, we see that $p : PX \to X$ is formalizable.
\end{rem}

\begin{prop}\label{prop:d_{IHC}_formalizable}
Let $F$ be the extended tame persistence CDGA $\Theta(f)$ for a map $f : X\to Y$. If $\Theta(f)$ is $d_{\rm IHC}$-formal, then the map $f$ is formalizable.
\end{prop}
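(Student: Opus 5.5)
The plan is to take a $d_{\rm IHC}$-interleaving at a small parameter, say $\varepsilon<1/2$ (or simply any $\varepsilon$ small compared with the unit steps of $\Theta(f)$), and read off the formalizability diagram (\ref{eq:formalizability}) from its components at times $0$ and at a large time. Write $\iota_f:\wedge V_Y\to \wedge V_Y\otimes\wedge W$ for the minimal relative Sullivan model, so $\Theta(f)(t)=\wedge V_Y\otimes \wedge W^{\le \lfloor t\rfloor}$ and $H(\Theta(f))(t)=H(\wedge V_Y\otimes\wedge W^{\le\lfloor t\rfloor})$. Since $d_{\rm IHC}(\Theta(f),H(\Theta(f)))=0$, for every $\delta>0$ there is a $\delta$-interleaving in $\text{Ho}(\et{\mathsf{CDGA}})$. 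Because both objects are constant on the intervals $[n,n+1)$, a $\delta$-interleaving with $\delta<1/2$ yields a $0$-interleaving, exactly as in the ``only if'' part of Proposition \ref{prop:ID_equivalences} (i). So we get an isomorphism $\Phi:\Theta(f)\to H(\Theta(f))$ in $\text{Ho}(\et{\mathsf{CDGA}})$. Since $\Theta(f)$ is cofibrant and everything is fibrant (Remark \ref{rem:cofibrantOb}), $\Phi$ is represented by an honest morphism of persistence CDGAs $\Phi:\Theta(f)\to (H(\Theta(f)),0)$ that is a pointwise quasi-isomorphism. Naturality gives strictly commuting squares with the structure maps.

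Next I build the diagram. At $t=0$, $\Phi(0):\wedge V_Y\to H(\wedge V_Y)\cong H^*(Y)$ is a quasi-isomorphism; this is $\theta_B$. Passing to the colimit over $n$, as in (\ref{eq:rational_type}), gives $\Phi_\infty:=\mathrm{colim}\,\Phi(n):\wedge V_Y\otimes\wedge W\to \mathrm{colim}_n H(\wedge V_Y\otimes \wedge W^{\le n})$. Homology commutes with filtered colimits, and $H^k$ stabilizes once $n\ge k+1$, so the target is $H(\wedge V_Y\otimes\wedge W)\cong H^*(X)$ and $\Phi_\infty$ is a quasi-isomorphism. Naturality then gives the strictly commutative square
\[
\xymatrix@C30pt@R15pt{
\wedge V_Y \ar[r]^-{\iota_f}\ar[d]_{\Phi(0)}^\sim & \wedge V_Y\otimes\wedge W \ar[d]^{\Phi_\infty}_\sim\\
H^*(Y)\ar[r]_{H(f^*)} & H^*(X).
}
\]
Here I need to check that the bottom map is $H(\iota_f)=H(A_{\rm PL}(f))$ under the identifications; it is the colimit of the structure maps of $H(\Theta(f))$ starting at time $0$.

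The upper square comes from the relative Sullivan model diagram (\ref{eq:A_{PL}}), which commutes strictly with quasi-isomorphisms $m_Y:\wedge V_Y\to A_{\rm PL}(Y)$ and $\wedge V_Y\otimes\wedge W\to A_{\rm PL}(X)$. To get the form (\ref{eq:formalizability}) literally, with a minimal model $\wedge V_X$ in the middle, I invoke the remark after Definition \ref{defn:formalizability}: replacing $m_\alpha$ by the relative Sullivan model $\iota_f$ is allowed. Concretely, choose a minimal model $\eta:\wedge V_X\xrightarrow{\sim}\wedge V_Y\otimes\wedge W$ and a lift $m_\alpha$ with $\eta\circ m_\alpha\simeq\iota_f$ (lifting lemma \cite[Lemma 12.4]{FHT}). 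Then take $m_C$ and $\theta_C$ to be the composites with $\eta$. Both squares commute up to homotopy, so $f$ is formalizable.

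The main obstacle is the first step: extracting from $d_{\rm IHC}=0$ a genuine strict zigzag or morphism, that is, arguing that small interleavings in the homotopy category collapse to a $0$-interleaving. This uses that the discretising sequence is the integers, so any $\varepsilon<1/2$ shifts never cross a jump in a way that matters. It also uses representing homotopy-category morphisms out of the cofibrant $\Theta(f)$ by strict morphisms. A secondary subtlety is that strict naturality of $\Phi$ holds only after this representation, and that an isomorphism in the homotopy category is a pointwise quasi-isomorphism (weak equivalences in $\et{\mathsf{CDGA}}$ are saturated).
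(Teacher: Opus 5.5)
Your proposal is correct and follows the paper's argument in essence: collapse a $\delta$-interleaving with $2\delta<1$ to a levelwise quasi-isomorphism out of $\Theta(f)$, pass to colimits, and read off the diagram (\ref{eq:formalizability}). The one difference is that the paper first replaces $H(\Theta(f))$ by a $\mathbb{Z}$-discretised cofibrant replacement $C'$, so that both $\varphi$ and $\psi$ are honest morphisms. You map into $H(\Theta(f))$ itself, which is not cofibrant, so your appeal to Proposition \ref{prop:ID_equivalences} (i) does not apply literally to $\psi$. You only need $\varphi$, though. Represent it by an honest morphism $\Theta(f)\to H(\Theta(f))^\delta$ and apply the homology functor to both interleaving identities; this shows each $\varphi(n)$ is a quasi-isomorphism, and it also spares you the map $\gamma$ back from $C'$.
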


\begin{proof}
%In order to prove that the implication (4) holds,
We consider a relative Sullivan model $C(i) \to C(i+1)$ for $H(\Theta(f))(i) \to  H(\Theta(f))(i+1)$ for each $i\geq 0$. We regard $C$ as a persistence object in $ \mathsf{CDGA}^{(\mathbb{Z}_+, \leq)}$.
%\todo{Naito(3.30) : Change $ \mathsf{CDGA}^{(\mathbb{N}, \leq)}$ to $ \mathsf{CDGA}^{(\mathbb{Z}_{+}, \leq)}$ ?}
Then,
we have a cofibrant replacement $\gamma : C':=(\lfloor \ \rfloor )_*C \to H(\Theta(f))$ of $H(\Theta(f))$ in $\et{\mathsf{CDGA}}$.
Since $H(\Theta(f))$ is weakly equivalent to $C'$, it follows from the assumption that
$0=d_{\text{IHC}}(\Theta(f), H(\Theta(f))) =d_{\text{IHC}}(\Theta(f), C')$.
This implies that $\Theta(f)$ and $C'$ are $\delta$-interleaved in the homotopy category for $\delta < \frac{1}{2}$.

Let $(\varphi, \psi)$ be the $\delta$-interleaving of $\Theta(f)$ and $C'$. Since $2\delta <1$, it follows that $\psi(n) =\psi^\delta(n)$ and  $\psi^\delta(n) \circ \varphi(n) \simeq id_{\Theta(f)(n)}$ for each non-negative integer $n$.
Moreover, we see that $\varphi(n) =\varphi^\delta(n)$ and  $\varphi^\delta(n) \circ \psi(n) \simeq id_{C'(n)}$.
These facts imply that $\varphi$ gives rise to a morphism $\widetilde{\varphi} : \Theta(f) \to C'$ of persistence CDGAs for which the component
$\widetilde{\varphi}(n)$ is a quasi-isomorphism for $n\geq 0$. Thus, we have a commutative diagram
\begin{eqnarray}\label{eq:formality1}
\xymatrix@C55pt@R15pt{
 \Theta(f)(n) \ar[r]^-{\Theta(f)(n< n+1)} \ar[d]_{\widetilde{\varphi}(n)}^\sim & \Theta(f)(n+1) \ar[d]^{\widetilde{\varphi}(n+1)}_\sim \\
 C'(n) \ar[r]_-{C'(n < n+1)} & C'(n+1)
}
\end{eqnarray}
for each non-negative integer $n$.
The commutativity of the diagrams yields the commutative diagram
\begin{eqnarray}\label{eq:formality2}
\xymatrix@C10pt@R12pt{
\wedge V \ar[r]\ar[d]_{\widetilde{\varphi}}^\sim & \wedge V\otimes \wedge (W^{\leq m}) \ar[r]^-{\iota_m} \ar[d]_{\widetilde{\varphi}(m)}^\sim & \text{colim}_n(\wedge V\otimes \wedge (W^{\leq n}))  \ar[d]^{k_1} \\
C'(0) \ar[r] \ar[d]_{\gamma(0)}^\sim & C'(m)  \ar[r]^{i_m} \ar[d]_{\gamma(m)}^\sim & \text{colim}_nC'(n) \ar[d]^{k_2} \\
H^*(\wedge V) \ar[r] &H^*(\wedge V\otimes \wedge (W^{\leq m}))   \ar[r]^-{H^*(i)}  & H^*(\wedge V \otimes  \wedge W),
}
\end{eqnarray}
%\todo{Naito(3.30) : Need explanation for $k_1$}
where $\iota_m$ and $i_m$ are the canonical maps and
 $k_1$ is the evident map given by maps $\widetilde{\varphi}(m)$. Moreover, the morphism $k_2$ of CDGAs is induced by the composites
\[
C'(n)  \to H^*(\Theta(f)(n)) \to H^*(\text{colim}_n\Theta(f)(n))=H^*(\wedge V\otimes \wedge W)
\]
of natural maps.  Observe that $\Theta(f)(n) = \wedge V\otimes \wedge (W^{\leq n})$ and $\text{colim}_n(\wedge V\otimes \wedge (W^{\leq n}))=\wedge V\otimes \wedge W$.
To complete the proof, it suffices to show that the composite $k_2 \circ k_1$ is a quasi-isomorphism. We consider $H^\ell(k_2\circ k_1)$ on the $\ell$th cohomology. Then, for a sufficiently large integer $m  (> \! >\ell)$, it follows that $H^\ell(\iota_m)$, $H^\ell(i_m)$ and $H^\ell(i)$ are isomorphisms. We have the result.
\end{proof}

\begin{proof}[Proof of Proposition \ref{prop:implications_formalities}]
(i) By definition, the $H$-formality implies the $d_{\text{HI}}$-formality.
Proposition \ref{prop:inequalities} enables us to conclude that the implications (2) and (3) hold.
%The existences of the functor $\pi$ and $j_*$ yield the implications (2) and (3), respectively.

(ii) As for the implication (5), by applying the same argument as in obtaining the commutative diagram (\ref{eq:formality1}) in the proof of
Proposition \ref{prop:d_{IHC}_formalizable}, we have a commutative diagram
\begin{eqnarray}\label{eq:formality3}
\xymatrix@C45pt@R12.5pt{
 F(\tau_n) \ar[r]^-{F(\tau_n< \tau_{n+1})} \ar[d]_{\widetilde{\varphi}(\tau_n)}^\sim & F(\tau_{n+1})
 \ar[d]^{\widetilde{\varphi}(\tau_{n+1})}_\sim \\
 C'(\tau_n) \ar[r]_-{C'(\tau_n< \tau_{n+1})} & C'(\tau_{n+1}).
}
\end{eqnarray}
Here $C'$ is a cofibrant replacement of $H(F)$; see the paragraph after Proposition \ref{prop:cofibrantOb}.
Observe that we use a positive number $\delta$ with $2\delta < \alpha$ for obtaining the diagram (\ref{eq:formality3}).
This yields that $F$ is $H$-formal.

We prove that the implication (4) holds. The same argument as above enables us to obtain the diagram (\ref{eq:formality3})
which is homotopy commutative.  Here, the equalities in the argument are replaced with those up to homotopy. We observe that the $\delta$-interleaving $(\varphi, \psi)$ of $F$ and $C'$ is considered in
the category $\text{Ho}(\mathsf{CDGA})^{([0, \infty), \leq))}$.

By applying \cite[Proposition 2.22 (Extension of homotopies)]{FOT} to the composite $C'(0< \tau_0)\circ \widetilde{\varphi}(0)$, we have a strictly commutative diagram  (\ref{eq:formality1}) for $n = 0$ replacing $\widetilde{\varphi}(\tau_0)$ with $\widehat{\varphi}(\tau_0)$ which is homotopic to $\widetilde{\varphi}(\tau_0)$.
We observe that $\widehat{\varphi}(\tau_0)$ is a quasi-isomorphism. An inductive argument with the replacement enables us to obtain a strictly commutative diagram
\begin{eqnarray*}%\label{eq:formality3}
\xymatrix@C45pt@R12.5pt{
 F(\tau_n) \ar[r]^-{F(\tau_n< \tau_{n+1})} \ar[d]_{\widehat{\varphi}(\tau_n)}^\sim & F(\tau_{n+1})
 \ar[d]^{\widehat{\varphi}(\tau_{n+1})}_\sim \\
 C'(\tau_n) \ar[r]_-{C'(\tau_n< \tau_{n+1})} & C'(\tau_{n+1})
}
\end{eqnarray*}
in which the vertical arrows are quasi-isomorphisms. The family $\{\widetilde{\varphi}(0), \widehat{\varphi}(\tau_n)\}_{n\geq 0}$ gives rise to a weak equivalence between $F$ and $H(F)$ in $\et{\mathsf{CDGA}}$. This yields that $F$ is $d_{\text{IHC}}$-formal.
%Consider the morphism $\widetilde{\varphi}(0)$.
% By assumption,  the map $f$ is relatively finite. Therefore, we may assume that
%$C'(m) = \text{colim}_nC'(n)$ and $\wedge V\otimes \wedge (W^{\leq m}) =\wedge V\otimes \wedge W$
%for some sufficient large integer $m$. Moreover, it follows that the diagram (\ref{eq:formality2}) is homotopy commutative. This completes the proof.
\end{proof}

%%%%%%%%%%%

\begin{lem}\label{lem:H-formality}
The persistence CDGA $\theta(f)$ is $H$-formal if and only if $\theta(f)(n< n+1) : \theta(f)(n) \to \theta(f)(n+1)$ is formalizable for each $n\geq 0$.
\end{lem}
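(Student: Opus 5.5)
The plan is to prove the two implications separately, using throughout that $\theta(f)$ is cofibrant (Remark \ref{rem:cofibrantOb}) and that every object of $\et{\mathsf{CDGA}}$ is fibrant.

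\emph{Step 1: $H$-formality gives a strict weak equivalence.} Suppose $\theta(f)$ is $H$-formal. Then $\theta(f)\cong (H(\theta(f)),0)$ in $\text{Ho}(\et{\mathsf{CDGA}})$. Since $\theta(f)$ is cofibrant and $H(\theta(f))$ is fibrant, this isomorphism is represented by an honest morphism $\Psi\colon \theta(f)\to H(\theta(f))$ in $\et{\mathsf{CDGA}}$. By the first item of Theorem \ref{thm:ModelCat}, $\Psi$ is a pointwise quasi-isomorphism. Naturality of $\Psi$ gives, for each $n$, a strictly commutative square
\[
H(\alpha_n)\circ \Psi(n)=\Psi(n+1)\circ\alpha_n, \qquad \alpha_n:=\theta(f)(n<n+1).
\]

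\emph{Step 2: from the square to formalizability of $\alpha_n$.} Fix $n$ and feed the square into the diagram (\ref{eq:formalizability}) as follows.
\begin{itemize}
\item Take $m_B=\mathrm{id}$ on $\theta(f)(n)$. This is a Sullivan algebra; if a minimal one is needed, use its minimal model, which is obtained by retraction onto the minimal part as in \cite[Theorem 14.9]{FHT}.
\item Take $m_C=\mathrm{id}$ on $\theta(f)(n+1)$, again replaced by its minimal model if needed.
\item Take $m_\alpha=\alpha_n$, $\theta_B=\Psi(n)$ and $\theta_C=\Psi(n+1)$.
\end{itemize}
With these choices the lower square commutes strictly by Step 1. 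By the remark after Definition \ref{defn:formalizability}, replacing a map by a relative Sullivan model and its minimal lift only changes the diagram up to homotopy. Hence $\alpha_n$ is formalizable.

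\emph{Step 3: building a compatible family for the converse.} Now suppose every $\alpha_n$ is formalizable. I will construct inductively quasi-isomorphisms $\Psi(n)\colon\theta(f)(n)\to H(\theta(f)(n))$ satisfying $\Psi(n+1)\alpha_n=H(\alpha_n)\Psi(n)$ strictly.
\begin{itemize}
\item For $n=0$, use the lower arrow $\theta_B$ from the formalizability diagram of $\alpha_0$, precomposed with a homotopy inverse of $m_B$.
\item For the inductive step, formalizability of $\alpha_n$ provides a quasi-isomorphism $\Psi'(n+1)$ with $\Psi'(n+1)\alpha_n\simeq H(\alpha_n)\Psi''(n)$, where $\Psi''(n)$ is the formalizing map for the source of $\alpha_n$.
\item Suppose we can arrange $\Psi''(n)\simeq \Psi(n)$. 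Since $\alpha_n$ is a relative Sullivan algebra (Proposition \ref{prop:relativeSullivanAlgs}), the extension of homotopies \cite[Proposition 2.22]{FOT} replaces $\Psi'(n+1)$ by a homotopic, hence still quasi-isomorphic, $\Psi(n+1)$ making the square strictly commute. This is the same device used in the proof of Proposition \ref{prop:implications_formalities}(ii).
\end{itemize}
The resulting family $\{\Psi(n)\}$ is a natural transformation, and it is extended along $(\lfloor\ \rfloor)^*$ to all $t\in\R_+$. It is then a weak equivalence $\theta(f)\to H(\theta(f))$, so $\theta(f)$ is $H$-formal.

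\emph{Main obstacle: the matching condition $\Psi''(n)\simeq\Psi(n)$.} The formalizing data for $\alpha_n$ at its source need not agree up to homotopy with the map $\Psi(n)$ already chosen from the data of $\alpha_{n-1}$ at its target. Two quasi-isomorphisms from a Sullivan algebra into $H(\theta(f)(n))$ that induce the same map on cohomology need not be homotopic.
\begin{itemize}
\item First attempt: use Thomas's normalization $H(m_B)=H(\theta_B)$, recalled after Definition \ref{defn:formalizability}. Then precompose the formalizing diagram of $\alpha_n$ with a self-equivalence of the minimal model for $\theta(f)(n)$ that carries $\Psi''(n)$ to $\Psi(n)$ up to homotopy. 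Composing the top row with this self-equivalence keeps the diagram homotopy commutative.
\item If such a self-equivalence cannot be found in general, the fallback is to read "formalizable for each $n$" as formalizable with formalizing data on consecutive stages chosen compatibly. The converse then goes through exactly as in Step 3.
\end{itemize}
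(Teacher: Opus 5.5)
Your ``only if'' direction is correct and matches the paper's. The paper gets the strictly commuting squares with vertical quasi-isomorphisms $\theta(f)(n)\to H(\theta(f)(n))$ by applying \cite[Lemma A.1]{K2024} to the zig-zag. You get the same squares by representing the isomorphism in $\text{Ho}(\et{\mathsf{CDGA}})$ by an honest morphism from the cofibrant $\theta(f)$ to the fibrant $H(\theta(f))$, which is a weak equivalence since such maps are exactly those inverted in the homotopy category.

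The ``if'' direction has a genuine gap, namely the matching problem you isolate, and your first remedy does not close it. Precomposing the formalizing data $(m_B,\theta_B,m_\alpha)$ of $\alpha_n$ with a self-equivalence $\phi$ of the minimal model gives $(m_B\phi,\theta_B\phi,m_\alpha\phi)$. This determines the same isomorphism $\theta_B\circ m_B^{-1}$ in $\text{Ho}(\mathsf{CDGA})$, so $\Psi''(n)$ does not move at all. Write $\Psi(n)=h\circ\Psi''(n)$ with $h$ an automorphism of $H(\theta(f)(n))$ in $\text{Ho}(\mathsf{CDGA})$. Then a formalization of $\alpha_n$ with source isomorphism $\Psi(n)$ exists if and only if $H(\alpha_n)\circ h=k\circ H(\alpha_n)$ in $\text{Ho}(\mathsf{CDGA})$ for some automorphism $k$ of $H(\theta(f)(n+1))$. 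Thomas's normalization only gives $H(h)=\mathrm{id}$, which does not make $h$ homotopic to the identity. Formalizability of the individual maps also does not supply $k$ in general. For example, the collapse $S^2\vee S^3\to S^2$ and $\iota_3+\iota_2\circ\eta\colon S^3\to S^2\vee S^3$ ($\eta$ the Hopf map) are each formalizable, but their composite $\eta$ is not. The formalizations one finds use formality isomorphisms of $S^2\vee S^3$ differing by the self-equivalence that is the identity on $S^2$ and $\iota_3+\iota_2\circ\eta$ on $S^3$, which acts trivially on cohomology. Your fallback changes the hypothesis rather than proving the statement.

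For comparison, the paper's proof of the ``if'' part is your Step 3. It lifts each Sullivan representative $\alpha_n\colon\minmodel_n\to\minmodel_{n+1}$ to a relative Sullivan algebra with $\eta_{n+1}\circ\iota_n=\alpha_n$. It then runs the inductive extension-of-homotopies argument with \cite[Proposition 2.22]{FOT} from the proof of Proposition \ref{prop:implications_formalities}. By writing a single $\minmodel_n$ with a single formality map at each stage, it tacitly assumes the formalizing data of $\alpha_{n-1}$ and $\alpha_n$ agree on $\theta(f)(n)$. That is, it works under your fallback reading and does not treat the matching either. Under that reading your proposal is the paper's proof. With the hypothesis read literally, both arguments are missing a proof that compatible formalizing data can always be chosen for towers of the form $\theta(f)$; otherwise the hypothesis must be strengthened to require such compatible choices.
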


\begin{proof}
%\todo{Kuri (1/2): A complete proof will be written.}
To prove the `only if' part,  we apply \cite[Lemma A.1]{K2024} to a zig-zag of weak equivalences connecting $\theta(f)$ and $H(\theta(f))$.
Then, we have a commutative diagram
\begin{eqnarray*}%\label{eq:U3}
\xymatrix@C65pt@R15pt{
\theta(f)(n)  \ar[d]_\sim \ar[r]^-{\theta(f)(n<n+1)} & \theta(f)(n+1)  \ar[d]^\sim \\
H( \theta(f)(n))   \ar[r]_-{H(\theta(f)(n<n+1))} &  H( \theta(f)(n+1))% \ar@/_10.0pc/[uu]_\lambda^\simeq
} % \ar@/^1.0pc/[rr]^(0.5){f}  \ar@{>->}[llu]|(.4)\hole_(.6){i_0}
\end{eqnarray*}
for each $n\geq 0$.
By using a Sullivan representative of $\theta(f)(n<n+1)$, we have a homotopy commutative diagram as (\ref{eq:formalizability}), which allows us to deduce that
$\theta(f)(n< n+1) : \theta(f)(n) \to \theta(f)(n+1)$ is formalizable.

In order to prove the `if' part, we lift the Sullivan representative $\alpha_n : \minmodel_n \to \minmodel_{n+1}$ of $\theta(f)(n<n+1)$ in the homotopy commutative diagram, which gives the formalizability,
to a relative Sullivan algebra $\iota_n : \minmodel_n \to \minmodel_n \otimes \wedge W_n$ with a quasi-isomorphism
$\eta_{n+1} :  \minmodel_n \otimes \wedge W_n \stackrel{\sim}{\to}  \minmodel_{n+1}$. Thus, we have $\eta_{n+1} \circ \iota_n = \alpha_n$. The same inductive argument as in the proof of Proposition \ref{prop:implications_formalities}(5) with \cite[Proposition 2.22]{FOT} enables us to obtain the result.
\end{proof}

\begin{ex}\label{Hopf-f} %\todo{Kuri(2/9) move to section \ref{section:formalities}.
%\\Kuri (11/21) This is added. The map $g$ is not formalizable. Then, the result follows from Proposition \ref{prop:implications_formalities}}
Since the Hopf map $h : S^3\to S^2$ is not formalizable, it follows from Propositions \ref{prop:implications_formalities} and \ref{prop:d_{IHC}_formalizable}
that $\Theta(h)$ in Proposition  \ref{ex:The_Hopf_map} is not $d_{\text{HI}}$-formal. Here,
it is proved that $d_{\text{IHC}}(\Theta(h), H(\Theta(h)))= 1$.   % we determine the distance $d_{\text{IHC}}(\Theta(g), H(\Theta(g)))$, which is not non-zero.
The same argument as in Proposition  \ref{ex:The_Hopf_map} yields that $d_{\text{IHC}}(\Theta(h), H(\Theta(h)))\leq1$. We observe that
a cofibrant replacement $C$ of $H(\Theta(h))$ is given by $\wedge(x, y)$ on $[0, 1)$ and $\wedge(x,y,\bar{x},\bar{y},\tilde{y})$ on
$[1, \infty)$.

In order to prove $d_{\text{IHC}}(\Theta(h), H(\Theta(h)))\geq 1$, suppose that $\Theta(h)$ and $H(\Theta(h))$ are $\delta$-interleaved in the homotopy category for some $\delta < 1$ with a $\delta$-interleaving $(\varphi, \psi)$ for which $\psi : \Theta(h) \to C^\delta$ is a morphism in
$\text{Ho}(\et{\mathsf{CDGA}})$. Then, since  $\Theta(h)(0) \to  \Theta(h)(2\delta)$ is the identity, by applying the functor $H(Q( \  ))$,
it follows that $\psi(0)(y) = \lambda y$ in $C(\delta)$ for some nonzero rational number $\lambda$. Therefore, the naturality of $\psi$ implies
that $\psi(1+\delta)(y) = \lambda y$
in $C(1+2\delta)$. Since $\varphi(1+2\delta)\circ \psi(1+\delta) \simeq id_{\wedge (x, y, \overline{y})}$, we see that  $(H(Q( \varphi(1+2\delta)))\circ H(Q(\psi(1+\delta))))(y)  = y$.
However, we have  $y = 0$ in $H(Q(C(1+ 2\delta)))$, which is a contradiction.
\end{ex}

%\begin{rem}\label{rem:EM-models}
By combining Lemma \ref{lem:H-formality} with the following result due to Thomas \cite{Thomas}, we have a necessary and sufficient condition for $\theta(f)$ to be $H$-formal with {\it Eilenberg--Moore models} (E.M. models) for $\theta(f)$ and $H(\theta(f))$. An E.M. model is regarded as a
relative Sullivan algebra in which {\it lower degrees} are considered explicitly; see \cite[II. 2]{Thomas} for more details.
 %; see \cite[V.6. Proposition]{Thomas}.
%\todo{Kuri (1/2):  This fact will be proved with Lemma \ref{lem:H-formality}.}
%To see this, we recall a result in \cite{Thomas}

\begin{prop}\label{prop:Thomas'result} {\rm (cf. \cite[V.6. Proposition]{Thomas})} Let $\alpha : (B, d_B) \to (C, d_C)$ be the morphism of CDGAs in the upper homotopy commutative diagram
in (\ref{eq:formalizability}).  Then, the following conditions are equivalent. \\
(i) $\alpha$ is formalizable. \\
(ii) There exists an isomorphism  $(id, \Psi, \overline{\Psi})$ between an E.M. model $(\mathcal{F}, \eta)$ of $m_\alpha$ and an E.M. model
$(\mathcal{F}', \eta')$ of $H(\alpha)\circ \theta_B$ for some quasi-isomorphism $\theta_B : \minmodel_B \to H^*(B)$ with $H(\theta_B) = H(m_B)$ such that $(\Psi -id)$ strictly lowers the filtration degree and $(\eta')^*\circ \Psi^* = H(m_C)\circ \eta^*$.
\end{prop}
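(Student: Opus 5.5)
This result is essentially Thomas's \cite[V.6. Proposition]{Thomas}, so the plan is to recall the Eilenberg--Moore model machinery from \cite[II]{Thomas} and check that it matches the set-up fixed in diagram (\ref{eq:formalizability}). First I would normalize the data. By the remark after Definition \ref{defn:formalizability}, composing (\ref{eq:formalizability}) with its image under $H$ lets us assume $H(\theta_B)=H(m_B)$; we may also replace $m_\alpha$ by a relative Sullivan model of $\alpha$ over $\wedge V_B$. Then $(\mathcal F,\eta)$ is an E.M. model of $m_\alpha$ and $(\mathcal F',\eta')$ one of $H(\alpha)\circ\theta_B$, both relative Sullivan algebras over the same base $\wedge V_B$, filtered by lower degree.

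For (i)$\Rightarrow$(ii), the lower square of (\ref{eq:formalizability}) gives $\theta_C\circ m_\alpha\simeq H(\alpha)\circ\theta_B$. The lifting lemma \cite[Lemma 12.4]{FHT}, together with \cite[Proposition 2.22]{FOT} to make the triangles strict over $\wedge V_B$, produces a morphism $\Psi:\mathcal F\to\mathcal F'$ under $\wedge V_B$. It is a quasi-isomorphism because both sides model $C$ through $\theta_C$ and $m_C$. Minimality of the E.M. models and \cite[Theorem 14.11]{FHT} then make $\Psi$ an isomorphism. To get $\Psi-id$ strictly lowering the filtration, I would compare the associated graded objects. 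On the bottom filtration piece, both E.M. models reduce to the bigraded model of $H(\alpha)$ read off from $H(\theta_B)=H(m_B)$. So we can adjust $\Psi$ by an automorphism of the associated graded object, so that its leading term is the identity. The identity $(\eta')^*\Psi^*=H(m_C)\eta^*$ comes from the top square in (\ref{eq:formalizability}) on cohomology.

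For (ii)$\Rightarrow$(i), I would compose: $\mathcal F\xrightarrow{\Psi}\mathcal F'\xrightarrow{\eta'}H^*(C)$ gives a quasi-isomorphism. Restricting it along a minimal model $\wedge V_C\to\mathcal F$ yields $\theta_C$, and the cohomology condition makes $H(\theta_C)=H(m_C)$. By construction the lower square commutes up to homotopy, so $\alpha$ is formalizable.

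I expect the main obstacle to be the filtration condition in (i)$\Rightarrow$(ii): showing that the lifted isomorphism can be chosen with identity leading term. I would try an induction on the lower degree, correcting $\Psi$ at each stage by a homotopy rel $\wedge V_B$.
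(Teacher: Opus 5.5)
Your implication (ii)$\Rightarrow$(i) is fine. Let $\iota,\iota'$ be the inclusions of $\wedge V_B$ into $\mathcal F,\mathcal F'$, let $s\colon\wedge V_C\to\mathcal F$ be a homotopy inverse of $\eta$, and set $\theta_C:=\eta'\Psi s$. Then $\theta_C m_\alpha=\eta'\Psi s\eta\iota\simeq\eta'\Psi\iota=\eta'\iota'=H(\alpha)\theta_B$ and $H(\theta_C)=H(m_C)$, and the filtration hypothesis is not even needed. In (i)$\Rightarrow$(ii), your lifting argument does give an isomorphism $\Psi\colon\mathcal F\to\mathcal F'$ under $\wedge V_B$. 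This assumes E.M. models are minimal relative Sullivan algebras, which you should check against Thomas's definition before invoking \cite[Theorem 14.11]{FHT}.

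One correction there: your construction yields $(\eta')^*\Psi^*=H(\theta_C)\eta^*$. So the identity with $H(m_C)$ does not come from the upper square; it comes from normalizing $\theta_C$ as well as $\theta_B$. For example, replace $\theta_B,\theta_C$ by $H(m_B)H(\theta_B)^{-1}\theta_B$ and $H(m_C)H(\theta_C)^{-1}\theta_C$. Comparing both squares on cohomology shows the lower square still commutes up to homotopy after this change.

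The genuine gap is the filtration condition, which is the real content of \cite[V.6. Proposition]{Thomas}. The paper does not prove it but imports it, and Example \ref{ex:toy} only uses the bare isomorphism, which you do obtain. For ``$\Psi-id$'' even to make sense, $\mathcal F$ and $\mathcal F'$ must be one and the same lower-graded algebra $\wedge V_B\otimes\wedge Z$, carrying two differentials with the same leading part. That both E.M. models are perturbations of a common bigraded object is a structural fact about how E.M. models are built from cohomological data \cite[II]{Thomas}. It is exactly where $H(\theta_B)=H(m_B)$ and $H(\theta_C)=H(m_C)$ enter, and your lifting argument never sees it, since it treats $\mathcal F,\mathcal F'$ as abstract relative Sullivan algebras.

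Moreover, your proposed repair, composing $\Psi$ with an automorphism of the associated graded object, is not a valid step. Such an automorphism commutes only with the leading part of $D'$ and need not extend to a DGA automorphism of $(\mathcal F',D')$; any extension would also have to preserve $(\eta')^*$. What is required is a Halperin--Stasheff type perturbation argument. One option is to construct $\Psi=id+\Psi_{<}$ directly by induction on lower degree, showing from the homotopy commutative diagram in (i) that the obstruction at each stage vanishes. The other is to show that the leading part of your $\Psi$ is homotopic to the identity through a filtration-preserving homotopy, and then propagate that homotopy through the higher-order terms. Your final sentence points in this direction, but as written you have proved (i)$\Rightarrow$(ii) only without the requirement that $\Psi-id$ strictly lower the filtration degree.
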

%\end{rem}

%Let $F$ and $G$ be formal persistence CDGA's.
%We have a sequence of functors $\et{\mathsf{CDGA}} \stackrel{\pi}{\to} \text{Ho}(\et{\mathsf{CDGA}}) \stackrel{H^*}{\to} \mathsf{etCGA}$. This enables us to obtain inequalities
%\begin{align}
%d_{\text{I}, \mathsf{etCGA}}(HF, HG) \leq d_{\text{IHC}}(F, G) =  d_{\text{IHC}}(HF, HG) &\leq
%d_{\text{I}, \et{\mathsf{CDGA}}}(HF, HG) \\
%&= d_{\text{I}, \mathsf{etCGA}}(HF, HG).
%\end{align}
%As a consequence, we have
%\begin{prop}\label{prop:formal_etCDGAs}
%$d_{\text{\em IHC}}(F, G) = d_{\text{\em I}, \mathsf{etCGA}}(HF, HG)$.
%\end{prop}

\begin{ex}\label{ex:toy}%(A toy example)
The persistence CDGA $\Theta(p : PS^{2n}\to S^{2n})$ in Example \ref{ex:p_q} below is not $H$-formal.
To see this, suppose that $\theta(p)$ is $H$-formal. Then, by Lemma \ref{lem:H-formality},  we see that the morphism $\alpha :=\theta(p)(0 < 2n-1): \wedge(x, y) \to \wedge (x, y, z)$ is formalizable. Observe that $\alpha$ is an E.M. model for $\alpha$ itself. We have a quasi-isomorphism $\theta_B :  \wedge(x, y) \to H(\wedge(x, y), d)=\Q[x]/(x^2)$ with $H(\theta_B) = id$. For the trivial map $t : H(\wedge(x, y), d)=\Q[x]/(x^2) \to H(\wedge (x, y, z))=\wedge (xz-y)$, the composite $t\circ \theta_B$ admits an E.M. model of the form
$ \wedge(x, y) \to \wedge (x, y, z, w)\otimes \wedge (z')$ with the quasi-isomorphism $\eta' : \wedge (x, y, z, w)\otimes \wedge (z')\to \wedge (xz-y)$ defined by
$\eta(u) = 0$ for $u = x, y, z, w$ and $\eta'(z') = xz-y$. It follows from Proposition \ref{prop:Thomas'result} that these E.M. models are  isomorphic to each other, which is a contradiction.

We see that $p : PS^{2n}\to S^{2n}$ is formalizable; see Remark \ref{rem:PX}.
Thus, it follows
%from Assertion \ref{assertion:theConnectedSum}
%\todo{Fix "??" Delete Assertion ??}
that the
converse of Proposition \ref{prop:d_{IHC}_formalizable} does not hold in general.
\end{ex}

In order to describe an example of an extended tame persistence CDGA $\Theta(f)$ which is $H$-formal, we recall terminology from
 \cite{Thomas1}.

A fibration $f:X\to Y$, whose fiber $F$ and base are simply connected, is said to be {\it weakly homotopically trivial} (W.H.T.) if the connecting map of the rational fibration $F_\Q \to X_\Q \to Y_\Q$ is the zero map; see  \cite[page 75]{Thomas1}. For example, a fibration with a section is so. We observe that a
fibration is W.H.T if and only if $\pi_*(X)\otimes \Q\cong\pi_*(F)\otimes \Q\oplus\pi_*(Y)\otimes \Q$.
%for the homotopy fibre $F$ of $f$;

\begin{prop} %\todo{Yam(1/21)``elliptic'' is removed.}\todo{Kuri(2/1): The assertion is revised a little.}
 For a W.H.T. fibration $f:X\to Y$, if $X$ is a formal space with $\dim \pi_*(X)\otimes \Q<\infty$, then
    $\Theta(f)$ is H-formal; see the paragraph before Proposition \ref{prop:WHT} for a W.H.T. fibration.

%(2)  For formal spaces $X$ and $Y$,
%$f : X\to Y$ is formalizable if and only if
%$d_{\text{IHC}}(\Theta (f), \Theta (H^*(f;\Q )))=0$.
\end{prop}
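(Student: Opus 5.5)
The plan is to verify the criterion of Lemma \ref{lem:H-formality}: $\Theta(f)=\theta(\iota_f)$ is $H$-formal as soon as each structure map $\theta(\iota_f)(n<n+1)$ is formalizable. So the task is to build an explicit minimal relative Sullivan model for $f$ and show that every inclusion $\wedge V_Y\otimes\wedge W^{\le n}\to\wedge V_Y\otimes\wedge W^{\le n+1}$ is formalizable.

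\textbf{Step 1: splitting the model.} Since $f$ is W.H.T., the connecting map of $F_\Q\to X_\Q\to Y_\Q$ vanishes. Hence $\pi_*(X)\otimes\Q\cong\pi_*(F)\otimes\Q\oplus\pi_*(Y)\otimes\Q$, and the linear part of the relative model has no component $W\to V_Y$. Concretely, I would take the minimal relative model $\wedge V_Y\to\wedge V_Y\otimes\wedge W$ with $W\cong V_F$. The vanishing of the connecting map should let me choose it so that $\wedge V_Y\otimes\wedge W$ is itself a minimal model of $X$. Then $\wedge V_X=\wedge V_Y\otimes\wedge W$, and $f^*$ is injective on indecomposables. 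Because $\dim\pi_*(X)\otimes\Q<\infty$, $W$ is finite dimensional; so $f$ is relatively finite and only finitely many structure maps are nontrivial.

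\textbf{Step 2: formality of the stages.} I would use that $X$ is formal to get a bigraded (Halperin--Stasheff) minimal model $\wedge V_X=\wedge(V_0\oplus V_1\oplus\cdots)$, where $d$ lowers the lower degree by one and $H=H_0$. The key claim is that the generators can be chosen homogeneous for both the upper degree $n$ and the lower degree, compatibly with the splitting $V_Y\oplus W$. Then each truncation $\wedge V_Y\otimes\wedge W^{\le n}$ inherits the lower grading, and I would show it is formal by exhibiting the projection onto its lower-degree-zero cohomology as a quasi-isomorphism. The structure inclusions respect the lower grading. So the square in (\ref{eq:formalizability}) with $\theta=$ (projection to $H_0$) commutes strictly, and this gives formalizability of each $\theta(\iota_f)(n<n+1)$.

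\textbf{Main obstacle.} The hard part is Step 2. A truncation of a formal minimal model need not be formal, and the bigrading of $\wedge V_X$ need not restrict to a sub-bigrading on $\wedge V_Y$ and on the truncations by upper degree. To handle this I would first use the W.H.T. splitting to argue that $V_Y$ and each $W^{\le n}$ span sub-Sullivan algebras closed under $d$. I would then run Halperin--Stasheff's inductive construction degree by degree, so that at upper degree $n$ the generators of $W^n$ are added to kill exactly the lower-degree-positive cohomology, which yields $H=H_0$ at each stage. If that fails in general, the fallback is to use the finite-dimensionality (ellipticity) hypothesis to reduce to pure models, where formality is characterized explicitly, and check the truncations directly.
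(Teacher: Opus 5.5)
\textbf{Step 2 has a genuine gap.} Your Step 1 matches what the paper uses: by W.H.T., $\wedge V_Y\otimes\wedge W$ is itself the minimal model of $X$, so every stage $\wedge V_Y\otimes\wedge W^{\le n}$ is a sub-DGA of $\minmodel(X)$.

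In Step 2 you claim the Halperin--Stasheff construction can be run so that each stage has cohomology concentrated in lower degree $0$. This is false in general. Choosing the construction cleverly cannot fix it, because the generators in $W^n$ are determined up to isomorphism by $f$. A positive-lower-degree class that is a product of low-degree generators is killed only by a generator of higher degree, i.e.\ at a later stage.

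For instance, $X=S^3\vee S^3$ is formal and $X\to *$ is W.H.T. Yet its stage $\wedge(x,y,z)$ with $dz=xy$ and $|x|=|y|=3$ is not formal: $[xz]=\pm\langle x,x,y\rangle$ is a nontrivial Massey product, killed only by the degree-$7$ generator. Hence $\Theta(X\to *)$ is not $H$-formal. So the hypothesis $\dim\pi_*(X)\otimes\Q<\infty$ must enter the proof that the stages are formal. In your main line it only makes $W$ finite dimensional, which does not help with that.

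\textbf{The missing ingredient} is the Félix--Halperin theorem the paper invokes. If $X$ is formal with finite-dimensional rational homotopy, its minimal model is two-stage: $\wedge(V_0\oplus V_1)$ with $dV_0=0$, and $d$ maps a basis of $V_1$ to a regular sequence in $\wedge V_0$. Consequently $\wedge V\to\wedge V_0/(dV_1)$ is a quasi-isomorphism.

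Your fallback ``reduce to pure models'' points this way, but it is not an argument as stated. Ellipticity is not assumed, and replacing $X$ by an associated pure model would change the homotopy type. What you need is that the minimal model of $X$ itself has this two-stage shape.

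\textbf{With that theorem, your Step 2 goes through} and your compatibility worry disappears:
\begin{enumerate}
\item Since $d|_{V_1}$ is injective, $V_0=\ker(d|_V)$, and any graded complement of $V_0$ has the same image under $d$.
\item So you may choose nested complements $P_n$ of $S_n\cap V_0$ in $S_n:=V_Y\oplus W^{\le n}$, with $d(P_n)\subset\wedge(S_n\cap V_0)$.
\item $d(P_n)$ is a regular sequence in $\wedge(S_n\cap V_0)$: a basis of $d(P_n)$ extends to a basis of $d(V_1)$, which is again regular, and $\wedge V_0$ is free over $\wedge(S_n\cap V_0)$.
\item Hence each stage $\wedge\bigl((S_n\cap V_0)\oplus P_n\bigr)$ is formal via the projection onto $\wedge(S_n\cap V_0)/(dP_n)$.
\item These projections commute strictly with the inclusions.
\end{enumerate}
This gives a pointwise quasi-isomorphism $\theta(\iota_f)\to H(\theta(\iota_f))$ directly, which is how the paper concludes. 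Alternatively, you can feed the strict squares into Lemma~\ref{lem:H-formality} as you planned.
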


\begin{proof}
   From \cite[Theorem II]{FH}, the minimal model $\minmodel(X)$ for $X$ is a two stage model, i.e.,
   $\minmodel(X)=(\wedge V,d)$ for which $V=V_0\oplus V_1$, $dV_0=0$ and $\{ d(v_i) \}$ gives a regular sequence in $\wedge V_0$ for some generators $v_i$ of $V_1$. %\todo{Kuri (12/31): "a regular sequence" is added. (1/2) E.M. model will be applicable to prove Prop A.7.}
Then, any two sub-DGAs $\minmodel_a$ and $\minmodel_b$ of $\minmodel(X)$ represent formal spaces
and
there is a commutative diagram:
\[
\xymatrix@C30pt@R15pt{
\minmodel_a \ar[r]^{i} \ar[d]_{\eta_1}^\simeq & \minmodel_b \ar[d]^{\eta_2}_\simeq\\
(H^*(\minmodel_a),0) \ar[r]_{i^*} & (H^*(\minmodel_b),0)
}
\]
%(2) The relative model of the DGA-map $H^*(f;\Q):H^*(Y;\Q),0\to H^*(X;\Q),0$ is equivalent to one of $M(Y)\to \minmodel(X)$.
in which $i$ is the inclusion $\minmodel_a=\wedge V_a\subset \minmodel_b=\wedge V_b$.
Here, the quasi-isomorphism $\eta_i$ is given by
$H^*(\minmodel_i)=\wedge V_{i,0}/(dV_{i,1})$ for
$V_{i,0}\subset V_0$ and $V_{i,1}\subset V_1$ for $i=a$ and $b$.
Then, the inclusion $i$ gives rise to $\Theta(f)$ which is $H$-formal.
%We can put $\minmodel_i$ as $\theta (f)(n)$ for $n\geq 0$.
\end{proof}

\begin{rem}\label{rem:non-formalizable} %\todo{Kuri(2/1): This remark is revised. Kuri (2/23) We consider the lower square instead of the upper one.}
In view of Proposition \ref{ex:connectedsumCP2_distance} and Lemma \ref{lem:An_isometry}, one of the maps $f_1$ and $f_2$ in the proposition is not $d_{\rm{HI}}$-formal. Indeed, the both of maps
$f_k : S^3 \to {\mathbb C} P^2 \# \overline{{\mathbb C} P^2}$
are {\it not} formalizable. To see this, suppose that $f_k$ is formalizable for $k =1, 2$. Then, the homotopy commutativity of the lower square in Definition \ref{eq:formalizability} enables us to deduce that a Sullivan representative for $f_k$ is homotopic to the trivial map, which is a contradiction to the choice of the map.
\end{rem}
%%%%%%%%%%%%%%%%%%%%%%%%%%%%%%%%%%%

%\section*{Acknowledgments}

%\appendix

\section{More calculations of the distances $d_{\text{IHC}}$ of maps}\label{sect:Examples}
%\todo{Kuri (1/5): Some examples and results have been moved to this section.}
This section is devoted to gathering computational examples and upper bounds of the distance $d_{\rm IHC}(\Theta(f), \Theta(g))$ for
maps $f$ and $g$. In particular, the example of the distance for path fibrations is the first one that we have investigated
at the beginning of our work due to the result in Proposition \ref{prop:ID_equivalences} (ii).
%\todo{Kuri(3/25): The sentence is added. }

The following example deals with maps which are not relatively finite.

\begin{ex} %\todo{Kuri (1/11): The results are explained in detail.}
%Note an example of non finite type.
    Let $h : X=S^3\vee S^3\to S^3$ be the map that sends the second $S^3$ to the base point and $g:X=S^3\vee S^3\to *$ the constant map.
The minimal model of $X$ is given by
   $\minmodel(S^3\vee S^3)=(\wedge (x,y,z,u,w, ... ),d)$
   with $|x|=|y|=3$, $|z|=5$, $|u|=|w|=7$, $d(x)=d(y)=0$,
   $d(z)=xy$, $d(u)=xz$ and $d(w)=yz$.
   The minimal relative Sullivan model for $h$ is given by
   %Here $M(h)$ is given by
   the inclusion $(\wedge (x),0)\to (\wedge (x,y,z,u,w, ... ),d)$. Moreover, the inclusion
   $\iota _g :(\Q,0)\to (\wedge (x,y,z,u,w, ... ),d)$  is the minimal relative Sullivan model  for $g$.

   (i)(cf. Proposition \ref{prop:f_id_N_interleaved}) Then, it follows that $d_{\rm IHC}(\Theta (id_X), \Theta (g))= \infty$.  In fact, suppose that there exists
   a $\delta$-interleaving $(\varphi, \psi)$ between $\Theta(id_X)$ and $\Theta(g)$.  By applying the functor $H\circ Q$ to the homotopy relation
   $\psi^\delta\circ \varphi \simeq \Theta(id_X)(* < *+2\delta)$, we have $((H\circ Q)(\psi^\delta) \circ (H\circ Q)(\varphi))(i) = id_{(H\circ Q)\minmodel(X)}$ for each $i\geq 0$.
   The composite in the left-hand side factors through a finite dimensional vector space. The right-hand side is the identity map on the infinite dimensional vector space $(H\circ Q)(\minmodel(X))\cong Q\minmodel(X)$, which is a contradiction.

   (ii)  We have $d_{\rm IHC}(\Theta (h), \Theta (g) )= 3$.
   Indeed, for $\e \geq 3$, they are $\epsilon$-interleaved since the natural transformations  $\varphi$ and $\psi$
   are defined by identities.
   Also,  for $\e<3$,  they are not  homotopy $\varepsilon$-interleaved since
   $H(\psi (\e)\circ \varphi(0))(x)=0$.
   %On the other hand, if $f$ is the identity map $id: S^3\vee S^3\to S^3\vee S^3$, we have $d_{\rm IHC}(\Theta (f), \Theta (g) )= \infty$.
\end{ex}

We observe that the W.H.T. condition for a fibration $f : X\to Y$ is equivalent to that the minimal relative Sullivan model of $f$
gives a minimal  model for $X$.
Then, the minimal model  for $Y$ is regarded as a sub CDGA of that for $X$. %\todo{Kuri(2/1): We need a reference for the fact.}

\begin{prop}\label{prop:WHT} Let $f:X\to Y$ and $g:X\to Z$ be W.H.T. fibrations.
%Then,  for the Sullivan minimal model $\minmodel(X)=(\wedge V,d)$ for the space $X$,
     %$\minmodel(Y)=(\wedge V_1,d)$ and $\minmodel(Z)=(\wedge V_2,d)$,
   % are sub CDGAs of $\minmodel(X)$ with $V_i\subset V$.
    Let  $\minmodel(Y)=(\wedge V_1,d)$ and $\minmodel(Z)=(\wedge V_2,d)$ be minimal models for $Y$ and $Z$, respectively.
    Suppose that $\dim V_i<\infty$ for $i=1$ and $2$.
  Then, it holds that
    \begin{align}
    d_{\rm HC}(j_*(\Theta(f)), j_*(\Theta(g))) & \leq d_{\rm IHC}(\Theta (f), \Theta (g) ) \leq d_{\rm HI}(\Theta (f), \Theta (g) ) \\
    & \leq  d_{\rm I}(\Theta (f), \Theta (g) )\leq
    \max_{i \in \{1, 2\}}\{ \max\{n \mid V_i^n\neq 0\} \}.
    \end{align}
\end{prop}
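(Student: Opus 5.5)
The first three inequalities are exactly Proposition \ref{prop:inequalities} applied to $F=\Theta(f)$ and $G=\Theta(g)$; here $\Theta(f)$ and $\Theta(g)$ are represented by the cofibrant objects $\Theta'(f)$ and $\Theta'(g)$ (Remark \ref{rem:cofibrantOb}). So the real content is the last inequality, $d_{\rm I}(\Theta(f),\Theta(g))\le M:=\max_i\max\{n\mid V_i^n\neq 0\}$. I would prove it by writing down an explicit strict $M$-interleaving in $\et{\mathsf{CDGA}}$.

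\textbf{Step 1: identify the towers.} By the W.H.T. assumption, as observed before the statement, the minimal relative Sullivan model of $f$ yields a minimal model of $X$. Concretely, $\iota_f:\wedge V_1\to \wedge V_1\otimes\wedge W_1$ with $\wedge V_1\otimes \wedge W_1\cong \minmodel(X)=(\wedge V,d)$ and $V=V_1\oplus W_1$. Similarly, $\iota_g:\wedge V_2\to\wedge V_2\otimes\wedge W_2\cong\wedge V$ with $V=V_2\oplus W_2$. By Remark \ref{rem:SullivanRep} and Remark \ref{rem:iso}, we may use these models to compute $\Theta(f)$ and $\Theta(g)$ up to isomorphism. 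Thus $\Theta(f)(t)=\wedge V_1\otimes\wedge W_1^{\le \lfloor t\rfloor}$ and $\Theta(g)(t)=\wedge V_2\otimes \wedge W_2^{\le\lfloor t\rfloor}$. Both are sub-CDGAs of the single algebra $\wedge V$, and the structure maps are inclusions.

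\textbf{Step 2: the interleaving maps.} For $t\ge M$ we have $V_1\subset V^{\le M}\subset V^{\le \lfloor t\rfloor}$. Since $W_1^{\le \lfloor t\rfloor}\subset V^{\le\lfloor t\rfloor}$, we get
\[
\Theta(f)(t)\subset \wedge V^{\le \lfloor t\rfloor}.
\]
For $s\ge M$ we have $V_2\subset V^{\le \lfloor s\rfloor}$, so $V^{\le\lfloor s\rfloor}=V_2\oplus W_2^{\le\lfloor s\rfloor}$ and therefore $\wedge V^{\le\lfloor s\rfloor}=\Theta(g)(s)$. The same holds with $f$ and $g$ interchanged.

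Define $\varphi(t):\Theta(f)(t)\to\Theta(g)(t+M)$ as the inclusion $\Theta(f)(t)\subset\wedge V^{\le\lfloor t\rfloor}\subset\wedge V^{\le \lfloor t+M\rfloor}=\Theta(g)(t+M)$. The last equality uses $t+M\ge M$ and $t\ge 0$; note also $\lfloor t+M\rfloor\ge\lfloor t\rfloor$. Define $\psi$ symmetrically.

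\textbf{Step 3: check the interleaving axioms.} Every map involved is an inclusion of sub-CDGAs of $\wedge V$, so naturality and the two triangles in (\ref{eq:interleaving}) commute strictly: each composite is the inclusion $\Theta(f)(t)\subset\Theta(f)(t+2M)$, which is the structure map. This gives $d_{\rm I}\le M$.

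\textbf{Main obstacle.} The delicate point is Step 1. The W.H.T. condition has to give a \emph{common} decomposition $V=V_1\oplus W_1=V_2\oplus W_2$ for both maps, with $V_i$ and $W_i$ graded subspaces and the filtration by degree of $W_i$ matching the filtration $\theta$. I expect this to need the identification of the minimal relative model with the minimal model of $X$, applied to $f$ and $g$ separately. The two resulting copies of $\minmodel(X)$ must then be aligned by an isomorphism, which does not preserve sub-algebras. To avoid this, I would instead phrase $\varphi$ as the composite of $\iota$-inclusion with the isomorphism $\wedge V_1\otimes\wedge W_1\cong\wedge V\cong\wedge V_2\otimes\wedge W_2$. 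Such an isomorphism between minimal algebras can be chosen to send $V^{\le n}$-generated subalgebras to $V^{\le n}$-generated subalgebras, and that degree preservation is all that Step 2 uses.
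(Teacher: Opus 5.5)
Your proposal is correct and is essentially the paper's proof. The paper also takes the first three inequalities from Proposition \ref{prop:inequalities}. It then builds a strict interleaving at the stated bound by restricting $\widetilde{g}^{-1}\circ\widetilde{f}$ and $\widetilde{f}^{-1}\circ\widetilde{g}$, where $\widetilde{f}\colon\wedge(V_1\oplus W_1)\cong\wedge V$ and $\widetilde{g}\colon\wedge(V_2\oplus W_2)\cong\wedge V$ come from the W.H.T. condition; this is exactly your fallback formulation.

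Two small points. The degree compatibility you worry about is automatic, since any CDGA morphism between Sullivan algebras with positively graded generators maps $\wedge U^{\le n}$ into $\wedge U'^{\le n}$, so no choice of isomorphism is needed. Also, for $t<M$ you should define $\varphi(t)$ through $\Theta(f)(t)\subset\Theta(f)(t+M)\subset\wedge V^{\le\lfloor t+M\rfloor}$ rather than through $\wedge V^{\le\lfloor t\rfloor}$, because the latter need not contain $\wedge V_1\subset\Theta(f)(t)$.
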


Observe that the upper bound in the proposition above depends only on the minimal models for codomains of $f$ and $g$.
%\todo{Kuri (3/27): The sentence explains the significance of Proposition \ref{prop:WHT}.}

\begin{proof}[Proof of Proposition \ref{prop:WHT}]  We prove the last inequality. Other ones follow from Proposition \ref{prop:inequalities}.

Without loss of generality, we assume that inclusions
$\wedge V_1 \to  \wedge(V_1\oplus W_1)$ and $\wedge V_2 \to  \wedge(V_2\oplus W_2)$
for some $W_1$ and $W_2$
give rise to the persistence CDGAs $\Theta(f)$ and $\Theta(g)$, respectively. Let $\minmodel(X)=(\wedge V,d)$  be the minimal model for the space $X$. Then,
the W.H.T. condition and the uniqueness of the model enable us to obtain isomorphisms $\widetilde{f} : \wedge(V_1\oplus W_1) \stackrel{\cong}{\to} \wedge V$ and $\widetilde{g} : \wedge(V_2\oplus W_2) \stackrel{\cong}{\to} \wedge V$ of CDGAs.

Let $N$ be the right-hand side integer in the inequality that we show. %$\max \{ N_1,N_2\}$.
%For each $i\geq 0$,
We define $N$-interleavings $\varphi(i):\Theta(f)(i)\to \Theta(g)(i+N)$ and $\psi(i):\Theta(g)(i)\to \Theta(f)(i+N)$ by the
restriction of $\widetilde{g}^{-1}\circ \widetilde{f}$ and $\widetilde{f}^{-1}\circ \widetilde{g}$ to the domains, respectively. Since $V_i = V_i^{\leq N}$ for $i = 1$ and $2$, it follows that $(\varphi, \psi)$ is a well-defined $N$-interleaving.
 %\todo{Kuri (3/26): More details for the proof are added clarifying $\Theta(f)$ and $\Theta(g)$.}
%the inclusions of them into $M(X)$ for $i\geq0$.
\end{proof}

Principal $G$-bundles over a space $X$ are classified by the homotopy set $[X, BG]$ up to isomorphism,
where $BG$ denotes the classifying space of $G$. Thus, Theorem \ref{thm:Ho_sets} enables us to consider the pseudodistance between $G$-bundles. In the example below, we determine the distance $d$ between principal $S^1$-bundles over $\mathbb{C}P^n$.

\begin{ex} %\todo{Kuri: Please write the calculation, Naito. (2026.2.8) Naito added}
We identify the isomorphism class of an $S^1$-principal bundle over $\mathbb{C}P^n$ with the homotopy class of its classifying map $\xi_k : \mathbb{C}P^n \to BS^1$, which corresponds to an integer $k \in \Z \cong H^2(\mathbb{C}P^n; \mathbb{Z}) \cong [\mathbb{C}P^n, BS^1]$.
% We identify the isomorphism class of an $S^1$-principal bundle $\xi_k : X_k \to \mathbb{C}P^n$
% with the homotopy class of the classifying map $\xi_k \in [\mathbb{C}P^n, BS^1]\cong H^2(\mathbb{C}P^n; \mathbb{Z})=\mathbb{Z}$ of the bundle.
The pseudodistance $d(\xi_k, \xi_l)$ in Theorem \ref{thm:Ho_sets} is given by
\[
d(\xi_k, \xi_l)
=
\begin{cases}
0 & (k, l \neq 0 \quad \text{or} \quad k=l=0),\\
2 & (\text{otherwise}).
\end{cases}
\]

This result can be verified by a direct calculation using the Sullivan models as follows.
Let ${\minmodel}(BS^1)=(\wedge (u),0)$ and ${\minmodel}({\mathbb C}P^n)=(\wedge (x,y),d)$ be minimal Sullivan models for $BS^1$ and ${\mathbb C}P^n$, respectively.
Here,  $|u|=|x|=2$, $|y|=2n+1$, $dx=0$ and $dy = x^{n+1}$.
%The differential $d$ satisfies $dx=0$ and $dy = x^{n+1}$.
A Sullivan representative for $\xi_k$ is given by the morphism $\xi_k^* : {\minmodel}(BS^1) \to {\minmodel}({\mathbb C}P^n)$ defined by $\xi_k^* (u)=kx$.
Then, the relative Sullivan model for $\xi_k^*$ is constructed as follows.
\begin{itemize}
\item For $k\neq 0$, we have a relative Sullivan model $(\wedge (u),0) \to (\wedge (u,y_k),d)$ of $\xi_k^*$ via the quasi-isomorphism $\eta : (\wedge (u,y_k),d) \to {\minmodel}({\mathbb C}P^n)$ defined by $\eta (u)=kx$ and $\eta (y_k)=y$, where $d(y_k) = u^{n+1}/k^{n+1}$.
\item For $k=0$, a relative Sullivan model $(\wedge (u),0) \to (\wedge (u,z),d)\otimes (\wedge (x,y),d)$ of $\xi_0^*$ via the quasi-isomorphism $\eta : (\wedge (u,z),d)\otimes (\wedge (x,y),d)\to {\minmodel}({\mathbb C}P^n)$ given by the projection to ${\minmodel}({\mathbb C}P^n)$, where $dz=u$.
\end{itemize}
It is readily seen that the relative Sullivan models for $\xi_k^*$ and $\xi_l^*$ for any $k,l\neq 0$ are isomorphic.
This implies that $d(\xi_k, \xi_l) = 0$ immediately for any $k, l \neq 0$. The case $k = l = 0$ is trivial.

Moreover, $\Theta (\xi_k)$ and $\Theta (\xi_0)$ are $2$-interleaved in the homotopy category for any $k \neq 0$.
An interleaving $(\varphi,\psi)$ is given as follows.
\begin{itemize}
    \item $\varphi : \Theta(\xi_k) \Rightarrow \Theta (\xi_0)\circ T_2$ is a natural transformation defined by a restriction of the morphism $(\wedge (u,y_k),d) \to (\wedge (u,z),d)\otimes (\wedge (x,y),d)$, $u\mapsto x$, $y_k \mapsto y/k^{n+1}$.
    \item $\psi : \Theta(\xi_0) \Rightarrow \Theta (\xi_k)\circ T_2$  is a natural transformation defined by a restriction of the morphism $(\wedge (u,z),d)\otimes (\wedge (x,y),d) \to (\wedge (u,y_k),d)$, $u\mapsto 0$, $z\mapsto 0$, $x\mapsto u$, $y\mapsto k^{n+1}y_k$.
\end{itemize}

Similarly to Example \ref{ex:CP_S_distance}, it can be shown that $\Theta (\xi_k)$ and $\Theta(\xi_0)$ are not $\varepsilon$-interleaved in the homotopy category for $0\leq \varepsilon <2$.
This leads to the conclusion that $d(\xi_k, \xi_0) = 2$.
\end{ex}

In the rest of this section,  we consider extended tame persistence CDGAs $\Theta (p)$  for path fibrations $p$.

\begin{ex}\label{ex:p_q}
%We deal with simple cases of persistence CDGAs which come from continuous maps via the functor $\Theta$.
%\noindent
(i) Let $p : PS^{2n}\to S^{2n}$ be the path fibration, where $n\geq 1$.  The relative Sullivan model for the map $p$ is a relative Sullivan algebra of the form
\begin{align}
\xymatrix@C15pt@R20pt{
(\wedge(x, y), d)\  \ar@{>->}[r]& (\wedge(x, y, z, w), d) \simeq \Q
}
\end{align}
in which $d(y) =x^2$, $d(z)=x$ and $d(w) = xz -y$, where $\deg(x) = 2n$. Thus, we see that $\theta(\iota_p)$, but not $\Theta(p)$, has the form
\begin{align}
\xymatrix@C15pt@R0pt{
\hspace{-2.5cm} 0 &\hspace{-2.8cm}  2n-1 &\hspace{-1cm}  4n-2 \\
\wedge(x, y) =\cdots = \wedge(x, y) \  \ar@{>->}[r]& \wedge(x, y, z) = \cdots
=  \wedge(x, y, z) \ \ar@{>->}[r]& \wedge(x, y, z, w) = \cdots.
}
\end{align}
%Here, the upper numbers denote the persistence degrees in $\text{Func}((\mathbb{Z}_{\geq 0}, \geq), \mathsf{CDGA})$.
Moreover, the cohomology $H(\theta(\iota_p))$ of the persistence CDGA $\theta(\iota_p)$
has the form
\begin{align}
\xymatrix@C15pt@R0pt{
 \hspace{-2.5cm} 0 &\hspace{-2.8cm}  2n-1 &\hspace{-1cm}  4n-2 \\
\Q[x]/(x^2) =\cdots = \Q[x]/(x^2)  \ar[r]^-{t}& \wedge(xz-y) = \cdots
=   \wedge(xz-y) \ar[r]^-{t} & \Q = \cdots,
}
\end{align}
where $t$ denotes the trivial map. Thus, we have
\[
H(\Theta(p)) \cong [0, \infty)_0\oplus [0, 2n-1)_{2n} \oplus [2n-1, 4n-2)_{4n-1}
\]
as a persistence graded {\it module}.

(ii) Let $q : PS^{2n-1} \to S^{2n-1}$ be the path fibration, where $n\geq1$.  It follows that
$\theta(\iota_q)$ and $H(\theta(\iota_q))$ have the form
\begin{align}
\xymatrix@C15pt@R0pt{
\hspace{-2.5cm} 0 &\hspace{-1.5cm}  2n-2 &   \hspace{-2.5cm} 0 &\hspace{-0.8cm}  2n-2 \\
\wedge(u) =\cdots = \wedge(u) \  \ar@{>->}[r]& \wedge(u, v) = \cdots \  \text{and} & \wedge(u) =\cdots = \wedge(u) \ar[r]^-t &\Q = \cdots,
}
\end{align}
respectively. We see that $H(\Theta(q)) \cong [0, \infty)_0\oplus [0, 2n-2)_{2n-1}$ as a persistence graded module. Observe that the numbers of bars which appear in $H(\Theta(p))$ and $H(\Theta(q))$ are different from each other.
\end{ex}

We conclude this section by giving an upper bound of $d_{\rm IHC}$ for path fibrations.

\begin{prop} %\todo{Kuri (3/27): Check the assertion and its proof, Yama.}
  Let $\wedge V_X$ and $\wedge V_Y$ be the minimal models for simply-connected spaces $X$ and $Y$, respectively.
  Suppose that $\dim V_X <\infty$ and $\dim V_Y<\infty$.
    For the path fibrations $f:PX\to X$ and $g:PY\to Y$, one has
  \(d_{\rm IHC}(\Theta(f), \Theta(g)) \leq  (\max\{ \max\{ i \mid V_X^i\neq 0\}-1, \max\{ i \mid V_Y^i\neq 0\} -1\})/2\)=:N.
\end{prop}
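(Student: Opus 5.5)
The plan is to mimic the proof of the proposition on $\Theta(f\colon *\to Y)$ and $\Theta(id\colon *\to *)$, where the interleaving is built from trivial maps. First I describe $\Theta(f)$ explicitly. A minimal relative Sullivan model for the path fibration $f\colon PX\to X$ has the form $\iota_f\colon \wedge V_X\to \wedge V_X\otimes\wedge W$ with $W^n\cong V_X^{n+1}$. Since $PX$ is contractible, $\wedge V_X\otimes\wedge W\simeq\Q$, and this is visible in Example~\ref{ex:p_q} for spheres. Put $N_X:=\max\{i\mid V_X^i\neq 0\}-1$. Then $W=W^{\le N_X}$, so $\Theta(f)(i)=\wedge V_X\otimes\wedge W$ is contractible for all $i\ge N_X$. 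The same holds for $g$ with $N_Y$, and $N=\max\{N_X,N_Y\}/2$, so that $2N\ge N_X,N_Y$.

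Next I define the two natural transformations by factoring through the ground field. Let $\varphi(i)\colon\Theta(f)(i)\to\Theta(g)(i+N)$ be the composite of the augmentation $\Theta(f)(i)\to\Q$ with the unit $\Q\to\Theta(g)(i+N)$, and define $\psi$ in the same way. Both are morphisms of CDGAs because Sullivan algebras are augmented. They are natural, since the structure maps are inclusions and therefore commute with augmentations and units. Each composite $\psi^N\circ\varphi$ and $\varphi^N\circ\psi$ equals $\Theta(f)(*<*+2N)$ followed by augmentation and unit, that is, the constant map $\varepsilon\colon \Theta(f)\to\Theta(f)^{2N}$ (respectively the analogue for $g$). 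So it remains to show that $\Theta(f)(*<*+2N)\simeq \varepsilon$ in $\et{\mathsf{CDGA}}$, and the same for $g$. Homotopies that hold only pointwise are not sufficient; I need a morphism $\Theta(f)\to\Theta(f)^{2N}\otimes\wedge(t,dt)$, that is, a right homotopy through the path object of Lemma~\ref{lem:FibCof}.

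This is the main step. I plan to construct one global homotopy $H\colon \wedge V_X\otimes\wedge W\to(\wedge V_X\otimes\wedge W)\otimes\wedge(t,dt)$ from the identity to $\varepsilon$, and then restrict it as in Remark~\ref{rem:Homotopies}. A contractible Sullivan algebra is isomorphic to one of the form $\wedge(U\oplus dU)$ by \cite[Theorem 14.9]{FHT}, since its minimal part is $\Q$. On $\wedge(U\oplus dU)$ there is the explicit homotopy $u\mapsto ut$, $du\mapsto (du)t+u\,dt$, which joins $\varepsilon$ (at $t=0$) to the identity (at $t=1$). Transporting $H$ back along the isomorphism, the restriction condition of Remark~\ref{rem:Homotopies} is automatic: for every $i\ge 0$ the target $\Theta(f)(i+2N)\otimes\wedge(t,dt)$ already equals $\wedge V_X\otimes\wedge W\otimes\wedge(t,dt)$, because $i+2N\ge N_X$. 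Hence $H$ contains the whole image of $\Theta(f)(i)$. The restrictions are compatible with the structure maps, since they all come from the single map $H$. They therefore give a morphism $\widetilde H\colon\Theta(f)\to\Theta(f)^{2N}\otimes\wedge(t,dt)$ realizing $\Theta(f)(*<*+2N)\simeq\varepsilon$.

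The same argument applies to $g$, so $(\varphi,\psi)$ is an $N$-interleaving in the homotopy category, and hence $d_{\rm IHC}(\Theta(f),\Theta(g))\le N$. The only delicate point is the existence of a global contracting homotopy compatible with the filtration. The threshold $2N\ge\max\{N_X,N_Y\}$ is what removes that difficulty, because the filtration becomes irrelevant on the target side.
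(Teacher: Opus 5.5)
Your proposal is correct and follows the paper's proof: trivial interleaving maps factoring through $\Q$, plus a single contracting homotopy on the whole model restricted via Remark~\ref{rem:Homotopies}, which works because the target $\Theta(f)(i+2N)$ is already the full algebra. The only difference is that the paper obtains the contracting homotopy abstractly, from Whitehead's theorem applied to the quasi-isomorphism $\Q\to\minmodel(PY)$, whereas you write it down explicitly on $\wedge(U\oplus dU)$; there the formula should read $du\mapsto t\,du+dt\cdot u$, equivalently $(du)t+(-1)^{|u|}u\,dt$, so that the map commutes with differentials.
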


\begin{proof}
 %Let $N=\max\{ N_X-1,N_Y-1\}/2$.
 It follows from \cite[Theorem 15.3]{FHT}, which gives a model for a fibration, that
 $\minmodel(PX)=\wedge V_X\otimes \wedge\overline{V}_X$ and $\minmodel(PY)=\wedge V_Y\otimes \wedge \overline{V}_Y$.
%For $\e \geq N$,
Define morphisms
$\varphi(i):\Theta(f)(i)\to \Theta (g)(i+N)$ and $\psi (i) :\Theta(g)(i)\to \Theta (f)(i+N)$ by the trivial maps, respectively.
Then, we see that $\Theta(f)$ and $\Theta(g)$ are $N$-homotopy interleaved in the homotopy category with
the interleaving $(\varphi, \psi)$.

In fact, since the path space $PY$ is contractible, it follows that the unit $\eta : \Q \to \minmodel(PY)$ is a quasi-isomorphism.
Then, Whitehead theorem (\cite[Chapter II, Theorem 1.10]{GJ}) enables us to conclude that $\eta$ is a homotopy equivalence
with the homotopy inverse $\gamma$.
We observe that the domain and codomain of $\eta$ are cofibrant-fibrant objects in $\mathsf{CDGA}$.
Let $H$ be a homotopy from $\eta\circ \gamma$ to $id_{\minmodel(PY)}$.
It follows from the choice of the number $N$ that  $\overline{V}_Y^{\leq 2N}= \overline{V}_Y$. %$\bar{V}_X^{\leq 2N}=\overline{V}_X$.
Then, Remark \ref{rem:Homotopies} allows us to obtain a homotopy $\varphi^{N}\circ \psi \simeq \Theta(g)(*<*+{2N})$ in $\et{\mathsf{CDGA}}$
with the restriction of  the homotopy $H$.
By the same argument as above, we see that
$\psi^{N}\circ\varphi\simeq \Theta(f)(*<*+{2N})$ in $\et{\mathsf{CDGA}}$.
\end{proof}

\section{Perspective}\label{sect:Perspective}
As seen in the Introduction, the result \cite[Theorem 3.3]{KNWY}  asserts that the interleaving distances and the cohomology interleaving distance coincide for persistence cochain complexes.
The starting point of the manuscript has been to find algebraic or geometric examples which show the difference between the cohomology interleaving distance and
the interleaving distance in the homotopy category. To this end, we have prepared the categorical framework in Sections \ref{sect:pCDGAs} and \ref{sect:MPs-P_CDGAs}.
This section describes future perspective based on the framework. %\todo{Kuri (3/27) (3/28): Section `Perspective' is added.}

\subsection{Extended tame persistence differential graded Lie algebras}\label{sect:dgl} As seen in \cite[Theorem 6.9]{M-S}, we may replace the category $\mathsf{CDGA}$ in Theorem \ref{thm:main_II} with the category
$\mathsf{cdgl}$ of complete differential graded Lie algebras (cdgls), which is deeply considered in \cite{BFMT}.  In fact, the category
$\mathsf{cdgl}$ is endowed with a cofibrantly generated model structure; see \cite[Chapter 8]{BFMT}. Thus, by combining
Proposition \ref{prop:pQe}, \cite[Corollary 8.2]{BFMT} and \cite[Theorem 2.10]{FFM},
we have a partial Quillen equivalence between $\mathsf{et(sSet^{\R_+})}$ and $\mathsf{et(cdgl^{\R_+})}$ on reduced nilpotent rational simplicial sets and homologically nilpotent connected cdgls. We may investigate extended tame persistence simplicial sets
as well as towers, which are mentioned in Definition \ref{defn_Postnikov} and Remark \ref{rem:CDGA_sSet},
with their Lie models from the homotopical point of view.

It is worthwhile mentioning that in the consideration of Lie models, the finiteness condition for persistence objects assumed in Theorem \ref{thm:main_II} is not needed.

\subsection{(Co)fibration categories}\label{sect:cof} Let $\mathcal{C}$ be a cofibration category in the sense of Baues \cite{Baues}.
Then, following the proof of \cite[Theorem 2.2]{CGL},
we may give a (non trivial) cofibration model category structure to $\mathsf{et}(\mathcal{C^{\R_+}})$. In fact, for example, by considering the terminal object virtually, the existence of a fibrant model follows from the same argument as that in the proofs of the factorization axiom and the left lifting property in the model category. Such a cofibration category structure on $\mathsf{et}(\mathcal{C^{\R_+}})$ will be considered in \cite{Sekizuka}. Then, in view of the investigation in \cite{M-M}, we may develop the homotopy theory on persistence spaces in {\it coarse geometry}.

There are some versions of a cofibration (fibration) category. Therefore, it will be important work to consider the expansion
of the (co)fibration category structure to the category of extended tame functors for each version.
The (co)fibration category structures in \cite{CDKOSW} and \cite{A-G} are applicable
to investigating {\it persistence digraphs} and {\it persistence} $C^*$-{\it algebras}.

\subsection{Multiparameter persistence objects}\label{sect:multi-p}
%\begin{rem}\label{rem:n_to_n+1}
One might expect developments of persistence CDGAs and related objects in multiparameter persistence theory.
As a consequence of Proposition \ref{prop:0_to_1}, one has
a commutative diagram
\begin{eqnarray}\label{eq:1_to_2}
\xymatrix@C30pt@R20pt{
(\text{Func}(I, \mathsf{Top}_0))^{\rm op})^{({\mathbb R}^n, \leq)} \ar[r]^-{\Theta_*}  &  (\text{Ho}(\et{\mathsf{CDGA}}))^{({\mathbb R}^n, \leq)}
\ar[d]^{({\bf L}(\text{colim}))_*} \\
( \mathsf{Top}_0^{\text{op}})^{({\mathbb R}^n, \leq)} \ar[r]_-{(q\circ A_{PL}(\ ))_*} \ar[u]^{i_*} & (\text{Ho}(\mathsf{CDGA}))^{({\mathbb R}^n, \leq)}.
}
\end{eqnarray}
Thus, we obtain naturally $(n+1)$-parameter persistence objects from $n$-parameter persistence objects via $\Theta_*$.
In \cite{Zhou}, Zhou considers the interleaving distance and related distances between persistence spaces with the functor $(q\circ A_{PL}(\ ))_*$ in the case where $n=1$.
%\end{rem}

The E.M. model mentioned in Section \ref{section:formalities} has the lower degrees adding to the dimensions of the generators. It seems that we have two parameter persistence object modifying $\Theta(f)$
with the second degree.

Moreover, we might be able to deal with the objects described in Sections \ref{sect:dgl} and \ref{sect:cof} as multiparameter persistence ones improving the diagram
(\ref{eq:equivalence}).

\medskip
\noindent
{\it Acknowledgements.} The authors thank Jos\'e Manuel Moreno Fern\'andez and Bruno Stonek for comments 
on \cite[Definition 2.4]{M-S} which make the argument in Section \ref{sect:pQe} more certain. 
The first author was partially supported by JSPS KAKENHI Grant Numbers JP23K20795, JP26K00604.
The second author was partially supported by JSPS KAKENHI Grant Numbers JP23K03097, JP26K06813.
The fourth author was partially supported by JSPS KAKENHI Grant Number JP25K17250.

\end{document}